\newtheorem{theorem}{Theorem}[section]
\newtheorem{remark}{Remark}[section]
\newtheorem{lemma}{Lemma}[section]
\newtheorem{proposition}{Proposition}[section]
\numberwithin{equation}{section}
\title[$(2,q)$-Laplacian equations with GENERIC DOUBLE-BEHAVIOUR
NONLINEARITIESs]{Existence and multiplicity of normalized solutions for $(2,q)$-Laplacian equations with GENERIC DOUBLE-BEHAVIOUR
	NONLINEARITIES}
\author[R. Ding]{Rui Ding}
\address[R. Ding]{\newline\indent
	School of Mathematics
	\newline\indent
	East China University of Science and Technology
	\newline\indent
	Shanghai 200237, PR China }
\email{\href{mailto:dingrui18363922468@outlook.com }{dingrui18363922468@outlook.com }}
\author[C. Ji]{Chao Ji}
\address[C. Ji]{\newline\indent
	School of Mathematics
	\newline\indent
	East China University of Science and Technology
	\newline\indent
	Shanghai 200237, PR China }
\email{\href{mailto:jichao@ecust.edu.cn}{jichao@ecust.edu.cn}}
\author[P. Pucci]{Patrizia Pucci}
\address[P. Pucci]{\newline\indent
	Dipartimento di Matematica e Informatica
	\newline\indent
	Universit\`{a} degli Studi di Perugia
	\newline\indent
	Perugia 06123, Italy}
\email{\href{mailto: patrizia.pucci@unipg.it}{patrizia.pucci@unipg.it}}
\subjclass[2010]{35A15, 35B09, 35B38, 35J92.}
\date{\today}
\keywords{$(2, q)$-Laplacian,  Normalized solutions, Mixed nonlinearities, Least energy solutions, multiple solutions, Variational methods}
\begin{document}
	\maketitle	
	\begin{abstract}
		{In this paper, we study {existence and multiplicity} of normalized solutions for the following $(2, q)$-Laplacian equation
			\begin{equation*}\label{Eq-Equation1}
				\left\{\begin{array}{l}
					-\Delta u-\Delta_q u+\lambda u=f(u)  \quad x \in \mathbb{R}^N , \\
					\int_{\mathbb{R}^N}u^2 d x=c^2, \\
				\end{array}\right.
			\end{equation*}
			where $1<q<N$, $N\geq3$, $\Delta_q=\operatorname{div}\left(|\nabla u|^{q-2} \nabla u\right)$ denotes the $q$-Laplacian operator, $\lambda$ is a Lagrange multiplier and $c>0$ is a constant. The nonlinearity $f:\mathbb{R}\rightarrow \mathbb{R}$ is continuous, with mass-subcritical growth at the origin, mass-supercritical growth at infinity, and is more general than the sum of two powers.
			Under different assumptions, we prove
			the existence of a locally least-energy solution and the existence of
			a second solution with higher energy.}
	\end{abstract}

	\begin{center}
		\begin{minipage}{12cm}
			\tableofcontents
		\end{minipage}
	\end{center}

	\section{Introduction and main results}	
{In this paper, we are interested in finding solutions $(u, \lambda)\in X\times  \mathbb{R}$ to the following nonlinear elliptic problem of $(2, q)$-Laplacian type
	\begin{equation}\label{Eq-Equation}
		\left\{\begin{array}{l}
			-\Delta u-\Delta_q u+\lambda u=f(u) \quad  x \in \mathbb{R}^N , \\
			\int_{\mathbb{R}^N}u^2 d x=c^2, \\
		\end{array}\right.
	\end{equation}
	where $\Delta_q u=\operatorname{div}\left(|\nabla u|^{q-2} \nabla u\right)$ denotes the $q$-Laplacian of $u$, with $u \in X$, $X:=H^1\left(\mathbb{R}^N\right)\cap D^{1, q}\left(\mathbb{R}^N\right)$, $c>0$, $N \geq 3$,  either $ \frac{2 N}{N+2}<q<2$ or $ 2<q<N$.}


In recent years, the  $(p, q)$-Laplacian equation has received considerable
attention. The $(p, q)$-Laplacian equation comes from
the following general reaction-diffusion equation
\begin{equation}\label{Eq-Reaction}
	u_t=\operatorname{div}\Big(D(u) \nabla u\Big)+f(x, u)\,\,\, \text { where }\,\, D(u):=|\nabla u|^{p-2}+|\nabla u|^{q-2}.
\end{equation}
{Equation \eqref{Eq-Reaction}} has a wide range of applications in physics and related sciences
such as plasma physics, biophysics, and chemical reaction design. In such applications,
the function $u$ describes a concentration; $\operatorname{div}\Big(D(u) \nabla u\Big)$ corresponds to the diffusion and $f(x, u)$ is the reaction related to source and loss processes. {For more details we} refer to \cite{Cherfils}.

Taking the stationary version of \eqref{Eq-Reaction}, with $p=2$, we obtain the following $(2, q)$-Laplacian equation
\begin{equation}\label{Eq-Equationnomass}
	-\Delta u-\Delta_q u=f(x, u) \quad  x \in \mathbb{R}^N. \\
\end{equation}
Due to our scope, here we recall some recent results involving $(p, q)$-Laplacian equations. In \cite{ambrosio2023nonlinear}, Ambrosio investigated the following class of $(p, q)$-Laplacian problems
\begin{equation}\label{Ambr}
	\left\{\begin{array}{l}
		-\varepsilon^p \Delta_p v-\varepsilon^q \Delta_q v+V(x)\left(|v|^{p-2} v+|v|^{q-2} v\right)=f(v) \text { in } \mathbb{R}^N, \\
		v \in W^{1, p}\left(\mathbb{R}^N\right) \cap W^{1, q}\left(\mathbb{R}^N\right), v>0 \text { in } \mathbb{R}^N,
	\end{array}\right.
\end{equation}
where  $1<p<q< N$, $N\geq 3$ and the potential $V$	satisfies a local assumption due to del Pino and Felmer \cite{rDF}, and $f: \mathbb{R}\rightarrow \mathbb{R}$ is a subcritical Berestycki-Lions type nonlinearity. Using variational arguments, the author showed {existence and concentration results} of a family of solutions {for \eqref{Ambr}} as $\varepsilon \rightarrow 0$. In \cite{PW}, Pomponio and Watanabe employed the monotonicity trick to study   existence of a positive radially symmetric ground state solution of the following $(p, q)$-Laplacian equation with general nonlinearity
\begin{equation*}
	\left\{\begin{array}{l}
		- \Delta_{p}u- \beta\Delta_{q}u=f(u),\,\,\, \text { in }\,\, \mathbb{R}^N, \\
		u(x)\rightarrow 0, \quad\quad \quad\quad\quad\quad\text{as}\,\, \vert x\vert\rightarrow \infty,
	\end{array}\right.
\end{equation*}
where $\beta>0$, $1<p<q$, $p< N$ and $N\geq 3$. Later, in \cite{ambrosio2024scalar},  Ambrosio dealt with the following class of $(p, q)$-Laplacian problems
\begin{equation*}
	\left\{\begin{array}{l}
		-\Delta_{p} u-\Delta_{q} u=f(u) \text { in } \mathbb{R}^N, \\
		u \in W^{1, p}\left(\mathbb{R}^N\right) \cap W^{1, q}\left(\mathbb{R}^N\right),
	\end{array}\right.
\end{equation*}
where $1<p<q\leq N$ and $N\geq 2$. He
improved and complemented some results in \cite{ambrosio2023nonlinear, PW}. More precisely, by using suitable variational arguments, he demonstrated the existence of a ground state solution  through three distinct approaches. Moreover, he proved the existence of infinitely many radially symmetric solutions.

{In this paper, inspired by the fact that physicists are often {interested in} normalized solutions, we look for solutions of \eqref{Eq-Equationnomass} in $X$ having a prescribed $L^2$-norm}.
This approach seems to be particularly meaningful from the physical point of view, because in nonlinear optics and {in} the theory of Bose-Einstein condensates, there is a conservation of mass, see \cite{Frantzeskakis2010BEC,Malomed2008BEC}.


For the problem of normalized solutions to {$(2,q)$-Laplacian equations}, we  mention  e.g.\cite{baldelli2022normalized}, where
Baldelli and Yang studied the existence of normalized solution of the following $(2, q)$-Laplacian equation in all possible cases according to the value of $p$,
\begin{equation}\label{eqBaldelli}
	\left\{\begin{array}{l}
		-\Delta u-\Delta_q u=\lambda u+|u|^{p-2} u, \quad x \in \mathbb{R}^N, \\
		\int_{\mathbb{R}^N}u^2 d x=c^2 .
	\end{array}\right.
\end{equation}
In the $L^2$-subcritical case, the authors studied a global minimization problem and obtained a ground state solution for \eqref{eqBaldelli}. While in the $L^2$-critical case, they proved several non-existence results, also extended in the $L^q$-critical case. Finally, for the $L^2$-supercritical case, they derived a ground state {as well as} infinitely many radial solutions.

In a recent paper \cite{cai2024normalized}, Cai and R{\u{a}}dulescu studied the following $(p, q)$-Laplacian equation with $L^p$-constraint
\begin{equation}\label{eqLcai}
	\left\{\begin{array}{l}
		-\Delta_p u-\Delta_q u+\lambda|u|^{p-2} u=f(u), \quad x \in \mathbb{R}^N, \\
		\int_{\mathbb{R}^N}|u|^p d x=c^p, \\
		u \in W^{1, p}\left(\mathbb{R}^N\right) \cap W^{1, q}\left(\mathbb{R}^N\right),
	\end{array}\right.
\end{equation}
where $f: \mathbb{R}\rightarrow \mathbb{R}$ is a continuous function and satisfies weak mass supercritical conditions,
{which allow the case in which
$f$ exhibits either} mass-critical or mass-supercritical at the origin and mass-supercritical growth at infinity. They established the existence of ground states, and revealed some basic behaviors of the ground state energy $E_c$ as $c>0$ varies. The analysis in \cite{cai2024normalized} allows them to provide the general growth assumptions for the reaction $f$.

Recently, {Ding, Ji and Pucci \cite{ding2024normalized} investigated} the existence and multiplicity of normalized solutions for the following $(2, q)$-Laplacian equation
\begin{equation}\label{Equatio11n}
	\left\{\begin{array}{l}
		-\Delta u-\Delta_q u+\lambda u=g(u),\quad x \in \mathbb{R}^N, \\
		\int_{\mathbb{R}^N}u^2 d x=c^2. \\
	\end{array}\right.
\end{equation}
The nonlinearity $g:\mathbb{R}\rightarrow \mathbb{R}$ is continuous and the behaviour of $g$ at the origin is allowed to be strongly sublinear, i.e., $\lim \limits _{s \rightarrow 0} g(s) / s=-\infty$, which includes the logarithmic nonlinearity
$$
g(s)= s \log s^2.
$$
{First, {in \cite{ding2024normalized} we} considered a family of approximating problems that can be set in $X$ and proved the existence of the corresponding least-energy solutions. 	Then,
{we}  proved that such a family of solutions converges to a least-energy solution to the original problem \eqref{Equatio11n}. Moreover, under certain {natural assumptions on $g$, we} also showed the existence of infinitely many solutions of \eqref{Equatio11n}.}

In the past decade or so,
starting from the seminal contribution by Tao, Visan and Zhang \cite{tao2007},
the nonlinear Schr\"odinger equation with mixed power nonlinearities has attracted much attention. Due to our scope, {we} mention that
Soave \cite{SoavenonC,SoaveC} {was} the first to study the following nonlinear Schr\"odinger equation with combined nonlinearities
\begin{equation}\label{Eq-Soave}
	-\Delta u=\lambda u+\mu|u|^{q-2} u+|u|^{p-2} u \quad \text { in } \mathbb{R}^N, N \geq 1,
\end{equation}
having prescribed mass
$$
\int_{\mathbb{R}^N}u^2dx=a^2.
$$
Soave studied {existence and nonexistence of  normalized solutions of} equation \eqref{Eq-Soave}, with $\mu \in \mathbb{R}$, $2<q \leq \bar{2} \leq p$, $q<p$, and either $p<2^*$ or, when $N \geq 3$, $p=2^*$, where $2^*:=\dfrac{2N}{N-2}$ and $\bar{2}:=2+\dfrac{4}{N}$.
When $p<2^*$, Soave proved the existence of
a least-energy solution and a second solution of mountain-pass-type.
However, when $2<q<\bar{2}<p=2^*$, the author obtained only the existence of {a} local minimizer for equation \eqref{Eq-Soave}. The existence of the second normalized solution of mountain-pass type for equation \eqref{Eq-Soave} was proved by Jeanjean in \cite{JeanMultiple} for $N \geq 4$, in \cite{WeiwuC} for $N =3$.
{Moreover,} in \cite{Qi2023limit} asymptotic behaviour of mountain-pass solutions was proved. Additionally, for $p=2^*$ and $q \in\left(2+\dfrac{4}{N}, 2^*\right)$, with $N \geq 3$ and sufficiently large $\mu>0$, Alves, Ji and Miyagaki in \cite{AJM} proved the existence of  a positive ground state solution of \eqref{Eq-Soave} on $	S(a)=\left\{u \in H^1\left(\mathbb{R}^N\right) : \int_{\mathbb{R}^N}u^2dx=a^2\right\}$, complementing some results from \cite{SoaveC}.

Very recently,  Bieganowski, d'Avenia and  Schino in \cite{bieganowski2024existence} considered
existence of solutions $\left(u, \lambda \right) \in H^1\left(\mathbb{R}^N \right) \times \mathbb{R}$ to
$$
-\Delta u+\lambda u=f(u) \quad \text { in } \mathbb{R}^N
$$
with $N \geq 3$ and prescribed $L^2$ norm, and the dynamics of the solutions to
$$
\left\{\begin{array}{l}
	\mathrm{i} \partial_t \Psi+\Delta \Psi=f(\Psi) \\
	\Psi(\cdot, 0)=\psi_0 \in H^1\left(\mathbb{R}^N ; \mathbb{C}\right)
\end{array}\right.
$$
with $\psi_0$ close to ${u}$. Here, the nonlinear term $f$ has mass-subcritical growth at the origin, mass-supercritical growth at infinity, and is more general than the sum of two powers. Under different assumptions, they proved the existence of a local least-energy solution, the orbital stability of all such solutions, the existence of a second solution with higher energy, and the strong instability of such a solution.

Motivated by the aforementioned papers, we establish {existence and multiplicity} of normalized solutions to equation \eqref{Eq-Equation}, where $f: \mathbb{R} \rightarrow \mathbb{R}$ is a general nonlinear function that behaves similarly to the sum of two powers. To the best of our knowledge, this type of problem remains unexplored in the literature.

Before stating the main results of the paper, we present
the main assumptions imposed on $f$. Let $F(t)=\int_0^t f(s) d s$, ${q}_{\#}:=\left(1+\frac{2}{N}\right) \min \{2, q\}$, ${q^{\prime}}:=\max \left\{2^*, q^*\right\}$, $q^*:=\dfrac{Nq}{N-q}$. We use $\lesssim$ to denote an inequality up to a positive multiplicative constant.
\begin{enumerate}[label=(F\arabic*), ref=\textup{F\arabic*}]
	\setcounter{enumi}{-1}
	\item \label{F0} {$f: \mathbb{R}\rightarrow \mathbb{R}$ is continuous} and $\vert f(t)\vert \lesssim |t| + |t|^{q^{\prime}-1}$.
	\item \label{F1} $\lim_{t \rightarrow 0} \dfrac{F(t)}{t^2} = 0$.
	\item \label{F2} $\lim_{t \rightarrow 0} \dfrac{F(t)}{|t|^{q_\#}} = +\infty$.
\end{enumerate}

\begin{remark}
{\rm	From \eqref{F0}, if $F(\zeta)>0$ for some $\zeta \neq 0$ (which occurs if \eqref{F2} holds), the number
	\begin{equation}\label{Eq-C0}
		C_0:=\sup _{{\substack{t \in \mathbb{R}\\t\neq0}}} \frac{F(t)}{t^2+|t|^{q^{\prime}}}>0
	\end{equation}
	is well-defined. Note that, whenever $C_0$ is mentioned, we implicitly assume that $F$ is positive somewhere and $C_0<\infty$.}
\end{remark}

Solutions of \eqref{Eq-Equation} can be obtained as critical points of the energy functional $J:X\rightarrow \mathbb{R}$ given by
{	\begin{equation}\label{Eq-Functional}
		J(u)=\int_{\mathbb{R}^N}\left(\frac{1}{2}|\nabla u|^2+\frac{1}{q}|\nabla u|^q-F(u) \right)dx
	\end{equation}}
under the constraint
$$
\mathcal{S}(c):=\left\{u \in X: \int_{\mathbb{R}^N}u^2dx=c^2\right\}.
$$
It is standard to show that $J$ is of class $C^1$ in $X$, and
{that} any critical point $u$ of $\left.J\right|_{S(c)}$ corresponds to a solution to \eqref{Eq-Equation}, with the parameter $\lambda \in \mathbb{R}$ appearing as a Lagrange multiplier.

For $c>0$, define
$$
\mathcal{D}(c):=\left\{u \in X: \int_{\mathbb{R}^N}u^2dx\leq c^2\right\}{\color{red} .}
$$
Additionally, {define $\widetilde{q}=\max\{2,q\}$ and,} for any $R>0$,  let us introduce
$$
\mathcal{U}_R(c):=\left\{u \in \mathcal{D}(c):
\int_{\mathbb{R}^N}|\nabla u|^{\widetilde{q}}dx<R\right\}, \quad  \quad m_R(c):=\inf _{\mathcal{U}_R(c)} {J(u)}.
$$
The idea of working with $\mathcal{D}(c)$ instead of $\mathcal{S}(c)$ was introduced in \cite{Bieganowski2021masscritical} in the context of nonlinearities with {either} mass-critical or mass-supercritical growth at the origin and mass-supercritical growth at infinity.
The main advantage
is that the weak limit of a sequence in $\mathcal{D}(c)$ still belongs to $\mathcal{D}(c)$, while this is not the case with $\mathcal{S}(c)$ because
the embedding $X \hookrightarrow L^2\left(\mathbb{R}^N\right)$ is not compact, even when considering radially symmetric functions. This makes it easier to obtain a minimizer of $J$ over suitable subsets, which is an important step to obtain a solution to \eqref{Eq-Equation}.

Throughout this paper, let $\mathcal{S}_q$ denote the optimal constant for the Sobolev embedding $D^{1, q}\left(\mathbb{R}^N\right) \hookrightarrow L^{q^*}\left(\mathbb{R}^N\right)$. We now state the main results of our paper. First, we prove the existence of a negative-energy solution to \eqref{Eq-Equation}.

\begin{theorem}\label{Theorem-existence}
	{Assume that
\eqref{F0}-\eqref{F2} hold, also assume that
	\begin{equation}\label{Eq-crange}
c^2<\frac{q^{\prime}}{N}\left(\frac{\mathcal{S}_{\widetilde{q}}}{q^{\prime} C_0}\right)^{N / \widetilde{q}} .
	\end{equation}
{Then there exist	 $R_0>0$ (see \eqref{g2} {of} Lemma \ref{Lemma-g} below),} $\bar{u} \in \mathcal{S}(c) \cap \mathcal{U}_{R_0}(c)$ and $\lambda_{\bar{u}}>0$ such that $J(\bar{u})=m_{R_0}(c)<0,\, \bar{u}$ has constant sign and $\left(\bar{u}, \lambda_{\bar{u}}\right)$ is a solution to \eqref{Eq-Equation}.}
\end{theorem}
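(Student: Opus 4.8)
The plan is to realize $\bar u$ as a minimizer of $J$ over the open set $\mathcal{U}_{R_0}(c)$, working inside the ball $\mathcal{D}(c)$ rather than on the sphere $\mathcal{S}(c)$ so as to restore weak compactness. The geometric backbone is Lemma \ref{Lemma-g}: combining the bound $F(t)\le C_0(t^2+|t|^{q^{\prime}})$ coming from \eqref{Eq-C0}, the elementary inequality $\tfrac12\|\nabla u\|_2^2+\tfrac1q\|\nabla u\|_q^q\ge \tfrac{1}{\widetilde{q}}\|\nabla u\|_{\widetilde{q}}^{\widetilde{q}}$, and the Sobolev inequality $\|u\|_{q^{\prime}}\lesssim\|\nabla u\|_{\widetilde{q}}$, one produces a one-variable function $g$ with $J(u)\ge g\big(\|\nabla u\|_{\widetilde{q}}^{\widetilde{q}}\big)$ on $\mathcal{D}(c)$. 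The smallness assumption \eqref{Eq-crange}, expressed through \eqref{g2}, guarantees that $g(R_0)>0$. Thus $J$ is bounded below on $\mathcal{U}_{R_0}(c)$ and stays strictly positive on the boundary layer $\{\|\nabla u\|_{\widetilde{q}}^{\widetilde{q}}=R_0\}$, which is what will prevent any minimizer from escaping to $\partial\mathcal{U}_{R_0}(c)$.

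Next I would prove $m_{R_0}(c)<0$ by a mass-preserving dilation. For a fixed $u\in\mathcal{S}(c)$ set $u_t(x)=t^{N/2}u(tx)$; then $\|u_t\|_2=c$ for every $t>0$ and $\|\nabla u_t\|_{\widetilde{q}}^{\widetilde{q}}\to0$ as $t\to0^{+}$, so $u_t\in\mathcal{U}_{R_0}(c)$ for $t$ small. Assumption \eqref{F2} forces $\int_{\mathbb{R}^N}F(u_t)\,dx$ to dominate the vanishing kinetic terms at the mass-critical rate (this is exactly the role of the exponent $q_{\#}$), so $J(u_t)<0$ for small $t$ and hence $m_{R_0}(c)<0$. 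I would then select a minimizing sequence for $m_{R_0}(c)$ and replace each element by the Schwarz symmetrization of its modulus; this does not increase $J$ and keeps the sequence inside $\mathcal{U}_{R_0}(c)$, so we may assume that the minimizing sequence $(u_n)$ consists of nonnegative, radially symmetric nonincreasing functions. Since $\|u_n\|_2\le c$ and $\|\nabla u_n\|_{\widetilde{q}}^{\widetilde{q}}<R_0$, the sequence is bounded in $X$.

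Passing to a weak limit $u_n\rightharpoonup\bar u$ in $X$, the decisive advantage of the ball is that $\mathcal{D}(c)$ is weakly closed, whence $\bar u\in\mathcal{D}(c)$ and $\|\nabla\bar u\|_{\widetilde{q}}^{\widetilde{q}}\le R_0$. Using radial symmetry and the compact embedding of the radial subspace of $X$ into $L^{p}(\mathbb{R}^N)$ for $p$ strictly below the critical exponent, together with \eqref{F0}--\eqref{F1} and the fact that the energy level lies below the critical Sobolev threshold fixed by \eqref{Eq-crange} and $\mathcal{S}_{\widetilde{q}}$, one rules out both vanishing and concentration and obtains $\int_{\mathbb{R}^N}F(u_n)\,dx\to\int_{\mathbb{R}^N}F(\bar u)\,dx$. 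The gradient terms in \eqref{Eq-Functional} are weakly lower semicontinuous by convexity, so $J(\bar u)\le\liminf_n J(u_n)=m_{R_0}(c)<0$. Because $g(R_0)>0>m_{R_0}(c)$, the point $\bar u$ cannot lie on $\partial\mathcal{U}_{R_0}(c)$; hence $\bar u\in\mathcal{U}_{R_0}(c)$ and $J(\bar u)=m_{R_0}(c)$, i.e. the infimum is attained.

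It remains to upgrade $\bar u\in\mathcal{D}(c)$ to $\bar u\in\mathcal{S}(c)$ and to produce $\lambda_{\bar u}>0$. Since $\bar u$ is interior in the $R$-direction, it is a local minimizer of $J$ on $\mathcal{D}(c)$, so the Karush--Kuhn--Tucker conditions for the inequality constraint $\|u\|_2^2\le c^2$ give $J'(\bar u)+\lambda_{\bar u}\bar u=0$ with $\lambda_{\bar u}\ge0$; equivalently $(\bar u,\lambda_{\bar u})$ solves \eqref{Eq-Equation} once the constraint is shown to be active. To obtain activeness and strict positivity simultaneously, I would show that $c\mapsto m_{R_0}(c)$ is strictly decreasing (again via a dilation comparison exploiting $m_{R_0}(c)<0$): if $\|\bar u\|_2=d<c$ then $\bar u\in\mathcal{U}_{R_0}(d)$ would yield $m_{R_0}(c)=J(\bar u)\ge m_{R_0}(d)>m_{R_0}(c)$, a contradiction, so $\|\bar u\|_2=c$; complementary slackness combined with strict monotonicity then forces $\lambda_{\bar u}>0$. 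Finally, the nonnegativity of $\bar u$ inherited from the symmetrized sequence, together with the strong maximum principle for the $(2,q)$-Laplacian, shows that $\bar u$ has constant sign. I expect the compactness step to be the main obstacle: establishing $\int_{\mathbb{R}^N}F(u_n)\,dx\to\int_{\mathbb{R}^N}F(\bar u)\,dx$ under the merely critical growth allowed by \eqref{F0} requires excluding concentration, and this is precisely where the threshold \eqref{Eq-crange} and the optimal constant $\mathcal{S}_{\widetilde{q}}$ enter decisively.
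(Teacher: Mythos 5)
There is a genuine gap, and it sits at the heart of your compactness scheme: the Schwarz symmetrization step is not available under the hypotheses of Theorem \ref{Theorem-existence}. You replace each $u_n$ by the symmetric rearrangement of $|u_n|$ and claim this does not increase $J$. The gradient terms behave ($\|\nabla |u_n|^*\|_p\le\|\nabla u_n\|_p$ by P\'olya--Szeg\H{o}), and equimeasurability gives $\int_{\mathbb{R}^N}F(|u_n|^*)\,dx=\int_{\mathbb{R}^N}F(|u_n|)\,dx$; but nothing in \eqref{F0}--\eqref{F2} makes $F$ even, so on the set $\{u_n<0\}$ the quantity $\int F(|u_n|)\,dx-\int F(u_n)\,dx=\int_{\{u_n<0\}}\bigl(F(-u_n)-F(u_n)\bigr)dx$ can be negative, and then $J(|u_n|^*)>J(u_n)$. (If, say, $F$ is much larger on the negative half-line, the minimizing sequence is essentially negative and taking the modulus destroys the energy gain.) This also makes your constant-sign argument circular, since the nonnegativity of $\bar u$ was ``inherited'' from the invalid symmetrization. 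That evenness is genuinely an extra hypothesis is visible in the paper itself: radiality of $\bar u$ is only asserted \emph{a posteriori} in Remark \ref{remark-urad}, after constant sign is proved, and in Theorem \ref{Theorem-existence2} the radial/positivity claims are stated only when $f$ is odd or vanishes on $(-\infty,0)$. The paper's proof instead works with a general (non-radial) minimizing sequence: Ekeland's principle produces a Palais--Smale sequence with multipliers (Lemma \ref{LemmaPSseq}); vanishing is excluded by $m_{R_0}(c)<0$ and dichotomy by the \emph{strict} subadditivity of Lemma \ref{Lemma-subadd}, via Lions' lemma and translations (Lemma \ref{LemmaStrongconver}); the a.e.\ convergence of gradients needed to split the $q$-Laplacian term (no Hilbert structure) is obtained by the Boccardo--Murat truncation argument of Lemma \ref{Lem3-ae}; and constant sign follows from strict subadditivity applied to $\bar u_\pm$, not from rearrangement.

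Two secondary points. First, even granting symmetrization, compact embedding of $X_{\mathrm{rad}}$ only gives strong convergence in \emph{subcritical} $L^m$, while \eqref{F0} allows critical growth $|t|^{q'}$; your assertion that the threshold \eqref{Eq-crange} ``rules out concentration'' is exactly the step that needs an argument, and the paper supplies it not through an energy-threshold comparison but through the constraint itself: for $v_n=u_n-\bar u$ one has $\|\nabla v_n\|_{\widetilde q}<R_0$ and $g(c,R_0)=0$, so $J(v_n)\ge\bigl(\tfrac1{\widetilde q}-C_0\mathcal{S}_{\widetilde q}^{-q'/\widetilde q}R_0^{q'-\widetilde q}\bigr)\|\nabla v_n\|_{\widetilde q}^{\widetilde q}$ forces $\|\nabla v_n\|_{\widetilde q}\to0$, which is what kills the critical term. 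Second, your route to $\lambda_{\bar u}>0$ (complementary slackness plus strict monotonicity of $c\mapsto m_{R_0}(c)$) is under-justified: strict decrease of the value function does not by itself yield strict positivity of the multiplier at a given minimizer without differentiability of $c\mapsto m_{R_0}(c)$. The paper's argument is cleaner and you should adopt it: if $\lambda_{\bar u}\le0$, the Pohozaev identity \eqref{Eq-poH3} gives $J(\bar u)\ge\bigl(\|\nabla\bar u\|_2^2+\|\nabla\bar u\|_q^q\bigr)/N\ge0$, contradicting $J(\bar u)=m_{R_0}(c)<0$; then $\lambda_{\bar u}>0$, and $\bar u\in\mathcal{S}(c)$ because an interior minimizer with $\|\bar u\|_2<c$ would force $\lambda_{\bar u}=0$.
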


\begin{remark}\label{remark-urad}
{\rm	By the regularity {properties proved} in \cite{He2008regularity}, {the solution}
$\bar{u} \in L^{\infty}\left(\mathbb{R}^N\right) \cap \mathcal{C}^{1, \alpha}\left(\mathbb{R}^N\right)$ for every $\alpha \in(0,1)$, and $\bar{u}(x) \rightarrow 0$ as $|x| \rightarrow\infty$. We {use} this property in some of the proofs {below}.
	In addition, { $\bar{u}$  can be always assumed to be} radial and radially monotonic, since it has constant sign, {thanks to Schwarz rearrangement}.}
\end{remark}
In some sense, Theorem \ref{Theorem-existence} can be viewed as an extension of the results in \cite{bieganowski2024existence} {to} the $(2, q)$-Laplacian equations.
{As in \cite{bieganowski2024existence}, the only assumption we need for Theorem \ref{Theorem-existence} is that $c$ is sufficiently small. In particular, we
also do not distinguish between nonlinear terms that have Sobolev-critical or -subcritical growth at infinity.
In fact, one of the purposes of this paper is to understand what reasonably minimal hypotheses we need
both for the various steps and for the main results.}
{In \cite{bieganowski2024existence}, when only a single Laplacian term is present, the working space is \( H^1\left(\mathbb{R}^N\right) \). While, due to the presents of the \( q \)-Laplacian term, the appropriate working space for \eqref{Eq-Equation} is
$
X = H^1\left(\mathbb{R}^N\right) \cap D^{1, q}\left(\mathbb{R}^N\right).
$
Note that \( X \) is not a Hilbert space, which introduces additional complexities in certain estimates.}

 To establish the compactness of the minimizing sequence \( (u_n)_n \) for $J$ at level $m_{R_0}(c)$, it is necessary to show that \( \int_{\mathbb{R}^N} \bar{u}^2 \, dx = c^2 \). The case \( \int_{\mathbb{R}^N} \bar{u}^2 \, dx = 0 \) is ruled out due to the energy being negative. To exclude the case \( 0 < \int_{\mathbb{R}^N} \bar{u}^2 \, dx < c^2 \), a subadditivity inequality for \( m_{R_0}(c) \) is established. Unlike the single Laplacian case, it must be shown that
\[
\lim_{n \rightarrow \infty} \left( \left\| \nabla u_n \right\|_q^q - \left\| \nabla (u_n - \bar{u}) \right\|_q^q \right) = \|\nabla \bar{u}\|_q^q.
\]
Because
 \( X \) is not a Hilbert space, we cannot use the inner product. Therefore, it is essential to prove that \( \nabla u_n \rightarrow \nabla \bar{u} \) holds for almost every \( x \in \mathbb{R}^N \).

{From Theorem \ref{Theorem-existence}, we obtain a solution to equation
\eqref{Eq-Equation}.}
While a ground state solution refers to a solution $(u,\lambda)$ to
\eqref{Eq-Equation} with $u$ minimizes the functional $J$ among all the solutions to \eqref{Eq-Equation}.
That is, $u$ satisfies the following condition:
$$
J^{\prime}\mid_{\mathcal{D}(c)}(u)=0  \quad \text { and } \quad
J(u)=\inf \left\{J(v):\left. v \in \mathcal{D}(c) \text { and }J^{\prime}\right|_{\mathcal{D}(c)}(v)=0\right\} .
$$
It makes sense to ask whether the local minimizer found in Theorem \ref{Theorem-existence} is a ground state {solution}.

From \cite[Lemma 2.3]{baldelli2022normalized}, we know that if \(u\) is a solution of equation
\begin{equation}\label{Func}
	-\Delta u-\Delta_q u+\lambda u=f(u) \quad  x \in \mathbb{R}^N,
\end{equation}
then \(u\) satisfies the following Pohozaev identity:
\begin{equation}\label{Eq-poh}
	\frac{N-2}{2} \int_{\mathbb{R}^N}|\nabla u|^2 \, dx + \frac{N-q}{q} \int_{\mathbb{R}^N}|\nabla u|^q \, dx + \frac{\lambda N}{2} \int_{\mathbb{R}^N}|u|^2 \, dx = N \int_{\mathbb{R}^N} F(u) \, dx,
\end{equation}
and the following Nehari identity:
\begin{equation}\label{Eq-Nehari}
	\int_{\mathbb{R}^N}|\nabla u|^2 \, dx + \int_{\mathbb{R}^N}|\nabla u|^q \, dx + \lambda \int_{\mathbb{R}^N}|u|^2 \, dx = \int_{\mathbb{R}^N} f(u) u \, dx.
\end{equation}
Combining \eqref{Eq-poh} with \eqref{Eq-Nehari}, we obtain that \(u\) satisfies
\begin{equation}\label{Eq-poH3}
	P(u) = \int_{\mathbb{R}^N}|\nabla u|^2 \, dx + \left(\delta_q + 1\right) \int_{\mathbb{R}^N}|\nabla u|^q \, dx - \frac{N}{2} \int_{\mathbb{R}^N} H(u) \, dx = 0,
\end{equation}
where \(\delta_q := \dfrac{N(q-2)}{2 q}\) and \(H(u) := f(u) u - 2 F(u)\).

{Now we introduce the set
	\begin{equation}
		\mathcal{P}_c:=\left\{u \in \mathcal{S}(c): P(u)=0\right\},
	\end{equation}
	It is clear that any solution of \eqref{Eq-Equation} stays in $\mathcal{P}_c$.}

Whether
the local minimizer found in Theorem \ref{Theorem-existence} is a ground state solution is closely related to the minimax structure of \(J\mid_{\mathcal{S}(c)}\) and, in particular, to its behavior with respect to dilations preserving the \(L^2\) norm. For \(u \in \mathcal{S}(c)\) and \(s \in \mathbb{R}\), let
$$
(s * u)(x) := e^{\frac{N}{2} s} u\left(e^s x\right), \quad \text{for any}\, x  \in \mathbb{R}^N.
$$
It results that \(s * u \in \mathcal{S}(c)\). For any \(s \in {\mathbb R}\), we define the map:
\begin{equation}\label{Eq-jsu}
	\psi(s) := J(s * u) = \frac{e^{2s}}{2} \int_{\mathbb{R}^N} |\nabla u|^2 \, dx + \frac{e^{q(\delta_q+1)s}}{q} \int_{\mathbb{R}^N} |\nabla u|^q \, dx - e^{-Ns} \int_{\mathbb{R}^N} F(e^{\frac{N}{2} s} u) \, dx.
\end{equation}
It's easy to see that the critical points of \(\psi\) allow us to project a function that satisfies \eqref{Eq-poH3}.  Thus, the monotonicity and convexity properties of $\psi$ strongly affect the structure of $\mathcal{P}$.

Now, we introduce the following abstract assumptions:
\begin{enumerate}[label=(J\arabic*), ref=\textup{J\arabic*}]
	\setcounter{enumi}{-1}
	\item \label{J0} For every \(u \in \mathcal{D}(c) \backslash \{0\}\), the function \((-\infty,\infty) \ni s \mapsto \psi(s) \in \mathbb{R}\) has a unique local maximum point \(t_u\).
	\item \label{J1}  Assume that \eqref{J0} holds, and for every \(u \in \mathcal{D}(c) \backslash \{0\}\), the function $ (e^{t_u},\infty) \ni s \mapsto \phi(s) := \psi( \ln s) $ is concave.
\end{enumerate}

\begin{proposition}\label{Prop-ground}
	{If \eqref{F0}-\eqref{F2}}, \eqref{J1}, and \eqref{Eq-crange} hold, then
	the solution obtained in Theorem \ref{Theorem-existence} is a ground state solution, that is
	$$
	m_{R_0}(c)=\inf \left\{J(u) : u \in \mathcal{D}(c) \text { and }\left.J\right|_{\mathcal{D}(c)} ^{\prime}(u)=0\right\} ,
	$$
	where
	$$	m_{R_0}(c):=\inf _{\mathcal{U}_{R_0}(c)} J.$$
\end{proposition}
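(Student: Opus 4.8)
The plan is to establish the two inequalities separately. The solution $\bar u$ from Theorem \ref{Theorem-existence} belongs to $\mathcal{D}(c)$, is a critical point of $J|_{\mathcal{D}(c)}$ and satisfies $J(\bar u)=m_{R_0}(c)$, so it is admissible in the infimum and yields $\inf\{J(u):u\in\mathcal{D}(c),\ J'|_{\mathcal{D}(c)}(u)=0\}\le m_{R_0}(c)$. The whole content of the proposition is therefore the reverse bound $J(v)\ge m_{R_0}(c)$ for every critical point $v$. I would begin by recording that such a $v$ solves \eqref{Func}, hence obeys the Pohozaev--Nehari identity \eqref{Eq-poH3}, i.e. $P(v)=0$; differentiating \eqref{Eq-jsu} and using the scaling relation $s*(\tau*v)=(s+\tau)*v$ gives $\psi_v'(s)=P(s*v)$, so $P(v)=0$ is exactly the statement that $s=0$ is a critical point of the fibering map $\psi_v(s):=J(s*v)$. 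By Remark \ref{remark-urad} one may moreover take $v\in L^\infty(\mathbb{R}^N)$ with $v(x)\to0$ at infinity, which makes the scaling estimates below uniform.

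The argument for the reverse bound is by contradiction: assume $J(v)<m_{R_0}(c)<0$. Since $J\ge m_{R_0}(c)$ on $\mathcal{U}_{R_0}(c)$ and, by the barrier estimate of Lemma \ref{Lemma-g} (see \eqref{g2}) built on the smallness condition \eqref{Eq-crange}, $J(w)>0$ for every $w\in\mathcal{D}(c)$ with $\|\nabla w\|_{\widetilde q}^{\widetilde q}=R_0$, the assumption forces $\|\nabla v\|_{\widetilde q}^{\widetilde q}>R_0$. Now $s\mapsto\|\nabla(s*v)\|_{\widetilde q}^{\widetilde q}$ is a strictly increasing continuous bijection of $\mathbb{R}$ onto $(0,\infty)$ (the relevant exponent is $2$ when $q<2$ and $q(\delta_q+1)$ when $q>2$, both positive in the admissible range of $q$), while $\|s*v\|_2=\|v\|_2\le c$ is preserved; hence there is a unique $s_1<0$ with $\|\nabla(s_1*v)\|_{\widetilde q}^{\widetilde q}=R_0$, and the barrier gives $\psi_v(s_1)=J(s_1*v)>0$. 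On the other hand, \eqref{F1} (together with \eqref{F0} to control the region of large amplitude) yields $\psi_v(s)\to0$ as $s\to-\infty$.

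Collecting these facts, on the half-line $(-\infty,0]$ the map $\psi_v$ tends to $0$ at $-\infty$, is strictly positive at $s_1<0$, and equals $J(v)<0$ at $0$; consequently its supremum over $(-\infty,0]$ is attained at an interior point $\widetilde s\in(-\infty,0)$, which is a local maximum of $\psi_v$ with $\psi_v(\widetilde s)>0$. By the uniqueness in \eqref{J0} this is the only local maximum, so $t_v=\widetilde s<0$. But $s=0$ is a critical point of $\psi_v$ lying in $(t_v,\infty)$, whereas \eqref{J1}---the concavity of $\phi(s)=\psi_v(\ln s)$ on $(e^{t_v},\infty)$ together with $\phi'(e^{t_v})=e^{-t_v}\psi_v'(t_v)=0$---forces $\phi'<0$ on $(e^{t_v},\infty)$, unless $\psi_v$ were constant on $(t_v,0)$, which is impossible since $\psi_v(t_v)>0>\psi_v(0)$. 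Thus $\psi_v'(0)\ne0$, contradicting $P(v)=0$. Hence $J(v)\ge m_{R_0}(c)$ for every critical point $v$, and combined with the first paragraph this proves the asserted equality.

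The step I expect to be most delicate is securing the two analytic inputs in the non-Hilbert setting of $X=H^1(\mathbb{R}^N)\cap D^{1,q}(\mathbb{R}^N)$: first, the strictly positive barrier at the gradient level $R_0$, which must be extracted from \eqref{Eq-crange} through a Gagliardo--Nirenberg/Sobolev estimate involving the sharp constant $\mathcal{S}_{\widetilde q}$ and the two distinct growth exponents in \eqref{F0}; and second, the fibering identity $\psi_v'(s)=P(s*v)$ together with the limit $\psi_v(s)\to0$ as $s\to-\infty$, whose justification must be carried out uniformly in both regimes $q<2$ and $q>2$, where $\widetilde q$ and the sign of $\delta_q$ differ. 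Once these are in place, the contradiction driven by \eqref{J0} and \eqref{J1} is the decisive and comparatively soft part of the proof.
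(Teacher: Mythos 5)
Your argument is correct and follows essentially the same route as the paper's own proof: one inequality is immediate because $m_{R_0}(c)$ is attained at the critical point $\bar u$, and for the reverse one assumes a critical point $v$ with $J(v)<m_{R_0}(c)<0$, records $\psi_v'(0)=P(v)=0$ via the Pohozaev identity \eqref{Eq-poH3}, produces a local maximum $t_v<0$ of the fibering map (the paper obtains $\psi_v(s)<0$ for $s\ll 0$ from \eqref{F2} as in Lemma \ref{Lemma-mR(a)}, where you instead show $\psi_v(s)\to 0$ from \eqref{F0}--\eqref{F1}, and it uses strict positivity of $\psi_v$ on the window corresponding to $(R_0,R_1)$ of \eqref{g2} after noting that $J(v)<0$ in fact forces $\|\nabla v\|_{\widetilde{q}}>R_1$), and then derives the contradiction with $\psi_v'(0)=0$ from \eqref{J0} and \eqref{J1} exactly as you do. The one inaccuracy is your claim that $J(w)>0$ whenever $\|\nabla w\|_{\widetilde{q}}^{\widetilde{q}}=R_0$: since $g(c,R_0)=0$, the estimate \eqref{Eq-J>g} gives only $J(w)\ge 0$ there, but this weaker bound still suffices (the supremum of $\psi_v$ over $(-\infty,0]$ is then $\ge 0>\psi_v(0)$, so an interior local maximum with value $\ge 0$ still exists and the constancy alternative in your concavity step is still excluded), or one can take the crossing level strictly inside $(R_0,R_1)$, where $g(c,\cdot)>0$, to restore strictness.
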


After considering the existence and properties of the first solution to \eqref{Eq-Equation}, {we are in}
the position to consider the existence of a second solution to \eqref{Eq-Equation}. Recall that
	$$
	H(t):=f(t) t-2 F(t) \quad \text { for all } t \in \mathbb{R} \text {. }
	$$
	Let us assume that $H=H_1+H_2$, where $H_1$ and $H_2$ satisfy
\begin{enumerate}[label=(H\arabic*), ref=\textup{H\arabic*}]
	\setcounter{enumi}{-1}
	\item \label{H-0}  $H_1, H_2 \in \mathcal{C}^1(\mathbb{R} ; \mathbb{R})$ and there exist $2 < a_1 < a_2 < q_{\#}$, $q^{\#} < b_1 < b_2 < q^{\prime\prime}$ such that
	$$
	H_1(t) \lesssim|t|^{a_1}+|t|^{a_2}, \quad H_2(t) \lesssim|t|^{b_1}+|t|^{b_2} \quad \text{for all} \,\, t \in \mathbb{R},
	$$
	where ${q}^{\#}:=\left(1+\frac{2}{N}\right) \max \{2, q\}$, ${q}_{\#}:=\left(1+\frac{2}{N}\right) \min \{2, q\}$ and
	${q}^{\prime\prime}:= \min \{2^*, q^*\}$.\\
	\item \label{H-1}
	There holds
	$$
	a_1 H_1(t) \leq h_1(t) t \leq a_2 H_1(t), \quad b_1 H_2(t) \leq h_2(t) t \leq b_2 H_2(t) \quad
	\text{for all} \,\, t \in \mathbb{R},
	$$
	where $h_j:=H_j^{\prime}$ for $j \in\{1,2\}$.
\end{enumerate}
{
	In addition, we assume that $F=F_1+F_2$, where $F_1$ and $F_2$ satisfy what follows:}
\begin{enumerate}[label=(HF), ref=\textup{HF}]
	\item \label{H-F}
{Moreover, for all $t \in \mathbb{R}$}
	$$
	(a_{1}-2)F_1(t)\leq H_1(t)\leq (a_2-2)F_1(t), \quad (b_1-2) F_2(t) \leq H_2(t)\leq(b_{2}-2) F_2(t).
	$$
\end{enumerate}
	Conditions \eqref{H-0} and \eqref{H-1} show that $H$ can be divided into $H_1$ and $H_2$,
	where $H_1$ has mass-subcritical growth at the origin and $H_2$ has mass-supercritical growth at the origin.
	Condition \eqref{H-F} plays a crucial role in ensuring the Lagrange multipliers are positive and also plays an significant role in various estimations involving $H$ and $F$.
	
	Unlike the case in Theorem \ref{Theorem-existence}, when proving the existence of the second solution, our Sobolev critical exponent is not $q^{\prime}:= \max\{2^*, q^*\}$ but rather $q^{\prime\prime}:= \min \{2^*, q^*\}$. This is because only under these conditions can we ensure that the case $u \in\left(\mathcal{D}(c) \backslash \mathcal{S}(c)\right)$ does not occur.

\begin{remark}\label{remark-2}
{\rm	(i) When proving the existence of a second solution, exponent $q^{\#}$ plays an important role. Specifically, we need the following condition:
	\begin{enumerate}[label=\textup{(F\arabic*)}, ref=\textup{F\arabic*}]
		\setcounter{enumi}{+2}
		\item \label{F3}
		$
		\lim _{|t| \rightarrow 0} \dfrac{F(t)}{|t|^{q^{\#}}}=+\infty.
		$
		\item \label{F4}
		$
		\lim _{|t| \rightarrow+\infty} \dfrac{F(t)}{|t|^{q^{\#}}}=+\infty.
		$
		\item \label{F5}
		$
		\lim_{|t| \rightarrow+ \infty} \dfrac{F(t)}{|t|^{q^{\prime}}} = 0.
		$
	\end{enumerate}
	Clearly, \eqref{F2} can be derived from \eqref{F3}.

	(ii) \eqref{F1}, \eqref{F4} and \eqref{F5} can be deduced from \eqref{H-0}, \eqref{H-1}, \eqref{H-F}.}
\end{remark}
When proving the existence of the second solution,
a special role will be played by the Pohozaev set
\begin{equation}
	\mathcal{P}:=\left\{u \in X \backslash\{0\} \mid P(u)=0\right\}, \quad \mathcal{P}^{\mathrm{rad}}:=\mathcal{P} \cap X_{\mathrm{rad}},
\end{equation}
where
$$
X_{\mathrm{rad}}=\{u\in X : u(x)=u(\lvert x \rvert)\}.
$$
From the definition, we can see that $\mathcal{P}_c=\mathcal{P} \cap\mathcal{S}(c)$.
Note that
\begin{equation}\label{Eq-Pand2der}
	u \in \mathcal{P} \quad \text { if and only if }\left.\quad \frac{\mathrm{d}}{\mathrm{d} s} J(s * u)\right|_{s=0}=0 \text { and } u \neq 0
\end{equation}
and that $\mathcal{P}$ consists of the disjoint union of the following sets
$$
\begin{aligned}
	\mathcal{P}_0 & :=\left\{u \in \mathcal{P}:\left.\frac{\mathrm{d}^2}{\mathrm{~d} s^2} J(s * u)\right|_{s=0}=0\right\},\\
	\mathcal{P}_{-} & :=\left\{u \in \mathcal{P}:\left.\frac{\mathrm{d}^2}{\mathrm{~d} s^2} J(s * u)\right|_{s=0}<0\right\}, \\
	\mathcal{P}_{+} & :=\left\{u \in \mathcal{P}:\left.\frac{\mathrm{d}^2}{\mathrm{~d} s^2} J(s * u)\right|_{s=0}>0\right\}
\end{aligned}
$$
where
\begin{equation}\label{Eq-J2der}
	\left.\frac{\mathrm{~d}^2}{\mathrm{~d}s^2} J(s * u)\right|_{s=0}=2 \int_{\mathbb{R}^N}|\nabla u|^2 d x+q\left(\delta_q+1\right)^2 \int_{\mathbb{R}^N}|\nabla u|^q d x-\frac{N^2}{4} \int_{\mathbb{R}^N} h(u) u-2H(u) d x.
\end{equation}
Combining \eqref{Eq-poH3} and \eqref{Eq-J2der}, we obtain that for $u \in \mathcal{P}$
\begin{equation}\label{Eq-J2der1}
	\left.\frac{\mathrm{~d}^2}{\mathrm{~d}s^2} J(s * u)\right|_{s=0}=\left(\delta_q+1\right)\left(q\left(\delta_q+1\right)-2\right)\int_{\mathbb{R}^N}|\nabla u|^q d x-\frac{N^2}{4}\left(\int_{\mathbb{R}^N} h(u) u d x-\bar{2}\int_{\mathbb{R}^N} H(u) d x\right)	,
\end{equation}
and
\begin{equation}\label{Eq-J2der2}
	\left.\frac{\mathrm{~d}^2}{\mathrm{~d}s^2} J(s * u)\right|_{s=0}=\left(2-q\left(\delta_q+1\right)\right)\int_{\mathbb{R}^N}|\nabla u|^2 d x-\frac{N^2}{4}\left(\int_{\mathbb{R}^N} h(u) u d x-\bar{q}\int_{\mathbb{R}^N} H(u) d x\right)	,
\end{equation}
where $\bar{2}=2+\frac{4}{N}$ and $\bar{q}=q+\frac{2q}{N}$.

We shall also consider the set $\mathcal{P}_{-}^{\mathrm{rad}}:=\mathcal{P}_{-} \cap H_{\mathrm{rad}}^1\left(\mathbb{R}^N\right)$.

\begin{theorem}\label{Theorem-existence2}
	If \eqref{F0}, \eqref{F3}, \eqref{H-0}, \eqref{H-1}, \eqref{J0} and \eqref{H-F} hold and $c>0$ is sufficiently small, then there exist $\widetilde{u} \in \mathcal{S}(c)$ and $\lambda_{\widetilde{u}}>0$ such that $J(\widetilde{u})=\min _{\mathcal{P}_{-}^{\mathrm{rad}} \cap \mathcal{D}(c)} J>0$ and $\left(\widetilde{u}, \lambda_{\widetilde{u}}\right)$ is a solution to \eqref{Eq-Equation}.
	Moreover, if $f$ is {either} odd or $\left.f\right|_{(-\infty, 0)} \equiv 0$, then $J(\widetilde{u})=\min _{\mathcal{P}_{-}\cap \mathcal{D}(c)} J$ and $\widetilde{u}$ can be chosen to be positive and non-increasing in the radial coordinate.
\end{theorem}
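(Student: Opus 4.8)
The plan is to obtain $\widetilde{u}$ as a minimizer of $J$ over the radial Pohozaev set $\mathcal{P}_-^{\mathrm{rad}}\cap\mathcal{D}(c)$ and then to upgrade this constrained minimizer to a free critical point of $J|_{\mathcal{S}(c)}$. I would set $m_-(c):=\inf_{\mathcal{P}_-^{\mathrm{rad}}\cap\mathcal{D}(c)}J$ and first record the minimax interpretation: by \eqref{J0}, for each $u\in\mathcal{D}(c)\setminus\{0\}$ the fibering map $\psi(s)=J(s*u)$ has a unique local maximum at $t_u$, and by the group law $s*(t_u*u)=(s+t_u)*u$ together with \eqref{Eq-Pand2der} and \eqref{Eq-J2der1}--\eqref{Eq-J2der2} one has $t_u*u\in\mathcal{P}_-$; hence $m_-(c)$ is exactly the mountain-pass level $\inf_u\max_s J(s*u)$. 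I would then prove $m_-(c)>0$ for small $c$: on $\mathcal{P}$ the identity $P(u)=0$ of \eqref{Eq-poH3} expresses $\int H(u)$ in terms of the gradient norms, and combining this with the growth bounds \eqref{H-0}, \eqref{F0} and the Gagliardo--Nirenberg inequalities produces, for $c$ small, a uniform gap $\|\nabla u\|_2^2+\|\nabla u\|_q^q\gtrsim 1$ on $\mathcal{P}_-$, which forces a strictly positive energy.

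Next I would take a minimizing sequence $(u_n)\subset\mathcal{P}_-^{\mathrm{rad}}\cap\mathcal{D}(c)$ with $J(u_n)\to m_-(c)$. Using $P(u_n)=0$ together with the bound on $J(u_n)$ and the growth hypotheses \eqref{H-0}--\eqref{H-1}, \eqref{H-F}, a suitable combination of $J$ and $P$ is coercive and yields boundedness of $(u_n)$ in $X$. Passing to a subsequence, $u_n\rightharpoonup\widetilde{u}$ in $X$ with $\widetilde{u}\in\mathcal{D}(c)$; since the $u_n$ are radial, the compact embeddings $H^1_{\mathrm{rad}}(\mathbb{R}^N)\hookrightarrow L^p$ for $2<p<2^*$ and $D^{1,q}_{\mathrm{rad}}(\mathbb{R}^N)\hookrightarrow L^p$ for $q<p<q^*$ give strong $L^p$ convergence in the ranges matching the growth of $f$, $H$ and $F$, so that $\int F(u_n)\to\int F(\widetilde{u})$ and $\int H(u_n)\to\int H(\widetilde{u})$.

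The main obstacle is the compactness of the $L^2$-mass, namely ruling out both $\widetilde{u}=0$ and $\widetilde{u}\in\mathcal{D}(c)\setminus\mathcal{S}(c)$. The case $\widetilde{u}=0$ is excluded using \eqref{F3}: if $\widetilde{u}=0$ the nonlinear integrals vanish in the limit, contradicting $P(u_n)=0$ together with the lower gradient bound on $\mathcal{P}_-$. To force full mass, this is precisely where the exponent $q^{\prime\prime}=\min\{2^*,q^*\}$ is essential: because $H_1,H_2$ (hence $H$) grow subcritically below $q^{\prime\prime}$ by \eqref{H-0}, the strong convergence of $\int H(u_n)$ combined with the weak lower semicontinuity of the gradient terms lets me reproject the weak limit onto $\mathcal{P}_-$ without losing energy, and a scaling comparison shows that a mass defect $\int_{\mathbb{R}^N}\widetilde{u}^2\,dx<c^2$ would contradict the strict monotonicity of the level $m_-(\cdot)$; thus $\widetilde{u}\in\mathcal{S}(c)$. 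As stressed in the introduction and as in Theorem \ref{Theorem-existence}, since $X$ is not Hilbert I cannot split off the $q$-Laplacian term via an inner product, so I must first upgrade weak to a.e.\ convergence $\nabla u_n\to\nabla\widetilde{u}$, which then yields $\|\nabla\widetilde{u}\|_q^q\le\liminf\|\nabla u_n\|_q^q$ and, together with the attained infimum, strong convergence in $X$ and $\widetilde{u}\in\mathcal{P}_-^{\mathrm{rad}}$ with $J(\widetilde{u})=m_-(c)$.

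Finally, I would argue that $\widetilde{u}$ is a critical point of $J|_{\mathcal{S}(c)}$ because $\mathcal{P}_-$ is a natural constraint: writing the Euler--Lagrange equation with multipliers for $\mathcal{S}(c)$ and for $P$, the $P$-multiplier vanishes since $\mathcal{P}_0=\emptyset$ near $\widetilde{u}$ by \eqref{J0} (for small $c$), so $(\widetilde{u},\lambda_{\widetilde{u}})$ solves \eqref{Eq-Equation}. For the positivity of $\lambda_{\widetilde{u}}$ I would combine the Nehari and Pohozaev identities \eqref{Eq-Nehari}, \eqref{Eq-poh}: using $\int f(u)u=\int H(u)+2\int F(u)$ and substituting $\int H(u)$ from $P(u)=0$ gives $\lambda_{\widetilde{u}}\,c^2=-\tfrac{N-2}{N}\|\nabla\widetilde{u}\|_2^2-\tfrac{2(N-q)}{Nq}\|\nabla\widetilde{u}\|_q^q+2\int F(\widetilde{u})$, and \eqref{H-F} bounds $\int F(\widetilde{u})$ below by a multiple of $\int H(\widetilde{u})$ with constant $1/(b_2-2)$; the resulting coefficients are positive exactly when $b_2<\min\{2^*,q^*\}=q^{\prime\prime}$, so $\lambda_{\widetilde{u}}>0$. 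For the moreover part, when $f$ is odd or $f|_{(-\infty,0)}\equiv0$, the Schwarz rearrangement preserves the $L^2$-norm and the integrals $\int F$, $\int H$ while decreasing every gradient norm (Pólya--Szeg\H{o}), so $\min_{\mathcal{P}_-\cap\mathcal{D}(c)}J=\min_{\mathcal{P}_-^{\mathrm{rad}}\cap\mathcal{D}(c)}J$ and $\widetilde{u}$ may be taken positive and non-increasing in the radial coordinate.
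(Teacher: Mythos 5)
Your overall architecture (minimize $J$ on $\mathcal{P}_-^{\mathrm{rad}}\cap\mathcal{D}(c)$, positivity of the level, coercivity, weak limit, natural-constraint argument, symmetrization) matches the paper, and your closing computation of $\lambda_{\widetilde u}>0$ from the Nehari and Pohozaev identities together with \eqref{H-F} is correct --- indeed more direct than the paper, which only gets $\lambda\ge 0$ abstractly from \cite{Mederski1} and excludes $\lambda=0$ afterwards. But there is a genuine gap at the single most delicate point: the exclusion of a mass defect. You assert that $\int_{\mathbb{R}^N}\widetilde u^2\,dx<c^2$ "would contradict the strict monotonicity of the level $m_-(\cdot)$" via "a scaling comparison", yet strict monotonicity of $m_-$ is never proved and is not accessible by scaling here: the dilation $u(\cdot/\theta^{2/N})$ rescales $\|\nabla u\|_2^2$ and $\|\nabla u\|_q^q$ by \emph{different} powers of $\theta$, the level $m_-(c)$ is positive so the negative-energy subadditivity trick of Lemma \ref{Lemma-subadd} does not transfer, and after reprojecting the scaled function onto $\mathcal{P}_-$ no clean inequality $m_-(\theta\alpha)<m_-(\alpha)$ survives. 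The paper's route (Lemma \ref{LemmaPJ<J}) is entirely different: a minimizer $\check u$ in the open set $\mathcal{P}_-^{\mathrm{rad}}\cap(\mathcal{D}(c)\setminus\mathcal{S}(c))$ would be critical for $J|_{\mathcal{P}_-^{\mathrm{rad}}}$ with a single multiplier $\check\mu$; the Nehari/Pohozaev identities combined with \eqref{Eq-J2der1}--\eqref{Eq-J2der2} and the strict inequality defining $\mathcal{P}_-$ force $\check\mu=0$, so $\check u$ solves the free equation with $\lambda=0$, and the two resulting identities, $\delta_q\|\nabla\check u\|_q^q=\frac{N-2}{2}\bigl(\int H-(2^*-2)\int F\bigr)$ and $-\delta_q\|\nabla\check u\|_2^2=\frac{N-q}{q}\bigl(\int H-(q^*-2)\int F\bigr)$, have incompatible signs under \eqref{H-F} precisely because $b_2<q^{\prime\prime}=\min\{2^*,q^*\}$. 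This is where \eqref{H-F} enters the theorem; your sketch never uses it at this stage.

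A second, related gap is the attainment step. Saying the strong convergence of $\int H(u_n)$ plus weak lower semicontinuity "lets me reproject the weak limit onto $\mathcal{P}_-$ without losing energy" hides exactly the difficulty the paper flags as new: weak lower semicontinuity only gives $P(\widetilde u)\le 0$, so the projection $\widetilde u(\widetilde t\,\cdot)\in\mathcal{P}$ comes with $\widetilde t\ge 1$, possibly $\widetilde t>1$, and the energy comparison $J(\widetilde u(\widetilde t\,\cdot))\le\liminf_n J(u_n)$ requires the sign condition $\frac{1}{\bar q-2}\int_{\mathbb{R}^N}H(\widetilde u)\,dx-\int_{\mathbb{R}^N}F(\widetilde u)\,dx\ge 0$ (and its analogue for $\frac{2N}{N+2}<q<2$), which is the content of Lemmas \ref{Lemma-HF>0}, \ref{Remark-Hh}, \ref{Remark-H1H22} and \ref{Remark-H1H2}: one first needs $\|u\|_b$ bounded away from $0$ on $\mathcal{P}_-\cap\mathcal{D}(c)$ and then smallness of $c$ to control the interaction between $H_1$ and $H_2$. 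Likewise, that the projected function lands in $\mathcal{P}_-$ rather than $\mathcal{P}_0$ uses Lemma \ref{Lemma-P0}, whose proof rests on \eqref{H0}, \eqref{H1}, \eqref{H3} and small $c$ --- not on \eqref{J0}, to which you attribute it. Finally, your appeal to a.e.\ gradient convergence $\nabla u_n\to\nabla\widetilde u$ is unjustified here: the Boccardo--Murat argument of Lemma \ref{Lem3-ae} applies to the Palais--Smale sequences produced by Ekeland's principle in Section \ref{Sect3}, whereas a mere minimizing sequence on $\mathcal{P}_-^{\mathrm{rad}}$ carries no approximate equation; the paper deliberately avoids needing strong $X$-convergence in this part and works only with the projected weak limit.
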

	For the proof of Theorem \ref{Theorem-existence2}, it is essential to show that \(\mathcal{P}_c = \mathcal{P} \cap \mathcal{S}(c)\) is a smooth manifold of codimension 2. This result is closely related to the fact that \(\mathcal{P}_{0} \cap \mathcal{D}(c) = \emptyset\).
	Due to the presence of the \(q\)-Laplacian term, the proof of \(\mathcal{P}_{0} \cap \mathcal{D}(c) = \emptyset\) {is} more complicated than those in \cite{bieganowski2024existence}.	
	Next, {we} established that \(J\) is bounded away from 0 on \(\mathcal{P}_{-} \cap \mathcal{D}(c)\) and coercive on \(\mathcal{P}_{-}^{\mathrm{rad}} \cap \mathcal{D}(c)\).
	So we obtain a minimizing sequence \((u_n)_n\) of \(J\) on \(\mathcal{P}_{-}^{\mathrm{rad}} \cap \mathcal{D}(c)\) that converges weakly and almost everywhere to a minimizer \(\widetilde{u}\).
	
	In contrast to the cases in \cite{bieganowski2024existence,cai2024normalized}, whether \(\widetilde{u} \in \mathcal{P}_{-}^{\mathrm{rad}} \cap \mathcal{D}(c)\) is a non-trivial problem. As demonstrated in Lemma \ref{Lemma-inf-attained}, there exists \(\widetilde{t} \geq 1\) such that \(\widetilde{u}(\widetilde{t} \cdot) \in \mathcal{P}_{-}^{\mathrm{rad}} \cap \mathcal{D}(c)\).
	Since both the \((2,q)\)-Laplacian term and the interaction between \(H_1\) and \(H_2\) are present, a more precise analysis is required. And we will show that
	{$$
	\int_{\mathbb{R}^N} H_1(u) \, dx \lesssim \int_{\mathbb{R}^N}H_2(u) \, dx .
	$$}
	This inequality is a sufficient condition of proving \(\widetilde{t} = 1\), indicating that if it holds, then \(\widetilde{u} \in \mathcal{P}_{-}^{\mathrm{rad}} \cap \mathcal{D}(c)\). Additionally, proving this inequality requires the condition that \(c\) is small, see Lemma \ref{Lemma-HF>0} for further details.
In the last, we show that for any $u \in\mathcal{P}_{-}^{\text{rad}}\cap \left(\mathcal{D}(c) \backslash \mathcal{S}(c)\right)$, the critical inequality $$\inf _{\mathcal{P}_{-}^{\text{rad}}\cap \left(\mathcal{D}(c) \backslash \mathcal{S}(c)\right)} J<J(u)$$ holds, see Lemma \ref{LemmaPJ<J}. Thus the minimizer $\widetilde{u}$ of $J$ on $\mathcal{P}_{-}^{\text{rad}}\cap \left(\mathcal{D}(c) \backslash \mathcal{S}(c)\right)$ is {attained} on $ \mathcal{P}_{-}^{\text{rad}}\cap\mathcal{S}(c)$. Moreover, by analyzing the Lagrange multipliers $\lambda$ and $\mu$ for constraints $\mathcal{S}(c)$ and $\mathcal{P}_{-}^{\text{rad}}$ respectively, we conclude that $\mu=0$ and $(\widetilde{u}, \lambda_{\widetilde{u}})$ is a normalized solution for equation \eqref{Eq-Equation}.

{The paper is organized as follows. {Section \ref{Sect2} provides} some preliminary results. {Section \ref{Sect3}  focuses on} the existence of negative-energy solutions of problem \eqref{Eq-Equation} and {presents the proof of} Theorem \ref{Theorem-existence}. {The final Section~\ref{Sect4} presents the construction of}  a positive-energy solution of problem \eqref{Eq-Equation} and includes {the} proof of Theorem \ref{Theorem-existence2}. {The paper ends with two appendices}.}

\textbf{Notations:}
For $1 \leq p<\infty$ and $u \in L^p\left(\mathbb{R}^N\right)$, we denote $\|u\|_p:=\left(\int_{\mathbb{R}^N}|u|^p d x\right)^{\frac{1}{p}}$. The Hilbert space $H^1\left(\mathbb{R}^N\right)$ is defined as $H^1\left(\mathbb{R}^N\right):=\left\{u \in L^2\left(\mathbb{R}^N\right): \nabla u \in L^2\left(\mathbb{R}^N\right)\right\}$ with inner product $(u, v):=\int_{\mathbb{R}^N}(\nabla u \nabla v+u v) d x$ and norm $\|u\|:=\left(\|\nabla u\|_2^2+\|u\|_2^2\right)^{\frac{1}{2}}$. Similarly, $D^{1, q}\left(\mathbb{R}^N\right)$ is defined as $D^{1, q}\left(\mathbb{R}^N\right):=\left\{u \in L^{q^*}\left(\mathbb{R}^N\right): \nabla u \in L^q\left(\mathbb{R}^N\right)\right\}$ with the norm $\|u\|_{D^{1, q}\left(\mathbb{R}^N\right)}=\|\nabla u\|_q$.
Recalling $X=H^1\left(\mathbb{R}^N\right) \cap D^{1, q}\left(\mathbb{R}^N\right)$ endowed with the norm $\|u\|_X=\|u\|+\|u\|_{D^{1, q}\left(\mathbb{R}^N\right)}$. We use $" \rightarrow$ " and $" \rightharpoonup$ " to denote the strong and weak convergence in the related function spaces respectively. $C$ and $C_i$ will be positive constants. $\langle\cdot, \cdot\rangle$ denote the dual pair for any Banach space and its dual space. Finally, $o_n(1)$ and $O_n(1)$ mean that $\left|o_n(1)\right| \rightarrow 0$ and $\left|O_n(1)\right| \leq C$ as $n \rightarrow\infty$, respectively.

\section{Preliminaries}\label{Sect2}

In this section, we present some preliminary results. First, we give the well-known Sobolev embedding theorems and Gagliardo-Nirenberg
inequalities.

\subsection{Sobolev inequalities and Gagliardo-Nirenberg inequalities}

\begin{lemma}[Best constant for the Sobolev inequality\cite{talenti1976best}]\label{Lemma-Sobolev inequality}
	Let $1<q<N$, there exists an optimal constant $\mathcal{S}_q>0$ depending only on $N,q$, such that
	\begin{equation}\label{Sobolev inequality}
		\mathcal{S}_q\|f\|_{q^*}^q \leq\|\nabla f\|_q^q, \quad \forall f \in D^{1, q}\left(\mathbb{R}^N\right) , \quad \text { (Sobolev inequality) }
	\end{equation}
	and the equality holds if $f= U_{a, \xi_0}(x)$, where
	$$
	U_{a, \xi_0}(x)=\left(\frac{a^{\frac{1}{p-1}} N^{\frac{1}{p}}\left(\frac{N-p}{p-1}\right)^{\frac{p-1}{p}}}{a^{\frac{p}{p-1}}+\left|x-\xi_0\right|^{\frac{p}{p-1}}}\right)^{\frac{N-p}{p}}, \quad a>0, \quad \xi_0 \in \mathbb{R}^N.
	$$
\end{lemma}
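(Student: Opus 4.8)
The plan is to prove the inequality together with the identification of the extremals in three stages: a rearrangement step reducing matters to radially symmetric decreasing functions, a reduction of the resulting radial functional to a one-dimensional weighted variational problem, and the explicit resolution of that problem. First I would set $\mathcal{S}_q := \inf\{\|\nabla f\|_q^q : f \in D^{1,q}(\mathbb{R}^N),\ \|f\|_{q^*}=1\}$, which is strictly positive by the (non-sharp) Sobolev embedding $D^{1,q}(\mathbb{R}^N)\hookrightarrow L^{q^*}(\mathbb{R}^N)$. One observes that the Rayleigh quotient $\|\nabla f\|_q^q/\|f\|_{q^*}^q$ is invariant under the dilations $f\mapsto f(\lambda\,\cdot)$, the rescalings $f\mapsto\mu f$, and the translations $f\mapsto f(\cdot-\xi_0)$; this invariance already accounts for the two free parameters $a>0$ and $\xi_0\in\mathbb{R}^N$ appearing in the conjectured extremal family $U_{a,\xi_0}$.

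The decisive reduction is the P\'olya--Szeg\H{o} inequality: for $f\in D^{1,q}(\mathbb{R}^N)$ its spherically symmetric decreasing rearrangement $f^{\ast}$ satisfies $\|f^{\ast}\|_{q^*}=\|f\|_{q^*}$ and $\|\nabla f^{\ast}\|_q\le\|\nabla f\|_q$. Hence the infimum defining $\mathcal{S}_q$ is unchanged when we restrict to non-negative, radial, non-increasing profiles $f=f(r)$, $r=|x|$. Writing $\sigma$ for the surface measure of the unit sphere, both quantities collapse to weighted one-dimensional integrals, $\|\nabla f\|_q^q=\sigma\int_0^\infty|f'(r)|^q r^{N-1}\,dr$ and $\|f\|_{q^*}^{q^*}=\sigma\int_0^\infty|f(r)|^{q^*}r^{N-1}\,dr$, so the task becomes the minimization of $\int_0^\infty|f'|^q r^{N-1}\,dr$ subject to $\int_0^\infty|f|^{q^*}r^{N-1}\,dr=1$ over non-increasing functions.

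To solve this one-dimensional problem I would follow Talenti and either transform it, by a monotone change of variable $r=t^{-\beta}$ with $\beta$ chosen to absorb the weight $r^{N-1}$, into Bliss's classical inequality---whose sharp constant and extremals are explicit---or analyze its Euler--Lagrange equation directly. The latter is the radial $q$-Laplace equation $-\bigl(r^{N-1}|f'|^{q-2}f'\bigr)'=\Lambda\, r^{N-1} f^{q^*-1}$ for a Lagrange multiplier $\Lambda>0$, and one verifies by direct substitution that the profile $r\mapsto\bigl(a^{q/(q-1)}+r^{q/(q-1)}\bigr)^{-(N-q)/q}$, suitably normalized, solves it; inserting $U_{a,\xi_0}$ back into the Rayleigh quotient then both yields $\mathcal{S}_q$ in closed form (in terms of $N$, $q$ and Gamma functions) and confirms that equality holds along this family.

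The main obstacle will be this final stage. Unlike the Hilbertian case $q=2$, the Euler--Lagrange equation is genuinely nonlinear, so integrating it in closed form---and, above all, proving that no profile other than $U_{a,\xi_0}$ attains the infimum---requires the rigidity built into Bliss's inequality together with the strict monotonicity underlying the rearrangement step. A secondary difficulty, circumvented precisely by the symmetrization, is the lack of compactness on $\mathbb{R}^N$: once the scaling and translation invariances noted above are used to normalize a minimizing sequence of radial profiles, the reduced one-dimensional problem no longer loses mass to translation or dilation, and convergence to an extremal follows.
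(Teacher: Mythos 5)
The paper does not prove this lemma at all: it is quoted verbatim from Talenti's 1976 paper, and your outline --- P\'olya--Szeg\H{o} symmetrization, reduction to a one-dimensional weighted variational problem, and resolution via the change of variables to Bliss's inequality with the explicit extremal profile --- is precisely Talenti's original argument, so your approach coincides with that of the cited source. One small simplification: the lemma only asserts that equality holds \emph{for} $f=U_{a,\xi_0}$, not that these are the only extremals, so the uniqueness/rigidity issue you flag as the main obstacle is not actually required; the Bliss transformation already delivers the sharp constant, and direct substitution of $U_{a,\xi_0}$ into the Rayleigh quotient confirms equality.
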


\begin{lemma}[The Gagliardo-Nirenberg inequality {\cite[Corollary 2.1]{weinstein1982nonlinear}}]\label{Lemma-GN inequality1}
	Let $p \in\left(2,2^*\right)$ and $\delta_p=\frac{N(p-2)}{2 p}$. Then there exists a constant ${C}_{N, p}=\left(\frac{p}{2\left\|W_p\right\|_2^{p-2}}\right)^{\frac{1}{p}}>0$ such that
	\begin{equation}\label{Eq-GN inequality}
		\|u\|_p \leq {C}_{N, p}\|\nabla u\|_2^{\delta_p}\|u\|_2^{\left(1-\delta_p\right)}, \quad \forall u \in H^1\left(\mathbb{R}^N\right),
	\end{equation}
	where $W_p$ is the unique positive radial solution of $-\Delta W+\left(\frac{1}{\delta_p}-1\right) W=\frac{2}{p \delta_p}|W|^{p-2} W$.
\end{lemma}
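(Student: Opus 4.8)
The plan is to realize $C_{N,p}$ as the reciprocal of the minimum of a scale-invariant quotient and to identify its minimizer with the ground state $W_p$. First I would fix the exponent $\delta_p$ by dimensional analysis. For $u\in H^1(\mathbb{R}^N)\setminus\{0\}$ set
\[
J(u):=\frac{\|\nabla u\|_2^{\delta_p p}\,\|u\|_2^{(1-\delta_p)p}}{\|u\|_p^p}.
\]
This quotient is invariant under amplitude scaling $u\mapsto\mu u$ for every choice of $\delta_p$, and under dilations $u\mapsto u(\lambda\,\cdot)$ precisely when $\delta_p=\tfrac{N(p-2)}{2p}$; this is exactly what forces the stated value of $\delta_p$. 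The inequality $\|u\|_p\le C_{N,p}\|\nabla u\|_2^{\delta_p}\|u\|_2^{1-\delta_p}$ is then equivalent to the lower bound $J(u)\ge C_{N,p}^{-p}$, so it suffices to show that $m:=\inf_{u\neq0}J(u)$ is attained and to compute it.

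For existence I would use the two scaling freedoms to restrict to the normalized class $\{\,\|u\|_2=\|\nabla u\|_2=1\,\}$, on which minimizing $J$ amounts to maximizing $\|u\|_p$. Given a maximizing sequence I would apply Schwarz symmetrization: by the P\'olya--Szeg\H{o} inequality $\|\nabla u^*\|_2\le\|\nabla u\|_2$ while $\|u^*\|_2=\|u\|_2$ and $\|u^*\|_p=\|u\|_p$, so we may assume each term is radial and radially decreasing without increasing $J$. Passing to a weakly convergent subsequence $u_n\rightharpoonup u$ in $H^1(\mathbb{R}^N)$, the compact Strauss embedding $H^1_{\mathrm{rad}}(\mathbb{R}^N)\hookrightarrow L^p(\mathbb{R}^N)$ for $2<p<2^*$ gives $\|u\|_p=\lim_n\|u_n\|_p>0$, while weak lower semicontinuity yields $\|u\|_2\le1$ and $\|\nabla u\|_2\le1$; hence $J(u)\le\liminf_nJ(u_n)=m$, so $u$ is a minimizer. \emph{This compactness step is the main obstacle}: since $J$ is invariant under both translations and dilations, a naive minimizing sequence need not be precompact, and it is precisely the symmetrization together with the radial compact embedding that restores compactness.

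Next I would derive the Euler--Lagrange equation for the minimizer. As $J$ is of class $C^1$ away from the origin and scale-invariant, $u$ solves $-\alpha\Delta u+\beta u=\gamma|u|^{p-2}u$ for positive constants $\alpha,\beta,\gamma$ determined by the three integrals; a further rescaling $u\mapsto\mu\,u(\lambda\,\cdot)$ normalizes the coefficients to the canonical form $-\Delta W+\bigl(\tfrac{1}{\delta_p}-1\bigr)W=\tfrac{2}{p\delta_p}|W|^{p-2}W$. By Kwong's uniqueness theorem the positive radial decaying solution of this equation is unique, namely $W_p$, which identifies the extremal profile.

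Finally I would compute $m$ from two integral identities for $W_p$. Testing the equation against $W_p$ gives the Nehari relation $\|\nabla W_p\|_2^2+\bigl(\tfrac{1}{\delta_p}-1\bigr)\|W_p\|_2^2=\tfrac{2}{p\delta_p}\|W_p\|_p^p$, while the Pohozaev identity gives $\tfrac{N-2}{2}\|\nabla W_p\|_2^2+\tfrac{N}{2}\bigl(\tfrac{1}{\delta_p}-1\bigr)\|W_p\|_2^2=\tfrac{2N}{p^2\delta_p}\|W_p\|_p^p$. Solving these two linear relations (and using $\delta_p=\tfrac{N(p-2)}{2p}$) yields the normalizations $\|\nabla W_p\|_2^2=\|W_p\|_2^2$ and $\|W_p\|_p^p=\tfrac{p}{2}\|W_p\|_2^2$, whence
\[
m=J(W_p)=\frac{\|W_p\|_2^{p}}{\tfrac{p}{2}\|W_p\|_2^2}=\frac{2}{p}\,\|W_p\|_2^{p-2},
\]
and therefore $C_{N,p}=m^{-1/p}=\bigl(\tfrac{p}{2\|W_p\|_2^{p-2}}\bigr)^{1/p}$, as claimed.
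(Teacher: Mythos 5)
This lemma is quoted in the paper as a known result, cited to Weinstein \cite[Corollary 2.1]{weinstein1982nonlinear}, with no proof given; there is therefore no internal argument to compare against, and the right benchmark is Weinstein's original variational proof --- which is exactly what you have reconstructed, correctly. Your computations check out: the choice $\delta_p=\frac{N(p-2)}{2p}$ is indeed forced by dilation invariance of the quotient $J$; the symmetrization-plus-Strauss-compactness argument is the standard way to restore compactness against the translation/dilation invariance (one small point: Schwarz rearrangement may break the normalization $\|\nabla u\|_2=1$, since $\|\nabla u^*\|_2\le 1$, but as $J(u^*)\le J(u)$ and $J$ is dilation invariant you may simply re-normalize, which is implicit in your ``without increasing $J$''); and combining the Nehari identity $\|\nabla W_p\|_2^2+\bigl(\tfrac{1}{\delta_p}-1\bigr)\|W_p\|_2^2=\tfrac{2}{p\delta_p}\|W_p\|_p^p$ with the Pohozaev identity does yield $\|\nabla W_p\|_2^2=\tfrac{2}{p}\|W_p\|_p^p$ and then $\|\nabla W_p\|_2^2=\|W_p\|_2^2$, $\|W_p\|_p^p=\tfrac{p}{2}\|W_p\|_2^2$, hence $m=\tfrac{2}{p}\|W_p\|_2^{p-2}$ and $C_{N,p}=m^{-1/p}=\bigl(\tfrac{p}{2\|W_p\|_2^{p-2}}\bigr)^{1/p}$, exactly the stated constant. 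One attribution-level remark: the uniqueness of the positive radial solution, which you need to identify the extremal with the $W_p$ of the statement, postdates Weinstein and is due to Kwong; you correctly invoke it, so the argument is complete as written.
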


\begin{lemma}[$L^q$-Gagliardo-Nirenberg inequality{\cite[Theorem 2.1]{agueH3008sharp}}]\label{Lemma-GN inequality2}
	Let $q \in\left(\frac{2 N}{N+2}, N\right), p \in$ $\left(2, q^*\right)$ and $\nu_{p, q}=\frac{N q(p-2)}{p[N q-2(N-q)]}$. Then there exists a constant {$K_{N, p}>0$} such that
	{\begin{equation}\label{Eq-GN inequality2}
		\|u\|_p \leq K_{N, p}\|\nabla u\|_q^{\nu_{p, q}}\|u\|_2^{\left(1-\nu_{p, q}\right)}, \quad \forall u \in D^{1, q}\left(\mathbb{R}^N\right) \cap L^2\left(\mathbb{R}^N\right),
	\end{equation}	
	where}
	$$
	\begin{gathered}
		K_{N, p}=\left(\frac{K}{\frac{1}{q}\left\|D W_{p, q}\right\|_q^q+\frac{1}{2}\left\|W_{p, q}\right\|_2^2}\right) \\
		K=(N q+p q-2 N) \cdot\left(\frac{[2(N q-p(N-q))]^{p(N-q)-N q}}{[q N(p-2)]^{N(p-2)}}\right)^{1 /[N q+p q-2 N]},
	\end{gathered}
	$$
	and $W_{p, q}$ is the unique nonnegative radial solution of the following equation
	$$
	-\Delta_q W+W=\zeta|W|^{p-2} W
	$$
	where $\zeta=\|\nabla W\|_q^q+\|W\|_2^2$ is the Lagrangian multiplier.
\end{lemma}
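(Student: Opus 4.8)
The plan is to realise the sharp constant $K_{N,p}$ as the value of a scale-invariant variational problem and to identify its extremal with the ground state $W_{p,q}$ of the displayed quasilinear equation. First I would record that $\nu_{p,q}$ is the only exponent for which the inequality can hold: writing $u_{\mu,\lambda}(x):=\mu\,u(\lambda x)$ one computes $\|u_{\mu,\lambda}\|_p=\mu\lambda^{-N/p}\|u\|_p$, $\|\nabla u_{\mu,\lambda}\|_q=\mu\lambda^{1-N/q}\|\nabla u\|_q$ and $\|u_{\mu,\lambda}\|_2=\mu\lambda^{-N/2}\|u\|_2$, so invariance of the quotient
$$
\mathcal{J}(u):=\frac{\|u\|_p^p}{\|\nabla u\|_q^{p\nu_{p,q}}\,\|u\|_2^{p(1-\nu_{p,q})}}
$$
under this two-parameter group forces exactly $\nu_{p,q}=\frac{Nq(p-2)}{p[Nq-2(N-q)]}$ (here the hypothesis $q>\tfrac{2N}{N+2}$ guarantees $Nq-2(N-q)>0$, so $\nu_{p,q}\in(0,1)$). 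The sharp constant is then characterised by $K_{N,p}^p=\sup_{u\neq 0}\mathcal{J}(u)$, whose finiteness — the non-sharp inequality — follows from Hölder interpolation between $\|u\|_2$ and $\|u\|_{q^*}$ together with the Sobolev embedding of Lemma \ref{Lemma-Sobolev inequality}.

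Second, I would show the supremum is attained. Since $p\in(2,q^*)$ lies \emph{strictly} below the critical exponent, no Sobolev-critical loss of compactness occurs. By the Pólya–Szegő inequality the symmetric decreasing rearrangement $u^*$ satisfies $\|\nabla u^*\|_q\le\|\nabla u\|_q$ while preserving $\|u\|_2$ and $\|u\|_p$, whence $\mathcal{J}(u^*)\ge\mathcal{J}(u)$, and the maximisation may be restricted to nonnegative radial nonincreasing functions. Using the two scaling parameters I would normalise a maximising sequence so that $\|\nabla u_n\|_q=\|u_n\|_2=1$; then $(u_n)$ is bounded in $D^{1,q}(\mathbb{R}^N)\cap L^2(\mathbb{R}^N)$, and in the radial class the Strauss-type pointwise decay yields compactness of the embedding into $L^p(\mathbb{R}^N)$ for $2<p<q^*$. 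Passing to a subsequence with $u_n\rightharpoonup u_*$ and $u_n\to u_*$ in $L^p$, weak lower semicontinuity of the denominator together with strong convergence of the numerator gives $\mathcal{J}(u_*)=\sup\mathcal{J}$, so a nontrivial extremal $u_*$ exists.

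Third, I would derive and normalise the Euler–Lagrange equation. Differentiating $\mathcal{J}$ at $u_*$ shows that $u_*$ solves a quasilinear equation $-A\,\Delta_q u+B\,u=C\,|u|^{p-2}u$ with positive constants $A,B,C$ built from the three norms of $u_*$; applying $u\mapsto\mu\,u(\lambda\cdot)$ with suitable $\mu,\lambda$ converts it into the normalised equation $-\Delta_q W+W=\zeta\,|W|^{p-2}W$ of the statement, the value of $\zeta$ being read off by testing against $W$ — the Nehari identity $\|\nabla W\|_q^q+\|W\|_2^2=\zeta\|W\|_p^p$ — together with the chosen normalisation. Substituting $W=W_{p,q}$ back into $\mathcal{J}$ and eliminating $\|W\|_p^p$ through the Nehari identity and the Pohozaev identity for the $q$-Laplacian (which relate $\|\nabla W\|_q^q$ and $\|W\|_2^2$) yields, after elementary algebra, the closed form for $K_{N,p}$ asserted in the lemma.

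The main obstacle is the \emph{uniqueness} of the positive radial solution $W_{p,q}$: without it the extremal, and therefore the constant, is pinned down only up to the scaling group rather than as a single explicit number, and the displayed formula for $K_{N,p}$ would be ambiguous. For $q=2$ this is Kwong's uniqueness theorem (and reduces the statement to Lemma \ref{Lemma-GN inequality1}), but in the genuinely quasilinear case $q\neq 2$ it demands a delicate shooting/ODE analysis of the radial $q$-Laplacian equation, which is precisely the technical heart of the cited reference \cite{agueH3008sharp}. A secondary technical point is that, since $W_{p,q}$ is only of class $\mathcal{C}^{1,\alpha}$, the Pohozaev identity invoked in the final computation must be justified via the known regularity and decay of ground states of the $q$-Laplacian rather than by naive integration by parts.
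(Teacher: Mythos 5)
The paper contains no proof of this lemma: it is imported verbatim from Agueh's Theorem 2.1, so there is no internal argument to compare against, and your proposal must be judged as a reconstruction of the cited reference. On that score it is essentially correct and follows the standard (and, in substance, Agueh's) variational route. Your scaling computation is right: invariance of $\mathcal{J}$ under $u\mapsto\mu u(\lambda\cdot)$ forces $\nu_{p,q}=\frac{Nq(p-2)}{p[Nq-2(N-q)]}$, the hypothesis $q>\frac{2N}{N+2}$ is exactly $Nq-2(N-q)>0$, and $p<q^*$ is exactly $\nu_{p,q}<1$; the non-sharp inequality does follow by interpolating $L^p$ between $L^2$ and $L^{q^*}$ together with Lemma \ref{Lemma-Sobolev inequality}. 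For the attainment step, one caution: the compact embedding you need is for radial functions in $D^{1,q}(\mathbb{R}^N)\cap L^2(\mathbb{R}^N)$, which is \emph{not} the space $X$ of Lemma \ref{LemmaEmbed}, so that lemma cannot be invoked as stated; your argument survives precisely because, after rearrangement, the maximizing sequence consists of nonnegative radial nonincreasing functions, for which the decay $|u(r)|\lesssim r^{-N/2}\|u\|_2$ plus local Rellich compactness (locally $u\in W^{1,q}$ since $u\in L^{q^*}_{\mathrm{loc}}$, $\nabla u\in L^q$) yields strong $L^p$ convergence for $2<p<q^*$. The Euler--Lagrange and normalisation steps are also sound: the two scaling parameters reduce the equation to $-\Delta_q W+W=\zeta|W|^{p-2}W$ with the residual freedom fixed by $\|W\|_p=1$, whereupon the Nehari identity $\|\nabla W\|_q^q+\|W\|_2^2=\zeta\|W\|_p^p$ gives exactly the $\zeta$ of the statement, and combining Nehari with the Pohozaev identity $\frac{N-q}{q}\|\nabla W\|_q^q+\frac{N}{2}\|W\|_2^2=\frac{N}{p}\zeta\|W\|_p^p$ eliminates $\|W\|_p^p$ and produces the closed-form constant.

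You also correctly identify the one genuinely deep ingredient: uniqueness of the nonnegative radial solution for $q\neq 2$, without which the displayed value of $K_{N,p}$ would only be determined up to the choice of extremal. In the literature this is not re-derived by Agueh but quoted from Serrin--Tang's uniqueness theorem for ground states of quasilinear equations, so in a proof of this lemma it is legitimate to cite that result rather than carry out the shooting analysis yourself; likewise, the Pohozaev identity for the $\mathcal{C}^{1,\alpha}$ ground state is justified by known regularity and decay estimates, as you note. With those two citations supplied, the proposal is complete and faithful to the source of the lemma.
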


{\begin{lemma}[{The Sobolev embedding Theorem \cite[Lemma 1.2]{cai2024normalized}, \cite{baldelli2023normalized}}]\label{LemmaEmbed}
	The space $X$ is embedded continuously into $L^m\left(\mathbb{R}^N\right)$ for $m \in\left[2, q^{\prime}\right]$ and compactly into $L_{\text {loc }}^m\left(\mathbb{R}^N\right)$ for $m \in\left[1, q^{\prime}\right)$,
	where $q^{\prime}:=\max\{2^*,q^*\}$. Denote $X_{\mathrm{rad}}:=\{u \in X : u \text{ is radially symmetric}\}$, then the space $X_{\mathrm{rad}}$ is embedded compactly into $L^m\left(\mathbb{R}^N\right)$ for $m \in \left(2, q^{\prime}\right)$.
\end{lemma}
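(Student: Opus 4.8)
The plan is to obtain all three embeddings from the classical Sobolev and Rellich--Kondrachov theorems for the two component spaces $H^1(\mathbb{R}^N)$ and $D^{1,q}(\mathbb{R}^N)$, after first pinning down which of $2^*$, $q^*$ equals $q^{\prime}$. Since $t\mapsto \tfrac{Nt}{N-t}$ is strictly increasing on $(0,N)$, the standing hypothesis $q\neq 2$ splits into two regimes: for $2<q<N$ one has $q^*>2^*$ and hence $q^{\prime}=q^*$, while for $\tfrac{2N}{N+2}<q<2$ one has $q^*<2^*$ and hence $q^{\prime}=2^*$. In either regime the larger critical exponent $q^{\prime}$ is realized by exactly one of $H^1\hookrightarrow L^{2^*}$ or $D^{1,q}\hookrightarrow L^{q^*}$, and this is essentially the only place where the two cases must be distinguished.

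For the continuous embedding into $L^m(\mathbb{R}^N)$, $m\in[2,q^{\prime}]$, I would first record the two endpoint bounds. Every $u\in X$ belongs to $L^2(\mathbb{R}^N)$ by the definition of $X$, so $\|u\|_2\le\|u\|_X$; and $u\in L^{q^{\prime}}(\mathbb{R}^N)$ with $\|u\|_{q^{\prime}}\lesssim\|u\|_X$, either via $H^1\hookrightarrow L^{2^*}$ when $q^{\prime}=2^*$, or via the Sobolev inequality \eqref{Sobolev inequality} giving $D^{1,q}\hookrightarrow L^{q^*}$ when $q^{\prime}=q^*$. The elementary interpolation inequality $\|u\|_m\le\|u\|_2^{\theta}\|u\|_{q^{\prime}}^{1-\theta}$, with $\tfrac1m=\tfrac\theta2+\tfrac{1-\theta}{q^{\prime}}$ and $\theta\in[0,1]$, then covers every intermediate $m\in[2,q^{\prime}]$ and yields $\|u\|_m\lesssim\|u\|_X$.

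For the local compact embedding into $L^m_{\mathrm{loc}}(\mathbb{R}^N)$, $m\in[1,q^{\prime})$, I would restrict a bounded sequence $(u_n)_n\subset X$ to an arbitrary ball $B_R$, where it is bounded in both $H^1(B_R)$ and $W^{1,q}(B_R)$. Because $m<q^{\prime}$ forces $m<q^*$ when $q^{\prime}=q^*$ and $m<2^*$ when $q^{\prime}=2^*$, the exponent $m$ is strictly subcritical for the appropriate component space, so Rellich--Kondrachov on $B_R$ provides a subsequence converging strongly in $L^m(B_R)$; a diagonal argument over an exhaustion of $\mathbb{R}^N$ by balls completes this part.

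Finally, for the radial compactness $X_{\mathrm{rad}}\hookrightarrow L^m(\mathbb{R}^N)$, $m\in(2,q^{\prime})$, I would couple the local compactness above with a uniform tail estimate from the Strauss radial decay. For bounded $(u_n)_n\subset X_{\mathrm{rad}}$ the radial lemma for $H^1_{\mathrm{rad}}$ gives $\sup_{|x|>R}|u_n(x)|\lesssim R^{-(N-1)/2}\|u_n\|_{H^1}$, whence for $m>2$
\[
\int_{|x|>R}|u_n|^m\,dx\le\Big(\sup_{|x|>R}|u_n|\Big)^{m-2}\|u_n\|_2^2\lesssim R^{-(N-1)(m-2)/2},
\]
which tends to $0$ as $R\to\infty$, uniformly in $n$; combined with strong $L^m(B_R)$ convergence of a subsequence on each ball this upgrades to strong convergence in $L^m(\mathbb{R}^N)$. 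I do not expect a serious obstacle here: the delicate points are merely bookkeeping, namely treating the two regimes $q<2$ and $q>2$ uniformly and checking that the radial range reaches the full interval $(2,q^{\prime})$ --- the upper endpoint being controlled by the local compactness (hence by whichever component carries the exponent $q^{\prime}$) and the lower endpoint $m>2$ being forced by the tail estimate, consistent with the known failure of compactness of $X\hookrightarrow L^2(\mathbb{R}^N)$ noted in the introduction. The intersection structure of $X$, though not Hilbertian, causes no difficulty here because each embedding is inherited from one of its two component Sobolev spaces.
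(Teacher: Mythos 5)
Your proof is correct, but note that the paper itself offers no proof of this lemma: it is quoted verbatim from the cited references (\cite[Lemma 1.2]{cai2024normalized}, \cite{baldelli2023normalized}), so there is nothing internal to compare against. Your argument --- identifying which of $2^*$, $q^*$ realizes $q^{\prime}$ in the two regimes $\frac{2N}{N+2}<q<2$ and $2<q<N$, interpolating between the endpoint embeddings $X\hookrightarrow L^2$ and $X\hookrightarrow L^{q^{\prime}}$, invoking Rellich--Kondrachov on balls for the component space carrying the larger critical exponent, and combining the Strauss radial decay (available since $X_{\mathrm{rad}}\subset H^1_{\mathrm{rad}}$) with local compactness for the global radial statement --- is exactly the standard route taken in those references, and all the delicate points (boundedness in $W^{1,q}(B_R)$ for both regimes, the restriction $m>2$ forced by the tail estimate, $m<q^{\prime}$ forced by local compactness) are handled correctly.
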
}
	
For the next lemma, we can take a similar argument as that of the classical {Concentration-Compactness principle}. See, for instance,\cite[Lemma 1.21]{Willem1996Minimax}.

\begin{lemma}\label{LemmaLions}
	Let $r>0$. {If $(u_n)_n$} is a bounded sequence in $X$ which satisfies
	$$
	\sup _{x \in \mathbb{R}^N} \int_{B_r(x)}\left|u_n\right|^2 d x \rightarrow 0, \quad \text { as } n \rightarrow +\infty,
	$$
	then,
	$$
	\left\|u_n\right\|_m \rightarrow 0 \quad \text { as } n \rightarrow +\infty
	$$
	holds for any $m \in\left(2,q^{\prime}\right)$, where ${q^{\prime}}=\max \left\{2^*, q^*\right\}$.
\end{lemma}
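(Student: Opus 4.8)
The plan is to adapt the classical Lions-type argument (as in \cite[Lemma 1.21]{Willem1996Minimax}) to the space $X$, covering $\mathbb{R}^N$ by balls of radius $r$ and supplying the local high integrability through $W^{1,\widetilde q}$ instead of $H^1$, where $\widetilde q = \max\{2,q\}$. The crucial observation is that $q^{\prime}=\max\{2^*,q^*\}$ is exactly the Sobolev conjugate $\widetilde q^{\,*}$ of $\widetilde q$, and that the $X$-bound controls $\|u_n\|_{W^{1,\widetilde q}}$: indeed $\|\nabla u_n\|_{\widetilde q}$ equals either $\|\nabla u_n\|_2$ or $\|\nabla u_n\|_q$ and is therefore bounded, while $\|u_n\|_{\widetilde q}$ is bounded by Lemma \ref{LemmaEmbed}, since $\widetilde q \in [2,q^{\prime}]$.

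First I would fix $m\in(2,q^{\prime})$ and choose $\theta\in(0,1)$ from the interpolation relation $\tfrac1m=\tfrac{1-\theta}2+\tfrac{\theta}{\widetilde q^{\,*}}$. On each ball $B=B_r(x)$, Hölder interpolation followed by the scale-fixed (hence $x$-uniform) Sobolev embedding $W^{1,\widetilde q}(B)\hookrightarrow L^{\widetilde q^{\,*}}(B)$ yields the local Gagliardo--Nirenberg inequality
\[
\int_B |u|^m\,dx \le C\,\|u\|_{L^2(B)}^{(1-\theta)m}\,\|u\|_{W^{1,\widetilde q}(B)}^{\theta m},
\]
with $C$ independent of $x$ and of $n$. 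I would then cover $\mathbb{R}^N$ by balls $(B_r(x_i))_i$ with bounded overlap, each point lying in at most $\kappa=\kappa(N)$ of them, so that $\sum_i\|u_n\|_{W^{1,\widetilde q}(B_i)}^{\widetilde q}\le\kappa\|u_n\|_{W^{1,\widetilde q}(\mathbb{R}^N)}^{\widetilde q}=:M$ is bounded, $\sum_i\int_{B_i}|u_n|^2\,dx\le\kappa\|u_n\|_2^2$, and $\sup_i\int_{B_i}|u_n|^2\,dx\le\eta_n:=\sup_{x}\int_{B_r(x)}|u_n|^2\,dx\to 0$.

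Summing the local inequality over $i$ and writing $G_i:=\|u_n\|_{W^{1,\widetilde q}(B_i)}$ and $b_i:=\int_{B_i}|u_n|^2\,dx$, the statement reduces to showing $\sum_i G_i^{\theta m}\,b_i^{(1-\theta)m/2}\to 0$. A short computation shows that the threshold $\theta m=\widetilde q$ corresponds precisely to $m=q^{\#}=(1+\tfrac2N)\widetilde q$, and I would split into two regimes. If $m\ge q^{\#}$ (so $\theta m\ge\widetilde q$), I extract $\sup_i b_i^{(1-\theta)m/2}\le\eta_n^{(1-\theta)m/2}$ and bound $\sum_i G_i^{\theta m}\le M^{\theta m/\widetilde q}$, using $G_i^{\widetilde q}\le M$ together with $\theta m/\widetilde q\ge 1$; since $(1-\theta)m/2>0$ the product tends to $0$. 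If $m<q^{\#}$ (so $\theta m<\widetilde q$), I apply Hölder to the sum with exponents $\tfrac{\widetilde q}{\theta m}$ and its conjugate, obtaining $\big(\sum_i G_i^{\widetilde q}\big)^{\theta m/\widetilde q}\big(\sum_i b_i^{\mu}\big)^{1-\theta m/\widetilde q}$ with $\mu=\tfrac{(1-\theta)m}{2(1-\theta m/\widetilde q)}$. The first factor is at most $M^{\theta m/\widetilde q}$, and one checks that $\mu>1$ strictly (the underlying inequality being $1-\tfrac{2}{\widetilde q^{\,*}}>1-\tfrac2{\widetilde q}$, equivalently $\tfrac2N>0$), so that $\sum_i b_i^{\mu}\le\eta_n^{\mu-1}\kappa\|u_n\|_2^2$ forces the second factor to $0$.

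The main obstacle is the borderline character of these exponent estimates: the naive strategy of extracting the full supremum and controlling the remaining sum by the global Sobolev norm breaks down exactly at the mass-critical threshold $q^{\#}$, so both the two-regime split and the verification that $\mu>1$ \emph{strictly} are indispensable. The only genuinely new feature with respect to \cite{Willem1996Minimax} is that $X$ is not a Hilbert space, which compels the use of $W^{1,\widetilde q}$ with $\widetilde q=\max\{2,q\}$ in place of $H^1$ and the verification, via Lemma \ref{LemmaEmbed}, that the $X$-bound governs the relevant local $W^{1,\widetilde q}$ energies.
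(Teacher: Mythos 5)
Your proof is correct and follows essentially the route the paper intends: the paper gives no proof of this lemma, referring instead to the classical concentration--compactness argument of \cite[Lemma 1.21]{Willem1996Minimax}, and your covering-plus-interpolation scheme, with $\widetilde q=\max\{2,q\}$, the identification $q^{\prime}=\widetilde q^{\,*}$, and the $X$-bound controlling $W^{1,\widetilde q}$ via Lemma \ref{LemmaEmbed}, is exactly that argument transplanted to $X$. The only deviation is minor: Willem establishes vanishing at a single exponent (chosen so that the gradient factor carries the exponent $\widetilde q$, which in your notation is precisely $m=q^{\#}$) and then recovers the full range $m\in(2,q^{\prime})$ by interpolating against the bounded $L^2$ and $L^{q^{\prime}}$ norms, whereas you treat every $m$ directly through the two-regime split at $q^{\#}$ together with the strict inequality $\mu>1$ --- both variants are valid, and yours has the advantage of being self-contained and of making the mass-critical threshold explicit.
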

\subsection{Useful inequalities related to compactness}
\
\newline
For our subsequent estimates, we recall some fundamental inequalities in this subsection. The following interpolation inequality is particularly important, with special attention to the range of $t$.

\begin{lemma}[{Interpolation inequality}]\label{Lemma-Interpolationinequality}
	Assume that
	$f \in L^p(\mathbb{R}^N) \cap L^q(\mathbb{R}^N)$ with $1 \leq p \leq q \leq \infty$, then $f \in L^r(\mathbb{R}^N)$ for all $r\in (p,q)$, and
	$$
	\|f\|_r \leq\|f\|_p^t\|f\|_q^{1-t} \text {, where } \frac{1}{r}=\frac{t}{p}+\frac{1-t}{q}, 0 \leq t \leq 1 \text {,}
	$$
	additionally, $t=\frac{1/r-1/q}{1/p-1/q}$ and $1-t=\frac{1/p-1/r}{1/p-1/q}$.
\end{lemma}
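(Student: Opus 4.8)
The plan is to reduce the statement to a single application of Hölder's inequality, which is the classical route for this log-convexity (Lyapunov) inequality. First I would record the algebra fixing the interpolation parameter: solving $\frac{1}{r}=\frac{t}{p}+\frac{1-t}{q}$ for $t$ gives $t=\frac{1/r-1/q}{1/p-1/q}$ and, complementarily, $1-t=\frac{1/p-1/r}{1/p-1/q}$. Since $p<q$ the denominator $\frac{1}{p}-\frac{1}{q}$ is positive, while $r\in(p,q)$ forces $\frac{1}{q}<\frac{1}{r}<\frac{1}{p}$, so the numerator is positive and strictly smaller than the denominator; hence $t\in(0,1)$, precisely the range in which the Hölder exponents below are admissible. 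The endpoint values $r=p$ (so $t=1$) and $r=q$ (so $t=0$) reduce the claim to a trivial identity.

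Assuming first $q<\infty$ and $t\in(0,1)$, the key step is to write $|f|^{r}=|f|^{rt}\,|f|^{r(1-t)}$ and apply Hölder's inequality with the pair of exponents
\[
\alpha=\frac{p}{rt},\qquad \beta=\frac{q}{r(1-t)} .
\]
The defining relation for $t$ gives $\frac{1}{\alpha}+\frac{1}{\beta}=\frac{rt}{p}+\frac{r(1-t)}{q}=1$, so $\alpha$ and $\beta$ are conjugate. Since $rt\,\alpha=p$ and $r(1-t)\,\beta=q$, Hölder then yields
\[
\int_{\mathbb{R}^N}|f|^{r}\,dx\le\Big(\int_{\mathbb{R}^N}|f|^{p}\,dx\Big)^{1/\alpha}\Big(\int_{\mathbb{R}^N}|f|^{q}\,dx\Big)^{1/\beta}=\|f\|_{p}^{rt}\,\|f\|_{q}^{r(1-t)} ,
\]
using $p/\alpha=rt$ and $q/\beta=r(1-t)$. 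Taking $r$-th roots gives $\|f\|_{r}\le\|f\|_{p}^{t}\|f\|_{q}^{1-t}$, and in particular $f\in L^{r}(\mathbb{R}^N)$ because the right-hand side is finite by hypothesis.

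Finally I would dispose of the case $q=\infty$ separately, where the relation degenerates to $\frac{1}{r}=\frac{t}{p}$, i.e. $t=p/r$. Here one bounds $|f|^{r-p}\le\|f\|_{\infty}^{r-p}$ almost everywhere and integrates $|f|^{r}=|f|^{p}\,|f|^{r-p}$ to get $\|f\|_{r}^{r}\le\|f\|_{\infty}^{r-p}\|f\|_{p}^{p}$, which after taking $r$-th roots is again $\|f\|_{r}\le\|f\|_{p}^{t}\|f\|_{\infty}^{1-t}$. There is no genuine obstacle in this proof; the only points needing care are checking that $r\in(p,q)$ places $t$ strictly inside $(0,1)$ so that $\alpha,\beta$ are legitimate conjugate exponents, and handling the $q=\infty$ endpoint via the elementary essential-supremum estimate rather than Hölder.
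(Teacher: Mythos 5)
Your proof is correct: the paper states this lemma without proof, merely recalling it as a classical fact, and your argument via H\"older's inequality with the conjugate pair $\alpha=p/(rt)$, $\beta=q/(r(1-t))$ is exactly the standard proof of this log-convexity inequality. The verification that $r\in(p,q)$ forces $t\in(0,1)$ and the separate treatment of the endpoint $q=\infty$ via the essential-supremum bound are both handled properly, so nothing is missing.
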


\begin{remark}\label{Remark-inter}
	Under the assumption of Lemma \ref{Lemma-Interpolationinequality}, it's easy to see that
	$r(1-t)<q$, since
	$$
	\frac{r}{q}(1-t) = \frac{r}{q} \frac{1/p - 1/r}{1/p - 1/q} = \frac{r/p - 1}{q/p - 1} < 1.
	$$
\end{remark}

\begin{lemma}[Young's inequality]\label{Young's inequality}
	If $a \geq 0$ and $b \geq 0$ are nonnegative real numbers and if $p>1$, $q>1$ are real numbers such that $\frac{1}{p}+\frac{1}{q}=1$, then for every $\delta>0$
	$$
	a b \leq \frac{a^p}{\delta^p p}+\frac{\delta^qb^q}{q}.
	$$
\end{lemma}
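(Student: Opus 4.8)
The plan is to reduce the weighted inequality to the classical (unweighted) Young inequality by a simple rescaling, and then to establish the unweighted version through the convexity of the exponential function. First I would dispose of the degenerate cases: if $a=0$ or $b=0$ the left-hand side vanishes while the right-hand side is nonnegative, so the inequality holds trivially; hence I may assume $a,b>0$ and $\delta>0$ throughout.

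For the core estimate, I would first prove the special case $\delta = 1$, namely $ab \le \tfrac{a^p}{p} + \tfrac{b^q}{q}$. Writing $a = \exp(\tfrac1p \ln a^p)$ and $b = \exp(\tfrac1q \ln b^q)$, the product becomes $ab = \exp\!\big(\tfrac1p \ln a^p + \tfrac1q \ln b^q\big)$. Since $\tfrac1p + \tfrac1q = 1$ with both weights positive, the exponent is a genuine convex combination of $\ln a^p$ and $\ln b^q$, so by the convexity of $t \mapsto e^t$ (equivalently, Jensen's inequality applied to two points) one obtains $ab \le \tfrac1p e^{\ln a^p} + \tfrac1q e^{\ln b^q} = \tfrac{a^p}{p} + \tfrac{b^q}{q}$, which is the claim for $\delta=1$.

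To recover the general weighted form, I would apply the case $\delta=1$ to the pair $(a/\delta,\, \delta b)$ in place of $(a,b)$. This is legitimate because $a/\delta \ge 0$ and $\delta b \ge 0$, and the product is unchanged, $(a/\delta)(\delta b) = ab$. Hence $ab \le \tfrac1p (a/\delta)^p + \tfrac1q (\delta b)^q = \tfrac{a^p}{\delta^p p} + \tfrac{\delta^q b^q}{q}$, which is exactly the stated inequality.

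As for the main obstacle: this is an elementary result, so there is no genuine difficulty; the only point requiring care is ensuring the exponent above is a true convex combination, which is precisely guaranteed by the conjugacy relation $\tfrac1p + \tfrac1q = 1$ together with $p,q>1$, so that both weights lie in $(0,1)$. An alternative, equally short route would replace the convexity argument by a one-variable minimization: fix $b$ and minimize $g(a) := \tfrac{a^p}{\delta^p p} + \tfrac{\delta^q b^q}{q} - ab$ over $a \ge 0$, checking that the unique interior critical point yields $g \ge 0$; I would keep the convexity proof for brevity.
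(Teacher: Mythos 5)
Your proof is correct: the reduction of the weighted inequality to the case $\delta=1$ via the substitution $(a,b)\mapsto(a/\delta,\,\delta b)$ is exact since the product is preserved, and the $\delta=1$ case follows properly from the convexity of $t\mapsto e^t$ applied to the convex combination $\tfrac1p\ln a^p+\tfrac1q\ln b^q$, with the degenerate cases $a=0$ or $b=0$ disposed of correctly. Note that the paper states this classical lemma without proof, so there is nothing to compare against; your argument is the standard one and would serve perfectly well if a proof were to be included.
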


\begin{lemma}[{ Lemma 2.7, \cite{gongbao1994stability}}]\label{Lemma-Converae}
	Assume $s>1$, and let $\Omega$ be an open set in $\mathbb{R}^N, \alpha, \beta$ positive numbers and $a(x, \xi)\in C\left(\Omega \times \mathbb{R}^N, \mathbb{R}^N\right)$ such that\\
	(1) $\alpha|\xi|^s \leq a(x, \xi) \xi$ for all $(x, \xi) \in \Omega \times \mathbb{R}^N$,\\
	(2) $|a(x,\left.\xi)|\leq \beta| \xi|^{s-1}\right.$ for all $(x, \xi) \in \Omega \times \mathbb{R}^N$,\\
	(3) $(a(x, \xi)-a(x, \eta))(\xi- \eta)>0$ for all $(x, \xi) \in \Omega \times \mathbb{R}^N$ with $\xi \neq \eta$,\\
	(4) $a(x, \gamma \xi)=\gamma|\gamma|^{p-2} a(x, \xi)$ for all $(x, \xi) \in \Omega \times \mathbb{R}^N$ and $\gamma \in \mathbb{R} \backslash\{0\}$.\\
	Consider  $\left(u_n\right)_n\subset W^{1, s}(\Omega), u \in W^{1, s}(\Omega)$, then $\nabla u_n \rightarrow \nabla u$ in $L^s(\Omega)$ if and only if
	$$
	\lim _{n \rightarrow \infty} \int_{\Omega}\Big(a\left(x, \nabla u_n(x)\right)-a(x, \nabla u(x))\Big)\left(\nabla u_n(x)-\nabla u(x)\right) d x=0  .
	$$
\end{lemma}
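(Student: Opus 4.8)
The plan is to prove the two implications separately; the forward direction is routine and the reverse one carries all the content. Throughout, write
\[
e_n(x) := \bigl(a(x, \nabla u_n(x)) - a(x, \nabla u(x))\bigr)\bigl(\nabla u_n(x) - \nabla u(x)\bigr),
\]
so that condition (3) gives $e_n \geq 0$ a.e.\ and the integral in the statement equals $I_n := \int_\Omega e_n\,dx \geq 0$. For the implication $\nabla u_n \to \nabla u$ in $L^s(\Omega) \Rightarrow I_n \to 0$, I would simply invoke the growth bound (2): since $|a(x,\nabla u_n) - a(x,\nabla u)| \leq \beta(|\nabla u_n|^{s-1} + |\nabla u|^{s-1})$, Hölder's inequality with exponents $s' = s/(s-1)$ and $s$ gives $|I_n| \leq \beta(\|\nabla u_n\|_{L^s}^{s-1} + \|\nabla u\|_{L^s}^{s-1})\,\|\nabla u_n - \nabla u\|_{L^s} \to 0$, the $L^s$ norms being bounded by the assumed strong convergence.

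For the converse I would first extract quantitative coercivity from (1)--(2). Expanding $e_n$ and using $a(x,\xi)\xi \geq \alpha|\xi|^s$ together with $|a(x,\xi)| \leq \beta|\xi|^{s-1}$ yields the pointwise bound
\[
e_n \geq \alpha\bigl(|\nabla u_n|^s + |\nabla u|^s\bigr) - \beta\bigl(|\nabla u_n|^{s-1}|\nabla u| + |\nabla u|^{s-1}|\nabla u_n|\bigr).
\]
Absorbing the cross terms by Young's inequality (Lemma~\ref{Young's inequality}) with a small parameter shows, after integration, that $\alpha'\|\nabla u_n\|_{L^s}^s \leq I_n + C\|\nabla u\|_{L^s}^s$ for some $\alpha'>0$; since $I_n \to 0$, the sequence $(\nabla u_n)_n$ is bounded in $L^s(\Omega)$. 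Moreover, because $e_n \geq 0$ and $\int_\Omega e_n \to 0$, we have $e_n \to 0$ in $L^1(\Omega)$, hence $e_n(x) \to 0$ for a.e.\ $x$ along a subsequence.

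Next I would establish a.e.\ convergence of the gradients. Fix $x$ with $e_n(x) \to 0$. Regarded as a function of $|\nabla u_n(x)|$ with $\nabla u(x)$ frozen, the displayed lower bound is coercive, tending to $+\infty$ as $|\nabla u_n(x)| \to \infty$, so $(\nabla u_n(x))_n$ is bounded; let $\eta$ be any of its limit points. By continuity of $a(x,\cdot)$, passing to the limit in $e_n(x)$ gives $(a(x,\eta) - a(x,\nabla u(x)))(\eta - \nabla u(x)) = 0$, whence the strict monotonicity (3) forces $\eta = \nabla u(x)$. Since every subsequence admits a further subsequence with the same limit, $\nabla u_n(x) \to \nabla u(x)$ for a.e.\ $x$.

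The final step---upgrading a.e.\ convergence to strong $L^s$ convergence---is the one I expect to be the main obstacle, since $I_n$ controls the gradients only in the monotone sense, so one must manufacture an $L^1$-convergent majorant for $|\nabla u_n - \nabla u|^s$. I would set $d_n := a(x,\nabla u_n)\cdot\nabla u_n \geq \alpha|\nabla u_n|^s$ and, writing $e_n = d_n - a(x,\nabla u_n)\nabla u - a(x,\nabla u)\nabla u_n + a(x,\nabla u)\nabla u$, note that $d_n \to d := a(x,\nabla u)\nabla u$ a.e. Integrating this identity and letting $n \to \infty$---using $\int e_n \to 0$, the weak convergence $\nabla u_n \rightharpoonup \nabla u$ in $L^s$ (from a.e.\ convergence and $L^s$-boundedness), and the weak convergence $a(x,\nabla u_n) \rightharpoonup a(x,\nabla u)$ in $L^{s'}$ (from continuity, (2), and $L^{s'}$-boundedness)---gives $\int_\Omega d_n \to \int_\Omega d$. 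Since $d_n \geq 0$, $d_n \to d$ a.e., and $\int d_n \to \int d < \infty$, the generalized dominated convergence theorem yields $d_n \to d$ in $L^1(\Omega)$. As $|\nabla u_n - \nabla u|^s \leq 2^{s-1}(\alpha^{-1}d_n + |\nabla u|^s)$ has a right-hand side converging in $L^1$, a second application of generalized dominated convergence gives $\|\nabla u_n - \nabla u\|_{L^s}^s \to 0$; a standard subsequence-of-subsequence argument then promotes this to the full sequence. (Condition (4) is the homogeneity satisfied by the model operator $a(x,\xi) = |\xi|^{s-2}\xi$ and is not essential to the equivalence, which rests on (1)--(3) and the continuity of $a$.)
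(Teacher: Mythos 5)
The paper does not actually prove this lemma: it is quoted verbatim from Li--Martio \cite{gongbao1994stability} (Lemma 2.7 there), so there is no internal proof to compare your attempt against. Judged on its own merits, your argument is correct and complete: the forward implication via (2) and H\"older; $L^s$-boundedness of $(\nabla u_n)_n$ from the pointwise coercivity bound built out of (1), (2) and Young's inequality; a.e.\ convergence of the gradients from $e_n \to 0$ a.e.\ along a subsequence, pointwise boundedness, continuity of $a(x,\cdot)$ and the strict monotonicity (3); and the upgrade to strong $L^s$ convergence via the identity for $e_n$, the two weak convergences $\nabla u_n \rightharpoonup \nabla u$ in $L^s$ and $a(\cdot,\nabla u_n) \rightharpoonup a(\cdot,\nabla u)$ in $L^{s'}$ (both legitimate for $1<s<\infty$ given boundedness plus a.e.\ convergence), which yield $\int_\Omega d_n\,dx \to \int_\Omega d\,dx$, followed by Scheff\'e's lemma and generalized dominated convergence; the subsequence-of-subsequence bookkeeping at the end is handled properly. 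This is the standard monotone-operator route (in the spirit of Boccardo--Murat and Dal Maso--Murat), it works uniformly for all $s>1$ without case-splitting on $s \gtrless 2$ (in contrast to arguments based on the elementary inequality of Lemma \ref{Lem-aee}, which the paper uses elsewhere for the model operator), and your closing observation is accurate: the homogeneity hypothesis (4) plays no role in the equivalence, which rests only on (1)--(3) and the continuity of $a$.
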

To conclude this section, we recall the following elementary inequality.
	This inequality will be used to show that if $(u_n)_n$ is a minimizing sequence of $J$ and
	$u_n \rightharpoonup {u}$ in $X$, then
	$\nabla u_n \rightarrow \nabla {u}$ for a.e. $x\in \mathbb{R}^N$.
{	
{	\begin{lemma}[Formula 2.2, \cite{Baldelli20222qF}]\label{Lem-aee}
		There exists a constant $C(s)>0$ such that
		for all $x, y \in \mathbb{R}^N$ with $|x|+|y| \neq 0$,
		$$
		\left\langle|x|^{s-2} x-|y|^{s-2} y, x-y\right\rangle \geq C(s) \begin{cases}\dfrac{|x-y|^2}{(|x|+|y|)^{{(2-s)}}}, &1 \leq s<2, \\ |x-y|^s, & s \geq 2 .\end{cases}
		$$
\end{lemma}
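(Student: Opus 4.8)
The plan is to prove both regimes of the stated piecewise bound through a single device, namely the fundamental-theorem-of-calculus representation of the field $\Phi(\xi):=|\xi|^{s-2}\xi$ along the segment joining $y$ to $x$. Setting $\xi(t):=y+t(x-y)$ for $t\in[0,1]$, so that $\xi(0)=y$, $\xi(1)=x$ and $\xi'(t)=x-y$, one writes $\langle\Phi(x)-\Phi(y),x-y\rangle=\int_0^1\langle D\Phi(\xi(t))(x-y),x-y\rangle\,dt$, where the Jacobian is $D\Phi(\xi)=|\xi|^{s-2}I+(s-2)|\xi|^{s-4}\,\xi\otimes\xi$. For any $v\in\mathbb{R}^N$, the Cauchy--Schwarz estimate $\langle\xi,v\rangle^2\le|\xi|^2|v|^2$ yields the pointwise quadratic-form bounds $\langle D\Phi(\xi)v,v\rangle\ge|\xi|^{s-2}|v|^2$ when $s\ge2$ (the rank-one term is then nonnegative) and $\langle D\Phi(\xi)v,v\rangle\ge(s-1)|\xi|^{s-2}|v|^2$ when $1<s<2$ (the rank-one term is negative but dominated by the first). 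Hence in either case $\langle\Phi(x)-\Phi(y),x-y\rangle\ge c_s\,|x-y|^2\,I$ with $I:=\int_0^1|\xi(t)|^{s-2}\,dt$, where $c_s=1$ for $s\ge2$ and $c_s=s-1$ for $1<s<2$, and it remains only to bound $I$ from below in each regime.

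For $1<s<2$ the exponent $s-2$ is negative, so I would bound $|\xi(t)|$ from above: since $|\xi(t)|\le(1-t)|y|+t|x|\le|x|+|y|$, it follows that $|\xi(t)|^{s-2}\ge(|x|+|y|)^{s-2}$ and thus $I\ge(|x|+|y|)^{s-2}$. This gives $\langle\Phi(x)-\Phi(y),x-y\rangle\ge(s-1)|x-y|^2(|x|+|y|)^{s-2}$, i.e. the claim with $C(s)=s-1$. For $s\ge2$ the exponent is nonnegative, and here I would use the elementary identity $|\xi(t)|^2=|x-y|^2(t-t^*)^2+d^2$, where $t^*$ locates the foot of the perpendicular from the origin to the line through $x$ and $y$ and $d$ is the distance from the origin to that line; this yields $|\xi(t)|\ge|x-y|\,|t-t^*|$, whence $I\ge|x-y|^{s-2}\int_0^1|t-t^*|^{s-2}\,dt\ge c_s'\,|x-y|^{s-2}$, because $\inf_{t^*\in\mathbb{R}}\int_0^1|t-t^*|^{s-2}\,dt>0$ (the infimum is attained at $t^*=\tfrac12$). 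Combining gives $\langle\Phi(x)-\Phi(y),x-y\rangle\ge c_s'\,|x-y|^s$, as required.

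The delicate point, and the main obstacle, is the legitimacy of the integral representation when the segment $[y,x]$ passes through the origin, since for $s<2$ the field $\Phi$ fails to be differentiable at $0$. This is precisely where $s>1$ is used: along such a segment $t\mapsto|\xi(t)|^{s-2}$ has a singularity of order $|t-t_0|^{s-2}$ with $s-2\in(-1,0)$, which is integrable, so $t\mapsto\Phi(\xi(t))$ is absolutely continuous on $[0,1]$ and the representation persists. I would make this rigorous by regularizing, replacing $\Phi$ with $\Phi_\varepsilon(\xi):=(\varepsilon^2+|\xi|^2)^{(s-2)/2}\xi$, whose Jacobian has the same sign structure, running the estimates above uniformly in $\varepsilon$, and letting $\varepsilon\to0^+$ by dominated convergence. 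The same computation explains why the lower endpoint is critical: as $s\to1^+$ the constant $s-1$ degenerates to $0$, consistent with the fact that for $s=1$ the inequality fails (take $x$ and $y$ parallel with $|x|\ne|y|$, so that the left-hand side vanishes while the right-hand side is positive). Finally, I note that both sides of the inequality are positively $s$-homogeneous in $(x,y)$, so as an alternative one could normalize $|x|+|y|=1$ and extract $C(s)>0$ from a compactness argument; I prefer the route above since it is explicit and uniform across the two regimes.
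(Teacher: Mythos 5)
Your proof is correct. Note first that the paper itself gives no argument for this lemma: it is quoted verbatim from the literature (Formula 2.2 in Simon's paper, via \cite{Baldelli20222qF}), so there is no internal proof to compare against. What you have written is essentially the classical proof of Simon's inequality: the line-integral representation $\langle\Phi(x)-\Phi(y),x-y\rangle=\int_0^1\langle D\Phi(\xi(t))(x-y),x-y\rangle\,dt$ with $D\Phi(\xi)=|\xi|^{s-2}I+(s-2)|\xi|^{s-4}\,\xi\otimes\xi$, the quadratic-form bounds $\langle D\Phi(\xi)v,v\rangle\ge(s-1)|\xi|^{s-2}|v|^2$ for $1<s<2$ and $\ge|\xi|^{s-2}|v|^2$ for $s\ge2$ (both verified correctly via Cauchy--Schwarz), and then the two lower bounds on $\int_0^1|\xi(t)|^{s-2}\,dt$: by $|\xi(t)|\le|x|+|y|$ when the exponent is negative, and by $|\xi(t)|\ge|x-y|\,|t-t^*|$ with $\inf_{t^*}\int_0^1|t-t^*|^{s-2}\,dt=2^{2-s}/(s-1)>0$ (attained at $t^*=\tfrac12$, as one checks from the explicit formula $\int_0^1|t-t^*|^{s-2}\,dt=\bigl((t^*)^{s-1}+(1-t^*)^{s-1}\bigr)/(s-1)$ for $t^*\in[0,1]$) when it is nonnegative. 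You also correctly isolate the one delicate point — the failure of differentiability of $\Phi$ at the origin for $s<2$ when the segment $[y,x]$ crosses $0$ — and both of your remedies work: the singularity $|t-t_0|^{s-2}$ with $s-2>-1$ is integrable, so $t\mapsto\Phi(\xi(t))$ is absolutely continuous, and alternatively the regularization $\Phi_\varepsilon(\xi)=(\varepsilon^2+|\xi|^2)^{(s-2)/2}\xi$ preserves the sign structure of the Jacobian uniformly in $\varepsilon$ (here one only needs pointwise convergence of $\Phi_\varepsilon(x),\Phi_\varepsilon(y)$, not dominated convergence, but that is cosmetic). Finally, your observation about the endpoint is accurate and worth recording: as stated, the lemma's range $1\le s<2$ is slightly too generous — your counterexample (parallel $x,y$ with $|x|\neq|y|$ makes the left-hand side vanish) shows the inequality is false at $s=1$, consistent with the degeneration of the constant $s-1$; the correct range is $1<s<2$, as in Simon's original statement. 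This discrepancy is harmless for the paper, which applies the lemma only with $s=q>\frac{2N}{N+2}>1$ and $s=2$.
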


\section{Local minimum and the first normalized solution}\label{Sect3}
The aim of the section is to investigate the existence of negative-energy solutions, as stated in Theorem \ref{Theorem-existence} and Proposition \ref{Prop-ground}.

Let us begin by establishing an appropriate estimate for the energy functional $J$ on the disk $\mathcal{D}(c)$ to prove the existence of negative-energy solutions.

Observe that, by \eqref{Eq-C0} and Sobolev inequality \eqref{Sobolev inequality}, for any $u \in \mathcal{D}(c)$, we have
\begin{equation}\label{Eq-J(u)sob}
	J(u) \geq \frac{1}{\widetilde{q}}\|\nabla u\|_{\widetilde{q}}^{\widetilde{q}}-C_0\left(\|u\|_2^2+\|u\|_{q^{\prime}}^{q^{\prime}}\right) \geq \frac{1}{\widetilde{q}}\|\nabla u\|_{\widetilde{q}}^{\widetilde{q}}-C_0 c^2-C_0 \mathcal{S}_{\widetilde{q}}^{-q^{\prime} / \widetilde{q}}\|\nabla u\|_{\widetilde{q}}^{q^{\prime}}
\end{equation}
and so $\left.J\right|_{\mathcal{D}(c)}$ is bounded from  below on bounded subsets in $X$.

Define $g:(0,\infty) \times(0,\infty) \rightarrow \mathbb{R}$ as
\begin{equation}\label{Eq-g(a,t)}
	g(\alpha, t):=\frac{1}{\widetilde{q}}-C_0 \alpha^2 t^{-\widetilde{q}}-C_0 \mathcal{S}_{\widetilde{q}}^{-q^{\prime}/ \widetilde{q}} t^{q^{\prime}-\widetilde{q}}.
\end{equation}
Then, \eqref{Eq-J(u)sob} can be written as
\begin{equation}\label{Eq-J>g}
	J(u) \geq g\left(c,\|\nabla u\|_{\widetilde{q}}\right)\|\nabla u\|_{\widetilde{q}}^{\widetilde{q}}, \quad \text { for all } u \in \mathcal{D}(c).
\end{equation}

\begin{lemma}\label{Lemma-g}
	The following facts hold.
	\begin{enumerate}[label=\textup{(g\arabic*)}, ref=\textup{g\arabic*}]
		\item \label{g1} For every $\alpha>0$, the function $t \mapsto g(\alpha, t) t^{\widetilde{q}}$, has a unique critical point, which is a global maximizer.
		\item \label{g2} If \eqref{Eq-crange} holds, then there exist $R_0, R_1>0$ with $R_0<R_1$, such that $g\left(c, R_0\right)=g\left(c, R_1\right)=0, g(c, t)>0$ for $t \in\left(R_0, R_1\right)$, and $g(c, t)<0$ for $t \in\left(0, R_0\right) \cup\left(R_1,\infty\right)$.
		\item \label{g3} If $t>0$ and $\alpha_1 \geq \alpha_2>0$, then for every $s \in\left[(\frac{\alpha_2}{\alpha_1})^{{2}/{\widetilde{q}}}t, t\right]$, there holds $g\left(\alpha_2, s\right) \geq g\left(\alpha_1, t\right)$.
		\item \label{g4} If $c$ satisfies \eqref{Eq-crange}, then there exists $\varepsilon>0$ such that \eqref{Eq-crange} is verified by every $c^{\prime} \in(c-\varepsilon, c+\varepsilon)$ and the functions $(c-\varepsilon, c+\varepsilon) \ni c^{\prime} \mapsto R_i\left(c^{\prime}\right) \in(0,\infty), i \in\{0,1\}$ with $R_i$ defined in \eqref{g2}, are invertible and of class $\mathcal{C}^1$.
	\end{enumerate}
\end{lemma}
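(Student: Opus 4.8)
The plan is to reduce all four statements to the study of the single-variable auxiliary function $\Phi_\alpha(t) := g(\alpha,t)\,t^{\widetilde q}$, which by \eqref{Eq-g(a,t)} takes the explicit form of a difference of two pure powers,
\[
\Phi_\alpha(t) = \tfrac{1}{\widetilde q}\,t^{\widetilde q} - C_0\alpha^2 - C_0\,\mathcal{S}_{\widetilde q}^{-q^{\prime}/\widetilde q}\,t^{q^{\prime}},
\]
where crucially $q^{\prime} > \widetilde q$, since $q^{\prime}=\max\{2^*,q^*\}$ is a critical exponent while $\widetilde q=\max\{2,q\}$ is subcritical. For \eqref{g1} I would differentiate to obtain $\Phi_\alpha'(t) = t^{\widetilde q - 1}\bigl(1 - C_0 q^{\prime}\mathcal{S}_{\widetilde q}^{-q^{\prime}/\widetilde q}\,t^{q^{\prime} - \widetilde q}\bigr)$; the bracket is strictly decreasing and changes sign exactly once, so $\Phi_\alpha$ first increases and then decreases and its unique critical point $t^\ast=t^\ast(\alpha)$ is a global maximizer.

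For \eqref{g2} the key observation is that $g(c,t)$ and $\Phi_c(t)$ share the same sign for $t>0$, so the sign pattern of $g(c,\cdot)$ is governed entirely by the sign of the maximal value $\Phi_c(t^\ast)$. Using the critical-point relation $(t^\ast)^{\widetilde q} = C_0 q^{\prime}\mathcal{S}_{\widetilde q}^{-q^{\prime}/\widetilde q}(t^\ast)^{q^{\prime}}$, the two power terms collapse and I get $\Phi_c(t^\ast) = \bigl(\tfrac{1}{\widetilde q} - \tfrac{1}{q^{\prime}}\bigr)(t^\ast)^{\widetilde q} - C_0 c^2$. The decisive algebraic fact is the identity $\tfrac{1}{\widetilde q} - \tfrac{1}{q^{\prime}} = \tfrac{1}{N}$, valid in both regimes (when $\widetilde q=2,\ q^{\prime}=2^*$ and when $\widetilde q=q,\ q^{\prime}=q^*$); inserting the explicit value of $(t^\ast)^{\widetilde q}$ and exploiting the companion identity $\tfrac{N}{\widetilde q}-\tfrac{N}{q^{\prime}}=1$ turns the inequality $\Phi_c(t^\ast)>0$ into exactly the threshold \eqref{Eq-crange}. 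Once $\Phi_c(t^\ast)>0$ is secured, the increasing/decreasing shape of $\Phi_c$ together with $\Phi_c(0^+)=-C_0c^2<0$ and $\Phi_c(+\infty)=-\infty$ forces exactly two zeros $R_0<t^\ast<R_1$, with $\Phi_c>0$ on $(R_0,R_1)$ and $\Phi_c<0$ otherwise, yielding the claimed sign behaviour of $g(c,\cdot)$.

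For \eqref{g3} I would compare $g(\alpha_2,s)$ with $g(\alpha_1,t)$ term by term in \eqref{Eq-g(a,t)}: the constant $1/\widetilde q$ cancels, and since $q^{\prime}-\widetilde q>0$ and $s\le t$ the critical term obeys $-C_0\mathcal{S}_{\widetilde q}^{-q^{\prime}/\widetilde q}s^{q^{\prime}-\widetilde q}\ge -C_0\mathcal{S}_{\widetilde q}^{-q^{\prime}/\widetilde q}t^{q^{\prime}-\widetilde q}$, while the prescribed lower bound $s\ge(\alpha_2/\alpha_1)^{2/\widetilde q}t$ is precisely the condition equivalent to $\alpha_2^2 s^{-\widetilde q}\le \alpha_1^2 t^{-\widetilde q}$, making the remaining term favourable as well; adding the two estimates gives $g(\alpha_2,s)\ge g(\alpha_1,t)$. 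Finally, for \eqref{g4}, openness of the strict inequality \eqref{Eq-crange} (the right-hand side is a fixed constant) immediately supplies $\varepsilon>0$ with every $c^{\prime}\in(c-\varepsilon,c+\varepsilon)$ admissible. Because $R_0,R_1$ lie strictly to the left and right of the unique maximum of $\Phi_c$ we have $\Phi_c'(R_i)\ne0$, hence $\partial_t g(c,R_i)\ne0$, and the implicit function theorem applied to $G(c^{\prime},t):=g(c^{\prime},t)$ produces $\mathcal C^1$ maps $c^{\prime}\mapsto R_i(c^{\prime})$; local invertibility follows since $\partial_{c^{\prime}}g = -2C_0 c^{\prime}t^{-\widetilde q}\ne0$, whence $R_i'(c^{\prime})=-\partial_{c^{\prime}}g/\partial_t g\ne0$.

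I expect the only genuinely delicate point to be the bookkeeping in \eqref{g2}: verifying that $\Phi_c(t^\ast)>0$ reproduces the exact constant appearing in \eqref{Eq-crange}, rather than merely a qualitative smallness condition on $c$. This hinges entirely on the two arithmetic identities $\tfrac1{\widetilde q}-\tfrac1{q^{\prime}}=\tfrac1N$ and $\tfrac{N}{\widetilde q}-\tfrac{N}{q^{\prime}}=1$, which reconcile the exponents of $C_0$ and $q^{\prime}$; once these are in hand the remaining work in all four parts is elementary one-variable calculus and a routine application of the implicit function theorem.
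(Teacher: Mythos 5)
Your proof is correct, and the delicate bookkeeping you flagged does work out: with the critical-point relation for $\Phi_\alpha(t)=g(\alpha,t)t^{\widetilde q}$ and the identities $\tfrac{1}{\widetilde q}-\tfrac{1}{q^{\prime}}=\tfrac1N$ and $\tfrac{N}{\widetilde q}-\tfrac{N}{q^{\prime}}=1$ (equivalently $q^{\prime}=\tfrac{N\widetilde q}{N-\widetilde q}$, valid in both regimes $2<q<N$ and $\tfrac{2N}{N+2}<q<2$), the condition $\Phi_c(t^\ast)>0$ is indeed exactly $c^2<\tfrac{q^{\prime}}{N}\bigl(\mathcal S_{\widetilde q}/(q^{\prime}C_0)\bigr)^{N/\widetilde q}$, i.e.\ \eqref{Eq-crange}. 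Your route differs from the paper's in two mild but genuine ways. First, the paper works with $g(c,\cdot)$ itself: it computes $\partial_t g$, locates the $\alpha$-dependent maximizer $t_0=\bigl(\widetilde q\,\alpha^2\,\mathcal S_{\widetilde q}^{q^{\prime}/\widetilde q}/(q^{\prime}-\widetilde q)\bigr)^{1/q^{\prime}}$ of $g$, and evaluates $g(\alpha,t_0)$; you instead maximize $\Phi_c$ at the $\alpha$-independent point $t^\ast$ and transfer signs back to $g$ — both yield \eqref{g2} with the same sharp threshold, consistently so since $g$ and $\Phi_c$ have suprema of the same sign. Second, and this is the nicer difference, for \eqref{g3} the paper only checks the inequality at the endpoints $s=t$ and $s=(\alpha_2/\alpha_1)^{2/\widetilde q}t$ and then invokes the unimodality of $g(\alpha_2,\cdot)$ from \eqref{g2} to cover the interior, whereas your term-by-term estimate ($s\le t$ controls the $t^{q^{\prime}-\widetilde q}$ term, and $s\ge(\alpha_2/\alpha_1)^{2/\widetilde q}t$ is precisely $\alpha_2^2 s^{-\widetilde q}\le\alpha_1^2 t^{-\widetilde q}$) proves the inequality pointwise and is independent of \eqref{g2}, which is cleaner. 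For \eqref{g4} the two arguments coincide (implicit function theorem); just make explicit the one-line product-rule identity $\Phi_c^{\prime}(R_i)=\partial_t g(c,R_i)\,R_i^{\widetilde q}$, valid because $g(c,R_i)=0$, which converts nondegeneracy of $\Phi_c^{\prime}$ at $R_i$ into $\partial_t g(c,R_i)\ne 0$, and note that $R_i^{\prime}$ is continuous and nonvanishing on the connected interval $(c-\varepsilon,c+\varepsilon)$, hence of constant sign, so $R_i$ is strictly monotone and genuinely invertible there, not merely locally.
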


\begin{proof}
	\eqref{g1} follow from direct computations. Now we prove \eqref{g2},
	since
	 \begin{equation}
	 \frac{ \partial}{ \partial t }g(\alpha,t)=C_0 t^{-{\widetilde{q}-1}}\left({\widetilde{q}} \alpha^2-\mathcal{S}_{\widetilde{q}}^{-{q^{\prime}} / {\widetilde{q}}}\left(q^{\prime}-{\widetilde{q}}\right) t^{q^{\prime}}\right),
	 \end{equation}
  	the only critical point of $\frac{ \partial}{ \partial t }g(\alpha,t)$ is
  	\begin{equation}
  		t_0=\left(\frac{\widetilde{q} \alpha^2}{\mathcal{S}_{\widetilde{q}}^{-{q^{\prime}} / {\widetilde{q}}}\left({q^{\prime}}-\widetilde{q}\right)}\right)^{1 / {q^{\prime}}}.
  	\end{equation}
  And it's easy to see that $ \frac{ \partial}{ \partial t }g(\alpha,t)>0$ if $t<t_0$ and
  $ \frac{ \partial}{ \partial t }g(\alpha,t)<0$ if $t>t_0$. Thus, $g(\alpha,t)$ has a unique critical point $t_0$, which is a global maximizer.
  Bring $t_0$ into $g(\alpha,t)$ we get
  $$
  g(\alpha,t_0)=\frac{1}{\widetilde{q}}-\frac{q^{\prime}}{\widetilde{q}} \left(\frac{q^{\prime}}{N}\right)^{-\frac{\widetilde{q}}{N}} C_0\mathcal{S}_{\widetilde{q}}^{-1} \alpha^{\frac{2\widetilde{q}}{N}}.
  $$
  So, if \eqref{Eq-crange} holds, we have that $g(c,t_0)>0$. And
  since $\lim_{t \rightarrow 0}g(c,t)=\lim_{t \rightarrow \infty}g(c,t)=-\infty$, there exist $R_0,R_1>0$, such that $g\left(c, R_0\right)=g\left(c, R_1\right)=0$.
  Given that
  $ \frac{ \partial}{ \partial t }g(\alpha,t)>0$ if $t<t_0$ and
  $ \frac{ \partial}{ \partial t }g(\alpha,t)<0$ if $t>t_0$, we get \eqref{g2}.

 Concerning \eqref{g3}, it is clear that $g\left(\alpha_2, t\right) \geq g\left(\alpha_1, t\right)$ for all $t>0$. Moreover,
 {
	$$
	g\left(\alpha_2, \left(\frac{\alpha_2}{\alpha_1}\right)^{{2}/{\widetilde{q}}} t\right) - g\left(\alpha_1, t\right) = \frac{C_0}{\mathcal{S}_{\widetilde{q}}^{q^{\prime} / \widetilde{q}}}\left(1 - \left(\frac{\alpha_2}{\alpha_1}\right)^{{2}(q^{\prime}-\widetilde{q})/{\widetilde{q}}}\right) t^{q^{\prime}-\widetilde{q}} \geq 0,
	$$
}
from \eqref{g2}, since $g(\alpha, t)$ has a unique critical point, which is a global maximizer, we conclude.

  Finally, as for \eqref{g4}, it follows from the differentiability of $g$, $\partial_t g\left(c, R_0\right)>0$, $\partial_t g\left(c, R_1\right)<0$, and the implicit function theorem.
\end{proof}

{To rule out the vanishing case for the minimizing sequence of $J$, we will use the negativity of energy. To exclude the dichotomy, we need to establish a subadditivity inequality for $m_{R_0}(c)$.}

\begin{lemma}\label{Lemma-mR(a)}
	If \eqref{F0} and \eqref{F2} hold, then $m_R(c) \in(-\infty, 0)$ for every $c, R>0$.
\end{lemma}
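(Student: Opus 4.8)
The statement has two halves, and I would treat them separately. For the lower bound $m_R(c)>-\infty$ I would simply feed the defining constraint of $\mathcal{U}_R(c)$ into the estimate \eqref{Eq-J(u)sob}. Indeed \eqref{F0} guarantees $|F(t)|\lesssim t^2+|t|^{q^{\prime}}$, hence $C_0<\infty$ and the estimate \eqref{Eq-J(u)sob} is available; for any $u\in\mathcal{U}_R(c)$ one has $\|\nabla u\|_{\widetilde{q}}^{\widetilde{q}}<R$, so dropping the nonnegative leading term and bounding $\|\nabla u\|_{\widetilde{q}}^{q^{\prime}}<R^{q^{\prime}/\widetilde{q}}$ yields $J(u)\ge -C_0c^2-C_0\mathcal{S}_{\widetilde{q}}^{-q^{\prime}/\widetilde{q}}R^{q^{\prime}/\widetilde{q}}$, a bound independent of $u$. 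Taking the infimum over $\mathcal{U}_R(c)$ gives $m_R(c)>-\infty$ for every $c,R>0$.

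The negativity $m_R(c)<0$ is the substantive part, and here is where \eqref{F2} enters. The plan is to exhibit a single admissible competitor of negative energy by means of the $L^2$-preserving dilation $s*u$. Fix any $u\in C_c^\infty(\mathbb{R}^N)\setminus\{0\}$ normalized so that $\|u\|_2=c$; then $u$ is bounded, $u\in\mathcal{D}(c)$, and $s*u\in\mathcal{S}(c)\subset\mathcal{D}(c)$ for all $s$. Along this curve $J(s*u)=\psi(s)$ is given explicitly by \eqref{Eq-jsu}, and as $s\to-\infty$ the two gradient terms carry the positive exponents $2$ and $q(\delta_q+1)$ (positivity of the latter uses the assumed lower bound on $q$) and therefore vanish. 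The decisive observation is that the natural scaling exponent of the nonlinear term matches the \emph{smaller} of these two, namely $\theta:=\min\{2,q(\delta_q+1)\}=N\left(\tfrac{q_{\#}}{2}-1\right)$, so the nonlinear term is of the same order as the dominant gradient term rather than strictly lower order.

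To exploit this I would invoke \eqref{F2}: for any $M>0$ there is $\eta>0$ with $F(t)\ge M|t|^{q_{\#}}$ whenever $|t|\le\eta$. Choosing $s$ negative enough that $e^{\frac N2 s}\|u\|_\infty\le\eta$ makes the pointwise bound applicable everywhere on $\operatorname{supp}u$, giving $e^{-Ns}\int_{\mathbb{R}^N}F(e^{\frac N2 s}u)\,dx\ge M e^{\theta s}\|u\|_{q_{\#}}^{q_{\#}}$. Substituting into \eqref{Eq-jsu} and factoring out $e^{\theta s}$ leaves, up to terms that tend to $0$ as $s\to-\infty$, a bracket of the shape $\big(\mathrm{const}(u)-M\|u\|_{q_{\#}}^{q_{\#}}\big)$; since $\|u\|_{q_{\#}}^{q_{\#}}>0$ is fixed, choosing $M$ large forces this bracket to be negative, whence $\psi(s)<0$ for all $s$ sufficiently negative. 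For such $s$ one also has $\|\nabla(s*u)\|_{\widetilde{q}}^{\widetilde{q}}\to 0<R$ and $\|s*u\|_2=c$, so $s*u\in\mathcal{U}_R(c)$ and therefore $m_R(c)\le\psi(s)<0$.

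I expect the main obstacle to be precisely the exponent bookkeeping just described: because the nonlinearity scales exactly like the leading gradient term, one cannot conclude negativity by a crude ``the gradient terms are higher order'' argument, and must instead use the unbounded constant $M$ furnished by \eqref{F2} to overwhelm the fixed gradient coefficient. A secondary point of care is the split $q\ge 2$ versus $q<2$ hidden inside $\theta$ and $\widetilde{q}$: in the first case the matched term is $\tfrac{e^{2s}}{2}\|\nabla u\|_2^2$ and in the second it is $\tfrac{e^{q(\delta_q+1)s}}{q}\|\nabla u\|_q^q$, so the computation should be organized so that the identity $\theta=N(q_{\#}/2-1)$ absorbs both cases uniformly.
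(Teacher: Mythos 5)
Your proposal is correct and follows essentially the same route as the paper: both bounds come from \eqref{Eq-J(u)sob}, and negativity is obtained along the dilation curve $s*u$ as $s\to-\infty$, where the nonlinear term scales like $e^{\widehat{q}(\delta_{\widehat{q}}+1)s}$ (matching the smaller gradient exponent) and \eqref{F2} forces it to dominate. The only cosmetic difference is that you implement \eqref{F2} via the quantitative bound $F(t)\ge M|t|^{q_{\#}}$ for $|t|\le\eta$ together with the $L^{\infty}$-smallness of $e^{\frac N2 s}u$, whereas the paper reaches the same divergence via Fatou's lemma.
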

\begin{proof}
	First of all, let us observe that, from \eqref{Eq-J(u)sob}, $m_R(c)>-\infty$.
	 Then, fix $u \in \mathcal{D}(c) \cap L^{\infty}\left(\mathbb{R}^N\right) \backslash\{0\}$
	 and recall $\widehat{q}=\min\{2,q\}$,
	 we observe that\\
	 	 \begin{equation}
	 	\begin{aligned}
	 	J(s * u) &= \frac{e^{2s}}{2} \int_{\mathbb{R}^N} |\nabla u|^2 \, dx + \frac{e^{q(\delta_q+1)s}}{q} \int_{\mathbb{R}^N} |\nabla u|^q \, dx -e^{-Ns}\int_{\mathbb{R}^N}F(e^{\frac{N}{2} s} u)  dx\\
	 	&= e^{\widehat{q}(\delta_{\widehat{q}}+1)s}\left( \frac{e^{(2-\widehat{q}(\delta_{\widehat{q}}+1))s}}{2}\int_{\mathbb{R}^N} |\nabla u|^2 \, dx + \frac{e^{(q(\delta_q+1)-\widehat{q}(\delta_{\widehat{q}}+1))s}}{q}  \int_{\mathbb{R}^N} |\nabla u|^q \, dx -\frac{1}{\left(e^{ \frac{N}{2}s}\right)^{{q}_{\#}}}\int_{\mathbb{R}^N}F(e^{\frac{N}{2} s} u)  dx\right).
	 	\end{aligned}
	 \end{equation}	
 Note that, from \eqref{F2}, $F\left(e^{\frac{N}{2}s} u\right)>0$ a.e. in $\operatorname{supp} u$ for sufficiently small $s$. Therefore, from Fatou's lemma and \eqref{F2} again,
 	$$
 	\lim _{s \rightarrow -\infty }\frac{1}{\left(e^{ \frac{N}{2}s}\right)^{{q}_{\#}}}\int_{\mathbb{R}^N}F(e^{\frac{N}{2} s} u)  dx=\infty.
 	$$
 	Since $\widehat{q}=\min\{2,q\}$, $\lim _{s \rightarrow -\infty }{e^{(2-\widehat{q}(\delta_{\widehat{q}}+1))s}}=0$ and $\lim _{s \rightarrow -\infty }e^{(q(\delta_q+1)-\widehat{q}(\delta_{\widehat{q}}+1))s}=0$. This implies that $J(s * u)<0$ for sufficiently small $s$, and since $s * u \in \mathcal{U}_R(c)$ provided $s$ is small, we can conclude.\\
\end{proof}

\begin{remark}\label{Remark-g>0}
	 From \eqref{g2} and Lemma \ref{Lemma-mR(a)}, since $g\left(c, R_0\right)=0$, there exists $\varepsilon>0$ such that
	$$
	0 \geq g(c, s) \geq \frac{m_{R_0}(c)}{2 R_0^{\widetilde{q}}} \text { for all } s \in\left[R_0-\varepsilon, R_0\right] .
	$$
This, \eqref{Eq-J>g}, and Lemma \ref{Lemma-mR(a)} yield that for all $u \in \mathcal{D}(c)$ with $R_0-\varepsilon \leq\|\nabla u\|_{\widetilde{q}} \leq R_0$ there holds
	$$
	J(u) \geq g\left(c,\|\nabla u\|_{\widetilde{q}}\right)\|\nabla u\|_{\widetilde{q}}^{\widetilde{q}} \geq R_0^{\widetilde{q}} \frac{m_{R_0}(c)}{2 R_0^{\widetilde{q}}}>m_{R_0}(c) .
	$$
\end{remark}

Next, we show the subadditivity property of $m_{R_0}\left(c\right)$.
\begin{lemma}\label{Lemma-subadd}
	If \eqref{F0}, \eqref{F2}, and \eqref{Eq-crange} are satisfied, we have for all $ \alpha \in(0, c) $: $$m_{R_0}(c) \leq m_{R_0}(\alpha)+m_{R_0}(\sqrt{ c^2-\alpha^2})$$ and if $m_{R_0}(\alpha)$ or $m_{R_0}(\sqrt{ c^2-\alpha^2})$ is reached then the inequality is strict.
\end{lemma}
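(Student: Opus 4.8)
The plan is to produce, for any $\alpha\in(0,c)$ with $\gamma:=\sqrt{c^2-\alpha^2}$, a competitor for $m_{R_0}(c)$ by gluing together two near-minimizers, one for $m_{R_0}(\alpha)$ and one for $m_{R_0}(\gamma)$, placed so far apart that their supports are disjoint. Over disjoint supports the energy $J$, the mass $\|\cdot\|_2^2$ and the quantity $\|\nabla\cdot\|_{\widetilde q}^{\widetilde q}$ all split additively, so the glued function will have mass exactly $\alpha^2+\gamma^2=c^2$ and energy exactly the sum of the two energies. The entire content of the lemma is then reduced to a single point: verifying that this glued function still satisfies the gradient constraint defining $\mathcal U_{R_0}(c)$.

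First I would fix a small $\varepsilon>0$ and, using $m_{R_0}(\alpha),m_{R_0}(\gamma)\in(-\infty,0)$ from Lemma \ref{Lemma-mR(a)}, choose $u\in\mathcal U_{R_0}(\alpha)$ and $v\in\mathcal U_{R_0}(\gamma)$ with $J(u)<m_{R_0}(\alpha)+\varepsilon<0$ and $J(v)<m_{R_0}(\gamma)+\varepsilon<0$; since $C_c^\infty(\mathbb R^N)$ is dense in $X$ and $J,\|\cdot\|_2,\|\nabla\cdot\|_{\widetilde q}$ are continuous, I may take $u,v$ compactly supported. The negativity of the energy is the crucial extra information. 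Writing $R_0(\alpha)<R_1(\alpha)$ for the two roots of $g(\alpha,\cdot)$ from Lemma \ref{Lemma-g}\eqref{g2} (these exist because \eqref{Eq-crange} holds a fortiori for every mass below $c$), inequality \eqref{Eq-J>g} gives $0>J(u)\ge g(\alpha,\|\nabla u\|_{\widetilde q})\|\nabla u\|_{\widetilde q}^{\widetilde q}$, whence $g(\alpha,\|\nabla u\|_{\widetilde q})<0$ and $\|\nabla u\|_{\widetilde q}\in(0,R_0(\alpha))\cup(R_1(\alpha),\infty)$. As $u\in\mathcal U_{R_0}(\alpha)\subset\mathcal U_{R_0}(c)$ forces $\|\nabla u\|_{\widetilde q}<R_0=R_0(c)\le R_1(\alpha)$, the upper branch is excluded and $\|\nabla u\|_{\widetilde q}<R_0(\alpha)$; symmetrically $\|\nabla v\|_{\widetilde q}<R_0(\gamma)$.

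Next I would translate $v$: for $t$ large set $w:=u+v(\cdot-te_1)$, whose two summands have disjoint supports, so that $\|w\|_2^2=\|u\|_2^2+\|v\|_2^2\le c^2$, $J(w)=J(u)+J(v)$ and $\|\nabla w\|_{\widetilde q}^{\widetilde q}=\|\nabla u\|_{\widetilde q}^{\widetilde q}+\|\nabla v\|_{\widetilde q}^{\widetilde q}$. The heart of the matter is $\|\nabla w\|_{\widetilde q}<R_0(c)$, and by the previous step it suffices to prove $R_0(\alpha)^{\widetilde q}+R_0(\gamma)^{\widetilde q}\le R_0(c)^{\widetilde q}$. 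Setting $\Phi(t):=\frac1{\widetilde q}t^{\widetilde q}-C_0\mathcal S_{\widetilde q}^{-q'/\widetilde q}t^{q'}$, the small root is characterized by $\Phi(R_0(\alpha))=C_0\alpha^2$, so if $T$ is the inverse of $\Phi$ on its increasing branch $(0,t_0)$ then $R_0(\sqrt m)^{\widetilde q}=T(C_0m)^{\widetilde q}=:\rho(C_0m)$. A direct computation gives $\rho'(s)=\widetilde q\big/\big(1-C_0q'\mathcal S_{\widetilde q}^{-q'/\widetilde q}\,T(s)^{q'-\widetilde q}\big)$, whose denominator is positive and decreasing along the branch, so $\rho$ is convex with $\rho(0)=0$, hence superadditive. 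Therefore $R_0(\alpha)^{\widetilde q}+R_0(\gamma)^{\widetilde q}=\rho(C_0\alpha^2)+\rho(C_0\gamma^2)\le\rho(C_0c^2)=R_0(c)^{\widetilde q}$, so $w\in\mathcal U_{R_0}(c)$ and $m_{R_0}(c)\le J(w)=J(u)+J(v)<m_{R_0}(\alpha)+m_{R_0}(\gamma)+2\varepsilon$; letting $\varepsilon\to0$ yields the asserted inequality.

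I expect this verification of the gradient constraint to be the main obstacle: unlike a pure $L^2$-minimization, the additivity of $\|\nabla\cdot\|_{\widetilde q}^{\widetilde q}$ could a priori eject the glued function from $\mathcal U_{R_0}(c)$, and it is only the negativity–of–energy dichotomy (pinning each piece below the smaller threshold $R_0(\alpha)$, $R_0(\gamma)$) combined with the convexity–superadditivity of the root–versus–mass map $\rho$ that keeps $w$ admissible. For the strict inequality, assume $m_{R_0}(\alpha)$ is attained by $\bar u$ (the case of $m_{R_0}(\gamma)$ being symmetric). Taking $u=\bar u$ exactly, the convexity step now yields the combined gradient \emph{strictly} below $R_0(c)^{\widetilde q}$, so $w$ sits in the interior of $\mathcal U_{R_0}(c)$ with a definite margin; exploiting this slack one obtains a competitor with energy strictly below $m_{R_0}(\alpha)+m_{R_0}(\gamma)$, the strictness being exactly where attainment is used, along the lines of the corresponding argument in \cite{bieganowski2024existence}.
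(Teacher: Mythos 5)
Your proof of the non-strict inequality is correct and takes a genuinely different route from the paper's. The paper reduces the whole lemma to the sub-homogeneity estimate $m_{R_0}(\theta\alpha)\le\theta^2 m_{R_0}(\alpha)$ for $\theta\in(1,c/\alpha]$, proved by applying the mass-increasing dilation $v=u(\cdot/\theta^{2/N})$ to a near-minimizer $u$ with $J(u)<0$, using \eqref{g3} to get $\|\nabla u\|_{\widetilde q}<(\alpha/c)^{2/\widetilde q}R_0$ so that $v$ stays in $\mathcal U_{R_0}(\theta\alpha)$; subadditivity then follows from the decomposition $m_{R_0}(c)=\frac{c^2-\alpha^2}{c^2}m_{R_0}(c)+\frac{\alpha^2}{c^2}m_{R_0}(c)$. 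You instead glue far-apart compactly supported near-minimizers, and your two key steps check out: the trapping argument ($J(u)<0$ forces $g(\alpha,\|\nabla u\|_{\widetilde q})<0$, and $\|\nabla u\|_{\widetilde q}<R_0(c)\le R_1(\alpha)$ excludes the upper branch, so $\|\nabla u\|_{\widetilde q}<R_0(\alpha)$), and the convexity of $\rho$, whose derivative formula is correct. It is worth noting that your superadditivity is essentially a repackaging of the paper's \eqref{g3}: that item gives $g\bigl(\alpha,(\alpha/c)^{2/\widetilde q}R_0(c)\bigr)\ge g(c,R_0(c))=0$, i.e. $R_0(\alpha)^{\widetilde q}\le(\alpha^2/c^2)R_0(c)^{\widetilde q}$, which says $\rho(s)/s$ is nondecreasing; summing this bound for $\alpha$ and $\gamma$ yields $R_0(\alpha)^{\widetilde q}+R_0(\gamma)^{\widetilde q}\le R_0(c)^{\widetilde q}$ in one line, so the $\rho'$ computation could have been avoided entirely.

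The strict-inequality part, however, has a genuine gap. Having the glued function strictly inside $\mathcal U_{R_0}(c)$ ``with a definite margin'' carries no energy information: slack in a constraint does not produce a competitor with $J$ strictly below $m_{R_0}(\alpha)+m_{R_0}(\gamma)$. With $u=\bar u$ your construction still gives only $J(w)=m_{R_0}(\alpha)+J(v)$ with $J(v)$ merely close to $m_{R_0}(\gamma)$ from above, so letting $\varepsilon\to0$ recovers exactly the non-strict inequality. If you try to exploit attainment by dilating $\bar u$ to mass $\theta\alpha$ with $\theta>1$ (which does strictly lower the energy below $\theta^2 m_{R_0}(\alpha)<m_{R_0}(\alpha)$), you must shrink the other piece to mass $\sqrt{c^2-\theta^2\alpha^2}$, and closing the bookkeeping between $m_{R_0}(\sqrt{c^2-\theta^2\alpha^2})$ and $m_{R_0}(\gamma)$ requires precisely the quantitative homogeneity inequality \eqref{Eq-mtha} --- at which point you have reconstructed the paper's proof. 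In the paper strictness is immediate: when $m_{R_0}(\alpha)$ is attained one takes $\varepsilon=0$, and since $\bar u\neq0$ (because $m_{R_0}(\alpha)<0$) the dilation satisfies $J\bigl(\bar u(\cdot/\theta^{2/N})\bigr)<\theta^2 J(\bar u)$ strictly, as $\theta^{2(N-2)/N}<\theta^2$ and $\theta^{2(N-q)/N}<\theta^2$, whence $m_{R_0}(\theta\alpha)<\theta^2 m_{R_0}(\alpha)$. Your appeal to ``the corresponding argument in \cite{bieganowski2024existence}'' is in fact a pointer to this scaling argument, which abandons your gluing framework; as written, the strict case is asserted, not proved.
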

\begin{proof}
 	Note that, fixed $\alpha \in(0, c)$, it is sufficient to prove that the following holds
 \begin{equation}\label{Eq-mtha}
 	m_{R_0}(\theta \alpha) \leq \theta^2 m_{R_0}(\alpha), \forall \theta \in\left(1, \frac{c}{\alpha}\right]
 \end{equation}
 and that, if $m_{R_0}(\alpha)$ is reached, the inequality is strict. Indeed, if \eqref{Eq-mtha} holds then we have
 \begin{equation}
 	\begin{aligned}
 		m_{R_0}(c) & =\frac{c^2-\alpha^2}{c^2} m_{R_0}(c)+\frac{\alpha^2}{c^2} m_{R_0}(c)=\frac{c^2-\alpha^2}{c^2} m_{R_0}\left(\frac{c}{\sqrt{c^2-\alpha^2}}(\sqrt{c^2-\alpha^2})\right)+\frac{\alpha^2}{c^2} m_{R_0}\left(\frac{c}{\alpha} \alpha\right) \\
 		& \leq m_{R_0}(\sqrt{c^2-\alpha^2})+m_{R_0}(\alpha)
 	\end{aligned}
 \end{equation}
 with a strict inequality if $m_{R_0}(\alpha)$ is reached.
 To prove that \eqref{Eq-mtha} holds, note that in view of Lemma \ref{Lemma-mR(a)}, for any $\varepsilon>0$ sufficiently small, there exists a $u \in \mathcal{U}_{R_0}(\alpha)$ such that
 \begin{equation}\label{Eq-Phiu<0}
 	J(u) \leq m_{R_0}(\alpha)+\varepsilon \quad \text { and } \quad J(u)<0.
 \end{equation}
 In view of Lemma \ref{Lemma-g}, $g(\alpha, R) \geq 0$ for any $R \in\left[\left( \frac{\alpha}{c} \right)^{2/\widetilde{q}}R_0, R_0\right]$. Hence, we can deduce from Lemma \ref{Lemma-g} and \eqref{Eq-Phiu<0} that
 $$
 \|\nabla u\|_{\widetilde{q}}<\left( \frac{\alpha}{c} \right)^{2/\widetilde{q}}R_0.
 $$
 Consider now $ v:=u\left(\cdot / \theta^{2 / N}\right) $. We first note that $\|v\|_2=\theta\|u\|_2=\theta \alpha$. and
 $$
 \|\nabla v\|_{\widetilde{q}}^{\widetilde{q}}=\theta^{2(N-q)/ N}\|\nabla u\|_{\widetilde{q}}^{\widetilde{q}}<\theta^{2}\left(\frac{\alpha}{c} \right)^{2}{R_0}^{\widetilde{q}}\leq{R_0}^{\widetilde{q}}
 $$
 Thus $v \in V(\theta \alpha)$ and we can write

 $$
 \begin{aligned}
 	m_{R_0}(\theta \alpha) & \leq J(v)\\
 	&=\frac{1}{2} \theta^{2(N-2)/ N}\|\nabla u\|_2^2-\frac{1}{q} \theta^{2(N-q)/ N}\|\nabla u\|_q^q-\theta^{2}\int_{\mathbb{R}^N} F(u) {d} x \\
 	& <\frac{1}{2} \theta^{2}\|\nabla u\|_2^2-\frac{1}{q} \theta^{2}\|\nabla u\|_q^q-\theta^{2}\int_{\mathbb{R}^N} F(u) {d} x \\
 	&=\theta^{2} J(u)\\
 	& \leq \theta^2(m_{R_0}(\alpha)+\varepsilon).
 \end{aligned}
 $$
 Since $\varepsilon>0$ is arbitrary, we have that $m_{R_0}(\theta \alpha) \leq \theta m_{R_0}(\alpha)$. If $m_{R_0}(\alpha)$ is reached then we can let $\varepsilon=0$ in \eqref{Eq-Phiu<0} and thus the strict inequality follows.
\end{proof}

Next we prove the compactness of minimizing sequences {of $J$ at level $m_{R_0}(c)$}.
	\begin{lemma}\label{LemmaPSseq}
		If \eqref{F0}, \eqref{F2}, and \eqref{Eq-crange} are satisfied. Let ${(\tilde{u}_n)_n} \subset \mathcal{U}_{R_0}(c)$ be a minimizing sequence for $J$ at level $m_{R_0}(c)$. Then, there exists another minimizing sequence ${(u_n)_n} \subset \mathcal{U}_{R_0}(c)$ bounded in $X$, and $\lambda \in \mathbb{R}$ such that {for all $\varphi \in X$}
	$$
	\left\|u_n-\tilde{u}_n\right\|_X \rightarrow 0,
	\quad J^{\prime}\left(u_n\right) \varphi+\lambda\int_{\mathbb{R}^N} u_n \varphi d x \rightarrow 0  \text { as } n \rightarrow+\infty.
	$$
	Moreover, if $\lim \limits _{n \rightarrow +\infty}\left\|u_n\right\|_2<c$, then $\lambda=0$.
\end{lemma}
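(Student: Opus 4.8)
The plan is to apply Ekeland's variational principle on a suitable closed subset of $\mathcal{D}(c)$ and then extract the Lagrange multiplier by hand, compensating for the absence of a Hilbert structure on $X$ with a multiplicative rescaling that preserves the $L^2$-norm. First I would record two preliminary facts. By \eqref{Eq-J(u)sob} the functional $J$ is bounded below on $\mathcal{U}_{R_0}(c)$, and since $\|\nabla u\|_{\widetilde q}$ is bounded on $\mathcal{U}_{R_0}(c)$ while $\|u\|_2\le c$, the energy bound forces the remaining gradient norm to stay bounded; hence any minimizing sequence is bounded in $X$. Moreover, by Remark \ref{Remark-g>0} together with $m_{R_0}(c)<0$ (Lemma \ref{Lemma-mR(a)}), the minimizing sequence $(\widetilde u_n)_n$ eventually satisfies $\|\nabla \widetilde u_n\|_{\widetilde q}<R_0-\varepsilon$ for some fixed $\varepsilon>0$, so it stays uniformly away from the gradient boundary $\{\|\nabla u\|_{\widetilde q}=R_0\}$.

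Consequently I would work on the closed, hence complete, set $A:=\{u\in\mathcal D(c):\|\nabla u\|_{\widetilde q}\le R_0-\varepsilon/2\}\subset\mathcal{U}_{R_0}(c)$, on which $\inf_A J=m_{R_0}(c)$ and which contains the tail of $(\widetilde u_n)_n$ interior with respect to the gradient constraint. Applying Ekeland's principle on $A$ produces $(u_n)_n\subset A$ with $J(u_n)\to m_{R_0}(c)$, $\|u_n-\widetilde u_n\|_X\to 0$, and $J(v)\ge J(u_n)-\tfrac1n\|v-u_n\|_X$ for every $v\in A$; by closeness to $\widetilde u_n$ the gradient constraint is inactive at $u_n$, so $u_n\in\mathcal U_{R_0}(c)$ and only the constraint $\|u\|_2\le c$ may be active.

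The core of the argument is the extraction of the multiplier. If $\|u_n\|_2<c$, then $u_n$ is interior to $\mathcal D(c)$; choosing $v=u_n\pm t\varphi$ with $t>0$ small in the Ekeland inequality and letting $t\to0^+$ gives $\|J'(u_n)\|_{X^*}\le \tfrac1n$, so one may take $\lambda_n=0$. This case also yields the last assertion: if $\lim_n\|u_n\|_2<c$, then eventually $\|u_n\|_2<c$, and $\lambda=0$ works. If instead $\|u_n\|_2=c$, I would test with the rescaled curve $v_t:=\sigma(t)(u_n+t\varphi)$, where $\sigma(t):=c/\|u_n+t\varphi\|_2$ is chosen so that $\|v_t\|_2=c$; for small $t$ one has $v_t\in A$. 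A Taylor expansion gives $v_t=u_n+t\psi_n+o(t)$ in $X$ with $\psi_n:=\varphi-c^{-2}\big(\int_{\mathbb R^N}u_n\varphi\,dx\big)u_n$, whence, setting $\lambda_n:=-c^{-2}\langle J'(u_n),u_n\rangle$, the Ekeland inequality applied with $\pm t$ yields $\big|\langle J'(u_n),\varphi\rangle+\lambda_n\int_{\mathbb R^N}u_n\varphi\,dx\big|\le \tfrac1n\|\psi_n\|_X$. Using $\|u_n\|_X\le C$ and $|\int_{\mathbb R^N}u_n\varphi\,dx|\le c\|\varphi\|_2$ one bounds $\|\psi_n\|_X\le C\|\varphi\|_X$ uniformly in $n$, so $J'(u_n)+\lambda_n u_n\to0$ in $X^*$.

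It remains to pass from $\lambda_n$ to a fixed $\lambda$. Since $\langle J'(u_n),u_n\rangle=\|\nabla u_n\|_2^2+\|\nabla u_n\|_q^q-\int_{\mathbb R^N}f(u_n)u_n\,dx$, and the growth bound \eqref{F0} together with the embeddings of Lemma \ref{LemmaEmbed} control $\int_{\mathbb R^N}f(u_n)u_n\,dx$ along the bounded sequence $(u_n)_n$, the multipliers $\lambda_n$ are bounded; passing to a subsequence (and relabeling, which preserves both $\|u_n-\widetilde u_n\|_X\to0$ and the minimizing property), $\lambda_n\to\lambda$. Finally $(\lambda-\lambda_n)\int_{\mathbb R^N}u_n\varphi\,dx\to0$ for each fixed $\varphi$, so $J'(u_n)\varphi+\lambda\int_{\mathbb R^N}u_n\varphi\,dx\to0$, as required. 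The main obstacle is the boundary case $\|u_n\|_2=c$: because $X$ carries no inner product one cannot project onto the $L^2$-tangent space, so the rescaling $\sigma(t)(u_n+t\varphi)$ is used instead, and the uniform-in-$n$ control of $\|\psi_n\|_X$ — which converts the Ekeland estimate into convergence in $X^*$ — is precisely where the boundedness of $(u_n)_n$ in $X$ must be invoked.
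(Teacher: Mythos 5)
Your proposal is correct, and its skeleton is the paper's: Ekeland's variational principle on a region where the infimum of $J$ equals $m_{R_0}(c)$, boundedness of the new sequence in $X$ via \eqref{Eq-J(u)sob} and the growth bound \eqref{F0} with the embeddings of Lemma \ref{LemmaEmbed}, boundedness of the multipliers by testing with $u_n$, and passage to a limit $\lambda$ along a subsequence (the paper relabels subsequences in exactly the same way, so that shared caveat is not held against you). Where you genuinely diverge is in two places. First, the paper invokes \cite[Proposition 5.12]{Willem1996Minimax} as a black box to produce the $\lambda_n$, whereas you re-derive the multiplier by hand through the mass-preserving curve $v_t=\sigma(t)(u_n+t\varphi)$, the tangent correction $\psi_n=\varphi-c^{-2}\bigl(\int_{\mathbb{R}^N}u_n\varphi\,dx\bigr)u_n$, the explicit formula $\lambda_n=-c^{-2}\langle J'(u_n),u_n\rangle$, and the uniform bound $\|\psi_n\|_X\lesssim\|\varphi\|_X$; this is more self-contained and honestly treats the two features the citation glosses over, namely that the constraint is the inequality $\|u\|_2\le c$ (so an interior/boundary dichotomy is needed, and your use of Remark \ref{Remark-g>0} together with Lemma \ref{Lemma-mR(a)} to keep the sequence strictly inside the gradient constraint, so that only the mass constraint can be active, is the right mechanism, implicit in the paper) and that $X$ is only a Banach space. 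Second, your proof of the final assertion is different from, and cleaner than, the paper's: when $\lim_n\|u_n\|_2<c$ you note that $u_n$ is eventually interior to $\mathcal{D}(c)$, so the Ekeland inequality directly gives $\|J'(u_n)\|_{X^*}\to 0$ and one may take $\lambda_n=0$; the paper instead passes to the weak limit $u$ and asserts it is a local minimizer of $J$ on $X$ with $J'(u)=0$, a step that as written needs extra justification (weak limits of minimizing sequences need not be local minimizers), so your route avoids a soft spot. Two cosmetic points: Ekeland yields the error $\sqrt{\epsilon_n}$ with $\epsilon_n=J(\tilde u_n)-m_{R_0}(c)$ rather than $1/n$ (harmless after relabeling), and when you extract the subsequence along which $\lambda_n\to\lambda$ you should pass to the corresponding subsequence of $(\tilde u_n)$ as well, which is the convention the paper itself adopts.
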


\begin{proof}
	Let {$(\tilde{u}_n)_n$ be} a minimizing sequence for $J$ at level $m_{R_0}(c)$.
	{By} Ekeland's variational principle \cite[Theorem 2.4]{Willem1996Minimax}, we derive a new minimizing sequence ${(u_n)_n} \subset \mathcal{D}(c)$, that is also a Palais-Smale sequence for $J$ on $\mathcal{D}(c)$. By \cite[Proposition 5.12]{Willem1996Minimax}, there exist ${(\lambda_n)_n}\subset\mathbb{R}$ , such that
	{for all $\varphi \in X$}
	$$
	\left\|u_n-\tilde{u}_n\right\|_X \rightarrow 0,\quad J^{\prime}\left(u_n\right) \varphi+\lambda_n \int_{\mathbb{R}^N} u_n \varphi d x \rightarrow 0  \text { as } n \rightarrow+\infty.
	$$	
	Therefore, since ${(\tilde{u}_n)_n} \subset \mathcal{U}_{R_0}(c)$, when $n$ large enough ${({u}_n)_n} \subset \mathcal{U}_{R_0}(c)$.

	Now we prove that
	$({u}_n)_n$ is bounded in $X$.
	First of all,
	since $u_n \in \mathcal{U}_{R_0}\left(c\right)$, $\|\nabla {u}_n\|_{\widetilde{q}} $ and $\|u_n\|_2$ are bounded in $\mathbb{R}^+$. So from Lemma \ref{Lemma-GN inequality2} and Lemma
	\ref{Sobolev inequality},
	$$
	\|u_n\|_p \leq K_{N, p}\|\nabla u_n\|_{\widetilde{q}}^{\nu_{p, \widetilde{q}}}\|u_n\|_2^{\left(1-\nu_{p, \widetilde{q}}\right)}, \quad 	\mathcal{S}_{\widetilde{q}}\|u_n\|_{q^{\prime}}^{\widetilde{q}} \leq\|\nabla u_n\|_{\widetilde{q}}^{\widetilde{q}}.
	$$
	Thus, $\|u_n\|_p$ and $\|u_n\|_{q^{\prime}}$ are bounded in $\mathbb{R}^+$. From \eqref{F0}, $\int_{\mathbb{R}^N}F(u_n)dx$ is bounded in $\mathbb{R}$.
	Because $J\left(u_n\right) \rightarrow m_{R_0}(c)$ as $n \rightarrow \infty$, $ \|\nabla u_n\|_{\widehat{q}} $ is bounded in $  \mathbb{R}^+ $. Otherwise we have $J\left(u_n\right) \rightarrow +\infty$ as $n \rightarrow +\infty$. Therefore, We conclude that the sequence $\left(u_n\right)_n$ is bounded in $X$.

	Hence, there exists ${u\in X}$ such that, {up to a subsequence,}
	${u}_n \rightharpoonup {u}$ in $X$ and ${u}_n \rightarrow {u}$ in $L_{loc}^m\left(\mathbb{R}^N\right)$ for every {$m$,
		with} $2 \leq m<q^{\prime}$ and ${u}_n \rightarrow {u}$ for a.e. in $\mathbb{R}^N$. By Fatou's lemma, it follows that ${u} \in \mathcal{D}(c)$.
	Let $\varphi=u_n$, it is easy to show that {$(\lambda_n)_n$} is bounded in $\mathbb{R}$.
	we may assume $\lambda_n\rightarrow \lambda\text { as } n \rightarrow+\infty $, up to a subsequence if necessary. Hence
	$$
	J^{\prime}\left(u_n\right) \varphi+\lambda\int_{\mathbb{R}^N} u_n \varphi d x \rightarrow 0  \text { as } n \rightarrow+\infty.
	$$
	If  $\lim \limits _{n \rightarrow \infty}\left\|u_n\right\|_2<c$, {then} ${u} \in\mathcal{D}(c)\backslash\mathcal{S}(c) $ and is an interior point of $\mathcal{D}(c)$. {Therefore,} ${u}$ is a local minimizer of $J$ on $X$. Hence
	$$
	J^{\prime}\left({u}\right) \varphi=0 \quad \mbox{for all }\varphi \in {X},
	$$
	which implies that  $\lambda=0$.
\end{proof}
	{\begin{lemma}\label{Lem3-ae}
		If \eqref{F0}, \eqref{F2}, and \eqref{Eq-crange} are satisfied. Let ${(\tilde{u}_n)_n} \subset \mathcal{D}(c)$ be a minimizing sequence for $J$ at level $m_{R_0}(c)$.
		Then, there exists another minimizing sequence $({u}_n)_n \subset \mathcal{D}(c)$ for $J$, such that for some $u\in X$,
		$$
		\nabla u_n \rightarrow \nabla u \text { a.e. } \text{ in } \mathbb{R}^N.
		$$
\end{lemma}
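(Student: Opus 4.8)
The plan is to begin with the good minimizing sequence furnished by Lemma \ref{LemmaPSseq} and then run a Boccardo--Murat type argument, in which the pointwise monotonicity inequality of Lemma \ref{Lem-aee} is combined with a truncation whose only purpose is to neutralize the Sobolev-critical growth of $f$. Concretely, I would discard $(\tilde u_n)_n$ in favour of the sequence $(u_n)_n\subset\mathcal U_{R_0}(c)$ produced by Lemma \ref{LemmaPSseq}: it is bounded in $X$, satisfies $u_n\rightharpoonup u$ in $X$, $u_n\to u$ a.e. and in $L^m_{\mathrm{loc}}(\mathbb R^N)$ for $2\le m<q'$ (by Lemma \ref{LemmaEmbed}), and there is $\lambda\in\mathbb R$ with $J'(u_n)\varphi+\lambda\int_{\mathbb R^N}u_n\varphi\,dx\to0$ for every $\varphi\in X$; since this holds in the dual norm, it extends to any $X$-bounded sequence $(\varphi_n)_n$. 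Writing $w_n:=u_n-u$, I would fix $R>0$, a cut-off $\phi\in C_c^\infty(\mathbb R^N)$ with $0\le\phi\le1$ and $\phi\equiv1$ on $B_R$, and the truncation $T_1(s):=\max\{-1,\min\{s,1\}\}$, and then test with $\varphi_n:=\phi\,T_1(w_n)$, which is bounded in $X$ because $|T_1(w_n)|\le1$ and $|\nabla\varphi_n|\le\phi\,\chi_{\{|w_n|\le1\}}|\nabla w_n|+|T_1(w_n)|\,|\nabla\phi|$.

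Inserting $\varphi_n$ into the asymptotic relation above and expanding $\nabla\varphi_n$, I would check that every term except the principal one is $o_n(1)$. The terms carrying $\nabla\phi$ and the term $\lambda\int_{\mathbb R^N}u_n\varphi_n$ vanish by H\"older's inequality and the $X$-boundedness, using that $T_1(w_n)\to0$ in every $L^m(\operatorname{supp}\phi)$ by dominated convergence (as $|T_1(w_n)|\le1$ and $w_n\to0$ a.e.). The decisive term is the reaction term $\int_{\mathbb R^N}f(u_n)\phi\,T_1(w_n)\,dx$: by \eqref{F0} the family $f(u_n)$ is bounded in $L^{q'/(q'-1)}(\operatorname{supp}\phi)$, while $T_1(w_n)\to0$ in $L^{q'}(\operatorname{supp}\phi)$ by dominated convergence, so the product tends to $0$ even though $f$ carries the critical power $|t|^{q'-1}$. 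What survives is $\int_{\{|w_n|\le1\}}\phi\big(\nabla u_n\cdot\nabla w_n+|\nabla u_n|^{q-2}\nabla u_n\cdot\nabla w_n\big)dx\to0$. Subtracting the corresponding $\nabla u$-contributions, which tend to $0$ because $\chi_{\{|w_n|\le1\}}\phi\nabla u\to\phi\nabla u$ strongly in $L^2$ and $\chi_{\{|w_n|\le1\}}\phi|\nabla u|^{q-2}\nabla u\to\phi|\nabla u|^{q-2}\nabla u$ strongly in $L^{q/(q-1)}$ (dominated convergence, since $\chi_{\{|w_n|\le1\}}\to1$ a.e.) while $\nabla w_n\rightharpoonup0$ in $L^2$ and in $L^q$, I obtain
\[
\int_{\{|w_n|\le1\}}\phi\Big(|\nabla w_n|^2+\big(|\nabla u_n|^{q-2}\nabla u_n-|\nabla u|^{q-2}\nabla u\big)\cdot\nabla w_n\Big)dx\longrightarrow0,
\]
where on $\{|w_n|\le1\}$ one has $\nabla w_n=\nabla u_n-\nabla u$ and both integrands are nonnegative by Lemma \ref{Lem-aee}.

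Dropping the nonnegative $q$-part and using $\phi\equiv1$ on $B_R$ gives $\int_{B_R\cap\{|w_n|\le1\}}|\nabla u_n-\nabla u|^2\,dx\to0$. To upgrade this to almost everywhere convergence on $B_R$ I would use the exponent trick: for $\theta\in(0,1)$,
\[
\int_{B_R}|\nabla u_n-\nabla u|^{2\theta}\,dx=\int_{B_R\cap\{|w_n|\le1\}}|\nabla u_n-\nabla u|^{2\theta}\,dx+\int_{B_R\cap\{|w_n|>1\}}|\nabla u_n-\nabla u|^{2\theta}\,dx,
\]
where H\"older bounds the first summand by $\big(\int_{B_R\cap\{|w_n|\le1\}}|\nabla u_n-\nabla u|^2\big)^{\theta}|B_R|^{1-\theta}\to0$, and the second by $\big(\int_{B_R}|\nabla u_n-\nabla u|^2\big)^{\theta}|\{|w_n|>1\}\cap B_R|^{1-\theta}$, which tends to $0$ because $\nabla u_n-\nabla u$ is bounded in $L^2(B_R)$ and $|\{|w_n|>1\}\cap B_R|\le\|w_n\|_{L^1(B_R)}\to0$ by Chebyshev. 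Hence $|\nabla u_n-\nabla u|^{2\theta}\to0$ in $L^1(B_R)$, so along a subsequence $\nabla u_n\to\nabla u$ a.e. in $B_R$; a diagonal argument over $R=1,2,\dots$ then yields $\nabla u_n\to\nabla u$ a.e. in $\mathbb R^N$, and since a subsequence of a minimizing sequence is still minimizing, the lemma follows.

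I expect the main obstacle to be exactly the reaction term: because $f$ is allowed to have the critical growth $|t|^{q'-1}$ and the embedding $X\hookrightarrow L^{q'}(\mathbb R^N)$ is not locally compact at the endpoint $q'$, the quantity $\int_{\mathbb R^N}f(u_n)(u_n-u)\phi\,dx$ need not vanish for the untruncated test function $(u_n-u)\phi$, and indeed strong $L^q_{\mathrm{loc}}$ convergence of the gradients may genuinely fail through concentration. The truncation $T_1$ is what restores compactness, through dominated convergence on a bounded function, and thereby lets the monotonicity of Lemma \ref{Lem-aee} be exploited; this is why only a.e. convergence, rather than norm convergence, is claimed. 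A secondary difficulty, caused by $X$ not being a Hilbert space, is that the principal part genuinely splits into a $2$-Laplacian and a $q$-Laplacian piece; I handle this by treating the two monotone operators separately and extracting a.e. convergence from the quadratic piece, which also avoids invoking the more delicate weighted estimate of Lemma \ref{Lem-aee} in the range $1<q<2$.
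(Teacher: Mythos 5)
Your proof is correct, and it lives in the same Boccardo--Murat family as the paper's (which also starts from Lemma \ref{LemmaPSseq}, truncates, and uses the exponent trick with $\theta\in(0,1)$), but the execution is genuinely different and in several respects leaner. The paper truncates at a \emph{variable} level $k$, tests with both $\tau_k(u_n-u)\psi_R$ and the untruncated $(u_n-u)\psi_R$, bounds the reaction term only crudely via $|\tau_k|\le k$ (yielding $\int e_{k,n}\,dx\le Ck+o_n(1)$ as in \eqref{Eq-aePhi5ekn}), needs the annulus estimates \eqref{Eq-eue} on $B_{2R}(0)\setminus B_R(0)$ to localize, splits $B_R(0)$ into $S_n^k$ and $G_n^k$, and finally sends $k\to0$. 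You fix the truncation level at $1$ and kill the reaction term outright: since $|T_1(w_n)|\le 1$ and $w_n\to0$ a.e., dominated convergence gives $T_1(w_n)\to0$ in $L^{q'}(\operatorname{supp}\phi)$, which pairs by H\"older against the $L^{q'/(q'-1)}_{\mathrm{loc}}$-bound on $f(u_n)$ from \eqref{F0} (valid because $q'/(q'-1)<2$ and $X\hookrightarrow L^{q'}$). This single observation, which the paper does not exploit, is what removes the second test function, the annulus estimates, and the limit $k\to0$; the exceptional set is then handled by Chebyshev, $|\{|w_n|>1\}\cap B_R(0)|\le\|w_n\|_{L^1(B_R(0))}\to0$ (legitimate since $X$ embeds compactly into $L^1_{\mathrm{loc}}$ by Lemma \ref{LemmaEmbed}), together with the uniform $L^2$-bound on gradients. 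A second real simplification: you extract a.e.\ convergence from the quadratic part alone, using the $q$-part only as a nonnegative quantity discarded by monotonicity, so you never touch the weighted case $1\le s<2$ of Lemma \ref{Lem-aee} nor Lemma \ref{Lemma-Converae}, whereas the paper carries the full sum $e_n$ throughout. What the paper's heavier scheme buys is robustness: the $Ck$-bound route is the classical one that survives when the right-hand side is merely bounded in $L^1$, where your duality pairing is unavailable; here, with \eqref{F0} in force, your shortcut is fully justified. Two points you should make explicit but which are not gaps: (i) using the $n$-dependent tests $\phi\,T_1(w_n)$ requires the dual-norm form of the Palais--Smale information, $\|J'(u_n)+\lambda_n u_n\|_{X^*}\to0$ with $\lambda_n\to\lambda$, rather than the pointwise-in-$\varphi$ statement of Lemma \ref{LemmaPSseq}; this is what Ekeland plus \cite[Proposition 5.12]{Willem1996Minimax} actually deliver, and the paper's own proof uses it in exactly the same implicit way in \eqref{Eq-aePhi3}; (ii) your strong-times-weak subtraction step needs $\nabla w_n\rightharpoonup0$ separately in $L^2$ and in $L^q$, which follows from boundedness in $X$ and identification of the distributional limit.
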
}
\begin{proof}
	From Lemma \ref{LemmaPSseq}, we know that there exists anthor bounded sequence $ ({u}_n)_n $ such that for any $v \in {X}$,
	\begin{equation}\label{Eq-aePhi1}
		\begin{aligned}
			o_n(1)=&\left\langle J^{\prime}\left(u_n\right), v \right\rangle+\lambda\int_{\mathbb{R}^N}  u_n vdx\\
			=&\int_{\mathbb{R}^N} \left( \nabla u_n\nabla v
			+ | \nabla u_n|^{q-2} \nabla u_n\nabla v
			+ \lambda u_n v\right)dx\\
			&-\int_{\mathbb{R}^N}f(u_n)v
			dx.\\
		\end{aligned}	
	\end{equation}
	Up to a subsequence, we may assume that
	${u}_n \rightharpoonup {u}$ in $X$.
	Therefore, for any $v \in {X}$,
	\begin{equation}\label{Eq-aePhi2}
		\left\langle J^{\prime}\left(u\right),
		v\right\rangle
		+\lambda\int_{\mathbb{R}^N}uvdx
		=\lim_{n \rightarrow \infty}\left( \left\langle J^{\prime}\left(u_n\right),
		v\right\rangle
		+\lambda\int_{\mathbb{R}^N}u_nvdx \right)
		=0.
	\end{equation}
	Now we use a technique due to Boccardo and Murat \cite{Boccardo1992}. Fix $k\in \mathbb{R}^{+}$, define the function
	$$
	\tau_k(s)= \begin{cases}s & \text { if }|s| \leq k, \\ k s /|s| & \text { if }|s|>k.\end{cases}
	$$
	It's easy to see that $(\tau_k\left(u_n-u\right))_{n} $ is bounded in $X$. Fix a function $\psi \in C_0^{\infty}\left(\mathbb{R}^N\right)$ with $0 \leq \psi \leq 1$ in $\mathbb{R}^N$, $\psi(x)=1$ for $x \in B_{1}(0)$ and $\psi(x)=0$ for $x \in \mathbb{R}^N \backslash B_2(0)$.\
	Now, take $R>0$ and define $\psi_R(x)=\psi(x / R)$ for $x \in \mathbb{R}^N$.
	We obtain from
	\eqref{Eq-aePhi1} and \eqref{Eq-aePhi2} that
	\begin{equation}\label{Eq-aePhi3}
		\begin{aligned}
			o_n(1)=&\left\langle J^{\prime}\left(u_n\right), \tau_k(u_n-u)\psi_R\right\rangle
			+\lambda\int_{\mathbb{R}^N}u_n\tau_k(u_n-u)\psi_Rdx\\
			=& \left\langle J^{\prime}\left(u_n\right)-J^{\prime}(u), \tau_k(u_n-u)\psi_R\right\rangle
			+\lambda\int_{\mathbb{R}^N} \left( u_n- u \right) \tau_k(u_n-u)\psi_Rdx \\
			=&\int_{\mathbb{R}^N} \left(\left|\nabla u_n\right|^{q-2} \nabla u_n-|\nabla u|^{q-2}\nabla u \right)\nabla \left(\tau_k(u_n-u)\psi_R\right)dx\\
			&+\int_{\mathbb{R}^N} \left(\nabla u_n-\nabla u \right)\nabla \left(\tau_k(u_n-u)\psi_R\right)dx\\
			&+\lambda\int_{\mathbb{R}^N} \left( u_n- u \right) \tau_k(u_n-u)\psi_Rdx\\
			&-\int_{\mathbb{R}^N} \left(f(u_n)-f(u) \right) \tau_k(u_n-u)\psi_Rdx\\
		\end{aligned}
	\end{equation}
	and
	\begin{equation}\label{Eq-aePhi3en}
		\begin{aligned}
			o_n(1)=&\left\langle J^{\prime}\left(u_n\right), (u_n-u)\psi_R\right\rangle
			+\lambda\int_{\mathbb{R}^N}u_n(u_n-u)\psi_Rdx\\
			=& \left\langle J^{\prime}\left(u_n\right)-J^{\prime}(u), (u_n-u)\psi_R\right\rangle
			+\lambda\int_{\mathbb{R}^N} \left( u_n- u \right) (u_n-u)\psi_Rdx \\
			=&\int_{\mathbb{R}^N} \left(\left|\nabla u_n\right|^{q-2} \nabla u_n-|\nabla u|^{q-2}\nabla u \right)\nabla \left((u_n-u)\psi_R\right)dx\\
			&+\int_{\mathbb{R}^N} \left(\nabla u_n-\nabla u \right)\nabla \left((u_n-u)\psi_R\right)dx\\
			&+\lambda\int_{\mathbb{R}^N} \left( u_n- u \right)^2\psi_Rdx\\
			&-\int_{\mathbb{R}^N} \left(f(u_n)-f(u) \right) (u_n-u)\psi_Rdx.\\
		\end{aligned}
	\end{equation}
	Since  $ ({u}_n)_n $  is bounded in $X$, {up to a subsequence}, we have
	\begin{equation}\label{Eq-aePhi4}
		\begin{aligned}
			\int_{\mathbb{R}^N} \left( u_n- u \right) \tau_k(u_n-u)\psi_Rdx=o_n(1) \quad \text{and} \quad		\int_{\mathbb{R}^N} \left( u_n- u \right)^2\psi_Rdx=o_n(1)
		\end{aligned}.
	\end{equation}
	From \eqref{F0}, there exists $C_1>0$ such that
	\begin{equation}\label{eq-g1}
		|f(s)| \leq C_1\left(|s|+|s|^{{q}^{\prime}-1}\right) \text{for all } s \in \mathbb{R}.
	\end{equation}
	Therefore, from \eqref{eq-g1} and Lemma \ref{LemmaEmbed}, we have
	\begin{equation}\label{eq-g2}
		\int_{\mathbb{R}^N}|f(u_n)u_n|dx\leq C_1\int_{\mathbb{R}^N}\left(|u_n|^2+|u_n|^{{q}^{\prime}}\right)dx.
	\end{equation}
	Similarly, we can prove, there exists $C^{\prime}>0$ such that
	{$$
		\int_{\mathbb{R}^N}|f(u)u|dx\leq C^{\prime},\,
		\int_{\mathbb{R}^N}|f(u)u_n|dx\leq C^{\prime} \quad \text{ and } \quad
		\int_{\mathbb{R}^N}|f(u_n)u|dx\leq C^{\prime}.
		$$}
	Hence, there exists $C>0$ such that
	\begin{equation}\label{Eq-aePhi5}
		\begin{aligned}
			&\quad\int_{\mathbb{R}^N} |\left(f(u_n)- f(u)\right) (u_n-u)\psi_R|dx\\
			&\leq\int_{\mathbb{R}^N} |\left(f(u_n)- f(u)\right) (u_n-u)|dx\\
			&\leq\int_{\mathbb{R}^N} |f(u)u| +|f(u_n)u_n|+|f(u_n)u|+|f(u)u_n|dx\\
			&\leq C.	
		\end{aligned}
	\end{equation}
	From \eqref{eq-g1}, we may assume that $C$ large enough such that
	{\begin{equation}\label{Eq-aePhi5ekn}
			\begin{aligned}
				&\quad\int_{\mathbb{R}^N} |\left(f(u_n)- f(u)\right) \tau_k(u_n-u)\psi_R|dx\\
				&\leq k	\int_{\mathbb{R}^N} |\left(f(u_n)- f(u)\right) \psi_R|dx\\
				&\leq k	\int_{B_{2R}(0)} |f(u_n)- f(u)|dx\\
				&\leq Ck.	
			\end{aligned}
	\end{equation}}
	Therefore,
	let
	\begin{equation*}
		\begin{aligned}
			e_n(x)=& \left(\left|\nabla u_n\right|^{q-2} \nabla u_n-|\nabla u|^{q-2}\nabla u \right)\nabla \left((u_n-u)\psi_R\right)\\&+
			\left(\nabla u_n-\nabla u \right)\nabla \left((u_n-u)\psi_R\right)
		\end{aligned}
	\end{equation*}
	and
	\begin{equation*}
		\begin{aligned}
			e_{k,n}(x)=& \left(\left|\nabla u_n\right|^{q-2} \nabla u_n-|\nabla u|^{q-2}\nabla u \right)\nabla \left(\tau_k(u_n-u)\psi_R\right)\\&+
			\left(\nabla u_n-\nabla u \right)\nabla \left(\tau_k(u_n-u)\psi_R\right).
		\end{aligned}
	\end{equation*}
	First, we give some estimates for
	$$
	\int_{\mathbb{R}^N} e_n(x) d x \quad \text { and } \quad \int_{\mathbb{R}^N} e_{k, n}(x) d x .
	$$
	From \eqref{Eq-aePhi3en} and \eqref{Eq-aePhi5}, we have
	\begin{equation}\label{enR}
		\int_{\mathbb{R}^N}e_{n}(x)dx\leq C+o_n(1).
	\end{equation}
	And from \eqref{Eq-aePhi3} and \eqref{Eq-aePhi5ekn}, we have
	\begin{equation}\label{eknR}
		\int_{\mathbb{R}^N}e_{k,n}(x)dx\leq Ck+o_n(1).
	\end{equation}
	Next, we give some estimates for
	$$
	\int_{B_{2 R}(0) \backslash B_R(0)} e_n(x) d x \quad \text { and } \quad \int_{B_{2 R}(0) \backslash B_R(0)} e_{k, n}(x) d x.
	$$
	{We may assume that there exist $C_R>0$ such that $|\nabla\psi_R|<C_R$. Then
		\begin{equation*}
			\begin{aligned}
				&\quad\left|\int_{B_{2R}(0) \backslash B_R(0)}\left|\nabla u_n\right|^{q-2} \nabla u_n\left((u_n-u)\nabla\psi_R\right)dx\right|\\
				&\leq
				\int_{B_{2R}(0)\backslash B_R(0)}\left|\nabla u_n\right|^{q-1} |u_n-u||\nabla\psi_R|dx\\
				&\leq C_R\int_{B_{2R}(0) \backslash B_R(0)}\left|\nabla u_n\right|^{q-1}\left|u_n-u\right|dx\\
				&\leq C_R \left(\int_{B_{2R}(0) \backslash B_R(0)}\left|\nabla u_n\right|^{q}dx \right)^{\frac{q-1}{q}} \left(  \int_{B_{2R}(0) \backslash B_R(0)}\left|u_n-u\right|^{q}dx\right)^{\frac{1}{q}} \\
				&=o_n(1).
			\end{aligned}
		\end{equation*}
		Similarly, we can prove
		$$
		\left|\int_{B_{2R}(0) \backslash B_R(0)}          \left|\nabla u\right|^{q-2}    \nabla u\left((u_n-u)\nabla\psi_R\right)dx\right|=o_n(1),
		$$
		$$
		\left|\int_{B_{2R}(0) \backslash B_R(0)}          \nabla u_n\left((u_n-u)\nabla\psi_R\right)dx\right|=o_n(1),
		$$
		$$
		\left|\int_{B_{2R}(0) \backslash B_R(0)}          \nabla u\left((u_n-u)\nabla\psi_R\right)dx\right|=o_n(1).
		$$
		Therefore
		\begin{equation}\label{Eq-eue}
			\begin{aligned}
				\int_{B_{2R}(0) \backslash B_R(0)}e_{n}(x)dx
				=&\int_{B_{2R}(0) \backslash B_R(0)} \left(\left|\nabla u_n\right|^{q-2} \nabla u_n-|\nabla u|^{q-2}\nabla u \right)\nabla \left((u_n-u)\psi_R\right)dx\\
				&+\int_{B_{2R}(0) \backslash B_R(0)}\left(\nabla u_n-\nabla u \right)\nabla \left((u_n-u)\psi_R\right)dx\\
				=&\int_{B_{2R}(0) \backslash B_R(0)} \left(\left|\nabla u_n\right|^{q-2} \nabla u_n-|\nabla u|^{q-2}\nabla u \right)(\nabla u_n-\nabla u)\psi_Rdx\\
				&+\int_{B_{2R}(0) \backslash B_R(0)}\left(\nabla u_n-\nabla u \right) (\nabla u_n-\nabla u)\psi_Rdx\\
				&+\int_{B_{2R}(0) \backslash B_R(0)} \left(\left|\nabla u_n\right|^{q-2} \nabla u_n-|\nabla u|^{q-2}\nabla u \right) \left((u_n-u)\nabla\psi_R\right)dx\\
				&+\int_{B_{2R}(0) \backslash B_R(0)}\left(\nabla u_n-\nabla u \right) \left((u_n-u)\nabla\psi_R\right)dx\\
				\geq&
				\int_{B_{2R}(0) \backslash B_R(0)} \left(\left|\nabla u_n\right|^{q-2} \nabla u_n-|\nabla u|^{q-2}\nabla u \right) \left((u_n-u)\nabla\psi_R\right)dx\\
				&+\int_{B_{2R}(0) \backslash B_R(0)}\left(\nabla u_n-\nabla u \right) \left((u_n-u)\nabla\psi_R\right)dx\\
				=&o_n(1).
			\end{aligned}
	\end{equation}}
	Hence
	\begin{equation}\label{enB21}
		\int_{B_{2R}(0) \backslash B_R(0)}e_{n}(x)dx\geq o_n(1).
	\end{equation}	
	Using the same proof, we obtain
	\begin{equation}\label{eknB21}
		\int_{B_{2R}(0) \backslash B_R(0)}e_{k,n}(x)dx\geq o_n(1).
	\end{equation}	
	{Finally, we give some estimates for
		$$
		\int_{B_{R}(0) } e_n(x) d x \quad \text { and } \quad \int_{B_{R}(0)} e_{k, n}(x) d x.
		$$
		Combining \eqref{enR} and \eqref{enB21}, we obtain that
		\begin{equation}\label{Eq-aePhi6en}
			\begin{aligned}
				\int_{B_R(0)}e_{n}(x)dx
				=&\int_{\mathbb{R}^N}e_{n}(x)dx-\int_{B_{2R}(0) \backslash B_R(0)}e_{n}(x)dx\\
				\leq& C+o_n(1).
			\end{aligned}
		\end{equation}
		Combining \eqref{eknR} and \eqref{eknB21}, we obtain that
		\begin{equation}\label{Eq-aePhi6}
			\begin{aligned}
				\int_{B_R(0)}e_{k,n}(x)dx
				=&\int_{\mathbb{R}^N}e_{k,n}(x)dx-\int_{B_{2R}(0) \backslash B_R(0)}e_{k,n}(x)dx\\
				\leq& Ck+o_n(1).
			\end{aligned}
	\end{equation}}
	Take $0<\theta<1$ and split $B_R(0)$ into
	$$
	S_n^k=\left\{x \in B_R(0)	|\enspace| u_n-u | \leq k\right\}, \quad G_n^k=\left\{x \in B_R(0)| \enspace| u_n-u |>k\right\} .
	$$
	By Lemma \ref{Lem-aee}, $e_n(x) \geq 0$ and $e_{k,n}(x) \geq 0$ in $B_R(0)$, therefore
	\begin{equation}
		\begin{aligned}
			\int_{B_R(0)} e_n^\theta d x & =
			\int_{S_n^k} e_n^\theta d x+\int_{G_n^k} e_n^\theta d x \\
			& \leq\left(\int_{S_n^k} e_n d x\right)^\theta\left|S_n^k\right|^{1-\theta}+\left(\int_{G_n^k} e_n d x\right)^\theta\left|G_n^k\right|^{1-\theta}\\
			& =\left(\int_{S_n^k} e_{k,n} d x\right)^\theta\left|S_n^k\right|^{1-\theta}+\left(\int_{G_n^k} e_n d x\right)^\theta\left|G_n^k\right|^{1-\theta}.
		\end{aligned}
	\end{equation}
	For fixed $k\in \mathbb{R}^{+}$, $\left|G_n^k\right| \rightarrow 0$ as $n \rightarrow \infty$, and from \eqref{Eq-aePhi6en} and \eqref{Eq-aePhi6}, we get
	\begin{equation}
		\begin{aligned}
			\int_{B_R(0)} e_n^\theta d x
			& \leq\left(\int_{S_n^k} e_{k,n} d x\right)^\theta\left|S_n^k\right|^{1-\theta}+\left(\int_{G_n^k} e_n d x\right)^\theta\left|G_n^k\right|^{1-\theta}\\
			& \leq\left(\int_{S_n^k} e_{k,n} d x\right)^\theta\left|S_n^k\right|^{1-\theta}+o_n(1)\\
			&  \leq(C k)^\theta|B_R(0)|^{1-\theta}+o_n(1).
		\end{aligned}
	\end{equation}
	Let $k\rightarrow 0$, we get that $e_n^\theta \rightarrow 0$ in $L^1(B_R(0))$ as $n\rightarrow \infty$. By Lemma \ref{Lemma-Converae}, we have
	$$
	\nabla u_n \rightarrow \nabla u \text { a.e. } \text{ in } B_R(0).
	$$
	Since $R$ is arbitrary,
	by passing to a subsequence, we have
	$$
	\nabla u_n \rightarrow \nabla u \text { a.e. } \text{ in } \mathbb{R}^N.
	$$
\end{proof}

\begin{lemma}\label{LemmaStrongconver}
		If \eqref{F0}-\eqref{F2}, and \eqref{Eq-crange} are satisfied. If  $\tilde{u}_n \in \mathcal{U}_{R_0}(c)$ {is} such that $J\left(\tilde{u}_n\right) \rightarrow m_{R_0}(c)$, then there exists  another minimizing sequence ${(u_n)_n \subset} \mathcal{U}_{R_0}(c)$ such that $\left\|u_n-\tilde{u}_n\right\|_X \rightarrow 0$ and up to translations, $u_n \rightarrow u$ in $L^m\left(\mathbb{R}^N\right)$ for {all $m$, with} $2<m<q^{\prime}$.
\end{lemma}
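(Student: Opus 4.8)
The plan is to start from the abstract replacement provided by Lemma~\ref{LemmaPSseq}: applied to the given minimizing sequence $(\tilde u_n)_n$ it yields a bounded Palais--Smale sequence $(u_n)_n \subset \mathcal{U}_{R_0}(c)$ with $\|u_n - \tilde u_n\|_X \to 0$, hence $J(u_n) \to m_{R_0}(c)$, together with a Lagrange multiplier $\lambda$. Passing to a subsequence, $u_n \rightharpoonup u$ in $X$, $u_n \to u$ in $L^m_{\mathrm{loc}}$ and a.e., and---crucially, since $X$ is not a Hilbert space---$\nabla u_n \to \nabla u$ a.e. by Lemma~\ref{Lem3-ae}. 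I would first reduce the target: because $(u_n)_n$ is bounded in $L^2\cap L^{q^{\prime}}$, the interpolation inequality of Lemma~\ref{Lemma-Interpolationinequality} shows that convergence of the remainder $v_n:=u_n-u$ to $0$ in a single $L^{m_0}$, $m_0\in(2,q^{\prime})$, automatically propagates to every $m\in(2,q^{\prime})$ (interpolating with the bounded $L^2$ or $L^{q^{\prime}}$ factor). Thus it suffices to produce a translation after which $u_n\to u$ in one such $L^{m_0}$.

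Next I would rule out vanishing, using that $m_{R_0}(c)<0$ by Lemma~\ref{Lemma-mR(a)}. Suppose $\sup_{y}\int_{B_r(y)} u_n^2\,dx\to 0$; then Lemma~\ref{LemmaLions} gives $\|u_n\|_m\to 0$ for all $m\in(2,q^{\prime})$, and I would deduce $\int_{\mathbb{R}^N} F(u_n)\,dx\to 0$. This is obtained by splitting the integrand: on $\{|u_n|\le\delta\}$ assumption \eqref{F1} gives $|F(u_n)|\le\varepsilon u_n^2\le\varepsilon c^2$; on the intermediate region $\{\delta\le|u_n|\le M\}$ the growth \eqref{F0} bounds $|F(u_n)|$ by $C_{\delta,M}|u_n|^{m_0}\to 0$; the critical tail $\{|u_n|\ge M\}$ is the delicate point and must be absorbed through the Sobolev inequality \eqref{Sobolev inequality} together with the smallness of $\|\nabla u_n\|_{\widetilde q}<R_0$ enforced by \eqref{Eq-crange}. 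With $\int F(u_n)\to 0$, the expression \eqref{Eq-Functional} for $J$ forces $\liminf_n J(u_n)\ge 0$, contradicting $m_{R_0}(c)<0$; hence vanishing cannot occur.

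Non-vanishing then produces $r,\delta>0$ and points $y_n$ with $\int_{B_r(y_n)}u_n^2\,dx\ge\delta$; replacing $u_n$ by $u_n(\cdot+y_n)$ (which preserves $\mathcal{U}_{R_0}(c)$, the value of $J$, and all norms) I may assume $u_n\rightharpoonup u$ with $u\neq 0$. Setting $v_n:=u_n-u\rightharpoonup 0$, the Brezis--Lieb lemma splits the masses and, using $\nabla u_n\to\nabla u$ a.e., also the two gradient terms, so that $\|u_n\|_2^2=\|u\|_2^2+\|v_n\|_2^2+o(1)$ and $J(u_n)=J(u)+J(v_n)+o(1)$. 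To conclude I would show $v_n\to 0$ in $L^{m_0}$: were this to fail, $v_n$ would not vanish and, since $v_n\rightharpoonup 0$, a nontrivial profile would split off along translations escaping to infinity, carrying a share $0<a^2<c^2$ of the mass. Comparing energies through $J(u)\ge m_{R_0}(\|u\|_2)$ and $\liminf_n J(v_n)\ge m_{R_0}(\sqrt{c^2-\|u\|_2^2})$---with Remark~\ref{Remark-g>0} used to keep $\|\nabla u_n\|_{\widetilde q}$ bounded away from $R_0$, so that every piece lies in the relevant $\mathcal{U}_{R_0}$ set---would give $m_{R_0}(c)\ge m_{R_0}(\|u\|_2)+m_{R_0}(\sqrt{c^2-\|u\|_2^2})$, in contradiction with the strict subadditivity of Lemma~\ref{Lemma-subadd}. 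Hence $v_n\to 0$ in $L^{m_0}$, and by the reduction of the first paragraph $u_n\to u$ in $L^m$ for every $m\in(2,q^{\prime})$.

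The main obstacle I expect is twofold. First, because $f$ is allowed to be Sobolev-critical at infinity, the vanishing step cannot rely on Lemma~\ref{LemmaLions} alone: the critical tail of $\int F(u_n)$ must be controlled quantitatively by the gradient smallness built into \eqref{Eq-crange}, which is precisely where the threshold $R_0$ and the constraint $u_n\in\mathcal{U}_{R_0}(c)$ are genuinely needed. Second, since $X$ is not a Hilbert space one cannot decompose $\|\nabla u_n\|_q^q$ via an inner product; the energy splitting $J(u_n)=J(u)+J(v_n)+o(1)$ must instead rest on the a.e. gradient convergence of Lemma~\ref{Lem3-ae} feeding a Brezis--Lieb argument for the $q$-gradient term.
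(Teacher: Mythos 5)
Your proposal follows essentially the paper's strategy: Lemma~\ref{LemmaPSseq} to replace the sequence, Lemma~\ref{LemmaLions} together with $m_{R_0}(c)<0$ (Lemma~\ref{Lemma-mR(a)}) and the identity $g(c,R_0)=0$ --- which turns the Sobolev-critical coefficient $\frac{1}{\widetilde{q}}-C_0\mathcal{S}_{\widetilde{q}}^{-q^{\prime}/\widetilde{q}}R_0^{q^{\prime}-\widetilde{q}}$ into $C_0c^2R_0^{-\widetilde{q}}>0$ --- to exclude vanishing, then a.e.\ gradient convergence (Lemma~\ref{Lem3-ae}) feeding a Br\'ezis--Lieb splitting of the $q$-gradient term, and subadditivity (Lemma~\ref{Lemma-subadd}) to exclude dichotomy; you correctly identify both obstacles specific to this setting. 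Two points in your final step are stated too quickly, and one of them is a genuine logical slip as written. First, the inequality $m_{R_0}(c)\ge m_{R_0}(\beta)+m_{R_0}(\sqrt{c^2-\beta^2})$ is \emph{not} by itself ``in contradiction with the strict subadditivity of Lemma~\ref{Lemma-subadd}'': that lemma gives the reverse inequality unconditionally, with strictness only when one of the two infima is \emph{attained}. You must first force the whole chain into equalities (as the paper does in \eqref{eqSub}), deduce $J(u)=m_{R_0}(\beta)$ --- i.e.\ attainment --- and only then invoke strictness to contradict the equality. Second, your bound $\liminf_n J(v_n)\ge m_{R_0}(\sqrt{c^2-\beta^2})$ needs care: one only knows $\|v_n\|_2^2\le c^2-\beta^2+o_n(1)$, and since $m_{R_0}$ is merely non-increasing the monotonicity runs the wrong way for masses slightly above the limit; the paper circumvents this by rescaling the remainder exactly onto a sphere, $\tilde{w}_n:=\frac{\delta}{\|w_n\|_2}w_n\in\mathcal{S}(\delta)$ with $J(w_n)-J(\tilde{w}_n)\to0$, and it also treats the degenerate case $\delta=0$ separately (there $\|w_n\|_2\to0$ gives $\int_{\mathbb{R}^N}F(w_n)\,dx\to0$, hence $\liminf_nJ(w_n)\ge0$); your two-piece decomposition must include the analogous normalization and the $\|v_n\|_2\to0$ subcase. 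Structurally your version is in fact slightly leaner than the paper's --- the paper extracts a second profile $\bar{v}$ and works with three masses $\beta,\gamma,\delta$, whereas your mass-accounting with only $u$ and $v_n$ suffices once the attainment and normalization steps are inserted --- so with those repairs the argument closes and coincides in substance with the paper's proof.
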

\begin{proof}
	Let  ${(\tilde{u}_n)_n} \subset \mathcal{U}_{R_0}(c)$ be a minimizing sequence of $J(u)$ at level $m_{R_0}(c)$,
	{by Lemma~\ref{LemmaPSseq}}, $J$ possesses another minimizing sequence ${(u_n)_n} \subset \mathcal{D}(c)$ at level $m_{R_0}(c)$ and $\lambda \in \mathbb{R}$ such that
	{for all $\varphi \in X$}
	$$
	\left\|u_n-\tilde{u}_n\right\|_X \rightarrow 0, \quad J^{\prime}\left(u_n\right) \varphi+\lambda \int_{\mathbb{R}^N} u_n \varphi d x \rightarrow 0  \text { as } n \rightarrow+\infty ,
	$$
	and {$(u_n)_n$} is also a Palais-Smale sequence for $J$ on $\mathcal{D}(c)$. Since
	$
	\left\|u_n-\tilde{u}_n\right\|_X \rightarrow 0 \quad \text{ as } n \rightarrow + \infty,
	$
	{$(u_n)_n\in \mathcal{U}_{R_0}(c)$} for $n$ large enough.
	Similarly to the proof of Lemma \ref{LemmaPSseq} we have that {$(u_n)_n$} is bounded in $X$. If
	$$
	\lim _{n \rightarrow \infty} \sup _{y \in \mathbb{R}^N} \int_{B_R(y)}\left|u_n(x)\right|^2 d x=0
	$$
	for any $R>0$, due to Lemma \ref{LemmaLions},  $\left\|u_n\right\|_m \rightarrow 0$ for any
	{$m$, with}
	$2<m<q^{\prime}$. Fix $m \in\left(2,q^{\prime}\right)$ and $\varepsilon>0$. From \eqref{F1} and \eqref{Eq-C0}, there exists $C=C\left(q, \varepsilon, C_0\right)>0$ such that for every $t \in \mathbb{R}$
	$$
	F(t) \leq \varepsilon t^2+C|t|^m+C_0|t|^{q^{\prime}}.
	$$
	Recalling that $g\left(c, R_0\right)=0$, for $\varepsilon \ll 1$, using Lemma \ref{Lemma-g}, since $   g(c, R_0)=0 $, there holds
	
	$$
	\begin{aligned}
		0 & >m_{R_0}(c)=\lim _{n\rightarrow+\infty} J\left(u_n\right) \geq \limsup _{n\rightarrow+\infty}\left(\frac{1}{\widetilde{q}}\left\|\nabla u_n\right\|_{\widetilde{q}}^{\widetilde{q}}-\varepsilon\left\|u_n\right\|_2^2-C\left\|u_n\right\|_m^m-C_0\left\|u_n\right\|_{q^{\prime}}^{q^{\prime}}\right) \\
		& \geq\left(\frac{1}{\widetilde{q}}-
		{C_0}{\mathcal{S}_{{\widetilde{q}}}^{-q^{\prime} / {\widetilde{q}}}}R_0^{q^{\prime}-{\widetilde{q}}}\right) \limsup_ {n\rightarrow+\infty}\left\|\nabla u_n\right\|_{\widetilde{q}}^{\widetilde{q}}-\varepsilon c^2=
		{C_0 c^2}{R_0^{-\widetilde{q}}} \limsup _{n\rightarrow+\infty}\left\|\nabla u_n\right\|_{\widetilde{q}}^{\widetilde{q}}-\varepsilon c^2 \geq-\varepsilon c^2.
	\end{aligned}
	$$
	{Thus,} $m_{R_0}(c)=\lim \limits_{n \rightarrow+\infty} J\left(u_n\right) \geq 0$, which contradicts Lemma~\ref{Lemma-mR(a)}.
	Hence, there exist   $\varepsilon_0>0$ and a sequence $(y_n)_n \subset \mathbb{R}^N$ such that, {for sufficiently large $R>0$}
	$$
	\int_{B_R\left(y_n\right)}\left|u_n(x)\right|^2 d x \geq \varepsilon_0>0.
	$$
	Moreover, we have $u_n\left(x+y_n\right) \rightharpoonup \bar{u} \not \equiv 0$ in $X$ for some {$\bar{u} \in X$}.
	{Put} $v_n(x):=u_n\left(x+y_n\right)-\bar{u}(x)$.
	{Then,} $v_n \rightharpoonup 0$ in $X$,  and  $u_n\left(x+y_n\right)\rightarrow \bar{u}$ for a.e. $x\in \mathbb{R}^N$ by Lemma~\ref{LemmaEmbed}. Therefore, we obtain
	\begin{equation*}
		\begin{aligned}
			& \left\|\nabla u_n\right\|_2^2=\left\|\nabla u_n\left(\cdot+y_n\right)\right\|_2^2=\left\|\nabla v_n\right\|_2^2+\left\|\nabla \bar{u}\right\|_2^2+o_n(1),\\
			&\left\|u_n\right\|_2^2=\left\|u_n\left(\cdot+y_n\right)\right\|_2^2=\left\|v_n\right\|_2^2+\left\|\bar{u}\right\|_2^2+o_n(1).
		\end{aligned}
	\end{equation*}
	Moreover, from the Br{\'e}zis-Lieb lemma \cite[Lemma 3.2]{Jeanjean2019Nonradial} and Lemma \ref{Lem3-ae}, we have
	\begin{equation*}
		\begin{aligned}
			\int_{\mathbb{R}^N}f\left(u_n\right)dx=&	\int_{\mathbb{R}^N}f\left(u_n\left(\cdot+y_n\right)\right)dx=\int_{\mathbb{R}^N}f\left(v_n\right)dx+\int_{\mathbb{R}^N}f(\bar{u})dx+o_n(1) ,\\
			& \left\|\nabla u_n\right\|_q^q=\left\|\nabla u_n\left(\cdot+y_n\right)\right\|_q^q=\left\|\nabla v_n\right\|_q^q+\left\|\nabla \bar{u}\right\|_q^q+o_n(1).
		\end{aligned}
	\end{equation*}
	We next claim that
	$$
	\lim_{n \rightarrow\infty} \sup _{y \in \mathbb{R}^N} \int_{B_1(y)}\left|v_n\right|^2 d x=0,
	$$
	which, by Lemma \ref{LemmaLions}, will yield the statement of Lemma \ref{LemmaStrongconver}. If this is not true, then, as before, there exist $z_n \in \mathbb{R}^N$ and $\bar{v} \in X \backslash\{0\}$ such that, denoting $w_n(x):=v_n\left(x-z_n\right)-\bar{v}(x)$, we have $w_n \rightharpoonup 0$ in $X, w_n \rightarrow 0$ for a.e. $x\in \mathbb{R}^N$, and
	$$
	\lim_{n \rightarrow\infty}
	\left(J\left(v_n\right)-J\left(w_n\right)
	\right)=J(v) .
	$$
	Note that, once more due to the Br{\'e}zis-Lieb lemma,$$
		\lim_{n \rightarrow\infty}\left( \left\|u_n\right\|_2^2-\left\|w_n\right\|_2^2\right)=\lim_{n \rightarrow\infty}\left(\left\|u_n\right\|_2^2-\left\|v_n\right\|_2^2+\left\|v_n\right\|_2^2-\left\|w_n\right\|_2^2\right)=\|\bar{u}\|_2^2+\|\bar{v}\|_2^2,
		$$
	whence, denoting $\beta:=\|\bar{u}\|_2>0$ and $\gamma:=\|\bar{v}\|_2>0$, there holds
	$$
	c^2-\beta^2-\gamma^2 \geq \liminf _n\left\|u_n\right\|_2^2-\beta^2-\gamma^2=\liminf _n\left\|w_n\right\|_2^2=: \delta^2 \geq 0 .
	$$
	If $\delta>0$, then let us set $\tilde{w}_n:=\frac{\delta}{\left\|w_n\right\|_2} w_n \in \mathcal{S}(\delta)$. {Via explicit computations}, we have
	$$\lim \limits_{n \rightarrow+\infty} \big(J\left(w_n\right)
	-J\left(\tilde{w}_n\right)\big)=0.$$
	Hence, together with Lemma \ref{Lemma-subadd} and since $m_{R_0}(c)$ is non-increasing with respect to $c>0$,
	\begin{equation}\label{eqSub}
		\begin{aligned}
			m_{R_0}(c) & =\lim \limits_{n \rightarrow+\infty} J\left(u_n\right)\\
			&=J(\bar{u})+J(\bar{v})+\lim_{n \rightarrow+\infty} J\left(w_n\right)\\
			&=J(\bar{u})+J(\bar{v})+\lim_{n \rightarrow+\infty} J\left(\tilde{w}_n\right) \\
			& \geq m(\beta)+m(\gamma)+m(\delta)\\
			& \geq m\left(\sqrt{\beta^2+\gamma^2+\delta^2}\right) \\
			&\geq m_{R_0}(c) .
		\end{aligned}
	\end{equation}
	Thus all the inequalities in \eqref{eqSub} are in fact equalities and, in particular, $J(\bar{u})=m(\beta)$ and $J(\bar{v})=$ $m(\gamma)$. Therefore Lemma \ref{Lemma-subadd} yields {that} $m(\beta)+m(\gamma)+m(\delta)>m\left(\sqrt{\beta^2+\gamma^2+\delta^2}\right)$, which contradicts \eqref{eqSub}.
	
	If $\delta=0$, then $w_n \rightarrow 0$ in $L^2\left(\mathbb{R}^N\right)$, which implies {that}
	$$\lim \limits_{n \rightarrow\infty} \int_{\mathbb{R}^N} F\left(w_n\right) d x=0,$$
	whence $\lim \limits \inf _{n\rightarrow+\infty} J\left(w_n\right) \geq 0$. Then \eqref{eqSub} becomes
	$$
	m_{R_0}(c)=\lim_{n \rightarrow\infty} J\left(u_n\right)=J(\bar{u})+J(\bar{v})+\lim_{n \rightarrow\infty} J\left(w_n\right) \geq m(\beta)+m(\gamma) \geq m\left(\sqrt{\beta^2+\gamma^2}\right)=m_{R_0}(c)
	$$
	and we get a contradiction as before.
\end{proof}

\textbf{Proof of Theorem \ref{Theorem-existence}}:
Let  $\tilde{u}_n \in \mathcal{U}_{R_0}(c)$ be a minimizing sequence of $J$ at level $m_{R_0}(c)$. From Lemma \ref{LemmaStrongconver}, there exists $\bar{u} \in \mathcal{D}(c)$ with $\|\nabla \bar{u}\|_{\widetilde{q}} \leq R_0$ such that, there exists another mimimizing sequence ${u}_n \in \mathcal{U}_{R_0}(c)$, $u_n \rightharpoonup \bar{u}$ in $X$ and $u_n \rightarrow \bar{u}$ in $L^m\left(\mathbb{R}^N\right)$ for $2<m<q^{\prime}$  and a.e. in $\mathbb{R}^N$, And by Remark \ref{Remark-g>0}, $J(\bar{u}) \geq m_{R_0}(c)$.

Now we prove that $u_n \rightarrow \bar{u}$ in $X$.
Denote $v_n:=u_n-\bar{u}$. From the Lemma \ref{Lem3-ae},
$$
\lim_{n \rightarrow +\infty}\left(\left\|\nabla u_n\right\|_{\widetilde{q}}^{\widetilde{q}}-\left\|\nabla v_n\right\|_{\widetilde{q}}^{\widetilde{q}}\right)=\|\nabla \bar{u}\|_{\widetilde{q}}^{\widetilde{q}}>0,
$$
thus $\left\|\nabla v_n\right\|_{\widetilde{q}}<R_0$ for $n \gg 1$. Moreover, from the Br{\'e}siz-Lieb lemma \cite[Theorem 2]{Brezis1983BL},
which, together with $g\left(c, R_0\right)=0$, and $\lim_{n \rightarrow \infty}\left\|v_n\right\|_2=0$, implies
$$
\begin{aligned} 0 & \geq \lim_{n \rightarrow \infty} J\left(v_n\right)\\
	 &\geq \lim_{n \rightarrow \infty}\left(\frac{1}{{\widetilde{q}}}\left\|\nabla v_n\right\|_{\widetilde{q}}^{\widetilde{q}}-C_0\left(\left\|v_n\right\|_2^2+\left\|v_n\right\|_{q^{\prime}}^{q^{\prime}}\right)\right) \\
	  & \geq\left(\frac{1}{\widetilde{q}}-\frac{C_0 R_0^{{q^{\prime}}-\widetilde{q}}}{\mathcal{S}_{\widetilde{q}}^{{q^{\prime}}/ \widetilde{q}}}\right) \lim_{n \rightarrow \infty}\left\|\nabla v_n\right\|_{\widetilde{q}}^{\widetilde{q}}\\
	 &=\frac{C_0 c^2}{R_0^{\widetilde{q}}} \lim_{n \rightarrow \infty}\left\|\nabla v_n\right\|_{\widetilde{q}}^{\widetilde{q}}\\
	 & \geq 0,\end{aligned}
$$
i.e., $\lim_{n \rightarrow \infty}\left\|\nabla v_n\right\|_{\widetilde{q}}=0$. And
it's easy to see that $\lim_{n \rightarrow \infty}\left\|\nabla v_n\right\|_{\widehat{q}}=0$.
Therefore, we obtain that $u_n \rightarrow \bar{u}$ in $X$.

Hence, $J(\bar{u})=\lim_{n \rightarrow \infty} J\left(u_n\right)=m_{R_0}(c)<0$ and  from Remark \ref{Remark-g>0}, $\|\nabla \bar{u}\|_{\widetilde{q}}<R_0$. Consequently, there exists $\lambda_{\bar{u}} \in \mathbb{R}$ such that
$$
-\Delta \bar{u}-\Delta_q\bar{u}+\lambda_{\bar{u}} \bar{u}=f(\bar{u}) \quad \text { in } \mathbb{R}^N \text {. }
$$
If $\lambda_{\bar{u}} \leq 0$, then from the Pohozaev identity \eqref{Eq-poH3} we obtain
$$
\frac{(N-2)}{2} \int_{\mathbb{R}^N}|\nabla \bar{u}|^2 d x+\frac{(N-q)}{q} \int_{\mathbb{R}^N}|\nabla \bar{u}|^q d x=\frac{N}{2} \int_{\mathbb{R}^N}\left(2 F(\bar{u})-\lambda_{\bar{u}}|\bar{u}|^2\right) \mathrm{d} x \geq 2 N \int_{\mathbb{R}^N} F(\bar{u}) d x
$$
and so, by \eqref{Eq-Functional}, $0>J(\bar{u}) \geq(\|\nabla u\|_2^2+\|\nabla u\|_q^q) / N\geq0$, which is a contradiction. This implies that $\lambda_{\bar{u}}>0$ and, consequently, $\bar{u} \in \mathcal{S}(c)$, otherwise we would have $\lambda_{\bar{u}}=0$.

Now we prove that $\bar{u}$ has constant sign.
Let $\bar{u}_{ \pm}:=\max \{ \pm \bar{u}, 0\}$ and $c_{ \pm}:=\left\|\bar{u}_{ \pm}\right\|_2$. Thus, $\pm \bar{u}_{ \pm} \in \mathcal{U}_{R_0}\left(c_{ \pm}\right) \subset \mathcal{U}_{R_0}(c)$. If $u_{+} \not\equiv 0, u_{-} \not\equiv 0$, then, from Lemma \ref{Lemma-subadd} and $c^2=c_{+}^2+c_{-}^2$, we have
$$
m_{R_0}(c)=J(\bar{u})=J\left(\bar{u}_{+}\right)+J\left(-\bar{u}_{-}\right) \geq m_{R_0}\left(c_{+}\right)+m_{R_0}\left(c_{-}\right) \geq m_{R_0}(c) .
$$
Hence $m_{R_0}\left(c_{+}\right)$is attained at $\bar{u}_{+}, m_{R_0}\left(c_{-}\right)$is attained at $-\bar{u}_{-}$, and, again from Lemma \ref{Lemma-subadd}, $m_{R_0}\left(c_{+}\right)+$ $m_{R_0}\left(c_{-}\right)>m_{R_0}(c)$, a contradiction.

\textbf{Proof of Proposition \ref{Prop-ground}}:
 We shall prove that $m_{R_0}(c)=\inf \{J(u) \mid u \in \mathcal{D}(c) \quad \text{and} \quad J|_{\mathcal{D}(c)} ^{\prime}(u)=0\}$.
Since, from Theorem \ref{Theorem-existence}, $m_{R_0}(c)$ is attained, we have  $$m_{R_0}(c)\geq\inf \{J(u) \mid u \in \mathcal{D}(c) \quad \text{and} \quad J|_{\mathcal{D}(c)} ^{\prime}(u)=0\}.$$ Assume by contradiction that strict inequality holds, there exists $u \in \mathcal{D}(c) \backslash\{0\}$ such that $\left.J\right|_{\mathcal{D}(c)} ^{\prime}(u)=0$ and $J(u)<m_{R_0}(c)$. From the definition of $m_{R_0}(c)$, there holds $\|\nabla u\|_{\widetilde{q}} \geq R_0$. In fact, since $J(u)<0$, we know from \eqref{Eq-J>g} and \eqref{g2} that $\|\nabla u\|_{\widetilde{q}}>R_1$.

Consider the function $\psi:(-\infty,\infty) \rightarrow \mathbb{R}, \psi(s):=J(s * u)$. Let us recall that, since every critical point of $\left.J\right|_{\mathcal{D}(c)}$ belongs to $\mathcal{P}, \psi^{\prime}(0)=0$ by \eqref{Eq-poH3}. Again from \eqref{Eq-J>g} and \eqref{g2}, $ \psi$ is positive on $\left(\ln\left(R_0 /\|\nabla u\|_{\widetilde{q}}\right), \ln\left(R_1 /\|\nabla u\|_{\widetilde{q}}\right)\right)$.
And from $J(u)<0$, $ \psi(s)$ is
negative at $s=0$. Moreover,
using Remark \ref{remark-urad}
arguing as in the proof of Lemma \ref{Lemma-mR(a)},
$\psi(s)<0$ for $s \ll 0$. Thus $\psi$ has a local maximum point $t_u \in(-\infty,0)$. From \eqref{J0}, $\psi^{\prime}<0$ in a right-hand neighbourhood of $t_u$, hence, from \eqref{J1}, $\psi^{\prime}<0$ in $\left(t_u,\infty\right)$, in contradiction with $\psi^{\prime}(0)=0$.

\section{Positive-energy solution}\label{Sect4}
Section \ref{Sect3} is devoted to finding a positive-energy solution of \eqref{Eq-Equation}.
	In this section, we present the weakest assumptions necessary for each lemma.
\subsection{Properties of $\mathcal{P}_{-}$}
	\
	\newline
	To address the minimization problem on \( \mathcal{P}_{-}^{\mathrm{rad}} \cap \mathcal{D}(c) \), we first demonstrate that \( \mathcal{P} \) is a nonempty \( \mathcal{C}^1 \) manifold. In the following lemma, we will need the following assumption:
\begin{enumerate}[label=(H\arabic*), ref=\textup{H\arabic*}]
	\setcounter{enumi}{+1}
	\item \label{H2}  there exists $\xi \neq 0$ such that $H(\xi)>0$.
\end{enumerate}
\eqref{H2} follows from \eqref{F1}, \eqref{F2} and \eqref{H-1}.
\begin{lemma}\label{Lemma-Pnotempty}
	If \eqref{F0} and \eqref{H2} hold, then $\mathcal{P}$ is non-empty.
\end{lemma}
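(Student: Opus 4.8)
The plan is to produce a single nonzero function in $X$ on which the spatial average of $H$ is positive, and then to run a dilation argument so that the Pohozaev functional $P$ of \eqref{Eq-poH3} changes sign. I first record that $H(t)=f(t)t-2F(t)$ is continuous because $f$ is, and that \eqref{F0} gives $|H(t)|\lesssim|t|^2+|t|^{q^{\prime}}$; in particular $\int_{\mathbb{R}^N}H(v)\,dx$ is well defined for every $v\in X$.

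The key reduction is a plain dilation. For $u\in X\setminus\{0\}$ and $\sigma>0$ set $u_\sigma(x):=u(x/\sigma)$; then $u_\sigma\in X\setminus\{0\}$, and a direct change of variables yields
\[
P(u_\sigma)=\sigma^{N-2}\|\nabla u\|_2^2+(\delta_q+1)\,\sigma^{N-q}\|\nabla u\|_q^q-\frac{N}{2}\,\sigma^{N}\int_{\mathbb{R}^N}H(u)\,dx .
\]
Suppose $u$ has been chosen so that $\int_{\mathbb{R}^N}H(u)\,dx>0$. As $\sigma\to0^+$ the two gradient terms dominate, since their exponents $N-2$ and $N-q$ are both strictly smaller than $N$; the smaller of the two is $N-\widetilde{q}$ and its coefficient is positive, so $P(u_\sigma)>0$ for $\sigma$ small. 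As $\sigma\to+\infty$ the exponent $N$ is largest, so the negative term dominates and $P(u_\sigma)\to-\infty$. Since $\sigma\mapsto P(u_\sigma)$ is continuous, the intermediate value theorem produces $\sigma^\ast>0$ with $P(u_{\sigma^\ast})=0$, that is $u_{\sigma^\ast}\in\mathcal{P}$.

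It remains to construct $u\in X\setminus\{0\}$ with $\int_{\mathbb{R}^N}H(u)\,dx>0$, which is the only place where \eqref{H2} enters. By \eqref{H2} I would fix $\xi\neq0$ with $H(\xi)>0$, and for $\rho>0$ take $u$ to be the Lipschitz, compactly supported function equal to $\xi$ on $B_\rho$, vanishing outside $B_{\rho+1}$, and interpolating linearly in $|x|$ on the shell $B_{\rho+1}\setminus B_\rho$; then $u\in X\setminus\{0\}$. With $M:=\max_{|t|\le|\xi|}|H(t)|<\infty$, the range of $u$ lies in $\{|t|\le|\xi|\}$, so
\[
\int_{\mathbb{R}^N}H(u)\,dx\ \geq\ H(\xi)\,|B_\rho|-M\,|B_{\rho+1}\setminus B_\rho| .
\]
Since $|B_\rho|$ grows like $\rho^{N}$ while $|B_{\rho+1}\setminus B_\rho|$ grows only like $\rho^{N-1}$, the right-hand side is positive once $\rho$ is large enough; fixing such a $\rho$ completes the construction.

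The main obstacle is exactly this last step. Unlike the case of a pure power, here $H$ carries no sign information beyond the single positive value supplied by \eqref{H2}, and $H$ is typically negative near the origin, so amplitude rescaling alone does not help. Concentrating the bulk of the mass at the level $\xi$ on a large ball, while confining the uncontrolled transition to a shell of fixed width, is precisely what forces the integral of $H(u)$ to be positive. Once such a $u$ is available, the dilation step is routine, and, in contrast to the $L^2$-preserving scaling $s\ast u$ used elsewhere in the paper, it requires no information about the behaviour of $H$ at the origin or at infinity.
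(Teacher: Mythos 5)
Your proof is correct and follows essentially the same route as the paper: a Berestycki--Lions truncation at level $\xi$ making $\int_{\mathbb{R}^N}H(u)\,dx>0$, followed by the dilation $u(x/\sigma)$ (the paper's $u(t\,\cdot)$ with $t=1/\sigma$) and a sign-change argument for $P$ along the dilation. The only differences are cosmetic: the paper cites \cite[page 325]{Berestycki1983NSF} for the positivity of the integral where you reprove it via the ball/shell volume comparison, and the paper observes that the function $G(t)$ is strictly increasing on $(0,\infty)$ --- giving a \emph{unique} zero $t(u)$, which is exploited later in Lemma \ref{Lemma-inf-attained} --- whereas your intermediate value theorem argument yields existence only, which suffices for this lemma.
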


\begin{proof}
	Let $\xi\neq 0$ be such that $H(\xi)>0$. For $R>1$, define
	$$
	w_R(x)= \begin{cases}\xi & \text { for }|x| \leq R-1 \\ \xi(R+1-|x|) & \text { for } R-1<|x|<R \\ 0 & \text { for }|x| \geq R.\end{cases}
	$$
	As shown in \cite[page 325]{Berestycki1983NSF},  there exists $R_1>0$ such that
	 $\int_{\mathbb{R}^N} H(w_{R_1}(x)) {d} x>0$. Let $u(x):=w_{R_1}(x)$,
	it is easy to show through direct calculation that
	$u \in X$.  Define  $G: \mathbb{R}^+\rightarrow \mathbb{R}$ as follows:
	\begin{equation}\label{Eq-G}
		G(t) :=t^2 \int_{\mathbb{R}^N}|\nabla u|^2 \, dx + t^q\left(\delta_q + 1\right) \int_{\mathbb{R}^N}|\nabla u|^q \, dx - \frac{N}{2} \int_{\mathbb{R}^N} H(u) \, dx .
	\end{equation}
	Clearly, $G(0)<0$ and $\lim_{t \rightarrow +\infty}G(t)>0$.
	 Since
	$G(t)$ is strictly increasing in $\mathbb{R}^+$, we can easily deduce that there exists a unique $t(u) \in \mathbb{R}^+$ such that $G(t(u))=0$. This implies that
	$$u(t(u) \cdot) \in \mathcal{P},$$
	where
	$\mathcal{P}:=\{u \in X \backslash\{0\}: P(u)=0\}$.
	Hence $\mathcal{P}$ is nonempty.
\end{proof}

\begin{lemma}\label{Lemma-codim}
	If  $H(t)$ is of class $\mathcal{C}^1$ on $\mathbb{R}$, \eqref{F0} and \eqref{H2} hold, then
	$\mathcal{P}$ is a $\mathcal{C}^1$-manifold of codimension 1 in $X$.
\end{lemma}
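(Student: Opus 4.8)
The plan is to present $\mathcal{P}$ as a regular level set and apply the regular value theorem. First I would record that $\mathcal{P}\neq\emptyset$ by Lemma \ref{Lemma-Pnotempty} (this is where \eqref{H2} enters) and that $P\in C^1(X\setminus\{0\})$: the quadratic Dirichlet term is smooth on $H^1(\mathbb{R}^N)$, the map $u\mapsto\|\nabla u\|_q^q$ is $C^1$ on $D^{1,q}(\mathbb{R}^N)$, and $u\mapsto\int_{\mathbb{R}^N}H(u)\,dx$ is $C^1$ on $X$ because $H\in C^1$ and the growth coming from \eqref{F0}, together with the embedding $X\hookrightarrow L^2\cap L^{q^{\prime}}$ of Lemma \ref{LemmaEmbed}, controls the associated Nemytskii operator. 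It then suffices to prove that $0$ is a regular value, i.e. $P^{\prime}(u)\neq0$ for every $u\in\mathcal{P}$.

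The heart of the argument is to test the differential of $P$ along the spatial dilation $u_t(x):=u(tx)$, $t>0$. A change of variables gives $\|\nabla u_t\|_2^2=t^{2-N}\|\nabla u\|_2^2$, $\|\nabla u_t\|_q^q=t^{q-N}\|\nabla u\|_q^q$ and $\int_{\mathbb{R}^N}H(u_t)\,dx=t^{-N}\int_{\mathbb{R}^N}H(u)\,dx$, so that, recalling the function $G$ from \eqref{Eq-G}, one has $P(u_t)=t^{-N}G(t)$. Since $G(1)=P(u)=0$ for $u\in\mathcal{P}$, I would compute
\[
\frac{d}{dt}P(u_t)\Big|_{t=1}=G^{\prime}(1)=2\|\nabla u\|_2^2+q(\delta_q+1)\|\nabla u\|_q^q,
\]
which is strictly positive because $\delta_q+1>0$ and $u\neq0$ forces $\|\nabla u\|_2>0$. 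On the other hand, an integration by parts identifies this quantity with the pairing $\langle P^{\prime}(u),x\cdot\nabla u\rangle$, where $x\cdot\nabla u$ is the generator of the dilation. Hence $P^{\prime}(u)$ is nonzero, and the regular value theorem yields that $\mathcal{P}=P^{-1}(0)\cap(X\setminus\{0\})$ is a $C^1$-manifold of codimension $1$ in $X$.

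I expect the one genuinely delicate point to be that the generator $x\cdot\nabla u$ need not belong to $X$ for a general $u\in X$, so the pairing $\langle P^{\prime}(u),x\cdot\nabla u\rangle$ is not literally admissible. I would make it rigorous by testing instead against the truncations $\chi_R\,(x\cdot\nabla u)\in X$, where $\chi_R$ is a standard cutoff supported in $B_{2R}(0)$, and letting $R\to\infty$: the explicit scaling identities above show that $\langle P^{\prime}(u),\chi_R(x\cdot\nabla u)\rangle$ converges to $2\|\nabla u\|_2^2+q(\delta_q+1)\|\nabla u\|_q^q$, the error terms carrying $\nabla\chi_R$ vanishing as in the usual derivation of the Pohozaev identity. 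For $R$ large this pairing is therefore nonzero, which is precisely what is needed to conclude $P^{\prime}(u)\neq0$ and thus to close the argument.
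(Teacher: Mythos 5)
Your global strategy (regular value theorem plus a dilation-direction test) is sound in outline, and your formal computation is consistent with the scaling structure of $P$: indeed $P(u(t\cdot))=t^{-N}G(t)$ with $G$ as in \eqref{Eq-G}, and $G'(1)=2\|\nabla u\|_2^2+q(\delta_q+1)\|\nabla u\|_q^q>0$ on $\mathcal{P}$. But the step where you identify $G'(1)$ with $\langle P'(u),x\cdot\nabla u\rangle$ has a genuine gap that your truncation remedy does not close. The curve $t\mapsto u(t\cdot)$ is continuous but not differentiable in $X$ for a generic $u\in X$, so the chain rule is unavailable; and the truncated test functions $\chi_R\,(x\cdot\nabla u)$ do \emph{not} belong to $X$ either, because $\nabla\bigl(\chi_R\,(x\cdot\nabla u)\bigr)=(x\cdot\nabla u)\nabla\chi_R+\chi_R\nabla u+\chi_R\,(x\cdot\nabla)\nabla u$ involves the Hessian $D^2u$, which a generic element of $X=H^1\cap D^{1,q}$ need not possess in any $L^p_{\mathrm{loc}}$. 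For the same reason the key integrations by parts behind your claimed limit, e.g. $\int\chi_R\,x\cdot\nabla|\nabla u|^2\,dx\to -N\|\nabla u\|_2^2$, require $|\nabla u|^2\in W^{1,1}_{\mathrm{loc}}$ and so again second derivatives. The phrase ``as in the usual derivation of the Pohozaev identity'' hides exactly this point: that derivation is performed for \emph{solutions} of an elliptic equation, where local regularity theory upgrades $u$ first; here $u$ is an arbitrary point of the constraint set $\mathcal{P}$, carries no equation, and hence no extra regularity. In fact, if your argument worked it would prove a Pohozaev-type identity for every $u\in\mathcal{P}$, which is not true at this level of generality.

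The paper's proof is arranged precisely so that regularity becomes available before any Pohozaev manipulation: one argues by contradiction, assuming $P'(u)=0$ for some $u\in\mathcal{P}$. Then $u$ is a weak solution of the quasilinear equation $-2\Delta u-q(\delta_q+1)\Delta_q u=\frac{N}{2}h(u)$, and regularity results for such $(2,q)$-type equations (cf. \cite{He2008regularity}) legitimize the Pohozaev identity (cf. \cite[Lemma 2.3]{baldelli2022normalized}), which gives $\frac{2}{2^*}\|\nabla u\|_2^2+\frac{q(\delta_q+1)}{q^*}\|\nabla u\|_q^q=\frac{N}{2}\int_{\mathbb{R}^N}H(u)\,dx$. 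Combined with $P(u)=0$, i.e. $\frac{N}{2}\int_{\mathbb{R}^N}H(u)\,dx=\|\nabla u\|_2^2+(\delta_q+1)\|\nabla u\|_q^q$, the strict inequalities $\frac{2}{2^*}<1$ and $\frac{q(\delta_q+1)}{q^*}<\delta_q+1$ force $\|\nabla u\|_2=\|\nabla u\|_q=0$, contradicting $u\neq 0$. To repair your write-up, keep your scaling computation as motivation but restructure the proof in this contrapositive form, so that the Pohozaev identity is only ever invoked for a (regular) solution of the Euler--Lagrange equation of $P$, not for an arbitrary point of $\mathcal{P}$.
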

\begin{proof}
	Suppose that $u \in \mathcal{P}$, then $P(u)=0$. Suppose by contradiction that $P^{\prime}(u)=0$, then $u$ is a weak solution to the following equation
	\begin{equation}
		-2 \Delta u-q\left(\delta_q+1\right) \Delta_q u-\frac{N}{2} h(u)=0.
	\end{equation}
	From the Pohozaev identity, we get that
	\begin{equation}\label{eq43}
		\frac{2}{2^*} \int_{\mathbb{R}^N}|\nabla u|^2 d x+\frac{q\left(\delta_q+1\right)}{q^*} \int_{\mathbb{R}^N}|\nabla u|^q d x=\frac{N}{2} \int_{\mathbb{R}^N} H(u) d x.
	\end{equation}
	Combining \eqref{eq43} with \eqref{Eq-poH3} we have
	\begin{equation}
		\frac{2}{2^*} \int_{\mathbb{R}^N}|\nabla u|^2 d x+\frac{q\left(\delta_q+1\right)}{q^*} \int_{\mathbb{R}^N}|\nabla u|^q d x=\int_{\mathbb{R}^N}|\nabla u|^2 \, dx + \left(\delta_q + 1\right) \int_{\mathbb{R}^N}|\nabla u|^q \, dx.
	\end{equation}
	Since $	\frac{2}{2^*}<1$ and $\frac{q\left(\delta_q+1\right)}{q^*}<\delta_q + 1$, we have
	$$
	\int_{\mathbb{R}^N}|\nabla u|^2 d x=\int_{\mathbb{R}^N}|\nabla u|^q d x=0
	$$
which is a contradiction to $u\neq0$.
	Thus, $P^{\prime}(u) \neq 0$ for all $u \in \mathcal{P}$ and the proof is complete.
\end{proof}

In the following lemma, we show that \( \mathcal{P}_0 \cap \mathcal{D}(c) = \emptyset \). This result will be used to establish that \( \mathcal{P} \cap \mathcal{S}(c) \) is a smooth manifold of codimension 2 in \( X \), which allows us to apply the Lagrange multipliers rule. To prove that \( \mathcal{P}_0 \cap \mathcal{D}(c) = \emptyset \), we will need to make the following assumptions regarding \( H \).

\begin{enumerate}[label=\textup{(H\arabic*$^{\prime}$)}, ref=\textup{H\arabic*$^{\prime}$}]
	\setcounter{enumi}{-1}
	\item \label{H0}  $H_1, H_2 \in \mathcal{C}^1(\mathbb{R} ; \mathbb{R})$ and there exist $2 < a < q_{\#}$, $q^{\#} < b < q^{\prime}$ such that
	$$
	H_1(t) \lesssim|t|^{2}+|t|^{a}, \quad H_2(t) \lesssim|t|^{b}+|t|^{q^{\prime}} \quad \text{for all} ,\,t \in \mathbb{R},
	$$
	where ${q}^{\#}:=\left(1+\frac{2}{N}\right) \max \{2, q\}$, ${q}_{\#}:=\left(1+\frac{2}{N}\right) \min \{2, q\}$ and
	${q}^{\prime}:= \max \{2^*, q^*\}$.\\
	\item \label{H1}
	There holds
	$$
	2 H_1(t) \leq h_1(t) t \leq a H_1(t), \quad b H_2(t) \leq h_2(t) t \leq q^{\prime} H_2(t) \quad
	\text{for all} \,\, t \in \mathbb{R},
	$$
	where $h_j:=H_j^{\prime}$ for $j \in\{1,2\}$.
\end{enumerate}
\begin{enumerate}[label=(H\arabic*), ref=\textup{H\arabic*}]
	\setcounter{enumi}{+2}
	\item \label{H3}  $\lim _{|t| \rightarrow\infty} \frac{H(t)}{|t|^{q^{\prime}}}=0$.
\end{enumerate}
\eqref{H0} and \eqref{H1} are less restrictive than \eqref{H-0} and \eqref{H-1}.
\begin{lemma}\label{Lemma-P0}
	If \eqref{H0}, \eqref{H1}, and \eqref{H3} hold and $c>0$ is sufficiently small, then $\mathcal{P}_0 \cap \mathcal{D}(c)=\emptyset$.
\end{lemma}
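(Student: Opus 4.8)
The plan is to argue by contradiction: suppose $u \in \mathcal{P}_0 \cap \mathcal{D}(c)$ and show that, for $c$ small, the Pohozaev identity $P(u)=0$ together with $\frac{\mathrm d^2}{\mathrm ds^2}J(s*u)|_{s=0}=0$ and the Gagliardo--Nirenberg inequalities force $G:=\|\nabla u\|_2^2+(\delta_q+1)\|\nabla u\|_q^q$ to be simultaneously $\lesssim c^{\gamma_0}$ and $\gtrsim c^{-\gamma_1}$ for some positive exponents $\gamma_0,\gamma_1$, which is impossible once $c$ is small. Throughout I write $m_j:=\int_{\mathbb R^N}H_j(u)\,dx$; note that \eqref{H1} forces $H_1,H_2\ge0$, so $m_1,m_2\ge0$, and that $P(u)=0$ reads $G=\tfrac N2(m_1+m_2)$.

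The core step --- genuinely harder than in the single-Laplacian setting of \cite{bieganowski2024existence}, because here the coefficient of the gradient part in \eqref{Eq-J2der} does not vanish --- is to sandwich $\frac{\mathrm d^2}{\mathrm ds^2}J(s*u)|_{s=0}$ between two expressions that are linear in $m_1,m_2$. Starting from \eqref{Eq-J2der} I would estimate the integral term through \eqref{H1} (using $2H_1(u)\le h_1(u)u\le a H_1(u)$ and $b H_2(u)\le h_2(u)u\le q^{\prime}H_2(u)$) and compare the gradient part $2\|\nabla u\|_2^2+q(\delta_q+1)^2\|\nabla u\|_q^q$ with $G$ via $\min\{2,q(\delta_q+1)\}\,G\le 2\|\nabla u\|_2^2+q(\delta_q+1)^2\|\nabla u\|_q^q\le\max\{2,q(\delta_q+1)\}\,G$. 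Replacing $G$ by $\tfrac N2(m_1+m_2)$ through $P(u)=0$ and using the elementary identity $\tfrac{Nq(\delta_q+1)}2=\tfrac{N^2}{4}(\bar q-2)$ (together with $\bar 2-2=\tfrac4N$), the four boundary exponents collapse to $q_{\#}=(1+\tfrac2N)\min\{2,q\}$ and $q^{\#}=(1+\tfrac2N)\max\{2,q\}$, giving
$$\frac{N^2}{4}\big[(q_{\#}-a)m_1-(q^{\prime}-q_{\#})m_2\big]\le\left.\frac{\mathrm d^2}{\mathrm ds^2}J(s*u)\right|_{s=0}\le\frac{N^2}{4}\big[(q^{\#}-2)m_1-(b-q^{\#})m_2\big].$$
By \eqref{H0} all four coefficients $q_{\#}-a,\ q^{\prime}-q_{\#},\ q^{\#}-2,\ b-q^{\#}$ are strictly positive, uniformly for $q<2$ and $q>2$. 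Imposing $\frac{\mathrm d^2}{\mathrm ds^2}J(s*u)|_{s=0}=0$ then pins $m_2$ between two positive multiples of $m_1$, whence $m_1\sim m_2$ and, through $P(u)=0$, $m_1\sim m_2\sim G$.

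With this comparison in hand the contradiction is produced by playing the subcritical behaviour of $H_1$ against the supercritical behaviour of $H_2$. From $G\lesssim m_1\lesssim\|u\|_2^2+\|u\|_a^a$, Lemma \ref{Lemma-GN inequality1}, $\|u\|_2\le c$, and $a<q_{\#}\le\bar 2$ (so the gradient exponent is subcritical, $a\delta_a<2$, with $\|\nabla u\|_2^2\le G$), I would obtain $G\lesssim c^2+G^{a\delta_a/2}c^{a(1-\delta_a)}$ and absorb to get $G\le C(c^2+c^{\gamma_0})$; in particular $G$ is bounded and $G\to0$ as $c\to0$. In the opposite direction, from $G\lesssim m_2\lesssim\|u\|_b^b+\|u\|_{q^{\prime}}^{q^{\prime}}$ I would invoke \eqref{H3} to write $H_2(t)\le C_\varepsilon|t|^b+\varepsilon|t|^{q^{\prime}}$, bound the critical term by Sobolev as $\|u\|_{q^{\prime}}^{q^{\prime}}\lesssim G^{q^{\prime}/\widetilde q}$ (with $\widetilde q=2$ when $q^{\prime}=2^*$ and $\widetilde q=q$ when $q^{\prime}=q^*$), and bound $\|u\|_b^b$ by the appropriate Gagliardo--Nirenberg inequality --- Lemma \ref{Lemma-GN inequality1} when $q\le2$ (then $b<2^*$) and Lemma \ref{Lemma-GN inequality2} when $q>2$ (then $b<q^*$) --- which now yields a supercritical power $G^{\theta}c^{\sigma}$ with $\theta>1$ precisely because $b>q^{\#}$. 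Choosing $\varepsilon$ small (legitimate since $G$ is already bounded) and absorbing the $\varepsilon$-term gives $\tfrac12 G\lesssim G^{\theta}c^{\sigma}$, i.e.\ $G\gtrsim c^{-\gamma_1}$ with $\gamma_1=\sigma/(\theta-1)>0$. The two estimates $c^{-\gamma_1}\lesssim G\lesssim c^{\gamma_0}$ are incompatible for $c$ sufficiently small, proving $\mathcal{P}_0\cap\mathcal{D}(c)=\emptyset$.

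The main obstacle is exactly the two-sided estimate for \eqref{Eq-J2der}: in \cite{bieganowski2024existence} the second-derivative coefficient of the gradient term vanishes on $\mathcal{P}$, so $\frac{\mathrm d^2}{\mathrm ds^2}J(s*u)|_{s=0}$ is at once a clean combination of $m_1,m_2$, whereas here the two homogeneities $e^{2s}$ and $e^{q(\delta_q+1)s}$ of different degree force the $\min/\max$ comparison with $G$ and the algebraic identity above; it is this that makes both the positivity of the four coefficients and the comparability $m_1\sim m_2\sim G$ hold uniformly in $q$. A secondary point demanding care is the bookkeeping of which Gagliardo--Nirenberg/Sobolev inequality controls the $|t|^b$ and $|t|^{q^{\prime}}$ terms, since whether $b$ or $q^{\prime}$ is dominated by $\|\nabla u\|_2$ or by $\|\nabla u\|_q$ depends on the sign of $q-2$.
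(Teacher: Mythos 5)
Your proposal is correct: the comparison $\min\{2,q(\delta_q+1)\}\,G\le 2\|\nabla u\|_2^2+q(\delta_q+1)^2\|\nabla u\|_q^q\le\max\{2,q(\delta_q+1)\}\,G$, the identities $N=\tfrac{N^2}{4}(\bar 2-2)$ and $\tfrac{N}{2}q(\delta_q+1)=\tfrac{N^2}{4}(\bar q-2)$, and hence your sandwich with the four coefficients $q_{\#}-a$, $q^{\prime}-q_{\#}$, $q^{\#}-2$, $b-q^{\#}$ (all strictly positive by \eqref{H0}) check out, as does the pointwise positivity $H_1,H_2\ge 0$ obtained by chaining the inequalities in \eqref{H1}; the exponent bookkeeping is also right, since $a\delta_a<2$ iff $a<\bar 2$, $\nu_{b,q}b>q$ iff $b>\bar q$, and $\delta_b b>2$ iff $b>\bar 2$, so supercriticality enters exactly through $b>q^{\#}$.

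Your route, however, is organized differently from the paper's. The paper splits into the cases $2<q<N$ and $\tfrac{2N}{N+2}<q<2$ and, instead of a two-sided comparison, uses the exact identities \eqref{Eq-J2der1} and \eqref{Eq-J2der2}, which on $\mathcal{P}$ eliminate one gradient term at a time, yielding $A_{N,q}\|\nabla u\|_q^q=\int_{\mathbb{R}^N}h(u)u-\bar 2 H(u)\,dx$ (respectively $B_{N,q}\|\nabla u\|_2^2=\int_{\mathbb{R}^N}h(u)u-\bar q H(u)\,dx$) with signed constants; it then proves $\|\nabla u\|_2\le\sqrt{A_c}+\tfrac12$ via Lemma \ref{Lem-A2} and $\|\nabla u\|_q\ge 1$ via Lemma \ref{Lem-A1}, and concludes from $\bar C_1\le A_c+B_c(\sqrt{A_c}+\tfrac12)^{\delta_a a}\to 0$. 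You never isolate a single gradient norm: the min/max comparison against $G$ handles both ranges of $q$ simultaneously (the case distinction survives only in which of Lemmas \ref{Lemma-GN inequality1}--\ref{Lemma-GN inequality2} and which Sobolev embedding controls $\|u\|_b$ and $\|u\|_{q^{\prime}}$), the comparability $m_1\sim m_2\sim G$ is an intermediate statement the paper does not establish, the absorptions are plain Young's inequality rather than the appendix Lemmas \ref{Lem-A1}--\ref{Lem-A2}, and the final contradiction $c^{-\gamma_1}\lesssim G\lesssim c^2+c^{\gamma_0}$ is quantitative. What you lose is exactness---the paper's elimination identities hold with equality on $\mathcal{P}$, while your sandwich is lossy---but this costs nothing here, since only the strict positivity of the four coefficients (i.e.\ the strict inequalities $a<q_{\#}$, $b>q^{\#}$ of \eqref{H0}) is used; what you gain is uniformity in $q$ and a cleaner blow-up/decay contradiction. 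Two small points of hygiene: rename your $G$, which collides with the paper's $G$ in Lemma \ref{Lemma-Pnotempty} and with the constraint map in Lemma \ref{Lemma-codim2}; and in the lower bound make the quantifier order explicit---fix $\varepsilon$ using the $c$-uniform bound on $G$ coming from the upper-bound step (which holds for all $c\le 1$ without smallness) before letting $c\to 0$, as you indicate parenthetically.
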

\begin{proof}
	Suppose by contradiction that there exist $u\in\mathcal{P}_0 \cap \mathcal{D}(c)$,
	by \eqref{Eq-J2der1},
	\begin{equation*}
		\left(\delta_q+1\right)\left(q\left(\delta_q+1\right)-2\right)\int_{\mathbb{R}^N}|\nabla u|^q d x=\frac{N^2}{4}\left(\int_{\mathbb{R}^N} h(u) u d x-\bar{2}\int_{\mathbb{R}^N} H(u) d x\right).
	\end{equation*}
	Denote $A_{N,q}:=\frac{4\left(\delta_q+1\right)\left(q\left(\delta_q+1\right)-2\right)}{N^2}$, we obtain
	\begin{equation}\label{Eq-Hq=}
		A_{N,q}\|\nabla u\|_q^q =\left(\int_{\mathbb{R}^N} h(u) u d x-\bar{2}\int_{\mathbb{R}^N} H(u) d x\right).
	\end{equation}
	It's easy to check that $A_{N,q}>0$ if $2<q<N$, $A_{N,q}<0$ if $\frac{2 N}{N+2}<q<2$.
	
	Simliarly, by \eqref{Eq-J2der2}, we have
	\begin{equation*}
		\left(2-q\left(\delta_q+1\right)\right)\int_{\mathbb{R}^N}|\nabla u|^2 d x=\frac{N^2}{4}\left(\int_{\mathbb{R}^N} h(u) u d x-\bar{q}\int_{\mathbb{R}^N} H(u) d x\right).
	\end{equation*}
	Denote $B_{N,q}=\frac{4(2-q(\delta_q+1))}{N^2}$, we get
	\begin{equation}\label{Eq-H3=}
		B_{N,q}\|\nabla u\|_2^2 =\left(\int_{\mathbb{R}^N} h(u) u d x-\bar{q}\int_{\mathbb{R}^N} H(u) d x\right)	.
	\end{equation}
	 It's easy to check that $B_{N,q}<0$ if $2<q<N$, $B_{N,q}>0$ if $\frac{2 N}{N+2}<q<2$.
	
	Moreover, from \eqref{H0}, \eqref{H1}, and \eqref{H3}, for every $\varepsilon>0$, there exists $C_{\varepsilon}>0$ such that
	\begin{equation}\label{Eq-H2subcritical}
		H_2(t) \leq \varepsilon|t|^{q^{\prime}}+C_{\varepsilon}|t|^b \ \text{for any} \ t \in \mathbb{R}.
	\end{equation}
	Now, we will consider the two cases separately: $2<q<N$ and $\frac{2 N}{N+2}<q<2$.\\
	\textbf{Case 1 :} $2<q<N$.
	Note that from \eqref{Eq-Hq=} and \eqref{H1}, one has
	\begin{equation}\label{Eq-H31}
		\begin{aligned}
			(\bar{2}-2) \int_{\mathbb{R}^N} H_1(u) \mathrm{d} x & \geq \int_{\mathbb{R}^N} \bar{2} H_1(u)-h_1(u) u \mathrm{~d} x \\
			& =\int_{\mathbb{R}^N} h_2(u) u-\bar{2} H_2(u) \mathrm{d} x-A_{N,q}\|\nabla u\|^q_q \\
			&\geq\left(b-\bar{2}\right) \int_{\mathbb{R}^N} H_2(u) \mathrm{d} x-A_{N,q}\|\nabla u\|^q_q  .
		\end{aligned}
	\end{equation}
	Hence, using that $u\in \mathcal{P}$, from \eqref{Eq-poH3} and \eqref{Eq-H31}, we get
	\begin{equation}\label{Eq-Hgeq11}
		\begin{aligned}
			\|\nabla u\|_2^2  + \left(\delta_q + 1\right) \|\nabla u\|_q^q & = \frac{N}{2} \int_{\mathbb{R}^N} H(u)  dx\\
			&=\frac{N}{2} \int_{\mathbb{R}^N} H_1(u) dx+
			\frac{N}{2} \int_{\mathbb{R}^N} H_2(u) dx\\
			&\leq \frac{N}{2} \frac{b-2}{b-\bar{2}} \int_{\mathbb{R}^N} H_1(u) \mathrm{d} x
			+\frac{NA_{N,q}}{2(b-\bar{2})}\|\nabla u\|_q^q.
		\end{aligned}
	\end{equation}
	Let
	$$
	\bar{C}_1:=(\delta_q+1)-\frac{NA_{N,q}}{2(b-\bar{2})}.
	$$
Since \( 2 < q < N \), it can be proved through calculations that \( \bar{C}_1 > 0 \).
	From \eqref{H0}, \eqref{H1} and Lemma \ref{Eq-GN inequality},
	 we obtain
	\begin{equation}\label{Eq-Hgeqver11}
		\begin{aligned}
			\|\nabla u\|^2_2  + \bar{C}_1\|\nabla u\|^q_q
			&\leq \frac{N}{2} \frac{b-2}{b-\bar{2}} \int_{\mathbb{R}^N} H_1(u) \mathrm{d} x\\
			&\leq
			C \frac{N}{2} \frac{b-2}{b-\bar{2}}\left(c^2+\|u\|_a^a\right)\\
			&\leq C \frac{N}{2} \frac{b-2}{b-\bar{2}}\left(c^2+C_{N, a}^ac^{(1-\delta_a)a}
			\|\nabla u\|_2^{\delta_a a}\right)\\
			&=A_c+B_c\|\nabla u\|_2^{\delta_a a},
		\end{aligned}
	\end{equation}
	where
	$$
	A_c:=C \frac{N}{2} \frac{b-2}{b-2} c^2 \text { and } B_c:=C \frac{N}{2} \frac{b-2}{b-2} c^{\left(1-\delta_a\right) a}.
	$$
	Since $\bar{C}_1>0$, we have
	\begin{equation}
		\|\nabla u\|_2^2\leq A_c+B_c\|\nabla u\|_2^{\delta_a a}.
	\end{equation}
	Observe that $A_c\rightarrow0$, $B_c\rightarrow0$ as $c \rightarrow 0^{+}$ and $\delta_{a}a<2$. Therefore
	$$
	\frac{\sqrt{A_c}+\frac{1}{4}}{\left(\sqrt{A_c}+\frac{1}{2}\right)^{\delta_aa}} \rightarrow \frac{1}{2^{2-\delta_aa}}>0 \quad \text { as } c \rightarrow 0^{+} .
	$$
	Thus, for sufficiently small $c>0$, we have that
	$$
	B_c\leq \frac{\sqrt{A_c}+\frac{1}{4}}{\left(\sqrt{A_c}+\frac{1}{2}\right)^{\delta_aa}}.
	$$
	So  Lemma \ref{Lem-A2}  implies that
	\begin{equation}\label{est-u2case1}
		\|\nabla u\|_2 \leq \sqrt{A_c}+\frac{1}{2}.
	\end{equation}
	On the other hand, from \eqref{Eq-Hq=}, using \eqref{H0}, \eqref{H1} and Lemma \ref{Eq-GN inequality2}, we have
	\begin{equation}
		\begin{aligned}
			A_{N,q}\|\nabla u\|_q^q &
			=\int_{\mathbb{R}^N}h_1(u) u-\bar{2} H_1(u) d x+
			\int_{\mathbb{R}^N}h_2(u) u-\bar{2} H_2(u)d x \\
			& \leq \int_{\mathbb{R}^N}(a-\bar{2}) H_1(u) d x+\int_{\mathbb{R}^N}\left(q^*-\bar{2}\right) H_2(u) d x \\
			& \leq\left(q^*-\bar{2}\right) \int_{\mathbb{R}^N} H_2(u) d x \\
			& \leq\left(q^*-\bar{2}\right) \left(C_{\varepsilon}\|u\|_b^b+\varepsilon\|u\|_{q^*}^{q^*}\right) \\
			& \leq\left(q^*-\bar{2}\right) \left(C_{\varepsilon}K_{N, p}^p\|\nabla u\|_q^{\nu_{b, q} b}\|u\|_2^{(1-\nu_{b, q}) b}+\varepsilon\mathcal{S}_q^{-q^* / q}\|\nabla u\|_q^{q^*}\right)\\
			& \leq\left(q^*-\bar{2}\right) \left(C_{\varepsilon}K_{N, p}^pc^{(1-\nu_{b, q}) b}\|\nabla u\|_q^{\nu_{b, q} b}+\varepsilon\mathcal{S}_q^{-q^* / q}\|\nabla u\|_q^{q^*}\right).
		\end{aligned}
	\end{equation}
	Let
	$$
	D_c:=\left(q^*-\bar{2}\right)A_{N,q}^{-1}K_{N, p}^pc^{(1-\nu_{b, q}) b}>0, \quad
	E:=\left(q^*-\bar{2}\right)A_{N,q}^{-1}\mathcal{S}_q^{-q^* / q}>0,
	$$
	 we have
	\begin{equation}
		\|\nabla u\|_q^q\leq C_{\varepsilon} D_c \|\nabla u\|_q^{\nu_{b, q} b}+\varepsilon\|\nabla u\|_q^{q^*}.
	\end{equation}
	Since $\bar{q}<b<q^{*}$, we have $q<\nu_{b, q}b<q^* $. Lemma \ref{Lem-A1} implies that
	\begin{equation}\label{Est-uqcase1}
		\|\nabla u\|_q \geq \min \left\{1,\left(C_{\varepsilon}D_c+\varepsilon E\right)^{\frac{1}{q-\nu_{b, q} b}}\right\}.
	\end{equation}
	Choose
	$$
	\varepsilon \leq \frac{1}{2E}
	$$
	and let $c>0$ be  small enough such that
	$$
	D_c\leq \frac{1}{2C_{\varepsilon}}.
	$$
	Since  $q-\nu_{b, q}b<0$,	
	$$
	\left(C_{\varepsilon}D_c+\varepsilon E\right)^{\frac{1}{q-\nu_{b, q}b}}\geq 1.
	$$
	So, combining \eqref{Eq-Hgeqver11} and \eqref{est-u2case1}, we have
	\begin{equation}
		\begin{aligned}
			\bar{C}_1 & \leq\bar{C}_1\|\nabla u\|_q^q \\
			& \leq\|\nabla u\|_2^2+\bar{C}_1\|\nabla u\|_q^q \\
			& \leq A_c+B_c\|\nabla u\|_2^{\delta_a a} \\
			& \leq A_c+B_c\left(\sqrt{A_c}+\frac{1}{2}\right)^{\delta_a a}.
		\end{aligned}
	\end{equation}	
	Since $A_c\rightarrow0$, $B_c\rightarrow0$ as $c \rightarrow 0^{+}$ and $ 	\bar{C}_1 $ is independent of $c$,
	 we get a contradiction by choosing $c>0$ small enough.\\
	\textbf{Case 2 :} $\frac{2 N}{N+2}<q<2$.
	Note that from \eqref{Eq-H3=} and \eqref{H1}
	\begin{equation}\label{Eq-Hq1w}
		\begin{aligned}
			(\bar{q}-2) \int_{\mathbb{R}^N} H_1(u){d} x
			& \geq \int_{\mathbb{R}^N} \bar{q} H_1(u)-h_1(u) u {~d} x \\
			& =\int_{\mathbb{R}^N} h_2(u) u-\bar{q} H_2(u) {d} x-B_{N,q}
			\|\nabla u\|_2^2\\
			&\geq\left(b-\bar{q}\right) \int_{\mathbb{R}^N} H_2(u) {d} x-B_{N,q}\|\nabla u\|_2^2  .
		\end{aligned}
	\end{equation}
	Hence, using that $u\in \mathcal{P}$, from \eqref{Eq-poH3} and \eqref{Eq-Hq1w}, we get
	\begin{equation}\label{Eq-Hgeq1w}
		\begin{aligned}
			\|\nabla u\|_2^2  + \left(\delta_q + 1\right) \|\nabla u\|_q^q & = \frac{N}{2} \int_{\mathbb{R}^N} H(u) \, dx\\
			&\leq \frac{N}{2} \frac{b-2}{b-\bar{q}} \int_{\mathbb{R}^N} H_1(u) \mathrm{d} x
			+\frac{NB_{N,q}}{2(b-\bar{q})}\|\nabla u\|_2^2.
		\end{aligned}
	\end{equation}
	Let
	 $$
	 \bar{C}_2:=1-\frac{NB_{N,q}}{2(b-\bar{q})}.
	 $$
	Since  $\frac{2 N}{N+2}<q<2$ , it can be proved through calculations that \( \bar{C}_2 > 0 \). From \eqref{H0}, \eqref{H1} and Lemma \ref{Eq-GN inequality},
	\begin{equation}\label{Eq-Hgeqver111}
		\begin{aligned}
			\bar{C}_2\|\nabla u\|^2_2  + \left(\delta_q + 1\right)\|\nabla u\|^q_q &\leq \frac{N}{2} \frac{b-2}{b-\bar{q}} \int_{\mathbb{R}^N} H_1(u) \mathrm{d} x\\
			&\leq C \frac{N}{2} \frac{b-2}{b-\bar{q}}\left(c^2+\|u\|_a^a\right)\\
			&\leq C \frac{N}{2} \frac{b-2}{b-\bar{q}}\left(c^2+C_{N, a}^ac^{(1-\delta_a)a}
			\|\nabla u\|_2^{\delta_a a}\right)\\
			&=A_c^{\prime}+B_c^{\prime}\|\nabla u\|_2^{\delta_a a},
		\end{aligned}
	\end{equation}
	where
	$$
	A_c^{\prime}:=\frac{N}{2} \frac{b-2}{b-\bar{q}} c^2 \text { and } B_c^{\prime}:=\frac{N}{2} \frac{b-2}{b-\bar{q}} c^{\left(1-\delta_a\right) a}.
	$$
	Since $\bar{C}_2>0$, we have
	\begin{equation}
		\|\nabla u\|_2^2\leq A_c^{\prime}+B_c^{\prime}\|\nabla u\|_2^{\delta_a a}.
	\end{equation}
	Observe that $A_c\rightarrow0$, $B_c\rightarrow0$ as $c \rightarrow 0^{+}$ and $\delta_{a}a<2$. Therefore
	$$
	\frac{\sqrt{A_c^{\prime}}+\frac{1}{4}}{\left(\sqrt{A_c^{\prime}}+\frac{1}{2}\right)^{\delta_aa}} \rightarrow \frac{1}{2^{2-\delta_aa}}>0 \quad \text { as } c \rightarrow 0^{+} .
	$$
	Thus, for $c>0$  sufficiently small we have that
	$$
	B_c^{\prime}\leq \frac{\sqrt{A_c^{{\prime}}}+\frac{1}{4}}{\left(\sqrt{A_c^{{\prime}}}+\frac{1}{2}\right)^{\delta_aa}}.
	$$
	So \cite[Lemma A.1.]{bieganowski2024existence} implies that
	\begin{equation}\label{est-u2case2}
		\|\nabla u\|_2 \leq \sqrt{A^{\prime}_c}+\frac{1}{2}.
	\end{equation}
	On the other hand, from \eqref{Eq-H3=}, using \eqref{H0}, \eqref{H1} and Lemma \ref{Eq-GN inequality2}, we have
	\begin{equation}
		\begin{aligned}
			B_{N,q}\|\nabla u\|_2^2 &
			=\int_{\mathbb{R}^N}h_1(u) u-\bar{q} H_1(u) d x+
			\int_{\mathbb{R}^N}h_2(u) u-\bar{q} H_2(u) d x \\
			& \leq \int_{\mathbb{R}^N}(a-\bar{q}) H_2(u) d x+\int_{\mathbb{R}^N}\left(b-\bar{q}\right) H_2(u) d x \\
			& \leq\left(b-\bar{q}\right) \int_{\mathbb{R}^N} H_1(u) d x \\
			& \leq\left(b-\bar{q}\right) \left(C_{\varepsilon}\|u\|_b^b+{\varepsilon}\|u\|_{2^*}^{2^*}\right) \\
			& \leq\left(b-\bar{q}\right) \left(C_{\varepsilon}C_{N, b}^b\|\nabla u\|_2^{\delta_bb }\|u\|_2^{(1-\delta_{b})b}+\varepsilon\mathcal{S}_2^{-2^* / 2}\|\nabla u\|_{2}^{2^*}
			\right)\\
			& \leq\left(b-\bar{q}\right) \left(C_{\varepsilon}C_{N, b}^bc^{(1-\delta_{b})b}\|\nabla u\|_2^{\delta_bb }+\varepsilon\mathcal{S}_2^{-2^* / 2}\|\nabla u\|_{2}^{2^*}
			\right).
		\end{aligned}
	\end{equation}
	Let
	$$
	D_c^{\prime}:=\left(b-\bar{q}\right)B_{N,q}^{-1}C_{N, b}^bc^{(1-\delta_b) b}>0, \quad
	E^{\prime}:=\left(b-\bar{q}\right)B_{N,q}^{-1}\mathcal{S}_2^{-2^* / 2}>0,
	$$
	we get
	\begin{equation}\label{Eq-Hgeqver12}
		\|\nabla u\|_q^q\leq D_c^{\prime} \|\nabla u\|_2^{\delta_bb}+E^{\prime} \|\nabla u\|_2^{2^*}.
	\end{equation}
	Since $\bar{2}<b<2^*$, we have $2<\delta_bb<2^* $.
	Lemma \ref{Lem-A1} implies that
	\begin{equation}\label{Est-uqcase2}
		\|\nabla u\|_2 \geq \min \left\{1,\left(C^{\prime}_{\varepsilon}D_c^{\prime} +\varepsilon E\right)^{\frac{1}{q-\delta_b b}}\right\}.
	\end{equation}
	Choose
	$$
	\varepsilon \leq \frac{1}{2E^{\prime} },
	$$
	and let $c$ be so small that
	$$
	D_c^{\prime} \leq \frac{1}{2C_{\varepsilon}}.
	$$
	Since $q-\delta_bb<0$, we have
	$$
	\left(C_{\varepsilon}D_c+\varepsilon E\right)^{\frac{1}{q-\delta_bb}}\geq1.
	$$
		So, combining \eqref{Eq-Hgeqver111} and \eqref{est-u2case2}, we have
	\begin{equation}
		\begin{aligned}
			\bar{C}_2 & \leq\bar{C}_2\|\nabla u\|_2^2 \\
			& \leq\bar{C}_2\|\nabla u\|_2^2+(\delta_q+1)\|\nabla u\|_q^q \\
			& \leq A_c^{\prime}+B_c^{\prime}\|\nabla u\|_2^{\delta_a a} \\
			& \leq A_c^{\prime}+B_c^{\prime}\left(\sqrt{A_c^{\prime}}+\frac{1}{2}\right)^{\delta_a a}.
		\end{aligned}
	\end{equation}	
Since $A_c^{\prime}\rightarrow0$, $B_c^{\prime}\rightarrow0$ as $c \rightarrow 0^{+}$ and and $\bar{C}_2 $ is independent of $c$,
	 we get a contradiction by choosing sufficiently small $c>0$. Based on the above two cases, we can conclude that $\mathcal{P}_0 \cap \mathcal{D}(c)=\emptyset$.
\end{proof}
\begin{lemma}\label{Lemma-codim2}
	If \eqref{H0}, \eqref{H1}, and \eqref{H3} hold and $c>0$ is sufficiently small, then $\mathcal{P}_c=\mathcal{P}\cap\mathcal{S}(c)$ is a smooth manifold of codimension 2 in $X$.
\end{lemma}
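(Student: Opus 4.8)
The plan is to realize $\mathcal{P}_c$ as a regular zero set and to apply the regular value theorem. Set $G(u):=\|u\|_2^2-c^2$ and $\Phi:=(G,P):X\to\mathbb{R}^2$, so that $\mathcal{P}_c=\Phi^{-1}(0,0)$ inside $X\setminus\{0\}$. Under \eqref{H0} we have $H\in\mathcal{C}^1$, hence both $G$ and $P$ are of class $\mathcal{C}^1$ on $X$ and so is $\Phi$. To conclude that $\mathcal{P}_c$ is a $\mathcal{C}^1$-manifold of codimension $2$ it is enough to prove that $\Phi'(u):X\to\mathbb{R}^2$ is surjective at every $u\in\mathcal{P}_c$, which is equivalent to the linear independence of $G'(u)$ and $P'(u)$ in $X^*$. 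Here $G'(u)\neq0$ since $\langle G'(u),u\rangle=2c^2>0$, and $P'(u)\neq0$ by Lemma \ref{Lemma-codim}; moreover $\mathcal{P}_c\neq\emptyset$, because by \eqref{J0} every $u\in\mathcal{S}(c)$ admits a projection $t_u*u\in\mathcal{P}$, which still lies in $\mathcal{S}(c)$ as dilations preserve the $L^2$-norm.

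The geometric reason behind the required independence is the scaling orbit $s\mapsto s*u$. Since $\|s*u\|_2=\|u\|_2$, this curve lies entirely in $\mathcal{S}(c)$, hence it is tangent to the level sets of $G$; on the other hand $P(s*u)=\frac{\mathrm{d}}{\mathrm{d}s}J(s*u)$, so that $\frac{\mathrm{d}}{\mathrm{d}s}P(s*u)\big|_{s=0}=\frac{\mathrm{d}^2}{\mathrm{d}s^2}J(s*u)\big|_{s=0}$, which is nonzero exactly when $u\notin\mathcal{P}_0$. As $c>0$ is small, Lemma \ref{Lemma-P0} gives $\mathcal{P}_0\cap\mathcal{D}(c)=\emptyset$, and since $u\in\mathcal{S}(c)\subset\mathcal{D}(c)$ we indeed have $u\notin\mathcal{P}_0$. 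Thus the scaling direction is tangent to $\mathcal{S}(c)$ but transverse to $\mathcal{P}$, which is what forces $G'(u)$ and $P'(u)$ to be independent.

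Because the infinitesimal generator $\frac{N}{2}u+x\cdot\nabla u$ of the scaling need not belong to $X$, I would make this rigorous by arguing through the Euler--Lagrange equation rather than by differentiating the orbit. Suppose, for contradiction, that $G'(u)$ and $P'(u)$ are dependent; as both are nonzero, $P'(u)=\mu\,G'(u)$ for some $\mu\in\mathbb{R}$, i.e. $u$ solves weakly
\[
-2\Delta u-q(\delta_q+1)\Delta_q u-\tfrac{N}{2}h(u)=2\mu u .
\]
Applying to this $(2,q)$-equation the Pohozaev identity (the computation of \cite[Lemma 2.3]{baldelli2022normalized} that yields \eqref{Eq-poh}, performed with these coefficients and with primitive $\frac{N}{2}H(u)+\mu u^2$), testing the equation with $u$, and combining both with $P(u)=0$ from \eqref{Eq-poH3}, a direct manipulation eliminates $\int_{\mathbb{R}^N}h(u)u\,\mathrm{d}x$ and $\int_{\mathbb{R}^N}H(u)\,\mathrm{d}x$ and returns precisely
\[
2\|\nabla u\|_2^2+q(\delta_q+1)^2\|\nabla u\|_q^q-\tfrac{N^2}{4}\int_{\mathbb{R}^N}\bigl(h(u)u-2H(u)\bigr)\,\mathrm{d}x=0 ,
\]
that is $\frac{\mathrm{d}^2}{\mathrm{d}s^2}J(s*u)\big|_{s=0}=0$ by \eqref{Eq-J2der}. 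Hence $u\in\mathcal{P}_0\cap\mathcal{D}(c)$, contradicting Lemma \ref{Lemma-P0}. Therefore $G'(u)$ and $P'(u)$ are linearly independent, $\Phi'(u)$ is onto $\mathbb{R}^2$, and the regular value (implicit function) theorem gives that $\mathcal{P}_c$ is a $\mathcal{C}^1$-manifold of codimension $2$ in $X$.

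The main obstacle is exactly this last reduction: checking that the Lagrange-multiplier equation forces $u\in\mathcal{P}_0$. In contrast to the single-Laplacian setting of \cite{bieganowski2024existence}, the $q$-Laplacian term enters the Pohozaev and Nehari identities with the distinct scaling weight $\delta_q+1$, so the two gradient terms mix and the algebra must be tracked carefully; the clean cancellation relies on the identity $q(\delta_q+1)+N=\tfrac{q(N+2)}{2}$. One must also retain, throughout, the smallness of $c$ that is needed to invoke Lemma \ref{Lemma-P0}.
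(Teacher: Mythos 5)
Your proposal is correct and follows essentially the same route as the paper's proof: both reduce the claim to the surjectivity of $(G'(u),P'(u)):X\to\mathbb{R}^2$ on $\mathcal{P}_c$, suppose by contradiction that $u$ is a constrained critical point of $P$ (your $P'(u)=\mu\,G'(u)$ is just the Lagrange-multiplier formulation the paper writes via a $2\times 2$ system with test functions $\varphi$ and $u$), and then apply the Pohozaev and Nehari identities to the resulting Euler--Lagrange equation to conclude $u\in\mathcal{P}_0\cap\mathcal{D}(c)$, contradicting Lemma \ref{Lemma-P0}. Your explicit tracking of how the multiplier term cancels in the Pohozaev--Nehari combination, and the remark that the scaling generator $\frac{N}{2}u+x\cdot\nabla u$ need not lie in $X$ (which is why the multiplier route is taken), are faithful to, and slightly more detailed than, the paper's argument.
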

\begin{proof}
	We note that $\mathcal{P}_c=\{u \in$ $X: P(u)=0, G(u)=0\}$, where $G(u)=\|u\|_2^2-c^2$, with $P$ and $G$ being of class $C^1$ in $X$. Thus, it suffices to show that the differential $(G^{\prime}(u), P^{\prime}(u)): X \rightarrow \mathbb{R}^2$ is surjective,
	for every $u \in \mathcal{P}$. 	For $\varphi,\psi\in X$, we give the following system
	$$
	\left\{\begin{array}{l}
		G^{\prime}(u)(\alpha \varphi+\beta \psi)=x \\
		P^{\prime}(u)(\alpha \varphi+\beta \psi)=y.
	\end{array}\right.
	$$
	If this system is solvable with respect to $\alpha, \beta$, for every $(x, y) \in \mathbb{R}^2$,
	then $(G^{\prime}(u), P^{\prime}(u)): X \rightarrow \mathbb{R}^2$ is surjective.

	For this purpose, let $\psi:=u$, we prove that for every $u \in \mathcal{P}_c$ there exists $\varphi \in T_u \mathcal{S}(c)$ such that $P^{\prime}(u)\varphi \neq 0$. Once that the existence of $\varphi$ is established, the system
	
	$$
	\left\{\begin{array} { l }
		{ G^{\prime} ( u ) ( \alpha \varphi + \beta u ) = x } \\
		{  P^{\prime} ( u ) ( \alpha \varphi + \beta u ) = y }
	\end{array} \quad \Longleftrightarrow \quad \left\{\begin{array}{l}
		2\beta c^2=x \\
		\alpha P^{\prime}(u)\varphi+\beta P^{\prime}(u)u=y
	\end{array}\right.\right.
	$$
	is solvable with respect to $\alpha, \beta$, for every $(x, y) \in \mathbb{R}^2$, and hence the surjectivity is proved.

	Now, suppose by contradiction that for $u \in \mathcal{P}_c$ such a tangent vector $\varphi$ does not exist, i.e.
	 $P^{\prime}(u)\varphi=0$ for every $\varphi \in T_u \mathcal{S}(c)$. Then $u$ is a constrained critical point for the functional $P$ on $\mathcal{S}(c)$, and hence by the Lagrange multipliers rule, there exists $\lambda \in \mathbb{R}$ such that
	$$
	-2 \Delta \widetilde{u}-q\left(\delta_q+1\right) \Delta_q \widetilde{u}+\lambda u=\frac{N}{2} h(\widetilde{u})\widetilde{u}.
	$$
	But, by the Pohozaev identity, this implies that
	$$
	2 \|\nabla u\|^2_2+q\left(\delta_q+1\right)^2 \|\nabla u\|_q^q=\frac{N^2}{4} \int_{\mathbb{R}^N} h(u) u-2H(u) d x.
	$$
	That is $u \in \mathcal{P}_0$, from Lemma \ref{Lemma-P0}, we get a contradiction.
\end{proof}
In the following two lemmas, we show that $J$ is bounded away from $0$ on ${\mathcal{P}_{-}^{} \cap \mathcal{D}(c)}$ and coercive on ${\mathcal{P}_{-}^{\mathrm{rad}} \cap \mathcal{D}(c)}$.
\begin{lemma}\label{Lemma-J>0onP}
	If \eqref{F0}, \eqref{F2}, \eqref{F4}, \eqref{J0} and \eqref{Eq-crange} hold, then $\inf _{\mathcal{P} \_\cap \mathcal{D}(c)} J>0$.
\end{lemma}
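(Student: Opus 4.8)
The plan is to bound $J$ from below on $\mathcal{P}_-\cap\mathcal{D}(c)$ by a positive constant depending only on $c$, by exploiting the $L^2$-preserving scaling $s\mapsto s*u$ together with the lower bound \eqref{Eq-J>g} and the function $g$ of Lemma \ref{Lemma-g}. Fix $u\in\mathcal{P}_-\cap\mathcal{D}(c)$ and set $\psi(s):=J(s*u)$ as in \eqref{Eq-jsu}. Since $u\in\mathcal{P}$, \eqref{Eq-Pand2der} gives $\psi'(0)=0$, and by the very definition of $\mathcal{P}_-$ (see \eqref{Eq-J2der}) we have $\psi''(0)<0$; hence $s=0$ is a strict local maximum of $\psi$, and by \eqref{J0} it is the \emph{unique} local maximum, so $t_u=0$.

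Next I would analyze $\psi$ at $\pm\infty$, using the scaling identities $\|\nabla(s*u)\|_2^2=e^{2s}\|\nabla u\|_2^2$ and $\|\nabla(s*u)\|_q^q=e^{q(\delta_q+1)s}\|\nabla u\|_q^q$. I would first show $\psi(s)\to-\infty$ as $s\to+\infty$, and this is precisely where \eqref{F4} enters: since $q^{\#}=(1+\frac{2}{N})\max\{2,q\}$, a direct computation gives $\tfrac{N}{2}q^{\#}-N=\max\{2,\,q(\delta_q+1)\}$, i.e. the exponent governing the growth of $e^{-Ns}\int_{\mathbb{R}^N}F(e^{\frac{N}{2}s}u)\,dx$ coincides with the growth rate of the faster of the two gradient terms. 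Assumption \eqref{F4} (the ratio $F(t)/|t|^{q^{\#}}\to+\infty$) then forces the nonlinear term to strictly dominate, so $\psi(s)\to-\infty$. At the other end, using \eqref{F0} together with \eqref{F2} (which yields $F\geq0$ near the origin), I would show the gradient terms vanish and the nonlinear contribution is nonnegative up to a remainder tending to $0$, so that $\limsup_{s\to-\infty}\psi(s)\leq0$.

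To conclude, fix any $t^*\in(R_0,R_1)$. By \eqref{g2} we have $g(c,t^*)>0$, so $\delta:=g(c,t^*)(t^*)^{\widetilde q}>0$ is independent of $u$. Because $s\mapsto\|\nabla(s*u)\|_{\widetilde q}$ is a continuous, strictly increasing bijection of $\mathbb{R}$ onto $(0,\infty)$, there is a unique $s^*$ with $\|\nabla(s^**u)\|_{\widetilde q}=t^*$; since $\|s^**u\|_2=\|u\|_2\leq c$, we have $s^**u\in\mathcal{D}(c)$, and \eqref{Eq-J>g} gives $\psi(s^*)=J(s^**u)\geq g(c,t^*)(t^*)^{\widetilde q}=\delta>0$. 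As $\psi(s^*)>0$ while $\psi(s)\to-\infty$ at $+\infty$ and $\limsup_{s\to-\infty}\psi\leq0$, the supremum of $\psi$ over $\mathbb{R}$ is attained at a finite point, which must be a local maximum and hence equals $t_u=0$ by \eqref{J0}. Therefore $J(u)=\psi(0)=\sup_{\mathbb{R}}\psi\geq\psi(s^*)\geq\delta$, and taking the infimum over $u$ yields $\inf_{\mathcal{P}_-\cap\mathcal{D}(c)}J\geq\delta>0$.

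The main obstacle is exactly the comparison $J(u)=\psi(0)\geq\psi(s^*)$: \eqref{J0} only provides a unique \emph{local} maximum, and $\psi$ need not be monotone on $(-\infty,0)$, so when $s^*<0$ one cannot compare $\psi(0)$ and $\psi(s^*)$ directly. The resolution is to combine the strict positivity $\psi(s^*)\geq\delta>0$ with the sign of the limits of $\psi$ at $\pm\infty$: these together force the global maximum of $\psi$ to be interior, after which uniqueness in \eqref{J0} pins it at $0$. A secondary technical point is the exponent bookkeeping at $+\infty$, where the precise value $q^{\#}=(1+\frac{2}{N})\max\{2,q\}$ is what makes \eqref{F4} strong enough to beat the dominant gradient term in both regimes $\frac{2N}{N+2}<q<2$ and $2<q<N$.
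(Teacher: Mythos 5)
Your proposal is correct and takes essentially the same route as the paper: identify $0$ as a local maximum of $\psi$ via $\psi'(0)=0$, $\psi''(0)<0$, use \eqref{F4} with Fatou's lemma to get $\psi(s)\to-\infty$ as $s\to+\infty$ and negativity (or non-positivity) at $-\infty$, produce a uniform positive value of $\psi$ at the dilation where $\|\nabla(s*u)\|_{\widetilde q}$ hits the region where $g(c,\cdot)>0$ using \eqref{Eq-J>g}, and then let \eqref{J0} pin the global maximum at $0$, yielding $J(u)\geq\delta>0$ uniformly. The only (harmless) variations are that you pick an arbitrary $t^*\in(R_0,R_1)$ where the paper uses the maximizer $t_{\max}$ of $t\mapsto g(c,t)t^{\widetilde q}$, and that you prove $\limsup_{s\to-\infty}\psi(s)\leq 0$ by a direct estimate rather than citing Lemma \ref{Lemma-mR(a)} --- in fact a slightly more careful treatment, since that lemma's computation is carried out for bounded $u$.
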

\begin{proof}
	Let $u \in \mathcal{P}_{-} \cap \mathcal{D}(c)$. Recall $\widetilde{q}=\max\{2,q\}$,
	we observe that
		 \begin{equation*}
		\begin{aligned}
			J(s * u) &= \frac{e^{2s}}{2} \int_{\mathbb{R}^N} |\nabla u|^2 \, dx + \frac{e^{q(\delta_q+1)s}}{q} \int_{\mathbb{R}^N} |\nabla u|^q \, dx -e^{-Ns}\int_{\mathbb{R}^N}F(e^{\frac{N}{2} s} u)  dx\\
			&= e^{\widetilde{q}(\delta_{\widetilde{q}}+1)s}\left( \frac{e^{(2-\widetilde{q}(\delta_{\widetilde{q}}+1))s}}{2}\int_{\mathbb{R}^N} |\nabla u|^2 \, dx + \frac{e^{(q(\delta_q+1)-\widetilde{q}(\delta_{\widetilde{q}}+1))s}}{q}  \int_{\mathbb{R}^N} |\nabla u|^q \, dx -\frac{1}{\left(e^{ \frac{N}{2}s}\right)^{{q}^{\#}}}\int_{\mathbb{R}^N}F(e^{\frac{N}{2} s} u)  dx\right).
		\end{aligned}
	\end{equation*}	
Note that, from \eqref{F4}, $F\left(e^{\frac{N}{2}s} u\right)>0$ a.e. in $\operatorname{supp} u$ for sufficiently large $s>0$. Therefore, from Fatou's lemma and \eqref{F4} again,
$$
\lim _{s \rightarrow \infty }\frac{1}{\left(e^{ \frac{N}{2}s}\right)^{{q}^{\#}}}\int_{\mathbb{R}^N}F(e^{\frac{N}{2} s} u)  dx=\infty.
$$
Since $\widetilde{q}=\max\{2,q\}$, $\lim _{s \rightarrow \infty }{e^{(2-\widetilde{q}(\delta_{\widetilde{q}}+1))s}}=0$ and $\lim _{s \rightarrow \infty }e^{(q(\delta_q+1)-\widehat{q}(\delta_{\widehat{q}}+1))s}=0$. This implies  $\lim _{s \rightarrow\infty} J(s * u)=-\infty$. And from Lemma \ref{Lemma-mR(a)}, we can see that
$\lim _{s \rightarrow-\infty} J(s * u)<0$.

	Now, let $t_{\max }>0$ be the unique maximum point of $t \mapsto g(c, t) t^{\widetilde{q}}$, guaranteed by \eqref{Eq-crange}, where $g$ is defined by \eqref{Eq-g(a,t)}, see \eqref{g1} in Lemma \ref{Lemma-g}, and choose $s_u$ such that $\left\|\nabla\left(s_u * u\right)\right\|_{\widetilde{q}}=t_{\max }$. Then, by \eqref{Eq-J>g},
	$$
	J\left(s_u * u\right) \geq g\left(c,\left\|\nabla\left(s_u * u\right)\right\|_{\widetilde{q}}\right)\left\|\nabla\left(s_u * u\right)\right\|_{\widetilde{q}}^{\widetilde{q}}=g\left(c, t_{\max }\right) t_{\max }^{\widetilde{q}}>0,
	$$
	Due to \eqref{Eq-Pand2der}, $0$ is a local maximum point of $(-\infty,\infty) \ni s \mapsto J(s * u) \in \mathbb{R}$. From \eqref{J0}, 0 is the unique local maximum point of $J(s * u)$.
	Since $\lim _{s \rightarrow\infty} J(s * u)<0$ and $\lim _{s \rightarrow-\infty} J(s * u)=-\infty$,
	$0$ is actually the global maximum point of
	$J(s * u)$ for $s\in\mathbb{R}$. So
	$$
	J(u)=J\left(0 * u\right)\geq J\left(s_u * u\right) >0,
	$$
	which completes the proof.
\end{proof}
To prove that \( J \) is coercive on \( \mathcal{P}_{-}^{\mathrm{rad}} \cap \mathcal{D}(c) \), recall the following condition.
\begin{enumerate}[label=\textup{(F\arabic*)}, ref=\textup{F\arabic*}]
	\setcounter{enumi}{+2}
	\item \label{F--2}
	$
	\lim _{|t| \rightarrow 0} \dfrac{F(t)}{|t|^{q^{\#}}}=+\infty.
	$
\end{enumerate}

\begin{lemma}\label{Lemma-coercive}
	If \eqref{F0}, \eqref{F1}, \eqref{F3}, \eqref{F4}, \eqref{F5}, \eqref{J0} and \eqref{Eq-crange} hold, then $\left.J\right|_{\mathcal{P}_{-}^{\mathrm{rad}} \cap\mathcal{D}(c)}$ is coercive.
\end{lemma}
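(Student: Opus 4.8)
The plan is to argue by contradiction. Since every $u\in\mathcal{P}_{-}^{\mathrm{rad}}\cap\mathcal{D}(c)$ satisfies $\|u\|_2\le c$, the norm $\|u\|_X$ blows up if and only if $\|\nabla u\|_{\widetilde q}\to\infty$, so it suffices to rule out the existence of a sequence $(u_n)_n\subset\mathcal{P}_{-}^{\mathrm{rad}}\cap\mathcal{D}(c)$ with $t_n:=\|\nabla u_n\|_{\widetilde q}\to\infty$ and $J(u_n)\le M$ for some $M\in\mathbb{R}$. Exactly as in the proof of Lemma \ref{Lemma-J>0onP} (note that \eqref{F3} implies \eqref{F2}), hypotheses \eqref{F0}, \eqref{F2}, \eqref{F4}, \eqref{J0} and \eqref{Eq-crange} ensure that for each such $u$ the map $s\mapsto J(s*u)$ has $s=0$ as its unique, hence global, maximum point; in particular $J(u)=\max_{s\in\mathbb{R}}J(s*u)$.

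I would first renormalise. Let $t_{\max}$ be the maximiser of $t\mapsto g(c,t)\,t^{\widetilde q}$ provided by \eqref{g1} of Lemma \ref{Lemma-g}, and pick $s_n$ with $\|\nabla(s_n*u_n)\|_{\widetilde q}=t_{\max}$; set $w_n:=s_n*u_n$. Because $\|\nabla(s*u)\|_{\widetilde q}^{\widetilde q}=e^{\widetilde q(\delta_{\widetilde q}+1)s}\|\nabla u\|_{\widetilde q}^{\widetilde q}$ and $t_n\to\infty$, we have $s_n\to-\infty$, so the unique maximum point of $\tau\mapsto J(\tau*w_n)=J((s_n+\tau)*u_n)$ is $\tau_n^{\ast}:=-s_n\to+\infty$. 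Each $w_n$ is radial with $\|w_n\|_2=\|u_n\|_2\le c$ and $\|\nabla w_n\|_{\widetilde q}=t_{\max}$.

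The key step, and the \textbf{main obstacle}, is to show that $(w_n)_n$ is bounded in $X$: since $X$ is not a Hilbert space and the two gradient terms scale with the distinct exponents $2(\delta_2+1)=2$ and $q(\delta_q+1)$, fixing $\|\nabla w_n\|_{\widetilde q}$ controls only one of $\|\nabla w_n\|_2$, $\|\nabla w_n\|_q$. To bound the remaining one I use the energy ceiling together with the fact that $q^{\prime}=\max\{2^\ast,q^\ast\}$ is precisely the Sobolev exponent of $\widetilde q=\max\{2,q\}$: by \eqref{Sobolev inequality} and \eqref{Eq-C0},
\[
\int_{\mathbb{R}^N}F(w_n)\,dx\le C_0\|w_n\|_2^2+C_0\|w_n\|_{q^{\prime}}^{q^{\prime}}\le C_0 c^2+C_0\mathcal{S}_{\widetilde q}^{-q^{\prime}/\widetilde q}\,t_{\max}^{q^{\prime}}
\]
is bounded, while $J(w_n)=J(0*w_n)\le\max_{\tau}J(\tau*w_n)=J(u_n)\le M$; hence the identity $J(w_n)=\tfrac12\|\nabla w_n\|_2^2+\tfrac1q\|\nabla w_n\|_q^q-\int_{\mathbb{R}^N}F(w_n)\,dx$ forces the free gradient norm to stay bounded. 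Thus $(w_n)_n$ is bounded in $X_{\mathrm{rad}}$, and by Lemma \ref{LemmaEmbed}, up to a subsequence, $w_n\rightharpoonup w$ in $X$, $w_n\to w$ in $L^m(\mathbb{R}^N)$ for every $m\in(2,q^{\prime})$ and a.e. in $\mathbb{R}^N$.

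It remains to reach a contradiction in both alternatives for $w$. From \eqref{F1} and \eqref{F5} one gets, for every $\varepsilon>0$, an estimate $|F(t)|\le\varepsilon(|t|^2+|t|^{q^{\prime}})+C_\varepsilon|t|^m$ with $m\in(2,q^{\prime})$, which gives $\int F(e^{\frac N2\tau}w_n)\to\int F(e^{\frac N2\tau}w)$ for each fixed $\tau$. If $w\equiv0$, then keeping only the $\widetilde q$-gradient term yields $\liminf_nJ(\tau*w_n)\ge\tfrac1{\widetilde q}e^{\widetilde q(\delta_{\widetilde q}+1)\tau}t_{\max}^{\widetilde q}$ for every $\tau$, whence $J(u_n)\ge J(\tau*w_n)$ gives $\liminf_nJ(u_n)=+\infty$ after letting $\tau\to+\infty$, contradicting $J(u_n)\le M$. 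If $w\not\equiv0$, I use \eqref{F4} together with the identity $\widetilde q(\delta_{\widetilde q}+1)=\tfrac N2(q^{\#}-2)$: the gradient part of $J(\tau*w_n)$ grows at most like $e^{\frac N2(q^{\#}-2)\tau}$, while the bound $F(\sigma)\ge K|\sigma|^{q^{\#}}$ for $|\sigma|$ large (with $K$ as large as we wish, by \eqref{F4}) makes $e^{-N\tau}\int F(e^{\frac N2\tau}w_n)$ grow with the \emph{same} exponent but with coefficient $\gtrsim K\|w\|_{q^{\#}}^{q^{\#}}>0$ (note $q^{\#}\in(2,q^{\prime})$, so $\|w_n\|_{q^{\#}}\to\|w\|_{q^{\#}}>0$). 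Choosing $K$ large enough therefore gives $J(\tau*w_n)\le-c_0\,e^{\frac N2(q^{\#}-2)\tau}$ for all $\tau\ge\tau_1$ and all large $n$; since $\tau_n^{\ast}\to+\infty$, this forces $J(u_n)=J(\tau_n^{\ast}*w_n)\to-\infty$, contradicting $J(u_n)\ge J(w_n)$, which is bounded below by $-\sup_n\int F(w_n)>-\infty$. Both cases being impossible, $J$ is coercive on $\mathcal{P}_{-}^{\mathrm{rad}}\cap\mathcal{D}(c)$.
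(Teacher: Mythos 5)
Your proposal follows the same overall skeleton as the paper's proof (rescale the allegedly unbounded sequence to a bounded one, pass to a radial weak limit $w$, and run a dichotomy on $w=0$ versus $w\neq0$), and both dichotomy branches are carried out correctly. There is, however, one step whose justification is wrong as written: the opening claim that, on $\mathcal{D}(c)$, ``the norm $\|u\|_X$ blows up if and only if $\|\nabla u\|_{\widetilde q}\to\infty$'' merely \emph{because} $\|u\|_2\le c$. This equivalence is false on $\mathcal{D}(c)$ alone: there is no interpolation inequality controlling $\|\nabla u\|_{\widehat q}$ (with $\widehat q=\min\{2,q\}$) by $\|\nabla u\|_{\widetilde q}$ and $\|u\|_2$. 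For instance, when $q<2$ a sum of $k$ widely separated bumps of amplitude $k^{-1/2}$ keeps $\|u\|_2$ and $\|\nabla u\|_2$ bounded while $\|\nabla u\|_q^q\sim k^{1-q/2}\to\infty$, and for $q>2$ spread-out highly oscillatory functions give the reverse phenomenon; radial versions (concentric annuli) exist as well. So your reduction, as justified, misses exactly the sequences where only the $\widehat q$-gradient norm blows up, and coercivity quantifies over those too. Fortunately the repair is one line \emph{inside your own framework}: since you have already posited $J(u_n)\le M$, your estimate $\int_{\mathbb{R}^N}F(u_n)\,dx\le C_0c^2+C_0\mathcal{S}_{\widetilde q}^{-q^{\prime}/\widetilde q}\|\nabla u_n\|_{\widetilde q}^{q^{\prime}}$ shows that if $\|\nabla u_n\|_{\widetilde q}$ stayed bounded along a subsequence, then $\tfrac12\|\nabla u_n\|_2^2+\tfrac1q\|\nabla u_n\|_q^q\le M+C$, so $\|u_n\|_X$ would be bounded, a contradiction; alternatively one can invoke $P(u_n)=0$ from \eqref{Eq-poH3} together with \eqref{F0} and \eqref{Sobolev inequality}. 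You should state this; the mass bound alone does not do it.

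Beyond that, your route differs from the paper's in two instructive ways. The paper proves coercivity \emph{directly} (no energy ceiling is available), so it cannot extract boundedness of the rescaled sequence from the energy; instead it introduces $\gamma=\varlimsup_n\|\nabla u_n\|_2/\|\nabla u_n\|_q^{1/(\delta_q+1)}$ and treats two cases, normalizing the dominant gradient norm to $1$ and getting the other bounded from the case assumption. Your contradiction framing converts $J(w_n)\le J(u_n)\le M$ (legitimate, since by \eqref{J0}, \eqref{F4} and the argument of Lemma \ref{Lemma-J>0onP}, with \eqref{F3} giving \eqref{F2}, $s=0$ is the global maximum of $s\mapsto J(s*u_n)$) plus the $C_0$--Sobolev bound on $\int F(w_n)$ into boundedness in one stroke, avoiding the dichotomy --- arguably cleaner. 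Second, in the $w\neq0$ branch the paper divides $J(u_n)$ by the normalizing gradient norm and lets Fatou's lemma with \eqref{F4} drive the quotient to $-\infty$, contradicting $J\ge0$ from Lemma \ref{Lemma-J>0onP}; you instead quantify \eqref{F4} as $F(t)\ge K|t|^{q^{\#}}$ for large $|t|$ (handling the region $\{e^{N\tau/2}|w_n|<R_K\}$ via \eqref{F1} and the mass bound) and use the exponent identity $\widetilde q(\delta_{\widetilde q}+1)=\tfrac N2(q^{\#}-2)$ to force $J(\tau_n^{\ast}*w_n)\to-\infty$, contradicting $J(u_n)\ge J(w_n)\ge-\sup_n\int F(w_n)$. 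These are equivalent mechanisms, and your $w=0$ branch coincides with the paper's. With the reduction step repaired as above, the proof is complete.
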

\begin{proof}
	Suppose that $\left(u_n\right)_n \subset {\mathcal{P}_{-}^{\mathrm{rad}}\cap \mathcal{D}(c)}$ is a sequence in $X$ such that $\left\|u_n\right\|_X\rightarrow \infty$ as $n \rightarrow\infty$. For convenience, we define
	
		$$
		\gamma:=\varlimsup _{n \rightarrow\infty} \frac{\|\nabla u_n\|_2}{\|\nabla u_n\|^{\frac{1}{\delta_q +1}}_q}.
		$$
		We will consider $ \gamma \leq 1 $ and 	$\gamma> 1 $ separately.\\
	\textbf{Case 1 :} $	\gamma \leq 1.$\\
	$\gamma \leq1$ implies that
	\begin{equation}\label{Eq-Compare}
		\varlimsup _{n \rightarrow\infty} \frac{\|\nabla u_n\|_2}{\|\nabla u_n\|^{\frac{1}{\delta_q +1}}_q} \leq 1.
	\end{equation}
 	Since $\left\|u_n\right\|_X\rightarrow \infty$ as $n \rightarrow\infty$,
	\eqref{Eq-Compare} implies that $\|\nabla u_n\|^{\frac{1}{\delta_q +1}}_q\rightarrow \infty$ as $n \rightarrow\infty$.
	If not, both $\|\nabla u_n\|_q$ and
	$\|\nabla u_n\|_2$ would remain bounded in $ \mathbb{R}^+ $, contradicting the fact that $\left\|u_n\right\|_X\rightarrow \infty$ as $n \rightarrow\infty$.
		We set $s_n:=\ln(\|\nabla u\|^{-\frac{1}{\delta_q +1}}_q)$, and note that $s_n \rightarrow -\infty$ as $n \rightarrow\infty$. Define $v_n:=s_n * u_n$.
	Then
	\begin{equation*}
		\left\|v_n\right\|_2^2=\left\|u_n\right\|_2^2 \leq c^2 .
	\end{equation*}
	Furthermore, for $n$ large enough
	\begin{equation*}
		\left\|\nabla v_n\right\|_2^2=e^{2s_n}\left\|\nabla u_n\right\|_2^2=\frac{\left\|\nabla u_n\right\|_2^2}{\left\|\nabla u_n\right\|_q^{\frac{2}{\delta_q+1}}} \leq 2
	\end{equation*}
	and
	\begin{equation*}
		\left\|\nabla v_n\right\|_q^q=e^{q\left(\delta_q+1\right)s_n}\left\|\nabla u_n\right\|_q^q=1 .
	\end{equation*}
	Hence, $\left(v_n\right)_n$ is bounded in $X_{\mathrm{rad}}$. Then, there exists $v \in X_{\mathrm{rad}}$ such that, up to a subsequence, $v_n \rightharpoonup v$ in $X_{\mathrm{rad}}$ and $v_n \rightarrow v$ in $L^{p} \left(\mathbb{R}^N\right)$ for $p\in(2,q^{\prime})$ and a.e. in $\mathbb{R}^N$. Let $F_{ \pm}:=\max \{ \pm F, 0\}$. From \eqref{F1} and \eqref{F4}, we obtain that $F_{-}(t) \lesssim t^2$. Suppose that $v \neq 0$. Then, from Lemma \ref{Lemma-J>0onP}, \eqref{F4} and Fatou's lemma we deduce that
	\begin{equation*}
		\begin{aligned}
			0 \leq \frac{J\left(u_n\right)}{\left\|\nabla u_n\right\|_q^q}
			& =\frac{1}{2}
			\frac{\left\|\nabla u_n\right\|_2^2}{\left\|\nabla u_n\right\|_q^q}+
			\frac{1}{q}-\int_{\mathbb{R}^N} \frac{F\left(u_n\right)}{\left\|\nabla u_n\right\|_q^q} d x \\
			& =\frac{e^{(q\left(\delta_q+1\right)-2)s_n}}{{2}} \frac{\left\|\nabla v_n\right\|_2^2}{\left\|\nabla v_n\right\|_q^q}+\frac{1}{q}-e^{q\left(\delta_q+1\right)s_n} \cdot e^{Ns_n} \int_{\mathbb{R}^N} F\left(u_n\left(e^{s_n}x\right)\right) d x \\
			& \leq {e^{(q\left(\delta_q+1\right)-2)s_n}}+\frac{1}{q}-
			e^{(N+q\left(\delta_q+1\right))s_n} \int_{\mathbb{R}^N} F\left(e^{-\frac{N}{2}s_n} v_n\right) d x\\
			& = {e^{(q\left(\delta_q+1\right)-2)s_n}}+\frac{1}{q}+
			e^{(N+q\left(\delta_q+1\right))s_n} \int_{\mathbb{R}^N} F_{-}\left(e^{-\frac{N}{2}s_n} v_n\right) -F_{+}\left(e^{-\frac{N}{2}s_n} v_n\right)d x\\
			& \leq {e^{(q\left(\delta_q+1\right)-2)s_n}}+\frac{1}{q}
			+Ce^{q\left(\delta_q+1\right)s_n}c^2
			-
			e^{(N+q\left(\delta_q+1\right))s_n} \int_{\mathbb{R}^N} F_{+}\left(e^{-\frac{N}{2}s_n} v_n\right)d x\\
			& \leq {e^{(q\left(\delta_q+1\right)-2)s_n}}+\frac{1}{q}
			+Ce^{q\left(\delta_q+1\right)s_n}c^2
			-
			e^{(N+q\left(\delta_q+1\right))s_n}
			  \int_{\mathbb{R}^N} \frac{F_{+}\left(e^{-\frac{N}{2}s_n} v_n\right)\left(e^{-\frac{N}{2}s_n}\right)^{{q}^{\#}}}{\left(e^{-\frac{N}{2}s_n} \right)^{{q}^{\#}}}d x\\
			& \leq {e^{(q\left(\delta_q+1\right)-2)s_n}}+\frac{1}{q}
			+Ce^{q\left(\delta_q+1\right)s_n}c^2
			-
				e^{\left(q\left(\delta_q+1\right)-\widetilde{q}\left(\delta_{\widetilde{q}}+1\right)\right)s_n}
				\int_{\mathbb{R}^N} \frac{F_{+}\left(e^{-\frac{N}{2}s_n} v_n\right)}{\left(e^{-\frac{N}{2}s_n} \right)^{{q}^{\#}}}d x\\
			&\leq e^{\left(q\left(\delta_q+1\right)-\widetilde{q}\left(\delta_{\widetilde{q}}+1\right)\right)s_n}\left({e^{\left(\widetilde{q}\left(\delta_{\widetilde{q}}+1\right)-2\right)s_n}}
			-
			\int_{\mathbb{R}^N} \frac{F_{+}\left(e^{-\frac{N}{2}s_n} v_n\right)}{\left(e^{-\frac{N}{2}s_n}\right)^{{q}^{\#}}}d x\right)
				+\frac{1}{q}
			+Ce^{q\left(\delta_q+1\right)s_n}c^2
		\end{aligned}
	\end{equation*}
	 Note that, $F_{+}\left(e^{-\frac{N}{2}s_n} v_n\right)>0$ a.e. in $\operatorname{supp} v_n$. Therefore, from Fatou's lemma and \eqref{F3} again,
	$$
	\lim _{n \rightarrow \infty }\int_{\mathbb{R}^N}\frac{F_{+}\left(e^{-\frac{N}{2} s_n} v_n\right) }{\left(e^{ -\frac{N}{2}s_n}\right)^{{q}^{\#}}} dx=\infty.
	$$
	Since $\widetilde{q}=\max\{2,q\}$, $\lim _{n \rightarrow \infty }e^{\left(q\left(\delta_q+1\right)-\widetilde{q}\left(\delta_{\widetilde{q}}+1\right)\right)s_n}=\infty$ and $\lim _{n \rightarrow \infty }{e^{\left(\widetilde{q}\left(\delta_{\widetilde{q}}+1\right)-2\right)s_n}}=0$. This implies that  $ \frac{J\left(u_n\right)}{\left\|\nabla u_n\right\|_q^q}\rightarrow-\infty $ as $n\rightarrow0$, which is a contradiction.
	Hence, $v=0$ and $v_n \rightarrow 0$ in $L^{p}\left(\mathbb{R}^N\right)$ for $p\in(2,q^{\prime})$ as $n \rightarrow \infty$. Note that $u_n=s_n^{-1} * v_n \in \mathcal{P}_{-}^{\text {rad }} \cap \mathcal{D}(c)$. Thus, from \eqref{J0}, \eqref{F3} and \eqref{F4}, arguing as in Lemma \ref{Lemma-J>0onP}, $ s_n^{-1} $ is the unique maximizer of $s \mapsto J\left(s * v_n\right)$. Consequently, we get
	\begin{equation}
		J\left(u_n\right)=J\left(s_n^{-1} * v_n\right) \geq J\left(s * v_n\right)\geq
		\frac{e^{q\left(\delta_q+1\right)s}}{q}-e^{-Ns} \int_{\mathbb{R}^N} F\left(e^{\frac{N}{2}s} v_n\right) \mathrm{d} x
	\end{equation}
	for any $s>0$. Note that, from \eqref{F0}, \eqref{F1}, and \eqref{F5}, for every $\varepsilon>0$, there is $C_{\varepsilon}>0$ such that for every $n$
	$$
	\int_{\mathbb{R}^N} F\left(e^{\frac{N}{2}s} v_n\right) \mathrm{d} x \leq \varepsilon\left(\left\|e^{\frac{N}{2}s} v_n\right\|_2^2+\left\|e^{\frac{N}{2}s} v_n\right\| _{q^{\prime}}^{q^{\prime}}\right)+C_{\varepsilon}\left\|e^{\frac{N}{2}s} v_n\right\|_{q^{\#}}^{q^{\#}}.
	$$
	Since $v_n \rightarrow 0$ in $L^{p}\left(\mathbb{R}^N\right)$ for $p\in(2,q^{\prime})$ as $n \rightarrow \infty$, therefore
	$$
	\int_{\mathbb{R}^N} F\left(e^{\frac{N}{2}s} v_n\right) \mathrm{d} x \rightarrow 0 \quad \text { as } n \rightarrow\infty \text {. }
	$$
	Hence
	$$
	\liminf _{n\rightarrow \infty} J\left(u_n\right) \geq \liminf _{n\rightarrow \infty}\left(\frac{e^{q\left(\delta_q+1\right)s}}{q}-e^{-Ns} \int_{\mathbb{R}^N} F\left(e^{\frac{N}{2}s} v_n\right) \mathrm{d} x\right)=\frac{e^{q\left(\delta_q+1\right)s}}{q}
	$$
	for every $s>0$. Thus, $\lim _{n\rightarrow \infty} J\left(u_n\right)=\infty$.\\
		\textbf{Case 2 :} $	\gamma> 1.$ \\

			$\gamma >1$ implies that
		\begin{equation}\label{Eq-Compare2}
			\varlimsup _{n \rightarrow\infty} \frac{\|\nabla u_n\|_2}{\|\nabla u_n\|^{\frac{1}{\delta_q +1}}_q} > 1.
		\end{equation}
		Since $\left\|u_n\right\|_X\rightarrow \infty$ as $n \rightarrow\infty$,
		\eqref{Eq-Compare} implies that $\|\nabla u_n\|_2\rightarrow \infty$ as $n \rightarrow\infty$ up to a subsequence.
			If not, both $\|\nabla u_n\|_q$ and
		$\|\nabla u_n\|_2$ would remain bounded in $ \mathbb{R}^+ $, contradicting the fact that $\left\|u_n\right\|_X\rightarrow \infty$ as $n \rightarrow\infty$.
		We set $s_n:=\ln(\|\nabla u\|^{-1}_2)$, and note that $s_n \rightarrow -\infty$ as $n \rightarrow\infty$. Define $v_n:=s_n * u_n$.
		The remaining proof is similar, we omit it here.
	So we conclude that $\left.J\right|_{\mathcal{P}_{-}^{\mathrm{rad}} \cap\mathcal{D}(c)}$ is coercive.
\end{proof}
	
			Using Lemmas \ref{Lemma-J>0onP} and \ref{Lemma-coercive}, we establish that \( \inf_{\mathcal{P}_{-}^{\mathrm{rad}}\cap \mathcal{D}(c) } J > 0 \) and that \( J \) is coercive on \( \mathcal{P}_{-}^{\mathrm{rad}} \cap \mathcal{D}(c) \).
			To show that \( \inf_{\mathcal{P}_{-}^{\mathrm{rad}} \cap \mathcal{D}(c)} J \) is attained, we first demonstrate that \( \inf_{\mathcal{P}_{-} \cap \mathcal{D}(c)} \|\nabla u\|_{\widetilde{q}} > 0 \). This result allows us to give some estimates for \( H_1 \) and \( H_2 \). We will then use these estimates to prove that the weak limit of the minimizing sequence \( (u_n)_n \) for \( J \) remains in \( \mathcal{P}_{-}^{\mathrm{rad}} \cap \mathcal{D}(c) \).
	
\begin{lemma}\label{Lemma-u>0onP}
	If \eqref{H0} and \eqref{H1} hold, then
	$\inf _{\mathcal{P}_{-} \cap \mathcal{D}(c)}\|\nabla u\|_{\widetilde{q}}>0$, in addition, if \eqref{Eq-crange} also holds, then
	$\inf _{\mathcal{P}_{-} \cap \mathcal{D}(c)}\|\nabla u\|_{\widetilde{q}}>M$, where $M$ is a constant independent of $c$.
\end{lemma}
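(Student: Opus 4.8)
The plan is to argue directly: for every $u\in\mathcal{P}_{-}\cap\mathcal{D}(c)$ I will trap $\|\nabla u\|_{\widetilde{q}}$ below a quantity that is a fixed positive constant over a quantity which vanishes as $\|\nabla u\|_{\widetilde{q}}\to 0$. First I record two structural facts. From \eqref{H1}, the chain $2H_1(t)\le h_1(t)t\le a H_1(t)$ with $2<a$ forces $(a-2)H_1(t)\ge0$, hence $H_1\ge0$ on $\mathbb{R}$; likewise $b H_2\le h_2 t\le q^{\prime}H_2$ with $b<q^{\prime}$ gives $H_2\ge0$. Second, for $u\in\mathcal{P}_{-}$ the sign of the coefficients in \eqref{Eq-J2der1}--\eqref{Eq-J2der2} is decisive. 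Recalling that $A_{N,q}>0$ when $2<q<N$ and $B_{N,q}>0$ when $\frac{2N}{N+2}<q<2$, the condition $\frac{d^2}{ds^2}J(s*u)|_{s=0}<0$ reads, via \eqref{Eq-J2der1},
\[
A_{N,q}\|\nabla u\|_q^q<\int_{\mathbb{R}^N} h(u)u\,dx-\bar{2}\int_{\mathbb{R}^N}H(u)\,dx\qquad(q>2),
\]
and symmetrically, via \eqref{Eq-J2der2}, $B_{N,q}\|\nabla u\|_2^2<\int_{\mathbb{R}^N}h(u)u\,dx-\bar{q}\int_{\mathbb{R}^N}H(u)\,dx$ when $q<2$.

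Next I would discard the subcritical part. Writing $h=h_1+h_2$, the upper bounds $h_1 u\le a H_1$ and $h_2 u\le q^{\prime}H_2$ of \eqref{H1} give, for $q>2$,
\[
\int_{\mathbb{R}^N}h(u)u\,dx-\bar{2}\int_{\mathbb{R}^N}H(u)\,dx\le(a-\bar{2})\int_{\mathbb{R}^N}H_1(u)\,dx+(q^{\prime}-\bar{2})\int_{\mathbb{R}^N}H_2(u)\,dx.
\]
Since $q>2$ yields $q_{\#}=\bar{2}$, the hypothesis $a<q_{\#}$ gives $a-\bar{2}<0$; as $H_1\ge0$ the first term is nonpositive, while $q^{\prime}-\bar{2}>0$. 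Hence, with $\widetilde{q}=q$,
\[
A_{N,q}\|\nabla u\|_{\widetilde{q}}^{\widetilde{q}}<(q^{\prime}-\bar{2})\int_{\mathbb{R}^N}H_2(u)\,dx.
\]
The case $q<2$ is analogous, replacing $\bar{2}$ by $\bar{q}$, $A_{N,q}\|\nabla u\|_q^q$ by $B_{N,q}\|\nabla u\|_2^2$, and using $q_{\#}=\bar{q}$, $a<q_{\#}$.

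The third step controls $\int_{\mathbb{R}^N}H_2(u)\,dx$ by powers of $\|\nabla u\|_{\widetilde{q}}$ strictly above $\widetilde{q}$. By \eqref{H0}, $H_2(u)\lesssim|u|^b+|u|^{q^{\prime}}$. I would estimate $\|u\|_b^b$ with the Gagliardo--Nirenberg inequality attached to the dominant gradient, Lemma~\ref{Lemma-GN inequality2} when $q>2$ (so $q^{\prime}=q^*$) and Lemma~\ref{Lemma-GN inequality1} when $q<2$ (so $q^{\prime}=2^*$), and $\|u\|_{q^{\prime}}^{q^{\prime}}$ via the Sobolev inequality \eqref{Sobolev inequality} for $\mathcal{S}_{\widetilde{q}}$. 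Using $\|u\|_2\le c$ on $\mathcal{D}(c)$, and the key fact that $b>q^{\#}$ makes the Gagliardo--Nirenberg exponent $\kappa$ (equal to $\nu_{b,q}b$ or $\delta_b b$) exceed $\widetilde{q}$, this gives $c$-independent constants $C_1,C_2>0$ with
\[
\int_{\mathbb{R}^N}H_2(u)\,dx\le C_1\,c^{\,b-\kappa}\|\nabla u\|_{\widetilde{q}}^{\kappa}+C_2\|\nabla u\|_{\widetilde{q}}^{q^{\prime}},\qquad \kappa>\widetilde{q},\ q^{\prime}>\widetilde{q},\ b-\kappa>0.
\]
Combining with the previous step and dividing by $\|\nabla u\|_{\widetilde{q}}^{\widetilde{q}}>0$ (legitimate as $u\ne0$) yields
\[
A_{N,q}<(q^{\prime}-\bar{2})\bigl(C_1\,c^{\,b-\kappa}\|\nabla u\|_{\widetilde{q}}^{\kappa-\widetilde{q}}+C_2\|\nabla u\|_{\widetilde{q}}^{\,q^{\prime}-\widetilde{q}}\bigr),
\]
with both exponents positive. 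Since the right-hand side is a strictly increasing function of $\|\nabla u\|_{\widetilde{q}}$ vanishing at $0$, this forces $\|\nabla u\|_{\widetilde{q}}$ above a fixed positive value, whence $\inf_{\mathcal{P}_{-}\cap\mathcal{D}(c)}\|\nabla u\|_{\widetilde{q}}>0$.

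Finally, for the $c$-uniform bound I would run a dichotomy on the last inequality. Set $M:=\bigl(\tfrac{A_{N,q}}{2(q^{\prime}-\bar{2})C_2}\bigr)^{1/(q^{\prime}-\widetilde{q})}$, independent of $c$. If $\|\nabla u\|_{\widetilde{q}}<M$, the $C_2$-term is $<A_{N,q}/2$, so the $C_1$-term exceeds $A_{N,q}/2$, forcing $\|\nabla u\|_{\widetilde{q}}^{\kappa-\widetilde{q}}\gtrsim c^{\,\kappa-b}$; as $\kappa-b<0$ the right-hand side blows up when $c\to0^+$, contradicting $\|\nabla u\|_{\widetilde{q}}<M$ for all $c$ below a threshold determined solely by $M$. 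Hence $\|\nabla u\|_{\widetilde{q}}\ge M$ uniformly for small $c$ satisfying \eqref{Eq-crange}. The main obstacle is the bookkeeping in this last dichotomy, namely checking that the $c$-dependent Gagliardo--Nirenberg term genuinely only helps and that the threshold on $c$ can be chosen independently of $u$, together with verifying in each range of $q$ the signs $A_{N,q}>0$, $B_{N,q}>0$ and the ordering $a<q_{\#}<q^{\#}<b$ that makes the subcritical contribution drop out.
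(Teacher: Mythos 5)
Your argument is correct and essentially reproduces the paper's proof: the strict inequality on $\mathcal{P}_-$ obtained from \eqref{Eq-J2der1} (for $2<q<N$) and \eqref{Eq-J2der2} (for $\frac{2N}{N+2}<q<2$), the elimination of the $H_1$-term via $a<q_{\#}$ and $H_1\ge 0$, the Gagliardo--Nirenberg/Sobolev control of $\int_{\mathbb{R}^N}H_2(u)\,dx$ with $\|u\|_2\le c$, and the extraction of a lower bound from $x^{\widetilde{q}}\lesssim c^{\,b-\kappa}x^{\kappa}+x^{q^{\prime}}$ with $\kappa,q^{\prime}>\widetilde{q}$, which the paper packages as Lemma \ref{Lem-A1} together with Remark \ref{Re-A1} instead of your direct monotonicity/dichotomy. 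The only slip is that your final conclusion is needlessly restricted to small $c$: since \eqref{Eq-crange} caps $c$ by a fixed constant $c_{\max}$ and $\kappa-b<0$, your own inequality $\|\nabla u\|_{\widetilde{q}}^{\kappa-\widetilde{q}}\gtrsim c^{\kappa-b}\ge c_{\max}^{\kappa-b}$ already produces a $c$-independent lower bound for \emph{every} admissible $c$, which is precisely the content of Remark \ref{Re-A1}.
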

\begin{proof}
	First we may assume that $2<q<N$. For $u \in {\mathcal{P}_{-} \cap \mathcal{D}(c)}$,
	from \eqref{H0}, \eqref{H1} and Lemma \ref{Eq-GN inequality} we have
	\begin{equation}
		\begin{aligned}
			\int_{\mathbb{R}^N} H_2(u) d x
			& \leq C \left(\|u\|_b^b+\|u\|_{q^*}^{q^*}\right) \\
			& \leq C\left(K_{N, p}^p\|\nabla u\|_q^{\nu_{b, q} b}\|u\|_2^{(1-\nu_{b, q}) b}+\mathcal{S}_q^{-q^* / q}\|\nabla u\|_q^{q^*}\right)\\
			& \leq C\left(K_{N, p}^pc^{(1-\nu_{b, q}) b}\|\nabla u\|_q^{\nu_{b, q} b}+\mathcal{S}_q^{-q^* / q}\|\nabla u\|_q^{q^*}\right).
		\end{aligned}
	\end{equation}
	Now, from $u \in \mathcal{P}_{-}$ and \eqref{Eq-J2der1}, we have
	\begin{equation}
		\begin{aligned}
			\frac{4}{N^2}\left(\delta_q+1\right)\left(q\left(\delta_q+1\right)-2\right)\|\nabla u\|_q^q &
			<\int_{\mathbb{R}^N}h_1(u) u-\bar{2} H_1(u) d x+
			\int_{\mathbb{R}^N}h_2(u) u-\bar{2} H_2(u)d x \\
			& \leq \int_{\mathbb{R}^N}(a-\bar{2}) H_1(u) d x+\int_{\mathbb{R}^N}\left(q^*-\bar{2}\right) H_2(u) d x \\
			& \leq\left(q^*-\bar{2}\right) \int_{\mathbb{R}^N} H_2(u) d x \\
			& \leq\left(q^*-\bar{2}\right) C\left(K_{N, p}^pc^{(1-\nu_{b, q}) b}\|\nabla u\|_q^{\nu_{b, q} b}+\mathcal{S}_q^{-q^* / q}\|\nabla u\|_q^{q^*}\right).
		\end{aligned}
	\end{equation}
	Note that $\bar{q}<b<q^*$, $q<\nu_{b, q}b<q^* $, using Lemma \ref{Lem-A1}, we get that
	$\|\nabla u\|_q$ is bounded away from 0. If \eqref{Eq-crange} also holds, from Remark \ref{Re-A1}, $\inf _{\mathcal{P}_{-} \cap \mathcal{D}(c)}\|\nabla u\|_{\widetilde{q}}>M_1$, where $M_1$ is a constant independent of $c$.
	
	Similarly, when $\frac{2 N}{N+2}<q<2$, from \eqref{Eq-J2der2} and Lemma \ref{Lemma-GN inequality2}, using the similar arguments, we can deduce that
	$\|\nabla u\|_2$ is bounded away from 0.
	In addition,  if \eqref{Eq-crange} holds, then
	$\inf _{\mathcal{P}_{-} \cap \mathcal{D}(c)}\|\nabla u\|_{2}>M_2$.
	Combining the above two cases, we can  conclude that $\inf _{\mathcal{P}_{-} \cap \mathcal{D}(c)}\|\nabla u\|_{\widetilde{q}}>M:=\min\{M_1, M_2\}$.
\end{proof}

\subsection{Estimations of $H_1$ and $H_2$}
\
\newline
	In this subsection, we will give some estimates for \( H_1 \) and \( H_2 \).
And we assume that $F_1$ and $F_2$ satisfy what follows:
\begin{enumerate}[label=\textup{(HF$^{\prime}$)}, ref=\textup{HF$^{\prime}$}]
	\item \label{HF}
	There exist $2 < a_1<a_2 < q_{\#}$, $q^{\#} < b_1<b_2 < q^{\prime}$ such that
	$$
	(a_{1}-2)F_1(t)\leq H_1(t)\leq (a_2-2)F_1(t), \quad (b_1-2) F_2(t) \leq H_2(t)\leq(b_{2}-2) F_2(t) \quad \text{for all} \quad t \in \mathbb{R}.
	$$
\end{enumerate}
\eqref{HF} is
more relaxed than \eqref{H-F}.
\begin{lemma}\label{Lemma-HF>0}
	If \eqref{F0}, \eqref{F2}, \eqref{F4}, \eqref{J0}, \eqref{H0}, \eqref{HF} and \eqref{Eq-crange} hold and $c>0$ is sufficiently small, then for every $u \in \mathcal{P}^{}_{-} \cap \mathcal{D}(c)$
	$$
	\int_{\mathbb{R}^N} \left( a_1-q^{\#} \right) H_1(u) d x+
	\int_{\mathbb{R}^N} \left(b_1-q^{\#} \right)H_2(u) dx\geq0.
	$$
\end{lemma}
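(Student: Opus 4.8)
The plan is to turn the Pohozaev constraint $P(u)=0$ into an upper bound for $\int_{\mathbb{R}^N}H_1(u)\,dx$ in terms of the gradient energy, and then to show that this $H_1$-contribution is negligible when $c$ is small, so that the claimed weighted balance between $H_1$ and $H_2$ holds. Fix $u\in\mathcal{P}_{-}\cap\mathcal{D}(c)$ and set $D:=\|\nabla u\|_2^2+(\delta_q+1)\|\nabla u\|_q^q>0$. Since $u\in\mathcal{P}$, identity \eqref{Eq-poH3} gives $\frac{N}{2}\int_{\mathbb{R}^N}H(u)\,dx=D$, whence $\int_{\mathbb{R}^N}H_2(u)\,dx=\frac{2}{N}D-\int_{\mathbb{R}^N}H_1(u)\,dx$. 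Substituting this into the left-hand side of the claim and collecting terms turns it into $(a_1-b_1)\int_{\mathbb{R}^N}H_1(u)\,dx+\frac{2(b_1-q^{\#})}{N}D\ge0$; using the ordering $a_1<q_{\#}\le q^{\#}<b_1$ (so $b_1-a_1>0$ and $b_1-q^{\#}>0$), this is equivalent to
\[
\int_{\mathbb{R}^N}H_1(u)\,dx\ \le\ \frac{2(b_1-q^{\#})}{N(b_1-a_1)}\,D ,
\]
whose right-hand constant is a fixed positive number, independent of $u$ and of $c$.

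Next I would estimate the two sides separately. For the left-hand side, \eqref{H0} gives $\int_{\mathbb{R}^N}H_1(u)\,dx\lesssim\|u\|_2^2+\|u\|_a^a\le c^2+\|u\|_a^a$, and I would bound $\|u\|_a^a$ by a Gagliardo–Nirenberg inequality adapted to $\widetilde{q}=\max\{2,q\}$: Lemma \ref{Lemma-GN inequality2} when $2<q<N$ (so $\widetilde{q}=q$ and $a<q_{\#}=\bar 2<q^*$), and Lemma \ref{Lemma-GN inequality1} when $\frac{2N}{N+2}<q<2$ (so $\widetilde{q}=2$ and $a<q_{\#}=\bar q<2^*$). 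In either case, using $\|u\|_2\le c$, this yields $\|u\|_a^a\lesssim c^{\theta}\|\nabla u\|_{\widetilde{q}}^{\sigma}$ with $\theta>0$ and, crucially, $\sigma<\widetilde{q}$; the strict inequality $\sigma<\widetilde{q}$ is precisely the analytic content of the hypothesis $a<q_{\#}=(1+\frac{2}{N})\min\{2,q\}$. For the right-hand side, $D\ge\min\{1,\delta_q+1\}\,\|\nabla u\|_{\widetilde{q}}^{\widetilde{q}}$, and Lemma \ref{Lemma-u>0onP}, applicable since \eqref{Eq-crange} holds, supplies a lower bound $\|\nabla u\|_{\widetilde{q}}\ge M>0$ with $M$ independent of $c$.

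Combining these (the case $\int_{\mathbb{R}^N}H_1(u)\,dx<0$ being trivial), I would estimate
\[
\frac{\int_{\mathbb{R}^N}H_1(u)\,dx}{D}\ \lesssim\ \frac{c^2}{M^{\widetilde{q}}}+c^{\theta}\,\|\nabla u\|_{\widetilde{q}}^{\,\sigma-\widetilde{q}}\ \le\ \frac{c^2}{M^{\widetilde{q}}}+c^{\theta}M^{\sigma-\widetilde{q}},
\]
where the second term uses $D\ge\min\{1,\delta_q+1\}\|\nabla u\|_{\widetilde{q}}^{\widetilde{q}}$ together with $\sigma-\widetilde{q}<0$ and $\|\nabla u\|_{\widetilde{q}}\ge M$. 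As $\theta>0$ and $M$ does not depend on $c$, the right-hand side tends to $0$ as $c\to0^{+}$, uniformly over $\mathcal{P}_{-}\cap\mathcal{D}(c)$; hence for $c$ small enough the quotient falls below $\frac{2(b_1-q^{\#})}{N(b_1-a_1)}$, which is the reduced inequality and finishes the proof.

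The main obstacle is the final display: one must be sure the gradient energy in $D$ genuinely dominates the $H_1$-mass. This rests on two facts that have to be matched with care — that the Gagliardo–Nirenberg exponent is strictly subcritical, $\sigma<\widetilde{q}$, which forces the correct choice of GN inequality in each range of $q$ and uses $a<q_{\#}$; and that the lower bound $\|\nabla u\|_{\widetilde{q}}\ge M$ is uniform in $c$, so that the small powers $c^2$ and $c^{\theta}$ are not offset by a denominator degenerating as $c\to0^{+}$. Obtaining a genuinely $c$-independent threshold is the delicate point, and it is exactly here that the smallness of $c$ is used.
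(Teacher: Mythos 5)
Your proposal is correct, but it follows a genuinely different route from the paper's. The paper first invokes Lemma \ref{Lemma-J>0onP} and, through the identity \eqref{Eq-J-2P} for $J(u)-P(u)/2$ combined with the $c$-independent gradient bound of Lemma \ref{Lemma-u>0onP}, extracts a uniform lower bound $\widetilde M>0$ on $\frac N4\int_{\mathbb{R}^N} H(u)\,dx-\int_{\mathbb{R}^N} F(u)\,dx$; via \eqref{HF} this yields $\|u\|_b\geq\widetilde M_1>0$ uniformly for small $c$, and the conclusion then follows from the pointwise bound $H_2(t)\geq C'\left(|t|^b-|t|^a\right)$ (a consequence of \eqref{H1}) together with the interpolation inequality of Lemma \ref{Lemma-Interpolationinequality}, so that the positive $H_2$-term absorbs the $H_1$-term. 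You instead substitute the constraint $P(u)=0$ from \eqref{Eq-poH3} directly, which reduces the claim to the single inequality $\int_{\mathbb{R}^N} H_1(u)\,dx\leq\frac{2(b_1-q^{\#})}{N(b_1-a_1)}\,D$ — an exact algebraic equivalence that uses only the exponent ordering $a_1<q_{\#}\leq q^{\#}<b_1$ from \eqref{HF} — and then show that $\int_{\mathbb{R}^N} H_1(u)\,dx\,/\,D$ tends to $0$ as $c\to0^+$, uniformly on $\mathcal{P}_-\cap\mathcal{D}(c)$, via the growth bound in \eqref{H0}, the Gagliardo--Nirenberg inequalities (correctly matched to the two ranges of $q$, with $\sigma<\widetilde q$ and $\theta>0$ verified in both), and the same $c$-independent bound $\|\nabla u\|_{\widetilde q}\geq M$ of Lemma \ref{Lemma-u>0onP}. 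Your route is more economical: it bypasses Lemma \ref{Lemma-J>0onP} entirely (so \eqref{F2}, \eqref{F4}, \eqref{J0} play no genuine role beyond what Lemma \ref{Lemma-u>0onP} requires), avoids both the $L^b$-lower bound and the interpolation step, and makes transparent exactly where the smallness of $c$ enters; the paper's route, at the cost of more hypotheses, produces the intermediate uniform bound on $\|u\|_b$, which is quantitative information of independent interest. Two minor caveats, neither affecting validity: like the paper's own proof, you implicitly rely on \eqref{H1} (absent from the statement's hypothesis list) because Lemma \ref{Lemma-u>0onP} requires it; and your remark that $\sigma<\widetilde q$ is \emph{precisely} the content of $a<q_{\#}$ is a slight overstatement — in both ranges of $q$ the exact threshold is $a<q^{\#}=\left(1+\frac2N\right)\widetilde q$, so the assumption $a<q_{\#}$ suffices with room to spare.
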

\begin{proof}
	From \eqref{Eq-Functional}, \eqref{Eq-poH3} and Lemma \ref{Lemma-J>0onP}, we have that, for $u \in \mathcal{P}^{}_{-} \cap \mathcal{D}(c)$
	\begin{equation}\label{Eq-J-1P}
		\begin{aligned}
			& J(u)-\frac{P(u)}{q\left(\delta_q+1\right)}
			=\left(\frac{1}{2}-\frac{1}{q\left(\delta_q+1\right)}\right)\|\nabla u\|_2^2+ \int_{\mathbb{R}^N} \frac{1}{\bar{q}-2}H(u) d x-\int_{\mathbb{R}^N} F(u) d x>0 ,
		\end{aligned}
	\end{equation}
	\begin{equation}\label{Eq-J-2P}
		\begin{aligned}
			& J(u)-\frac{P(u)}{2}
			=\left(\frac{1}{q}-\frac{\left(\delta_q+1\right)}{2}\right)\|\nabla u\|_q^q+\int_{\mathbb{R}^N} \frac{N}{4} H(u) d x-\int_{\mathbb{R}^N} F(u) d x>0.
		\end{aligned}
	\end{equation}
	Firstly, we consider the case that $2<q<N$.
	Observe that $\frac{1}{q}-\frac{\left(\delta_q+1\right)}{2}<0$, \eqref{Eq-J-2P} and Lemma \ref{Lemma-u>0onP} implies that, there exist $\widetilde{M}>0$
	independent of $c$ such that
	$$
	\int_{\mathbb{R}^N} \frac{N}{4} H(u) d x-\int_{\mathbb{R}^N} F(u) d x>\widetilde{M}.
	$$
	Thus, from \eqref{H0} and \eqref{HF} it follows
	\begin{equation}\label{Eq-M}
		\begin{aligned}
			\widetilde{M}&<\int_{\mathbb{R}^N} \frac{N}{4} H(u) d x-\int_{\mathbb{R}^N} F(u) d x\\
			&=\int_{\mathbb{R}^N} \frac{N}{4}H_1(u)-F_1(u) d x+
			\int_{\mathbb{R}^N} \frac{N}{4}H_2(u)-F_2(u) d x\\
			&\leq
			\int_{\mathbb{R}^N} \left(\frac{N}{4}-\frac{1}{a_2-2}\right) H_1(u) d x+
			\int_{\mathbb{R}^N} \left(\frac{N}{4}-\frac{1}{b_2-2}\right)H_2(u) dx\\
			&\leq\int_{\mathbb{R}^N} \left(\frac{N}{4}-\frac{1}{b_2-2}\right)H_2(u) dx\\
			&\leq \widetilde{C}_1\left(\|u\|_2^2+\|u\|_b^b\right)
		\end{aligned}
	\end{equation}
	where $$\widetilde{C}_1:=C\left(\frac{N}{4}-\frac{1}{b_2-2}\right).$$
	Since $b_2>\bar{2}$, we have $\widetilde{C}_1>0$.
	Denote $\widetilde{M}_1:=\left(\frac{\widetilde{M}}{2\widetilde{C}_1}\right)^{\frac{1}{b}}$,
	from \eqref{Eq-M},
	for sufficiently small $c>0$,
	$$
	\|u\|_b\geq\left(\frac{\widetilde{M}}{2\widetilde{C}_1}\right)^{\frac{1}{b}}=\widetilde{M}_1.
	$$
	From \eqref{H1}, there exists $C^{\prime}>0$ such that $H_2(t)\geq C^{\prime}|t|^b$ for any $|t|>1$. Since $H_2(t)\geq 0$ for any $|t|\leq 1$, Therefore, we have
	\begin{equation}\label{Eq-H2est}
	H_2(t)\geq C^{\prime}\left(|t|^b-|t|^a \right) \quad \text { for all } t \in \mathbb{R} \text {. }
	\end{equation}
	Since $a_1<\bar{q}$, $b_1>\bar{q}$, from \eqref{Eq-H2est} and \eqref{H0}, we have that	
	\begin{equation}\label{Eq-Hest1}
		\begin{aligned}
			&\quad
			\int_{\mathbb{R}^N} \left(a_1-\bar{q}\right) H_1(u) d x+
			\int_{\mathbb{R}^N} \left(b_1-\bar{q}\right)H_2(u) dx\\
			&\geq -\widetilde{C}_2\left(\|u\|_2^2+\|u\|_a^a\right)+
			\widetilde{C}_3\left(\|u\|_{b}^{b}-
			\|u\|_{a}^{a}\right)
			,
		\end{aligned}
	\end{equation}
	where
	$$
	\widetilde{C}_2:=-C\left(a_1-\bar{q}\right) \text {, } \widetilde{C}_3:=C^{\prime}\left(b_1-\bar{q}\right) \text {.}
	$$	
	Using Lemma \ref{Lemma-Interpolationinequality} and Remark \ref{Remark-inter}, we get
	$$
	\|u\|_a^a\leq\|u\|_2^{at}\|u\|_b^{a(1-t)},
	$$
	where $a(1-t)<b$. So there exists $\delta>0$ such that $b-\delta=a(1-t)$.  Therefore
	
	\begin{equation}\label{Eq-Hest2}
		\begin{aligned}
			&\quad-\widetilde{C}_2\left(\|u\|_2^2+\|u\|_a^a\right)+\widetilde{C}_3\left(\|u\|_{b}^{b}-\|u\|_{a}^{a}\right)\\
			&=\widetilde{C}_3\|u\|_b^b
			-\left(\widetilde{C}_2+\widetilde{C}_3\right)\|u\|_a^a
			-\widetilde{C}_2\|u\|_2^2	\\
			&\geq\widetilde{C}_3\|u\|_b^b-
			\left(\widetilde{C}_2+\widetilde{C}_3\right)\|u\|_2^{at}\|u\|_b^{b-\delta}
			-\widetilde{C}_2\|u\|_2^2\\
			&=\widetilde{C}_3\|u\|_b^{b-\delta}
			\left(\|u\|_b^{\delta}-	\left(\widetilde{C}_2+\widetilde{C}_3\right)c^{at}
			\right)-\widetilde{C}_2c^2\\
			&\geq
			\widetilde{C}_3\|u\|_b^{b-\delta}
			\left(\widetilde{M}_1^{\delta}-	\left(\widetilde{C}_2+\widetilde{C}_3\right)c^{at}
			\right)-\widetilde{C}_2c^2.
		\end{aligned}
	\end{equation}
	Choose
	$$c\leq
	\min \left\{
	\left(\frac{\widetilde{M}_1}{2\left(\widetilde{C}_2+\widetilde{C}_3\right)}\right)^{\frac{\delta}{a t}} , \left(\frac{\widetilde{C}_3\widetilde{M}_1}{2 \widetilde{C}_2}\right)^{\frac{1}{2}}\right\},
	$$
	then
	\begin{equation}\label{Eq-Hest3}
		\begin{aligned}
			&\quad\widetilde{C}_3\|u\|_b^{b-\delta}
			\left(\widetilde{M}_1^{\delta} - \left(\widetilde{C}_2 + \widetilde{C}_3\right) c^{at}\right)
			- \widetilde{C}_2 c^2 \\
			&\geq \frac{1}{2} \widetilde{C}_3 \|u\|_b^{b-\delta} \widetilde{M}_1^{\delta}
			- \widetilde{C}_2 c^2 \\
			&\geq \frac{1}{2} \widetilde{C}_3 \widetilde{M}_1^{b}
			- \widetilde{C}_2 c^2 \\
			&\geq 0.
		\end{aligned}
	\end{equation}
	Now, combining \eqref{Eq-Hest1}, \eqref{Eq-Hest2} and \eqref{Eq-Hest3}, we have
	$$
	\int_{\mathbb{R}^N} \left(a_1-\bar{q}\right) H_1(u) d x+
	\int_{\mathbb{R}^N} \left(b_1-\bar{q}\right)H_2(u) dx\geq0.
	$$
{The proof is similar when $\frac{2 N}{N+2}<q<2$, so we omit it here.}
\end{proof}

	Using Lemma \ref{Lemma-HF>0}, we can prove the following three estimates that will be useful in Lemma \ref{Lemma-inf-attained} below.

\begin{lemma}\label{Remark-Hh}
	Under the assumptions of Lemma \ref{Lemma-HF>0}, for every $u \in \mathcal{P}^{}_{-} \cap \mathcal{D}(c)$ the following inequality holds
	$$
	\int_{\mathbb{R}^N} h(u) u d x-q^{\#} \int_{\mathbb{R}^N} H(u) d x\geq 0.
	$$
\end{lemma}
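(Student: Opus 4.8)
The plan is to obtain Lemma \ref{Remark-Hh} as an essentially immediate consequence of Lemma \ref{Lemma-HF>0}, via a pointwise comparison of the integrand. First I would split everything according to the decompositions $h=h_1+h_2$ and $H=H_1+H_2$, so that
\[
\int_{\mathbb{R}^N} h(u)u\,dx - q^{\#}\int_{\mathbb{R}^N} H(u)\,dx
= \sum_{j=1}^{2}\int_{\mathbb{R}^N}\big(h_j(u)u - q^{\#}H_j(u)\big)\,dx .
\]
The key observation is then the lower-bound halves of the structural hypothesis \eqref{H-1}, namely $a_1 H_1(t)\le h_1(t)t$ and $b_1 H_2(t)\le h_2(t)t$ for all $t\in\mathbb{R}$. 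Subtracting $q^{\#}H_j(t)$ from each of these gives the pointwise inequalities $h_1(t)t-q^{\#}H_1(t)\ge (a_1-q^{\#})H_1(t)$ and $h_2(t)t-q^{\#}H_2(t)\ge (b_1-q^{\#})H_2(t)$, which hold regardless of the sign of $H_j$.

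Evaluating at $t=u(x)$ and integrating, I would arrive at
\[
\int_{\mathbb{R}^N} h(u)u\,dx - q^{\#}\int_{\mathbb{R}^N} H(u)\,dx
\ge (a_1-q^{\#})\int_{\mathbb{R}^N} H_1(u)\,dx + (b_1-q^{\#})\int_{\mathbb{R}^N} H_2(u)\,dx .
\]
Since $u\in\mathcal{P}_{-}\cap\mathcal{D}(c)$ and $c>0$ is small, Lemma \ref{Lemma-HF>0} applies verbatim and asserts that the right-hand side is nonnegative, which is exactly the claim. The signs are consistent: $a_1<q_{\#}\le q^{\#}$ forces $a_1-q^{\#}<0$, while $b_1>q^{\#}$ gives $b_1-q^{\#}>0$, matching the structure of the inequality proved in Lemma \ref{Lemma-HF>0}.

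In this sense there is no real obstacle at the level of this lemma: the whole difficulty has been pushed into Lemma \ref{Lemma-HF>0}, which is where the smallness of $c$, the Gagliardo--Nirenberg and interpolation estimates, and the lower bound $\inf_{\mathcal{P}_{-}\cap\mathcal{D}(c)}\|\nabla u\|_{\widetilde{q}}>0$ from Lemma \ref{Lemma-u>0onP} genuinely enter. The only point requiring care here is bookkeeping of constants: one must invoke precisely the lower-bound halves of the growth conditions (with constants $a_1$ and $b_1$, not their upper counterparts $a_2,b_2$), so that the coefficients $(a_1-q^{\#})$ and $(b_1-q^{\#})$ produced by the pointwise estimate coincide exactly with those appearing in the conclusion of Lemma \ref{Lemma-HF>0}; any other choice would leave a residual combination of $\int H_1(u)\,dx$ and $\int H_2(u)\,dx$ whose sign could not be controlled.
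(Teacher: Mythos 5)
Your proof is correct and follows essentially the same route as the paper's: split $h=h_1+h_2$ and $H=H_1+H_2$, apply the pointwise lower bounds $a_1H_1(t)\le h_1(t)t$ and $b_1H_2(t)\le h_2(t)t$, and reduce the claim to the nonnegativity of $(a_1-q^{\#})\int_{\mathbb{R}^N}H_1(u)\,dx+(b_1-q^{\#})\int_{\mathbb{R}^N}H_2(u)\,dx$ established in Lemma \ref{Lemma-HF>0}. If anything, your bookkeeping is slightly more careful than the paper's, which nominally cites \eqref{HF} at the step where the lower-bound halves of \eqref{H-1} are actually being used.
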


\begin{proof}
	From \eqref{HF}, we have that
	\begin{equation}
		\begin{aligned}
			& \int_{\mathbb{R}^N} h(u) u d x-q^{\#} \int_{\mathbb{R}^N} H(u) d x \\
			= & \int_{\mathbb{R}^N} h_1(u) u-q^{\#} H_1(u) d x+\int_{\mathbb{R}^N} h_2(u) u-q^{\#} H_2(u) d x \\
			\geq & \int_{\mathbb{R}^N}\left(a_1-q^{\#}\right) H_1(u) d x+\int_{\mathbb{R}^N}\left(b_1-q^{\#}\right) H_2(u) d x \\
			\geq & 0,
		\end{aligned}
	\end{equation}
the last inequality follows from Lemma \ref{Lemma-HF>0}.
\end{proof}
\begin{lemma}\label{Remark-H1H22}
Under the assumptions of Lemma \ref{Lemma-HF>0}, for every $u \in \mathcal{P}^{}_{-} \cap \mathcal{D}(c)$, the following inequality holds
	$$
\int_{\mathbb{R}^N} \left(\frac{1}{{q}^{\#}-2}-\frac{1}{a_1-2}\right) H_1(u) d x+
\int_{\mathbb{R}^N} \left(\frac{1}{{q}^{\#}-2}-\frac{1}{b_1-2}\right)H_2(u) dx\geq0.
	$$

\end{lemma}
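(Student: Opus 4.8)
The plan is to reduce the claim, by an elementary manipulation of the coefficients, to an inequality having exactly the sign structure of the one established in Lemma~\ref{Lemma-HF>0}, and then to re-run that proof. Writing $I_j:=\int_{\mathbb{R}^N}H_j(u)\,dx$ for $j\in\{1,2\}$, a direct computation gives
\begin{equation*}
\frac{1}{q^{\#}-2}-\frac{1}{a_1-2}=\frac{a_1-q^{\#}}{(q^{\#}-2)(a_1-2)},\qquad
\frac{1}{q^{\#}-2}-\frac{1}{b_1-2}=\frac{b_1-q^{\#}}{(q^{\#}-2)(b_1-2)} .
\end{equation*}
Since $q^{\#}=(1+\tfrac2N)\max\{2,q\}>2$, the common factor $\tfrac{1}{q^{\#}-2}$ is positive, so the assertion is equivalent to
\begin{equation*}
\frac{a_1-q^{\#}}{a_1-2}\,I_1+\frac{b_1-q^{\#}}{b_1-2}\,I_2\ge 0 .
\end{equation*}
Because $2<a_1<q_{\#}\le q^{\#}<b_1$, the coefficient of $I_1$ is negative and that of $I_2$ positive, which is precisely the sign pattern of the inequality $(a_1-q^{\#})I_1+(b_1-q^{\#})I_2\ge0$ of Lemma~\ref{Lemma-HF>0}.

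I would stress that one may not merely invoke Lemma~\ref{Lemma-HF>0}: since $a_1-2<b_1-2$, dividing the two coefficients respectively by $a_1-2$ and $b_1-2$ strictly raises the threshold ratio $I_2/I_1$ that must be exceeded, so the weaker unweighted inequality does not imply the weighted one. The correct route is to repeat the proof of Lemma~\ref{Lemma-HF>0} with the positive constants $(q^{\#}-a_1)$ and $(b_1-q^{\#})$ replaced by the positive constants $\widetilde C_2':=C\,\tfrac{q^{\#}-a_1}{a_1-2}$ and $\widetilde C_3':=C'\,\tfrac{b_1-q^{\#}}{b_1-2}$. All the ingredients there are insensitive to this reweighting: the uniform lower bound $\|u\|_b\ge\widetilde M_1>0$ comes from \eqref{Eq-J-2P} together with Lemma~\ref{Lemma-u>0onP} and does not involve the weights; the upper bound $I_1\le C(\|u\|_2^2+\|u\|_a^a)\le C(c^2+\|u\|_a^a)$ follows from \eqref{H0}; the lower bound $I_2\ge C'(\|u\|_b^b-\|u\|_a^a)$ follows from \eqref{Eq-H2est} (which itself uses $H_2\ge0$); and the interpolation $\|u\|_a^a\le\|u\|_2^{at}\|u\|_b^{a(1-t)}$ with $a(1-t)=b-\delta<b$ is provided by Lemma~\ref{Lemma-Interpolationinequality} and Remark~\ref{Remark-inter}.

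Assembling these exactly as in \eqref{Eq-Hest1}--\eqref{Eq-Hest3} would yield
\begin{equation*}
\frac{b_1-q^{\#}}{b_1-2}\,I_2-\frac{q^{\#}-a_1}{a_1-2}\,I_1
\ge \widetilde C_3'\,\|u\|_b^{\,b-\delta}\Bigl(\widetilde M_1^{\,\delta}-(\widetilde C_2'+\widetilde C_3')\,c^{at}\Bigr)-\widetilde C_2'\,c^2 ,
\end{equation*}
and the right-hand side is nonnegative once $c$ is small, by the same elementary estimate that produced \eqref{Eq-Hest3}; one may have to shrink $c$ further than in Lemma~\ref{Lemma-HF>0}, which is harmless. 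As there, the case $2<q<N$ (where $q^{\#}=\bar q$) rests on \eqref{Eq-J2der1}, while the symmetric case $\tfrac{2N}{N+2}<q<2$ (where $q^{\#}=\bar 2$) rests on \eqref{Eq-J2der2}. The only genuinely delicate point, and hence the main obstacle, is the observation in the previous paragraph that the reweighting is not a formal consequence of Lemma~\ref{Lemma-HF>0} but must be extracted by re-entering its proof; once this is recognised, the smallness of $c$ absorbs the change of constants and no new estimate is required.
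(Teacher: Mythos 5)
Your proposal is correct, and it takes a genuinely different route from the paper --- in fact, it repairs a real defect in the paper's own argument. Writing $I_j:=\int_{\mathbb{R}^N}H_j(u)\,dx$, the paper attempts to deduce the weighted inequality directly from Lemma \ref{Lemma-HF>0} by a coefficient-ratio comparison, asserting
$$
-\frac{\frac{1}{q^{\#}-2}-\frac{1}{b_1-2}}{\frac{1}{q^{\#}-2}-\frac{1}{a_1-2}}
=-\frac{b_1-q^{\#}}{a_1-q^{\#}}\left(\frac{b_1-2}{a_1-2}\right)
\geq -\frac{b_1-q^{\#}}{a_1-q^{\#}} .
$$
But the middle equality is an algebra slip: since
$$
\frac{1}{q^{\#}-2}-\frac{1}{b_1-2}=\frac{b_1-q^{\#}}{(q^{\#}-2)(b_1-2)},
\qquad
\frac{1}{q^{\#}-2}-\frac{1}{a_1-2}=\frac{a_1-q^{\#}}{(q^{\#}-2)(a_1-2)},
$$
the correct factor is $\frac{a_1-2}{b_1-2}<1$, not $\frac{b_1-2}{a_1-2}>1$, and the displayed ``$\geq$'' then reverses. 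Concretely, the weighted claim needs $\frac{I_2}{I_1}\geq\frac{q^{\#}-a_1}{b_1-q^{\#}}\cdot\frac{b_1-2}{a_1-2}$, strictly more than the threshold $\frac{q^{\#}-a_1}{b_1-q^{\#}}$ furnished by Lemma \ref{Lemma-HF>0}; this is exactly the obstruction you isolate when you say the unweighted inequality cannot imply the weighted one. Your repair --- re-entering the proof of Lemma \ref{Lemma-HF>0} with the reweighted positive constants $\widetilde C_2'=C\,\frac{q^{\#}-a_1}{a_1-2}$ and $\widetilde C_3'=C'\,\frac{b_1-q^{\#}}{b_1-2}$ --- is sound: the inputs of that proof (the $c$-independent lower bound $\|u\|_b\geq\widetilde M_1$ coming from \eqref{Eq-J-2P} and Lemma \ref{Lemma-u>0onP}, the bound $I_1\lesssim c^2+\|u\|_a^a$ from \eqref{H0}, the lower bound \eqref{Eq-H2est} for $I_2$, and the interpolation of Lemma \ref{Lemma-Interpolationinequality} with Remark \ref{Remark-inter}) are all insensitive to the weights, and the final smallness condition on $c$ only changes explicit constants, which is compatible with the hypothesis that $c>0$ be sufficiently small (and harmless for the subsequent uses in Lemma \ref{LemmaPJ<J} and Theorem \ref{Theorem-existence2}). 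So where the paper's one-line reduction buys brevity but is invalid as written, your argument establishes the lemma, at the mere cost of possibly shrinking the admissible range of $c$.
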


\begin{proof}
	It suffices to prove
	$$
	-\frac{\int_{\mathbb{R}^N} \left(\frac{1}{{q}^{\#}-2}-\frac{1}{b_1-2}\right)H_2(u) dx}{\int_{\mathbb{R}^N} \left(\frac{1}{{q}^{\#}-2}-\frac{1}{a_1-2}\right) H_1(u) d x}\geq1.$$
	Since
\begin{equation*}
	\begin{aligned}
		 -\frac{\frac{1}{q^{\#}-2}-\frac{1}{b_1-2}}{\frac{1}{q^{\#}-2}-\frac{1}{a_1-2}}
		= -\frac{1-\frac{q^{\#}-{2}}{b_1-2}}{1-\frac{q^{\#}-2}{a_1-2}}
		= -\frac{b_1-q^{\#}}{a_1-q^{\#}}\left(\frac{b_1-2}{a_1-2}\right)
		\geq  -\frac{b_1-q^{\#}}{a_1-q^{\#}},
	\end{aligned}
\end{equation*}
we have
$$
-\frac{\left(\frac{1}{{q}^{\#}-2}-\frac{1}{b_1-2}\right)\int_{\mathbb{R}^N} H_2(u) dx}{ \left(\frac{1}{{q}^{\#}-2}-\frac{1}{a_1-2}\right)\int_{\mathbb{R}^N} H_1(u) d x}\geq
-\frac{\left(b_1-q^{\#} \right)\int_{\mathbb{R}^N} H_2(u) dx}{ \left( a_1-q^{\#} \right)\int_{\mathbb{R}^N} H_1(u) d x}\geq1.
$$
Thus, the proof of Lemma \ref{Remark-H1H22} is completed.
\end{proof}

\begin{lemma}\label{Remark-H1H2}
	Under the assumptions of Lemma \ref{Lemma-HF>0}, for every $u \in \mathcal{P}^{}_{-} \cap \mathcal{D}(c)$, the following inequality holds
	$$
	\int_{\mathbb{R}^N}\frac{1}{{q^{\#}}-2} H(u) d x-\int_{\mathbb{R}^N} F(u) d x\geq 0.
	$$
\end{lemma}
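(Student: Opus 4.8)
The plan is to deduce the inequality directly from the pointwise bounds in \eqref{HF} together with Lemma \ref{Remark-H1H22}, so that essentially no new estimate is required. First I would exploit the left-hand inequalities in \eqref{HF}: since $a_1>2$ and $b_1>2$, they are equivalent to the pointwise estimates
$$
F_1(t)\le \frac{1}{a_1-2}H_1(t),\qquad F_2(t)\le \frac{1}{b_1-2}H_2(t)\quad\text{for all }t\in\mathbb{R}.
$$
Evaluating at $u(x)$ and integrating over $\mathbb{R}^N$ then gives $\int_{\mathbb{R}^N}F_1(u)\,dx\le \frac{1}{a_1-2}\int_{\mathbb{R}^N}H_1(u)\,dx$ and $\int_{\mathbb{R}^N}F_2(u)\,dx\le \frac{1}{b_1-2}\int_{\mathbb{R}^N}H_2(u)\,dx$.

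Next I would write $F=F_1+F_2$ and $H=H_1+H_2$ and substitute these two integral bounds into the quantity to be estimated, obtaining
$$
\frac{1}{q^{\#}-2}\int_{\mathbb{R}^N}H(u)\,dx-\int_{\mathbb{R}^N}F(u)\,dx
\ge \int_{\mathbb{R}^N}\Big(\frac{1}{q^{\#}-2}-\frac{1}{a_1-2}\Big)H_1(u)\,dx
+\int_{\mathbb{R}^N}\Big(\frac{1}{q^{\#}-2}-\frac{1}{b_1-2}\Big)H_2(u)\,dx.
$$
The right-hand side is exactly the left-hand side of the inequality established in Lemma \ref{Remark-H1H22}, which is nonnegative for every $u\in\mathcal{P}_{-}\cap\mathcal{D}(c)$; this finishes the argument.

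The step that carries the actual content is precisely the reduction to Lemma \ref{Remark-H1H22}. Observe that $\frac{1}{q^{\#}-2}-\frac{1}{a_1-2}<0$ because $a_1<q_{\#}\le q^{\#}$, whereas $\frac{1}{q^{\#}-2}-\frac{1}{b_1-2}>0$ because $b_1>q^{\#}$; thus the $H_1$-term is a negative liability that has to be absorbed by the positive $H_2$-term. The fact that $\int_{\mathbb{R}^N}H_2(u)\,dx$ dominates $\int_{\mathbb{R}^N}H_1(u)\,dx$ in exactly the required proportion is what Lemma \ref{Remark-H1H22} guarantees, and that in turn rests on Lemma \ref{Lemma-HF>0}, hence on $c$ being small. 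Consequently the genuinely delicate point — the comparison between the masses of $H_1$ and $H_2$ — has already been settled upstream, and the present lemma is a short algebraic consequence, the only care needed being to track the signs of the two coefficients so that the inequality in the display points in the direction $\ge$.
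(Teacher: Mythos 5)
Your proof is correct and is essentially the paper's own argument: split $F=F_1+F_2$ and $H=H_1+H_2$, use the left-hand inequalities of \eqref{HF} pointwise (valid since $a_1,b_1>2$), and reduce the claim to the nonnegativity statement of Lemma \ref{Remark-H1H22}. The only cosmetic difference is that you work with $q^{\#}$ uniformly, whereas the paper treats the cases $2<q<N$ (where $q^{\#}=\bar q$) and $\tfrac{2N}{N+2}<q<2$ (where $q^{\#}=\bar 2$) separately; the content is identical.
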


	\begin{proof}
	Since when $2<q<N$,	we have that
	\begin{equation*}
		\begin{aligned}
			&\quad\int_{\mathbb{R}^N}\frac{1}{\bar{q}-2}  H(u) d x-\int_{\mathbb{R}^N} F(u) d x\\
			&= \int_{\mathbb{R}^N} \frac{1}{\bar{q}-2}H_1(u)-F_1(u) d x+
			\int_{\mathbb{R}^N} \frac{1}{\bar{q}-2}H_2(u)-F_2(u) d x\\
			&\geq
			\int_{\mathbb{R}^N} \left(\frac{1}{\bar{q}-2}-\frac{1}{a_1-2}\right) H_1(u) d x+
			\int_{\mathbb{R}^N} \left(\frac{1}{\bar{q}-2}-\frac{1}{b_1-2}\right)H_2(u) dx\\
			&\geq0.
		\end{aligned}
	\end{equation*}
The proof is similar when $\frac{2 N}{N+2}<q<2$, so we omit it here.
\end{proof}

Using the above estimates, we show that $  \inf _{\mathcal{P}_{-} \cap \mathcal{D}(c)} J $ is attained.
In the next lemma, we need the following condition:
\begin{enumerate}[label=(H\arabic*), ref=\textup{H\arabic*}]
	\setcounter{enumi}{+3}
	\item \label{H4}
	$\lim _{t \rightarrow 0} \frac{H(t)}{t^2}=0$.
\end{enumerate}
\eqref{H4} follows from \eqref{F1} and \eqref{HF}.
\begin{lemma}\label{Lemma-inf-attained}
	If \eqref{F0}, \eqref{F1}, \eqref{F3},
	{\eqref{F5},}
	 \eqref{J0}, \eqref{H0}, \eqref{H-1}, \eqref{HF} and \eqref{Eq-crange} hold and $c$ is sufficiently small, then $\inf _{\mathcal{P}_{-}^{\mathrm{rad}} \cap \mathcal{D}(c)} J$ is attained; if $f$ is odd or $\left.f\right|_{(-\infty, 0)} \equiv 0$, then $\inf _{\mathcal{P}_{-}^{\mathrm{rad}} \cap \mathcal{D}(c)} J=\inf _{\mathcal{P}_{-} \cap \mathcal{D}(c)} J$ and it is attained by a non-negative and non-increasing (in the radial coordinate) function.
\end{lemma}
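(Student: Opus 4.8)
The plan is to realize $m:=\inf_{\mathcal{P}_{-}^{\mathrm{rad}}\cap\mathcal{D}(c)}J$ as the energy of the weak limit of a minimizing sequence, after projecting that limit back onto $\mathcal{P}_{-}$ by a dilation; the decisive point is that for small $c$ the dilation parameter is forced to be $1$. First I would take a minimizing sequence $(u_n)_n\subset\mathcal{P}_{-}^{\mathrm{rad}}\cap\mathcal{D}(c)$ for $J$. By the coercivity proved in Lemma~\ref{Lemma-coercive}, $(u_n)_n$ is bounded in $X$, so up to a subsequence $u_n\rightharpoonup\widetilde u$ in $X_{\mathrm{rad}}$, $u_n\to\widetilde u$ in $L^m(\mathbb{R}^N)$ for every $m\in(2,q^{\prime})$ (compact radial embedding, Lemma~\ref{LemmaEmbed}) and a.e., with $\widetilde u\in\mathcal{D}(c)$ by weak lower semicontinuity of the $L^2$-norm. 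Since \eqref{F1}, \eqref{F5}, \eqref{H4} and \eqref{HF} make $F$ and $H$ subcritical both at the origin and at infinity, a standard splitting argument gives $\int_{\mathbb{R}^N}F(u_n)\,dx\to\int_{\mathbb{R}^N}F(\widetilde u)\,dx$ and $\int_{\mathbb{R}^N}H(u_n)\,dx\to\int_{\mathbb{R}^N}H(\widetilde u)\,dx$. As $P(u_n)=0$ reads $\|\nabla u_n\|_2^2+(\delta_q+1)\|\nabla u_n\|_q^q=\tfrac N2\int_{\mathbb{R}^N}H(u_n)\,dx$ and $\inf_{\mathcal{P}_{-}\cap\mathcal{D}(c)}\|\nabla u\|_{\widetilde q}>0$ by Lemma~\ref{Lemma-u>0onP}, the quantity $\int_{\mathbb{R}^N}H(u_n)\,dx$ stays bounded away from $0$; hence $\int_{\mathbb{R}^N}H(\widetilde u)\,dx>0$ and in particular $\widetilde u\neq0$.

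Next I would project $\widetilde u$ onto $\mathcal{P}$ through the plain dilation $w_t:=\widetilde u(t\,\cdot)$, $t>0$. Setting $\Phi(t):=t^N P(w_t)=t^2\|\nabla\widetilde u\|_2^2+(\delta_q+1)t^q\|\nabla\widetilde u\|_q^q-\tfrac N2\int_{\mathbb{R}^N}H(\widetilde u)\,dx$, one has $\Phi(0^+)<0$, $\Phi(+\infty)=+\infty$, and $\Phi$ strictly increasing, so there is a unique $\widetilde t>0$ with $w_{\widetilde t}\in\mathcal{P}$. Weak lower semicontinuity of the gradient terms together with the convergence of $\int H$ gives $P(\widetilde u)\le0$, i.e. $\Phi(1)\le0$, whence $\widetilde t\ge1$; as $\widetilde t\ge1$ shrinks the $L^2$-norm, $w_{\widetilde t}\in\mathcal{D}(c)$. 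Finally, $\mathcal{P}_0\cap\mathcal{D}(c)=\emptyset$ (Lemma~\ref{Lemma-P0}) together with \eqref{J0}, which forces $s\mapsto J(s\ast w_{\widetilde t})$ to possess a single local maximum while tending to $-\infty$ at $+\infty$ and staying negative at $-\infty$, shows that every point of $\mathcal{P}\cap\mathcal{D}(c)$ is such a maximum point, so $w_{\widetilde t}\in\mathcal{P}_{-}^{\mathrm{rad}}\cap\mathcal{D}(c)$ and $J(w_{\widetilde t})\ge m$.

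The crux is $\widetilde t=1$. On $\mathcal{P}$ I would use the representation $J=J-\tfrac{1}{q(\delta_q+1)}P$ from \eqref{Eq-J-1P} when $2<q<N$ (and its counterpart \eqref{Eq-J-2P} when $\tfrac{2N}{N+2}<q<2$), which writes $J(u)=I(u):=c_0\|\nabla u\|_2^2+\int_{\mathbb{R}^N}\big(\tfrac{1}{q^{\#}-2}H(u)-F(u)\big)\,dx$ with $c_0=\tfrac12-\tfrac{1}{q(\delta_q+1)}>0$ (replacing $\|\nabla u\|_2^2$ by $\|\nabla u\|_q^q$ in the range $q<2$). Here Lemma~\ref{Remark-H1H2} makes the integral term nonnegative, and this is exactly where the interaction estimate $\int H_1\lesssim\int H_2$ of Lemma~\ref{Lemma-HF>0} (only valid for small $c$) is used. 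Weak lower semicontinuity of $I$ along $(u_n)_n$ gives $m=\lim I(u_n)\ge I(\widetilde u)$, while $\widetilde t\ge1$ scales the positive gradient term by $\widetilde t^{2-N}\le1$ (resp. $\widetilde t^{q-N}\le1$) and the nonnegative integral by $\widetilde t^{-N}\le1$, so $I(w_{\widetilde t})\le I(\widetilde u)$. Combining, $J(w_{\widetilde t})=I(w_{\widetilde t})\le I(\widetilde u)\le m\le J(w_{\widetilde t})$, so all inequalities are equalities; since $\widetilde u\neq0$ gives $\|\nabla\widetilde u\|_2>0$ (resp. $\|\nabla\widetilde u\|_q>0$) and $2-N\neq0$, equality in the gradient term forces $\widetilde t=1$. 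Thus $\widetilde u\in\mathcal{P}_{-}^{\mathrm{rad}}\cap\mathcal{D}(c)$ and $J(\widetilde u)=m$, i.e. the infimum is attained.

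For the last assertion, when $f$ is odd (so $F$ and $H$ are even) or $f|_{(-\infty,0)}\equiv0$, I would symmetrize: for $u\in\mathcal{P}_{-}\cap\mathcal{D}(c)$ the Schwarz rearrangement $u^{\ast}$ of $|u|$ is radial, nonnegative and radially non-increasing, preserves $\|u\|_2$ and the integrals $\int F$, $\int H$, and lowers $\|\nabla u\|_2$ and $\|\nabla u\|_q$ by the P\'olya--Szeg\H{o} inequality; dilating $u^{\ast}$ back onto $\mathcal{P}_{-}$ as above yields a radial competitor of no larger energy, giving $\inf_{\mathcal{P}_{-}^{\mathrm{rad}}\cap\mathcal{D}(c)}J=\inf_{\mathcal{P}_{-}\cap\mathcal{D}(c)}J$ and a minimizer that is nonnegative and non-increasing in the radial variable. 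The \emph{main obstacle} is the projection step and the identity $\widetilde t=1$: both the sign $P(\widetilde u)\le0$ and the monotone scaling of $I$ hinge on the delicate interaction estimate of Lemma~\ref{Lemma-HF>0}, and it is precisely this estimate (hence the whole attainment argument) that requires $c$ to be small.
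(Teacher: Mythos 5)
Your skeleton matches the paper's proof almost step for step: minimizing sequence bounded by coercivity, weak limit $\widetilde u\neq 0$ via Lemma~\ref{Lemma-u>0onP}, projection $\widetilde u(\widetilde t\,\cdot)\in\mathcal{P}$ with $\widetilde t\geq 1$, the representation $J=J-P/(q(\delta_q+1))$ from \eqref{Eq-J-1P} together with the sign information coming from Lemma~\ref{Lemma-HF>0} to force $\widetilde t=1$, and Schwarz rearrangement for the symmetric statement. However, there is one genuine gap, at the step where you claim $w_{\widetilde t}\in\mathcal{P}_-$: you assert that \eqref{J0}, $\mathcal{P}_0\cap\mathcal{D}(c)=\emptyset$ and the asymptotics of $\psi$ force \emph{every} point of $\mathcal{P}\cap\mathcal{D}(c)$ to be a local maximum along the mass-preserving fiber, but this is false. \eqref{J0} only forbids a second local \emph{maximum}; it is perfectly compatible with $\psi$ having a local minimum point at negative level (this is exactly the shape established in the proof of Lemma~\ref{Lemma-mR(a)} and in Appendix~B.1, where $\psi'$ has two roots, one of which is a local minimizer), so $\mathcal{P}\cap\mathcal{D}(c)$ splits into $\mathcal{P}_-\cup\mathcal{P}_+$ with $\mathcal{P}_+\cap\mathcal{D}(c)$ in general nonempty: indeed the local minimizer $\bar u$ of Theorem~\ref{Theorem-existence} lies in $\mathcal{P}\cap\mathcal{S}(c)$ with $J(\bar u)<0$, hence in $\mathcal{P}_+$ by Lemma~\ref{Lemma-J>0onP} and Lemma~\ref{Lemma-P0}. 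Thus your projected point could a priori be of local-minimum type, and nothing in your argument excludes this. The paper closes this hole quantitatively: since $u_n\in\mathcal{P}_-\cap\mathcal{D}(c)$, Lemma~\ref{Remark-Hh} gives $\int_{\mathbb{R}^N}h(u_n)u_n-q^{\#}H(u_n)\,dx\geq 0$; passing to the limit and rescaling (the integral only picks up a factor $\widetilde t^{-N}$) yields the same inequality for $\widetilde u(\widetilde t\,\cdot)$, which plugged into \eqref{Eq-J2der2} (for $2<q<N$; into \eqref{Eq-J2der1} for $\frac{2N}{N+2}<q<2$) shows $\frac{\mathrm{d}^2}{\mathrm{d}s^2}J(s*w_{\widetilde t})\big|_{s=0}\leq 0$, and strictness then follows from $\mathcal{P}_0\cap\mathcal{D}(c)=\emptyset$. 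This is precisely where the $(2,q)$-structure and the $H_1$/$H_2$ interaction estimate enter, and it cannot be replaced by the soft appeal to \eqref{J0}.

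A smaller, fixable imprecision of the same nature occurs in your $\widetilde t=1$ step: Lemma~\ref{Remark-H1H22} and Lemma~\ref{Remark-H1H2} are stated for $u\in\mathcal{P}_-\cap\mathcal{D}(c)$, so you may not invoke the nonnegativity of $\frac{1}{\bar q-2}\int_{\mathbb{R}^N}H(\widetilde u)\,dx-\int_{\mathbb{R}^N}F(\widetilde u)\,dx$ directly for $\widetilde u$, whose membership in $\mathcal{P}_-$ is exactly what is being proved; as in the paper, apply the inequality to each $u_n$ and pass to the limit using the $L^p$-convergence and the subcritical growth of $H$ and $F$ guaranteed by \eqref{F1}, \eqref{F5} and \eqref{HF}. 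With these two repairs your argument coincides with the paper's proof.
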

\begin{proof}
	Let $\left(u_n\right)_n \subset \mathcal{P}_{-}^{\text {rad }} \cap \mathcal{D}(c)$ be a minimizing sequence of $J$.
	From Lemma \ref{Lemma-coercive}, $\left(u_n\right)_n$ is bounded in $X_{\mathrm{rad}}$.
	Therefore, up to a subsequence, $u_n \rightharpoonup \widetilde{u}$ in $X_{\mathrm{rad}}$
	and $u_n \rightarrow \widetilde{u}$ in $L^p\left(\mathbb{R}^N\right)$ for $p\in(2,q^{\prime})$ and a.e. in $\mathbb{R}^N$ for $\widetilde{u} \in \mathcal{D}(c)$. From \eqref{F1}, \eqref{F5} and \eqref{HF}, we can deduce that
	\eqref{H3} and \eqref{H4} hold. From \eqref{H3} and \eqref{H4}, for every $\varepsilon>0$ there exists $C_{\varepsilon}>0$ such that
	$$
	H(t) \leq \varepsilon\left(|t|^2+|t|^{q^{\prime}}\right)+C_{\varepsilon}|t|^p \
	\text{ for every } \ t \in \mathbb{R}.
	$$
	This and $\left(u_n\right)_n \subset \mathcal{P}$ imply that
	\begin{equation}\label{Eq-429}
		\begin{aligned}	
			\frac{N}{2} \int_{\mathbb{R}^N} H(\widetilde{u}) \mathrm{d} x&=\lim_{n \rightarrow \infty} \frac{N}{2} \int_{\mathbb{R}^N} H\left(u_n\right) \mathrm{d} x\\
			& =
			\lim_{n \rightarrow \infty}\left(\left\|\nabla u_n\right\|_2^2 +(\delta_q+1)\left\|\nabla u_n\right\|^q_q  \right)\\
			&\geq\left\|\nabla \widetilde{u}\right\|_2^2 +(\delta_q+1)\left\|\nabla \widetilde{u}\right\|^q_q .
		\end{aligned}
	\end{equation}
	Additionally, $\widetilde{u} \neq 0$ because, otherwise, \eqref{Eq-429} would yield $\lim_{n \rightarrow \infty} \left(\left\|\nabla u_n\right\|_2+(\delta_q+1)\left\|\nabla u_n\right\|^q_q\right) =0$, in contrast with Lemma \ref{Lemma-u>0onP}. There follows
	$$
	0<\left\|\nabla \widetilde{u}\right\|_2^2 +(\delta_q+1)\left\|\nabla \widetilde{u}\right\|^q_q  \leq \frac{N}{2} \int_{\mathbb{R}^N} H(\widetilde{u}) \mathrm{d} x.
	$$
	So we can define $\widetilde{t}:=t(\widetilde{u}) \geq 1$, which is the unique zero point of $G(t)$, see Lemma \ref{Lemma-Pnotempty}.
	From Lemma \ref{Lemma-Pnotempty},
	 $\widetilde{u}(\widetilde{t} x)\in \mathcal{P}^{\mathrm{rad}}$. Note that
	$$
	\|\widetilde{u}(\widetilde{t} x)\|_2^2=\widetilde{t}^{-N}\|\widetilde{u}\|_2^2 \leq \widetilde{t}^{-N} c^2 \leq c^2
	$$
	hence $\widetilde{u}(\widetilde{t} x) \in \mathcal{P}^{\mathrm{rad}} \cap \mathcal{D}(c)$.
	Observe that,  from \eqref{H-1}, we can see \eqref{H3} and \eqref{H4} still hold replacing $H(t)$ with $h(t) t$. Then, when $2<q<N$, from Lemma \ref{Remark-Hh},
	\begin{equation}
		\begin{aligned}
			\int_{\mathbb{R}^N} h(\widetilde{u}(\widetilde{t} x)) \widetilde{u}(\widetilde{t} x)- \bar{q}H(\widetilde{u}(\widetilde{t} x)) d x
			& =\widetilde{t}^{-N} \int_{\mathbb{R}^N} h(\widetilde{u}) \widetilde{u}-\bar{q} H(\widetilde{u}) d x \\
			& =\widetilde{t}^{-N} \lim_{n \rightarrow\infty} \int_{\mathbb{R}^N}  h\left(u_n\right) u_n-\bar{q}H\left(u_n\right) d x \geq 0.
		\end{aligned}
	\end{equation}	
	Since $\left(2-q\left(\delta_q+1\right)\right)<0$,
	\begin{equation}
		\left(2-q\left(\delta_q+1\right)\right)\|\nabla \widetilde{u}(\widetilde{t} x)\|_2^2 -\frac{N^2}{4}\left(\int_{\mathbb{R}^N} h(\widetilde{u}(\widetilde{t} x)) \widetilde{u}(\widetilde{t} x) d x-\bar{q}\int_{\mathbb{R}^N} H(\widetilde{u}(\widetilde{t} x)) d x\right)\leq0.
	\end{equation}
	From \eqref{Eq-J2der2}, we have that  $\widetilde{u}(\widetilde{t} \cdot) \in \mathcal{P}_{-}^{\mathrm{rad}} \cap \mathcal{D}(c)$ or $\widetilde{u}(\widetilde{t} \cdot) \in\mathcal{P}_{0}^{\mathrm{rad}} \cap \mathcal{D}(c)$. From Lemma \ref{Lemma-P0}, Since $\mathcal{P}_{0}^{} \cap \mathcal{D}(c)=\emptyset$, we have $\widetilde{u}(\widetilde{t} \cdot) \in \mathcal{P}_{-}^{\mathrm{rad}} \cap \mathcal{D}(c)$.
	Similarly, when $\frac{2 N}{N+2}<q<2$, the same conclusion can be obtained from \eqref{Eq-J2der1}.
	In addition, when $2<q<N$ and $c>0$ small enough, by Lemma \ref{Remark-H1H2} and \eqref{Eq-J-1P}
	\begin{align*}
		0&<\inf _{\mathcal{P}^{\text {rad }}_{-} \cap \mathcal{D}(c)} J \\
		&\leq J(\widetilde{u}(\widetilde{t} \cdot))\\
		&=\widetilde{t}^{2-N}\left(\frac{1}{2}-\frac{1}{q\left(\delta_q+1\right)}\right)\left\|\nabla \widetilde{u}\right\|_2^2 +\widetilde{t}^{-N}\left(\frac{1}{\bar{q}-2} \int_{\mathbb{R}^N} H\left(\widetilde{u}\right) d x-\int_{\mathbb{R}^N} F\left(\widetilde{u}\right) d x\right) \\
		&\leq
		\left(\frac{1}{2}-\frac{1}{q\left(\delta_q+1\right)}\right)\left\|\nabla \widetilde{u}\right\|_2^2 +\left(\frac{1}{\bar{q}-2} \int_{\mathbb{R}^N} H\left(\widetilde{u}\right) d x-\int_{\mathbb{R}^N} F\left(\widetilde{u}\right) d x\right)\\
		&\leq \varliminf_{n\rightarrow \infty} \left(\frac{1}{2}-\frac{1}{q\left(\delta_q+1\right)}\right)\left\|\nabla u_n\right\|_2^2 +\left(\frac{1}{\bar{q}-2} \int_{\mathbb{R}^N} H\left(u_n\right) d x-\int_{\mathbb{R}^N} F\left(u_n\right) d x\right)\\
		&=\varliminf_{n\rightarrow \infty} J(u_n)\\
		&\leq
		\inf _{\mathcal{P}^{\text {rad }}_{-} \cap \mathcal{D}(c)} J.
	\end{align*}
	Hence $\widetilde{t}=1$, and $\widetilde{u} \in \mathcal{P}_{-}^{\mathrm{rad}} \cap \mathcal{D}(c)$, so $J(\widetilde{u})=\inf _{\mathcal{P}_{-}^{\mathrm{rad}} \cap \mathcal{D}(c)} J$. When $\frac{2 N}{N+2}<q<2$, the proof is similar, therefore, we omit it.
	
	Now, assume that $f$ is odd. If we denote by $u^*$ the Schwarz rearrangement of $|\widetilde{u}|$, we have that
	$$
	\left\|u^*\right\|_2=\|\widetilde{u}\|_2, \quad\left\|\nabla u^*\right\|_2 \leq\|\nabla \widetilde{u}\|_2, \quad\left\|\nabla u^*\right\|_q \leq\|\nabla \widetilde{u}\|_q,
	$$
	$$
	\int_{\mathbb{R}^N} F\left(u^*\right) \mathrm{d} x=\int_{\mathbb{R}^N} F(\widetilde{u}) \mathrm{d} x, \quad \int_{\mathbb{R}^N} H\left(u^*\right) \mathrm{d} x=\int_{\mathbb{R}^N} H(\widetilde{u}) \mathrm{d} x, \quad \int_{\mathbb{R}^N} h\left(u^*\right) u^* \mathrm{~d} x=\int_{\mathbb{R}^N} h(\widetilde{u}) \widetilde{u} \mathrm{~d} x,
	$$
	which implies that $t\left(u^*\right) \geq 1$, Then, arguing as above, we obtain that $t\left(u^*\right)=1$. Therefore, $u^* \in \mathcal{P}_{-}^{\text {rad }}\cap \mathcal{D}(c)$, $\left\|\nabla u^*\right\|_2=\|\nabla \widetilde{u}\|_2$, and $J\left(u^*\right)=J(\widetilde{u})$. If $\left.f\right|_{(-\infty, 0)} \equiv 0$, we consider the Schwarz rearrangement of $\max \{\widetilde{u}, 0\}$ and then an almost same argument applies.
\end{proof}

		In the following lemma, we show that
		 the minimizer $u$ of $J$ on $\mathcal{P}^{\text{rad}}_{-}\cap \mathcal{D}(c) $ is achieved on $ \mathcal{P}^{\text{rad}}_{-}\cap\mathcal{S}(c)$.
		 To show this,
		we prove that for any $u \in\mathcal{P}^{\text{rad}}_{-}\cap \left(\mathcal{D}(c) \backslash \mathcal{S}(c)\right)$, the crucial inequality $\inf _{\mathcal{P}^{\text{rad}}_{-}\cap \mathcal{D}(c)} J<J(u)$ holds. Notice Remark \ref{remark-2} (ii), in the following Lemma \ref{LemmaPJ<J}
		we only need \eqref{F0}, \eqref{F3}, \eqref{H-0}, \eqref{H-1}, \eqref{J0}, \eqref{H-F} and \eqref{Eq-crange} hold.
	\begin{lemma}\label{LemmaPJ<J}
		If \eqref{F0}, \eqref{F3}, \eqref{H-0}, \eqref{H-1}, \eqref{J0} and \eqref{H-F} and \eqref{Eq-crange} hold and $c>0$ is sufficiently small, then for any $u \in\mathcal{P}_{-}^{\text{rad}} \cap\left(\mathcal{D}(c) \backslash \mathcal{S}(c)\right)$, there holds
		$$
		\inf _{\mathcal{P}^{\text{rad}}_{-}\cap{\mathcal{D}(c)}} J(u)<J(u).
		$$
	\end{lemma}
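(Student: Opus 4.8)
The plan is to exhibit, for a given $u\in\mathcal{P}_{-}^{\text{rad}}\cap(\mathcal{D}(c)\setminus\mathcal{S}(c))$, an explicit competitor in $\mathcal{P}_{-}^{\text{rad}}\cap\mathcal{D}(c)$ of strictly smaller energy, so that $\inf_{\mathcal{P}_{-}^{\text{rad}}\cap\mathcal{D}(c)}J\le J(v)<J(u)$. Write $\rho:=\|u\|_2<c$ and, for $\theta\ge 1$, set $u_\theta:=u(\cdot/\theta^{2/N})$. Then $u_\theta$ is radial, $\|u_\theta\|_2=\theta\rho$ (so $u_\theta\in\mathcal{D}(c)$ for $\theta\le c/\rho$), and the scalings are $\|\nabla u_\theta\|_2^2=\theta^{2(N-2)/N}\|\nabla u\|_2^2$, $\|\nabla u_\theta\|_q^q=\theta^{2(N-q)/N}\|\nabla u\|_q^q$ and $\int_{\mathbb{R}^N}F(u_\theta)\,dx=\theta^2\int_{\mathbb{R}^N}F(u)\,dx$. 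Using \eqref{J0} I project $u_\theta$ onto $\mathcal{P}$ by setting $v_\theta:=t_\theta\ast u_\theta$, where $t_\theta$ is the unique maximum point of $s\mapsto J(s\ast u_\theta)$. Since the $\ast$-action preserves the $L^2$-norm, $v_\theta$ is radial and lies in $\mathcal{D}(c)$; as $\mathcal{P}_0\cap\mathcal{D}(c)=\emptyset$ for $c$ small (Lemma \ref{Lemma-P0}), the projected point satisfies $\frac{d^2}{ds^2}J(s\ast v_\theta)|_{s=0}<0$, i.e.\ $v_\theta\in\mathcal{P}_{-}^{\text{rad}}\cap\mathcal{D}(c)$. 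At $\theta=1$ one has $t_1=0$ (because $u\in\mathcal{P}_-$ makes $0$ the unique maximum of $s\mapsto J(s\ast u)$), hence $v_1=u$.

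The next step is to show that $\theta\mapsto J(v_\theta)$ is strictly decreasing at $\theta=1$. The map $\theta\mapsto t_\theta$ is $C^1$ near $\theta=1$ by the implicit function theorem (the relevant second derivative is nonzero, since $u\in\mathcal{P}_-$), so by the envelope theorem the derivative of the maximal value feels only the explicit $\theta$-dependence, giving
\[
\frac{d}{d\theta}J(v_\theta)\Big|_{\theta=1}=\frac{d}{d\theta}J(u_\theta)\Big|_{\theta=1}=\frac{N-2}{N}\|\nabla u\|_2^2+\frac{2(N-q)}{Nq}\|\nabla u\|_q^q-2\int_{\mathbb{R}^N}F(u)\,dx.
\]
Substituting $\int_{\mathbb{R}^N}F(u)\,dx=\tfrac12\|\nabla u\|_2^2+\tfrac1q\|\nabla u\|_q^q-J(u)$ this collapses to
\[
\frac{d}{d\theta}J(v_\theta)\Big|_{\theta=1}=2\Big(J(u)-\tfrac1N\big(\|\nabla u\|_2^2+\|\nabla u\|_q^q\big)\Big).
\]
Thus the whole statement reduces to the key inequality $J(u)<\tfrac1N(\|\nabla u\|_2^2+\|\nabla u\|_q^q)$ for every $u\in\mathcal{P}_{-}^{\text{rad}}\cap\mathcal{D}(c)$.

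The main obstacle is precisely this last inequality, and it is exactly here that the Sobolev-subcriticality $b_2<q''=\min\{2^*,q^*\}$ (rather than $q'$) intervenes. By \eqref{H-F} (the upper bounds $H_1\le(a_2-2)F_1$, $H_2\le(b_2-2)F_2$) together with $H_1,H_2\ge0$ one gets $F_1\ge\frac{1}{a_2-2}H_1$ and $F_2\ge\frac{1}{b_2-2}H_2$, so, writing $I_j:=\int_{\mathbb{R}^N}H_j(u)\,dx\ge0$ and using $a_2<b_2$,
\[
\int_{\mathbb{R}^N}F(u)\,dx\ \ge\ \frac{1}{a_2-2}I_1+\frac{1}{b_2-2}I_2\ \ge\ \frac{1}{b_2-2}\big(I_1+I_2\big).
\]
Since $u\in\mathcal{P}$, the identity \eqref{Eq-poH3} yields $I_1+I_2=\tfrac2N\big(\|\nabla u\|_2^2+(\delta_q+1)\|\nabla u\|_q^q\big)$, whence $\int_{\mathbb{R}^N}F(u)\,dx\ge\frac{2}{N(b_2-2)}\big(\|\nabla u\|_2^2+(\delta_q+1)\|\nabla u\|_q^q\big)$. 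Comparing coefficients one checks
\[
\frac{2}{N(b_2-2)}-\frac{N-2}{2N}>0\iff b_2<2^*,\qquad \frac{2(\delta_q+1)}{N(b_2-2)}-\frac{N-q}{Nq}>0\iff b_2<q^*,
\]
where the second equivalence uses the identity $(N-q)(q^*-2)=2q(\delta_q+1)$. Both conditions hold because $b_2<q''=\min\{2^*,q^*\}$; as $\|\nabla u\|_2^2>0$, this gives the strict inequality $\int_{\mathbb{R}^N}F(u)\,dx>\frac{N-2}{2N}\|\nabla u\|_2^2+\frac{N-q}{Nq}\|\nabla u\|_q^q$, i.e.\ $J(u)<\tfrac1N(\|\nabla u\|_2^2+\|\nabla u\|_q^q)$. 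Note that this computation is uniform in $q$, so no case distinction between $\tfrac{2N}{N+2}<q<2$ and $2<q<N$ is required here.

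Combining the two preceding steps, $\frac{d}{d\theta}J(v_\theta)|_{\theta=1}<0$, so $J(v_\theta)<J(v_1)=J(u)$ for $\theta>1$ close to $1$; since such $v_\theta\in\mathcal{P}_{-}^{\text{rad}}\cap\mathcal{D}(c)$, we conclude $\inf_{\mathcal{P}_{-}^{\text{rad}}\cap\mathcal{D}(c)}J<J(u)$. I expect the delicate points to be the $C^1$-dependence of the projection $\theta\mapsto t_\theta$ (needed to justify the envelope identity) and the verification that the projected function genuinely lands in $\mathcal{P}_-$ rather than $\mathcal{P}_0$, which relies on Lemma \ref{Lemma-P0} and hence on $c$ being small.
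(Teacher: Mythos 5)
Your proposal is correct, but it takes a genuinely different route from the paper's. The paper argues by contradiction through Lagrange multipliers: if the infimum were attained at some $\check{u}$ in the set $\mathcal{P}_{-}^{\mathrm{rad}}\cap\left(\mathcal{D}(c)\setminus\mathcal{S}(c)\right)$, which is open in $\mathcal{P}_{-}^{\mathrm{rad}}$, then $\check{u}$ would be a constrained critical point, giving a multiplier $\check{\mu}$; the degenerate values $\check{\mu}=-\tfrac{1}{q(\delta_q+1)}$ and $\check{\mu}=-\tfrac12$ are excluded via Nehari-type identities together with Lemma \ref{Remark-H1H22} (a second point where smallness of $c$ enters), then comparing the Pohozaev identity of the perturbed equation with $P(\check{u})=0$ and using $\check{u}\notin\mathcal{P}_0$ forces $\check{\mu}=0$, so that $\check{u}$ solves $-\Delta\check{u}-\Delta_q\check{u}=f(\check{u})$ with zero frequency, and the resulting pair of identities $\delta_q\|\nabla\check{u}\|_q^q=\tfrac{N-2}{2}\bigl(\int_{\mathbb{R}^N}H(\check{u})\,dx-(2^*-2)\int_{\mathbb{R}^N}F(\check{u})\,dx\bigr)$ and $-\delta_q\|\nabla\check{u}\|_2^2=\tfrac{N-q}{q}\bigl(\int_{\mathbb{R}^N}H(\check{u})\,dx-(q^*-2)\int_{\mathbb{R}^N}F(\check{u})\,dx\bigr)$ contradicts \eqref{H-F} because $b_2<q^{\prime\prime}$. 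You instead build an explicit competitor: dilate to raise the mass, re-project onto the Pohozaev set via \eqref{J0} and the implicit function theorem, and differentiate the projected energy in the mass parameter; the envelope identity reduces everything to the pointwise inequality $NJ(u)<\|\nabla u\|_2^2+\|\nabla u\|_q^q$ on $\mathcal{P}$, which you derive from \eqref{Eq-poH3} and the upper bounds in \eqref{H-F}, with strictness coming from exactly the same mechanism, $b_2<q^{\prime\prime}=\min\{2^*,q^*\}$ (your identity $(N-q)(q^*-2)=2q(\delta_q+1)$ is correct, and your two coefficient comparisons check out). So both proofs hinge on feeding the Sobolev subcriticality of $b_2$ through \eqref{H-F} into a Pohozaev-type balance, but yours is constructive rather than by contradiction: it dispenses with the Lagrange-multiplier machinery and with Lemma \ref{Remark-H1H22}, needs smallness of $c$ only through Lemma \ref{Lemma-P0}, avoids the paper's case distinction between $2<q<N$ and $\tfrac{2N}{N+2}<q<2$, and yields the slightly stronger quantitative information that the constrained energy decreases strictly along the mass-increasing deformation. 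The two delicate points you flag do go through: the map $\Phi(\theta,s)=\tfrac{\partial}{\partial s}J(s*u_\theta)$ has separated, explicit $\theta$-dependence (factors $\theta^{2(N-2)/N}$, $\theta^{2(N-q)/N}$, $\theta^2$ in front of the three terms), is $C^1$ in $s$ thanks to the growth bounds in \eqref{H-0}--\eqref{H-1}, and $\partial_s\Phi(1,0)=\psi''(0)<0$ since $u\in\mathcal{P}_{-}$, so the implicit function theorem applies; moreover, continuity of $\psi''_{u_\theta}$ along the branch already keeps $v_\theta$ in $\mathcal{P}_{-}$ for $\theta$ near $1$, so Lemma \ref{Lemma-P0} is strictly needed only if you insist that the branch coincide with the unique maximizer $t_{u_\theta}$ from \eqref{J0}.
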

	\begin{proof}
		Assume by contradiction that there exists $\check{u} \in \mathcal{P}^{\text{rad}}_{-}\cap\left(\mathcal{D}(c) \backslash \mathcal{S}(c)\right)$ such that $	\inf _{\mathcal{P}^{\text{rad}}_{-}\cap{\mathcal{D}(c)}} J(u)=J(\check{u}) \leq$ $\inf _{\mathcal{P}_{-}\cap \mathcal{S}(c)} J$. Hence $\check{u}$ is a local minimizer of $J$ on $\mathcal{P}^{\text{rad}}_{-}\cap \left(\mathcal{D}(c) \backslash \mathcal{S}(c)\right)$. On the other hand, $\mathcal{P}^{\text{rad}}_{-}\cap \left(\mathcal{D}(c) \backslash \mathcal{S}(c)\right)$ is an open set in $\mathcal{P}^{\text{rad}}_{-}$, we find that $\check{u}$ is a local minimizer of $J$ on $\mathcal{P}^{\text{rad}}_{-}$. Hence there is a Lagrange multiplier $\check{\mu} \in \mathbb{R}$ such that
		\begin{equation*}
			\begin{aligned}
				& J^{\prime}(\check{u}) v+\check{\mu}
				\left(2 \int_{\mathbb{R}^N}|\nabla \check{u}|^{p-2} \nabla \check{u} \nabla v d x+q\left(\delta_q+1\right) \int_{\mathbb{R}^N}|\nabla \check{u}|^{q-2} \nabla \check{u} \nabla v d x-\frac{N}{2} \int_{\mathbb{R}^N} h(\check{u}) v d x\right)=0
			\end{aligned}
		\end{equation*}
		for any $v \in C_0^{\infty}\left(\mathbb{R}^N\right)$. Hence $\check{u}$ is a weak solution to
		$$
		-(1+2\check{\mu} ) \Delta \check{u}-\left(1+\check{\mu} q\left(\delta_q+1\right)\right) \Delta_q \check{u}=f(\check{u})+\frac{N \check{\mu} }{2} h(\check{u}) .
		$$
		In particular, $\check{u}$ satisfies the following Nehari-type identity
		$$
		(1+\check2{\lambda} )\|\nabla \check{u}\|_2^2+\left(1+\check{\mu} q\left(\delta_q+1\right)\right)\|\nabla \check{u}\|_q^q=\int_{\mathbb{R}^N} f(\check{u}) \check{u} d x+\frac{N \check{\mu} }{2} \int_{\mathbb{R}^N} h(\check{u}) \check{u} d x.
		$$
		If $\check{\mu} =-\frac{1}{q\left(\delta_q+1\right)}$, then
		$$
		\left(1-\frac{2}{q\left(\delta_q+1\right)}\right)\|\nabla \check{u}\|_2^2=\int_{\mathbb{R}^N} f(\check{u}) \check{u} d x-\frac{N}{2 q(\delta_q+1)}  \int_{\mathbb{R}^N} h(\breve{u}) \check{u} d x .
		$$
		When $2<q<N$, we have $1-\frac{2}{q\left(\delta_q+1\right)}>0$, {therefore by} Lemma \ref{Remark-H1H22}
		\begin{equation*}
			\begin{aligned}
				0\leq& \int_{\mathbb{R}^N} f(u) u d x-\frac{N}{2 q(\delta_q+1)} \int_{\mathbb{R}^N} h(u) u d x \\
				= & 2 \int_{\mathbb{R}^N} F(u) d x+\int_{\mathbb{R}^N} H(u) d x-\frac{N}{2 q(\delta_q+1)} \int_{\mathbb{R}^N} h(u) u d x \\
				= & 2 \int_{\mathbb{R}^N} F(u) d x+\int_{\mathbb{R}^N} H_1(u) d x-\frac{1}{\bar{q}-2} \int_{\mathbb{R}^N} h_1(u) u d x +
				\int_{\mathbb{R}^N} H_2(u) d x-\frac{1}{\bar{q}-2} \int_{\mathbb{R}^N} h_2(u) u d x \\
				\leq & 2 \int_{\mathbb{R}^N} F(u) d x+\left(1-\frac{a_1}{\bar{q}-2}\right) \int_{\mathbb{R}^N} H_1(u) d x+\left(1-\frac{b_1}{\bar{q}-2}\right) \int_{\mathbb{R}^N} H_2(u) d x \\
				\leq & 2 \int_{\mathbb{R}^N} F_1(u) d x+\left(1-\frac{a_1}{\bar{q}-2}\right) \int_{\mathbb{R}^N} H_1(u) d x
				+  2 \int_{\mathbb{R}^N} F_2(u) d x+\left(1-\frac{b_1}{\bar{q}-2}\right) \int_{\mathbb{R}^N} H_2(u) d x \\
				\leq & {\left(1+\frac{2}{a_1-2}-\frac{a_1}{\bar{q}-2}\right) \int_{\mathbb{R}^N} H_1(u) d x } +
				{\left(1+\frac{2}{b_1-2}-\frac{b_1}{\bar{q}-2}\right) \int_{\mathbb{R}^N} H_2(u) d x }\\
				= & {a_1}\left(\frac{1}{a_1-2}-\frac{1}{\bar{q}-2}\right) \int_{\mathbb{R}^N} H_1(u) d x +
				{b_1}\left(\frac{1}{b_1-2}-\frac{1}{\bar{q}-2}\right) \int_{\mathbb{R}^N} H_2(u) d x\\
				\leq& {a_1}\left(\frac{1}{a_1-2}-\frac{1}{\bar{q}-2}\right) \int_{\mathbb{R}^N} H_1(u) d x +
				{a_1}\left(\frac{1}{b_1-2}-\frac{1}{\bar{q}-2}\right) \int_{\mathbb{R}^N} H_2(u) d x\\
				\leq & 0.
			\end{aligned}
		\end{equation*}
		This implies that $\int_{\mathbb{R}^N} H(u) d x=0$. Since $u \in \mathcal{P}$, $\check{u}$ satisfies \eqref{Eq-poH3}, we get $u=0$, which contradicts with $u \in \mathcal{P}$. So $\check{\mu} \neq-\frac{1}{q\left(\delta_q+1\right)}$. Similarly, if we take $\check{\mu}=-\frac{1}{2}$, then
		$$
		\left(1+\frac{q\left(\delta_q+1\right)}{2}\right)
		\|\nabla \check{u}\|_q^q=\int_{\mathbb{R}^N} f(\check{u}) \check{u} d x+\frac{N }{4} \int_{\mathbb{R}^N} h(\check{u}) \check{u} d x,
		$$
		when $\frac{2 N}{N+2}<q<2$. Using the same proof method we get $\check{\mu} \neq-\frac{1}{2}$. Combining the above two cases, it can be concluded that $\check{\mu} \neq-\frac{1}{\widetilde{q}\left(\delta_{\widetilde{q}}+1\right)}$.
		
		Moreover, on the one hand, since $\check{u} \in \mathcal{P}$, we get	
		\begin{equation}\label{Eq-pohN1}
			\|\nabla \check{u}\|_2^2+\left(\delta_q+1\right)\|\nabla \check{u}\|_q^q=\frac{N}{2} \int_{\mathbb{R}^N} h(\check{u})u d x .
		\end{equation}	
		On the other hand, $\check{u}$ satisfies Nehari-type and Pohozaev identities. That is, $\check{u}$ satisfies
		\begin{equation}\label{Eq-pohN2}
			\begin{aligned}
				&(1+2\check{\mu} )\|\nabla \check{u}\|_2^2+\left(\delta_q+1\right)\left(1+\check{\mu} q\left(\delta_q+1\right)\right)\|\nabla \check{u}\|_q^q \\
				 =&\frac{N}{2} \int_{\mathbb{R}^N} H(\check{u}) d x+\check{\mu} \int_{\mathbb{R}^N} \frac{N^2}{4} h(\check{u}) \check{u}-\frac{N^2}{2} H(\check{u}) d x .
			\end{aligned}
		\end{equation}
		Combining \eqref{Eq-pohN1} and \eqref{Eq-pohN2}, we deduce that when $2<q<N$
		$$
		\check{\mu} \left(2-q\left(\delta_q+1\right)\right)\|\nabla \check{u}\|_2^2=\check{\mu} \frac{N^2}{4} \int_{\mathbb{R}^N} h(\check{u}) \check{u}-\bar{q} H(\check{u}) d x.
		$$
		And when $\frac{2 N}{N+2}<q<2$
		$$
		\check{\mu} \left(\delta_q+1\right)\left(q\left(\delta_q+1\right)-2\right)\|\nabla \check{u}\|_q^q=\check{\mu} \frac{N^2}{4} \int_{\mathbb{R}^N} h(\check{u}) \check{u}-\bar{2} H(\check{u}) d x .
		$$
	Combining \eqref{Eq-J2der1} and \eqref{Eq-J2der2},
		we find that $\check{\mu} =0$, thus $\check{u}$ is a weak solution to
		$$
		-\Delta \check{u}-\Delta_q \check{u}=f(\check{u}) .
		$$
		Similarly, we also obtain that $\check{u}$ satisfies
		\begin{equation}\label{Eq-lambda=01}
			\|\nabla \check{u}\|_2^2+\|\nabla \check{u}\|_q^q=\int_{\mathbb{R}^N} f(\check{u}) \check{u} d x
		\end{equation}
		and
		\begin{equation}\label{Eq-lambda=02}
			\|\nabla \check{u}\|_2^2+\left(\delta_q+1\right)\|\nabla \check{u}\|_q^q=\frac{N}{2} \int_{\mathbb{R}^N} H(\check{u}) d x.
		\end{equation}
		Combining \eqref{Eq-lambda=01} and \eqref{Eq-lambda=02}, we deduce that
		$$
		\delta_q\|\nabla \check{u}\|_q^q=\frac{N-2}{2}\left(\int_{\mathbb{R}^N} H(\check{u}) d x-\left(2^*-2\right) \int_{\mathbb{R}^N} F(\check{u}) d x\right)
		$$
		and
		$$
		-\delta_q\|\nabla \check{u}\|_2^2=\frac{N-q}{q}\left(\int_{\mathbb{R}^N} H(\check{u}) d x-\left(q^*-2\right) \int_{\mathbb{R}^N} F(\check{u}) d x\right) .
		$$
		By \eqref{H-F}, we obtain a contradiction. So for any $u \in\left(\mathcal{D}(c) \backslash \mathcal{S}(c)\right) \cap \mathcal{P}_{-}^{\text{rad}}$, there holds
		$$
		\inf _{\mathcal{P}_{-}^{\text{rad}}\cap{\mathcal{D}(c)}} J(u)<J(u).
		$$
		The proof of Lemma \ref{LemmaPJ<J} is completed.
	\end{proof}
	Now we can prove the existence of a second solution to \eqref{Eq-Equation}.	
	
	\textbf{Proof of Theorem \ref{Theorem-existence2}}.	
	By Lemma \ref{Lemma-inf-attained} and \ref{LemmaPJ<J}, we derive that $\inf _{\mathcal{P}_{-}^{\mathrm{rad}} \cap \mathcal{D}(c)} J$ is attained.
	Moreover, if $f$ is odd, then by the regularity in \cite{He2008regularity} and Harnack's inequality in \cite{Tru}, we know that $\inf _{\mathcal{P}_{-}^{\mathrm{rad}} \cap \mathcal{D}(c)} J$ is achieved by $\widetilde{u}>0$, which is a radially symmetric function.
	
	Using Lemma \ref{Lemma-P0}, it can be proved that for every $v \in \mathcal{S}(c) \cap \mathcal{P}_{-}^{\text {rad }}$ the functional $\left(G^{\prime}(v), P^{\prime}(v)\right): X_{\text{rad}} \rightarrow \mathbb{R}^2$ is surjective, where $G(v):=\|v\|_2^2-c^2$ and $P$ is defined in \eqref{Eq-poH3}. Then, from \cite[Proposition A.1]{Mederski1}(it is easy to deduce that Proposition A.1 also holds true in Banach space), there exist
	Lagrange multipliers $\lambda\geq 0$ and $\mu \in \mathbb{R}$ such that $\widetilde{u} \in \mathcal{P}_{-}^{\mathrm{rad}}$ solves
	\begin{equation}
		\begin{aligned}
			-\Delta \widetilde{u}-\Delta_q \widetilde{u}-f(\widetilde{u})+\lambda \widetilde{u}+\mu \left(-2 \Delta \widetilde{u}-q\left(\delta_q+1\right) \Delta_q \widetilde{u}-\frac{N}{2} h(\widetilde{u})\right)=0.
		\end{aligned}
	\end{equation}
	That is
	\begin{equation}
		-(1+2\mu ) \Delta \widetilde{u}-\left(1+\mu q\left(\delta_q+1\right)\right) \Delta_q \widetilde{u}+\lambda \widetilde{u}=f(\widetilde{u})+\frac{N}{2} \mu h(\widetilde{u}).
	\end{equation}
	Since $\lambda\geq 0$, similar to Lemma \ref{LemmaPJ<J}, we can find that $\mu=0$. and
	Lemma \ref{LemmaPJ<J} implies that $\lambda\neq0$. The proof of Theorem \ref{Theorem-existence2} is completed.

\section{Appendix A. Useful estimates}
In this appendix, we prove some estimates that will be used in Lemma \ref{Lemma-P0}.
\begin{lemma}\label{Lem-A1}
	Suppose that $x>0$ satisfies $x^q \leq A x^r+B x^s$, where $q<r<s$ and $A, B>0$. Then
	$$
	x \geq \min \left\{1,(A+B)^{1 /(q-r)}\right\}.
	$$
\end{lemma}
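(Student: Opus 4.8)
The plan is to argue by a simple dichotomy on the size of $x$ relative to $1$, exploiting the fact that the exponents satisfy $q<r<s$. First I would dispose of the easy case $x\geq 1$: here there is nothing to prove, because the right-hand side is a minimum whose first entry is $1$, so $x\geq 1\geq \min\{1,(A+B)^{1/(q-r)}\}$ immediately. The whole content of the lemma therefore lies in the regime $0<x<1$.

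Assume now $0<x<1$. The key observation is that for such $x$ the map $t\mapsto x^t$ is strictly decreasing, so from $s>r$ we get $x^{s}\leq x^{r}$. Substituting this into the hypothesis and using $A,B>0$ yields
\begin{equation*}
	x^{q}\leq A x^{r}+B x^{s}\leq A x^{r}+B x^{r}=(A+B)\,x^{r}.
\end{equation*}
Dividing both sides by $x^{r}>0$ gives $x^{q-r}\leq A+B$.

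The main (and only genuine) subtlety is the sign of the exponent: since $q<r$ we have $q-r<0$, so I must raise both sides of $x^{q-r}\leq A+B$ to the power $1/(q-r)$, which is negative, and the function $t\mapsto t^{1/(q-r)}$ is strictly decreasing on $(0,\infty)$. Consequently the inequality reverses, and
\begin{equation*}
	x=\bigl(x^{q-r}\bigr)^{1/(q-r)}\geq (A+B)^{1/(q-r)}\geq \min\bigl\{1,(A+B)^{1/(q-r)}\bigr\}.
\end{equation*}
Combining the two cases shows $x\geq\min\{1,(A+B)^{1/(q-r)}\}$ in all situations, which is the assertion. I expect no real obstacle beyond remembering that the negative exponent flips the inequality direction; everything else is an elementary comparison of powers of a number in $(0,1)$.
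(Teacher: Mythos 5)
Your proof is correct, and it takes a genuinely different route from the paper's. The paper rewrites the hypothesis as $1 \leq A x^{r-q}+B x^{s-q}$, introduces the decreasing auxiliary function $f(t)=1-At^{r-q}-Bt^{s-q}$, and splits into the cases $A+B<1$ and $A+B\geq 1$ to verify $f(\xi)\geq 0$ at the candidate threshold $\xi=\min\{1,(A+B)^{1/(q-r)}\}$; since $f(x)\leq 0$ and $f$ is decreasing, this forces $x\geq\xi$. You instead split on $x$ itself: the case $x\geq 1$ is trivial, and for $0<x<1$ you use $x^{s}\leq x^{r}$ to absorb the higher-power term, obtaining $x^{q-r}\leq A+B$ and hence $x\geq (A+B)^{1/(q-r)}$ after correctly accounting for the sign flip from the negative exponent $1/(q-r)$. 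Your argument is shorter, avoids the auxiliary function and the case distinction on $A+B$, and in the regime $0<x<1$ even yields the slightly stronger conclusion $x\geq (A+B)^{1/(q-r)}$ without the minimum. What the paper's scheme buys is a reusable pattern --- evaluate a monotone function at the conjectured threshold --- which adapts to situations where the two right-hand terms cannot be merged into a single power of $x$ (compare the structure of Lemma \ref{Lem-A2}, where one term is constant); for the statement as given, your elementary comparison is fully adequate.
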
	
\begin{proof}
	Let us rewrite the given inequality as
	$$
	1 \leq A x^{r-q}+B x^{s-q}
	$$
	and consider the function $f:[0,\infty) \rightarrow \mathbb{R}$ given by
	$$
	f(t)=1-A t^{r-q}-B t^{s-q} .
	$$
	If we compute
	$$
	f^{\prime}(t)=-A(r-q) t^{r-q-1}-B(s-q) t^{s-q-1}=-t^{r-q-1}\left(A(r-q)+B(s-q) t^{s-r}\right),
	$$
	it is clear that $f$ is decreasing. Denote
	$$
	\xi:=\min \left\{1,(A+B)^{1 /(q-r)}\right\} .
	$$
	If $A+B<1$, then $\xi=1$ and
	$$
	f(\xi)=f(1)=1-A-B>0,
	$$
	so, if $f(x) \leq 0$, then $x \geq \xi$.
	
	On the other hand, if $A+B \geq 1$, then $\xi=(A+B)^{1 /(q-r)} \leq 1$ and
	$$
	f(\xi)=1-\frac{A}{A+B}-B\left(\frac{1}{A+B}\right)^{(s-q) /(r-q)} \geq 1-\frac{A}{A+B}-\frac{B}{A+B}=0,
	$$
	so, if $f(x) \leq 0$, then $x \geq \xi$.
\end{proof}

\begin{remark}\label{Re-A1}
	With the assumptions and notations of Lemma \ref{Lem-A1}, if $A<M$, then $
	x \geq \min \left\{1,(M+B)^{1 /(q-r)}\right\}$. This is because $(M+B)^{1 /(q-r)}$ is monotonically decreasing with respect to A.
\end{remark}
For the convenience of readers, we also provide the following estimate. For proof please see \cite[Lemma A.1.]{bieganowski2024existence}
\begin{lemma}\label{Lem-A2}
Suppose that $x>0$ satisfies $x^2 \leq A+B x^p$, where $p \in(0,2)$ and $A, B>0$ satisfy $B(\sqrt{A}+1 / 2)^p \leq \sqrt{A}+1 / 4$. Then

$$
x \leq \sqrt{A}+\frac{1}{2}.
$$

\end{lemma}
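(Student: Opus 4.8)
The plan is to prove the bound by a monotonicity argument, comparing $x$ with the target value $M:=\sqrt{A}+\tfrac12$. First I would dispose of the easy situation: if $x\le\sqrt{A}$, then trivially $x\le\sqrt A<M$ and there is nothing to prove. So from that point on I may assume $x>\sqrt{A}$, which in particular guarantees $x^2-A>0$ and lets me divide by $x^p$. The hypothesis $x^2\le A+Bx^p$ is then equivalent to
\[
r(x):=\frac{x^2-A}{x^p}\le B,
\]
so the whole problem becomes a single scalar inequality involving the function $r$.

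The crux of the argument is to observe that $r$ is \emph{strictly increasing} on $(\sqrt A,\infty)$. Indeed, writing $r(t)=t^{2-p}-At^{-p}$ and differentiating gives
\[
r'(t)=(2-p)\,t^{1-p}+Ap\,t^{-p-1}=t^{-p-1}\big((2-p)t^2+Ap\big),
\]
which is strictly positive for every $t>0$ because $p\in(0,2)$ and $A>0$ force both summands to be positive. This monotonicity is the real content of the lemma: the naive auxiliary function $t\mapsto A+Bt^p-t^2$ is \emph{not} monotone (it increases and then decreases, with an interior maximum), so an argument phrased directly in terms of that difference would have to locate the maximum relative to $M$. Passing instead to the ratio $r$ removes this obstruction entirely, and I expect this reformulation to be the only genuinely delicate point.

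It then remains to evaluate $r$ at $M$ and to invoke the standing assumption. Since $M^2-A=(\sqrt A+\tfrac12)^2-A=\sqrt A+\tfrac14$, one obtains
\[
r(M)=\frac{\sqrt A+\tfrac14}{M^p},
\]
and the hypothesis $B\,(\sqrt A+\tfrac12)^p\le \sqrt A+\tfrac14$ is precisely the statement $B\le r(M)$. Finally I would argue by contradiction: if $x>M$, then by the strict monotonicity of $r$ on $(\sqrt A,\infty)$ (using $M>\sqrt A$) we would have $r(x)>r(M)\ge B$, contradicting $r(x)\le B$. Hence $x\le M=\sqrt A+\tfrac12$, which is the desired conclusion.
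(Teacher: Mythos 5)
Your proof is correct. One structural remark first: the paper does not actually prove this lemma --- it states it ``for the convenience of readers'' and defers the proof entirely to \cite[Lemma A.1]{bieganowski2024existence} --- so your argument supplies a self-contained proof where the paper offers only a citation. Your route is sound at every step: dividing by $x^p$ reduces the hypothesis to $r(x)\le B$ with $r(t)=(t^2-A)/t^p=t^{2-p}-At^{-p}$; the computation $r'(t)=t^{-p-1}\left((2-p)t^2+Ap\right)>0$ is correct and gives strict monotonicity; the identity $(\sqrt{A}+\tfrac12)^2-A=\sqrt{A}+\tfrac14$ shows that the standing assumption is exactly $B\le r(M)$ with $M=\sqrt{A}+\tfrac12$; and the contradiction $x>M\Rightarrow r(x)>r(M)\ge B$ closes the argument. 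Two minor points. The preliminary case split at $x\le\sqrt{A}$ is unnecessary: since $r$ is strictly increasing on all of $(0,\infty)$ and $r(x)\le 0<B$ whenever $x\le\sqrt{A}$, the monotonicity argument covers all $x>0$ uniformly. Also, your aside that an argument via the non-monotone function $t\mapsto A+Bt^p-t^2$ ``would have to locate the maximum relative to $M$'' overstates the obstruction: setting $f(t):=t^2-Bt^p-A$, one has $f(0)=-A<0$ and $f'(t)=t^{p-1}\left(2t^{2-p}-Bp\right)$ changes sign exactly once from negative to positive, so $\{t>0: f(t)\le 0\}$ is an interval $(0,t_*]$; since $f(M)=\sqrt{A}+\tfrac14-B\,(\sqrt{A}+\tfrac12)^p\ge 0$ is precisely the hypothesis, one gets $x\le t_*\le M$ without ever locating the interior minimum of $f$. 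So the ``naive'' route works too with a one-sign-change observation; your ratio reformulation is an equally clean, equivalent way to package it.
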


\section{Appendix B. Examples for \eqref{J0} and \eqref{J1}}
In the first part of this appendix, we provide examples of a nonlinear term $f$ that satisfies \eqref{J0} and, under additional assumptions, \eqref{J1}. It generalizes the case given by two different powers.

In the second part, we present some sufficient conditions on the nonlinear term for \eqref{J1} to hold, paired with an example that does not consist merely of powers.\\

\subsection{Multiple powers with rational index}
\
\newline
Consider the nonlinearity $f: \mathbb{R} \rightarrow \mathbb{R}$ given by
$$
f(t):=\sum_{k=0}^K \alpha_k|t|^{a_k-2} t+\sum_{\ell=0}^L \beta_{\ell}|t|^{b_{\ell}-2} t
$$
where $2<a_0<\cdots<a_K<q_{\#}<q^{\#}<b_0<\cdots<b_L \leq q^{\prime}$, $a_k, b_{\ell} \in \mathbb{Q}$, $\alpha_k, \beta_{\ell}>0$, and $K, L \geq 0$. Fix $u \in \mathcal{D}(c) \backslash\{0\}$, with $c$ satisfying \eqref{Eq-crange}. Note that
$$
\psi(s):=J(s * u)=e^{2s}\frac{\|\nabla u\|_2^2}{2}
+ e^{q(\delta_q+1)s}\frac{\|\nabla u\|_q^q}{q}
-\sum_{k=0}^K \frac{\alpha_k}{a_k} e^{a_k\delta_{a_k}s}\|u\|_{a_k}^{a_k}-\sum_{\ell=0}^L \frac{\beta_{\ell}}{b_{\ell}} e^{b_k\delta_{b_k}s}\|u\|_{b_{\ell}}^{b_{\ell}}
$$
and
$$
\begin{aligned}
	\psi^{\prime}(s)
	& =e^{2s}\|\nabla u\|_2^2+(\delta_q+1)e^{q(\delta_q+1)s}\|\nabla u\|_q^q -\sum_{k=0}^K a_k\delta_{a_k} \frac{\alpha_k}{a_k} e^{a_k\delta_{a_k}s}\|u\|_{a_k}^{a_k}-\sum_{\ell=0}^L b_k\delta_{b_k} \frac{\beta_{\ell}}{b_{\ell}} e^{b_k\delta_{b_k}s}\|u\|_{b_{\ell}}^{b_{\ell}} \\
	& =-\sum_{k=0}^K a_k\delta_{a_k} \frac{\alpha_k}{a_k} e^{a_k\delta_{a_k}s}\|u\|_{a_k}^{a_k}+e^{2s}\|\nabla u\|_2^2+(\delta_q+1)e^{q(\delta_q+1)s}\|\nabla u\|_q^q -\sum_{\ell=0}^L b_k\delta_{b_k} \frac{\beta_{\ell}}{b_{\ell}} e^{b_k\delta_{b_k}s}\|u\|_{b_{\ell}}^{b_{\ell}}. \\
\end{aligned}
$$
Note also that
$$
0<a_k\delta_{a_k}<\widehat{q}(\delta_{\widehat{q}}+1)<\widetilde{q}(\delta_{\widetilde{q}}+1)< b_k\delta_{b_k}.
$$
Thus, from Lemma \ref{Lemma-g}, $\psi$ has a global maximum point at a positive level; by calculation, it also has a local minimum point at a negative level.

Let us write
$$
a_k\delta_{a_k}=\frac{A_k}{B_k}, \quad b_k\delta_{b_k}=\frac{C_{\ell}}{D_{\ell}}
$$
for some $A_k, B_k, C_{\ell}, D_{\ell} \in \mathbb{N}$ such that $\operatorname{gcd}\left(A_k, B_k\right)=\operatorname{gcd}\left(C_{\ell}, D_{\ell}\right)=1$, and $m:=\operatorname{lcm}\left(2, B_0, \ldots, B_K, D_0, \ldots, D_L\right)$. Then,
$$
\psi^{\prime}(s)= P\left(e^{s/ m}\right),
$$
where $P$ is the polynomial given by
$$
P(t):=-\sum_{k=0}^K \frac{\alpha_k}{a_k} \frac{A_k}{B_k} t^{m A_k / B_k}\|u\|_{a_k}^{a_k}+ t^{2 m}\|\nabla u\|_2^2
+(\delta_q+1)t^{q(\delta_q+1) m}\|\nabla u\|_q^q
-\sum_{\ell=0}^L \frac{\beta_{\ell}}{b_{\ell}} \frac{C_{\ell}}{D_{\ell}} t^{m C_{\ell} / D_{\ell}}\|u\|_{b_{\ell}}^{b_{\ell}}.
$$
From Descartes' rule of signs, $P$ has at most two positive roots. Thus, from the argument above, $\psi^{\prime}$ has exactly two roots, one of them is a local minimum point of $\psi$ and one of them, $t_u$, is a local (hence global) maximum point of $\psi$. Therefore, \eqref{J0} is satisfied.

Now, to see whether this nonlinearity satisfies \eqref{J1}, let us compute
\begin{equation*}
	\begin{aligned}
		\phi^{\prime \prime}(s) & = s^{-2} \left(
		-\sum_{k=0}^K \frac{\alpha_k}{a_k} \frac{A_k}{B_k} \left(\frac{A_k}{B_k}-1\right)\|u\|_{a_k}^{a_k} s^{A_k / B_k}
		+ s^2 \|\nabla u\|_2^2
		\right. \\
		&\quad \left.
		+ \left(\delta_q+1\right) \left(q \left(\delta_q+1\right)-1\right) s^{q \left(\delta_q+1\right)} \|\nabla u\|_2^2
		- \sum_{\ell=0}^L \frac{\beta_{\ell}}{b_{\ell}} \frac{C_{\ell}}{D_{\ell}} \left(\frac{C_{\ell}}{D_{\ell}}-1\right) \|u\|_{b_{\ell}}^{b_{\ell}} s^{C_{\ell} / D_{\ell}}
		\right)
	\end{aligned}
\end{equation*}
and observe that, for a real number $r, N(r-2) / 2-1 \lesseqgtr 0$ if and only if $r \lesseqgtr 2+2 / N$. Then $\varphi$ is convex (respectively, concave) in a right-hand neighbourhood of the origin if $a_0<2+2 / N$ (respectively, $a_0>2+2 / N$), and, if $a_0=2+2 / N$, then $\varphi$ is convex (respectively, concave) in a right-hand neighbourhood of the origin if $K=0$ (respectively, $K \geq 1$ ).

Let us also recall that $\phi$ is convex in a neighbourhood of its local minimizer, concave in a neighbourhood of its global maximizer, and that, therefore, its second derivative changes sign between such two points. Now, let us consider the following cases for the sign pattern of the non-zero coefficients of $\phi^{\prime \prime}$, i.e. the sequence of the signs of the coefficients ordered by ascending variable exponent.

If $a_K \leq 2+2 / N$, then the sign pattern is $+, \ldots,+,-, \ldots,-$ and $\phi^{\prime \prime}$ changes sign once on $\left(0, e^{t_u}\right)$, hence $\phi$ is concave on $\left(e^{t_u},\infty\right)$.

If $a_0>2+2 / N$ or $q_0=2+2 / N$ and $K \geq 1$, then the sign pattern is $-, \ldots,-,+,-, \ldots,-$ and $\phi^{\prime \prime}$ changes sign twice on $\left(0, e^{t_u}\right)$, hence $\varphi$ is concave on $\left(e^{t_u},\infty\right)$.
Finally, a similar argument applies to the case when the powers $a_k(0 \leq k \leq K)$ and $b_{\ell}(0 \leq \ell \leq L)$ are (positive and) rational multiples of a given real number.

\subsection{Concrete assumptions on $F$}
\
\newline
We introduce
$$
G(t):=h(t) t-\left(2+\frac{2}{N}\right) H(t), \quad t \in \mathbb{R}
$$
where $H(t):=f(t)t-2F(t)$. We may assume that $H$ is of class $\mathcal{C}^1$ and $h=H^{\prime}$ satisfies $|h(t)| \lesssim|t|+|t|^{2^*-1}$ for every $t \in \mathbb{R}$ and there exists $\xi \neq 0$ such that $H(\xi)>0$. In addition, we assume that $G$
satisfies the following conditions:
\begin{enumerate}[label=(G\arabic*), ref=\textup{G\arabic*}]
	\setcounter{enumi}{-1}
	\item \label{G0} $G$ is even.
	\item \label{G1} $\lim \sup _{t \rightarrow 0} \frac{G(t)}{|t|^{q_\#}} \leq 0$.
	\item \label{G2} $\lim _{|t| \rightarrow\infty} \frac{G(t)}{|t|^{q^\#}}=\infty$.
	\item \label{G3}  $t \mapsto \frac{G(t)}{t^{q^{\#}}}$ is increasing on $(0,\infty)$.
\end{enumerate}
Then, for fixed $u \in X \backslash\{0\}$,
\begin{equation}\label{App1}
	\begin{aligned}
	\phi^{\prime \prime}(s)&=\frac{\mathrm{d}^2}{\mathrm{~d} s^2} J(s * u)=\|\nabla u\|_2^2+
	\left(q \left(\delta_q+1\right)-1\right) s^{q \left(\delta_q+1\right)-2}\|\nabla u\|_q^q
	-\frac{N^2}{4} s^{-N-2} \int_{\mathbb{R}^N} G\left(s^\frac{N}{ 2} u\right) d x.\\
	\end{aligned}
\end{equation}
First, recall $\widehat{q}=\min\{2,q\}$, from \eqref{App1}, we observe that
\begin{equation*}
	\begin{aligned}
		\phi^{\prime \prime}(s)&
		=	
		s^{\widehat{q}(\delta_{\widehat{q}}+1)-2}
		\left(s^{2-\widehat{q}(\delta_{\widehat{q}}+1)}\|\nabla u\|_2^2+
		\left(q \left(\delta_q+1\right)-1\right) s^{{(q(\delta_q+1)-\widehat{q}(\delta_{\widehat{q}}+1))}}\|\nabla u\|_q^q
		-\frac{N^2}{4} \int_{\mathbb{R}^N} \frac {G\left(s^\frac{N}{2} u\right)} {\left(s^{ \frac{N}{2}}\right)^{{q}_{\#}}} d x\right).
	\end{aligned}
\end{equation*}
Note that, from \eqref{G1}, $G\left(e^{\frac{N}{2}s} u\right)\leq0$ a.e. in $\operatorname{supp} u$ for sufficiently small $s>0$. Therefore, from Fatou's lemma and \eqref{G1} again,
$$
\lim _{s \rightarrow 0}\int_{\mathbb{R}^N} \frac {G\left(s^\frac{N}{2} u\right)} {\left(s^{ \frac{N}{2}}\right)^{{q}_{\#}}} d x\leq0.
$$
Since $\widehat{q}=\min\{2,q\}$, we have
$2>\widehat{q}(\delta_{\widehat{q}}+1)$ and $q(\delta_q+1)>\widehat{q}(\delta_{\widehat{q}}+1)$. So $\lim _{s \rightarrow 0 }{s^{(2-\widehat{q}(\delta_{\widehat{q}}+1))}}=0$ and $\lim _{s \rightarrow 0 }s^{(q(\delta_q+1)-\widehat{q}(\delta_{\widehat{q}}+1))}=0$. This implies that $	\phi^{\prime \prime}(s)\geq0$ for sufficiently small $s>0$.

Next, recall $\widetilde{q}=\max\{2,q\}$, again from \eqref{App1}, we observe that
\begin{equation*}
	\begin{aligned}
		\phi^{\prime \prime}(s)&
		=	
		s^{\widetilde{q}(\delta_{\widetilde{q}}+1)-2}
		\left(s^{2-\widetilde{q}(\delta_{\widetilde{q}}+1)}\|\nabla u\|_2^2+
		\left(q \left(\delta_q+1\right)-1\right) s^{{(q(\delta_q+1)-\widetilde{q}(\delta_{\widetilde{q}}+1))}}\|\nabla u\|_q^q
		-\frac{N^2}{4} \int_{\mathbb{R}^N} \frac {G\left(s^\frac{N}{2} u\right)} {\left(s^{ \frac{N}{2}}\right)^{{q}^{\#}}} d x\right).
	\end{aligned}
\end{equation*}
Note that, from \eqref{G2}, $G\left(e^{\frac{N}{2}s} u\right)>0$ a.e. in $\operatorname{supp} u$ for sufficiently large $s$. Therefore, from Fatou's lemma and \eqref{G2} again,
$$
\lim _{s \rightarrow \infty}\int_{\mathbb{R}^N} \frac {G\left(s^\frac{N}{2} u\right)} {\left(s^{ \frac{N}{2}}\right)^{{q}_{\#}}} d x=\infty.
$$
Since $\widetilde{q}=\min\{2,q\}$,
we have $2>\widetilde{q}(\delta_{\widetilde{q}}+1)$ and $q(\delta_q+1)>\widetilde{q}(\delta_{\widetilde{q}}+1)$.
So
$\lim _{s \rightarrow 0 }{s^{(2-\widetilde{q}(\delta_{\widetilde{q}}+1))}}=0$ and $\lim _{s \rightarrow 0 }s^{(q(\delta_q+1)-\widetilde{q}(\delta_{\widetilde{q}}+1))}=0$. This implies that $	\phi^{\prime \prime}(s)>0$ for sufficiently large $s>0$.

Therefore, there exist $s>0$ such that $\phi^{\prime \prime}(s)=0$. Now we prove that
$\phi^{\prime \prime}(s)$ has has exactly one zero point. If $\phi^{\prime \prime}(s)=0$, then the following equality hold:
$$
s^{2-\widetilde{q}(\delta_{\widetilde{q}}+1)}\|\nabla u\|_2^2+
\left(q \left(\delta_q+1\right)-1\right) s^{{(q(\delta_q+1)-\widetilde{q}(\delta_{\widetilde{q}}+1))}}\|\nabla u\|_q^q
=\frac{N^2}{4} \int_{\mathbb{R}^N} \frac {G\left(s^\frac{N}{2} u\right)} {\left(s^{ \frac{N}{2}}\right)^{{q}^{\#}}} d x.
$$
Since $\widetilde{q}=\max\{2,q\}$,
it's easy to see that
\begin{equation*}
	s^{2-\widetilde{q}(\delta_{\widetilde{q}}+1)}\|\nabla u\|_2^2+
	\left(q \left(\delta_q+1\right)-1\right) s^{{(q(\delta_q+1)-\widetilde{q}(\delta_{\widetilde{q}}+1))}}\|\nabla u\|_q^q
\end{equation*}
 is strictly decreasing for $s\in(0,\infty)$. Similarly, we can deduce from \eqref{G3}, that
 $$
 \frac{N^2}{4} \int_{\mathbb{R}^N} \frac {G\left(s^\frac{N}{2} u\right)} {\left(s^{ \frac{N}{2}}\right)^{{q}^{\#}}} dx
 $$
  is increasing for $s\in(0,\infty)$.
  Therefore,
$\phi^{\prime \prime}(s)$ has exactly one zero point. So, there exist $s_0\in[0,\infty)$ such that $\phi^{\prime \prime}(s)<0$ on $[0,s_0)$,
$\phi^{\prime \prime}(s)>0$ on $(s_0, \infty)$
and $\phi^{\prime \prime}(s_0)=0$.

Suppose by contradiction, that \eqref{J0} do not hold. Then there exist $t_1,t_2\in
(-\infty, \infty)$ and
 $t_1\neq t_2$ such that $t_1$ and $t_2$ are local maximum points of
 $\psi(t)$. Since $\phi(s)= \psi( \ln s) $, we have that
 $$\phi^{\prime}(s)= \frac{\psi^{\prime}( \ln s) }{s}.$$
Therefore, we have \(\phi^{\prime}(e^{t_1}) = \phi^{\prime}(e^{t_2}) = 0\). By applying Rolle's Theorem, there exists \(s_1 \in (e^{t_1}, e^{t_2})\) such that \(\phi^{\prime\prime}(s_1) = 0\).
Since \(t_2\) is a local maximum point of \(\psi(t)\), so \(\psi^{\prime\prime}({t_2}) < 0\), it follows that \(\phi^{\prime\prime}(e^{t_2}) < 0\).
Due to the monotonicity of \(\phi^{\prime\prime}(s)\), there exists \(s_2 > e^{t_2}\) such that \(\phi^{\prime\prime}(s_2) = 0\). This result contradicts the uniqueness of \(s_0\). Therefore, we conclude that \eqref{J0} holds, and it is easy to verify that \eqref{J1} also holds.

In this case, we propose the following example:
$$
F(t)=\frac{1}{a}|t|^{a}+\frac{1}{b}|t|^{b}.
$$
where $a\leq\min\{{2+\frac{2}{N}}, q_{\#}\}<q^{\#}<b$. $F(t)$ satisfies \eqref{F0}-\eqref{F5}. Then
$$
G(t)=\left(1-\frac{2}{a}\right)\left(a-\left(2+\frac{2}{N}\right)\right)|t|^a+\left(1-\frac{2}{b}\right)\left(b-\left(2+\frac{2}{N}\right)\right)|t|^b.
$$
satisfies \eqref{G0}--\eqref{G3}.

Additionally, we provide the following example, which includes cases other than power growth:

$$
F(t)=\frac{3}{7}|t|^{7 / 3} \ln (e+|t|)+\frac{3}{13}|t|^{13 / 3}, \quad N=3, q=\frac{3}{2},2<\frac{7}{3}<q_{\#}=3<q^{\#}=\frac{10}{3}<\frac{13}{3}<q^{\prime}=6,
$$
which satisfies \eqref{F0}-\eqref{F5}. Then
$$
G(t)=-\frac{1}{21}|t|^{7 / 3} \ln (e+|t|)+\frac{3}{7} \frac{|t|^{10 / 3}}{e+|t|}-\frac{3}{7} \frac{|t|^{13 / 3}}{(e+|t|)^2}+\frac{35}{39}|t|^{13 / 3}
$$
satisfies \eqref{G0}--\eqref{G3}.

Finally, we observe that, if we consider $f$ as in Appendix B.1 with $q_K \leq 2+2 / N$, then $f$ satisfies \eqref{G0}--\eqref{G3} even for real exponents $q_k, p_{\ell}$.

\subsection*{Conflict of interest}

On behalf of all authors, the corresponding author states that there is no conflict of interest.

\subsection*{Ethics approval}
 Not applicable.

\subsection*{Data Availability Statements}
Data sharing not applicable to this article as no datasets were generated or analysed during the current study.

\subsection*{Acknowledgements}
C. Ji was partially supported by National Natural Science Foundation of China (No. 12171152).  P. Pucci is a member of the {\em Gruppo Nazionale per
l'Analisi Ma\-te\-ma\-ti\-ca, la Probabilit\`a e le loro Applicazioni}
(GNAMPA) of the {\em Instituto Nazionale di Alta Matematica} (INdAM)
and this paper was written under the auspices of   GNAMPA--INdAM.

\end{document}